\documentclass[12pt,leqno]{amsart}
        \title{The Farrell-Jones Conjecture for mapping class groups}

       \author{Arthur Bartels}
      \address{WWU M\"unster\\
               Mathematisches Institut\\
               Einsteinstr.~62,
               D-48149 M\"unster, Germany}
        \email{a.bartels@wwu.de}
      \urladdr{http://www.math.uni-muenster.de/u/bartelsa} 
       \author{Mladen Bestvina}
      \address{Department of Mathematics\\
               University of Utah\\
               155 South 1400 East, JWB 233\\
               Salt Lake City, Utah 84112-0090 }
        \email{bestvina@math.utha.edu}
        \urladdr{http://www.math.utah.edu/$\sim$bestvina/} 
         \date{October, 2018}
     \keywords{Mapping Class Groups, Curve Complex, 
            Farrell-Jones Conjecture, 
            $K$- and $L$-theory of group rings.}
    \subjclass[2010]{20F65, 18F25}

  \usepackage{comment}
  \usepackage{hyperref}
  \usepackage{calc}
  \usepackage{enumerate,amssymb}
  \usepackage[shortlabels]{enumitem} \setitemize{leftmargin=3.5ex}
  \usepackage[arrow,curve,matrix,tips,2cell]{xy}
    \SelectTips{eu}{10} \UseTips
    \UseAllTwocells
  \usepackage{tikz}
  \usepackage{pdfcolmk}
  \usepackage{wasysym}
  \usepackage{float}

\usepackage{etoolbox}

  \DeclareMathAlphabet{\matheurm}{U}{eur}{m}{n}

  \newcommand{\IH}{\mathbb{H}}

  \newcommand{\IN}{\mathbb{N}}

  \newcommand{\IR}{\mathbb{R}}

  \newcommand{\IZ}{\mathbb{Z}}
  \newcommand{\Z}{{\mathbb Z}}
  \newcommand{\R}{{\mathbb R}}

  \newcommand{\EF}{\matheurm{F}}

  \newcommand{\ER}{\matheurm{R}}

  \newcommand{\cala}{\mathcal{A}}
  
  \newcommand{\calc}{\mathcal{C}}

  \newcommand{\calf}{\mathcal{F}}
  \newcommand{\calg}{\mathcal{G}}

  \newcommand{\calm}{\mathcal{M}}

  \newcommand{\calp}{\mathcal{P}}

  \newcommand{\cals}{\mathcal{S}}
  \newcommand{\calt}{\mathcal{T}}
  \newcommand{\calu}{\mathcal{U}}
  \newcommand{\calv}{\mathcal{V}}

  \newcommand{\caly}{\mathcal{Y}}

  \newcommand{\bfK}{{\mathbf K}}
  \newcommand{\bfL}{{\mathbf L}}

  \newcommand{\bfS}{{\mathbf S}}

  \newcommand{\bfY}{{\mathbf Y}}

  \theoremstyle{plain}
  \newtheorem{theorem}{Theorem}[section]
  \newtheorem{thm}[theorem]{Theorem}
  \newtheorem{lemma}[theorem]{Lemma}
  \newtheorem{corollary}[theorem]{Corollary}
  \newtheorem{proposition}[theorem]{Proposition}
  \newtheorem{prop}[theorem]{Proposition}

  \newtheorem{ABCtheorem}{Theorem}

  \theoremstyle{definition}
  \newtheorem{definition}[theorem]{Definition}

  \newtheorem{axiom}[theorem]{Axiom}
  \newtheorem*{definition*}{Definition}

  \theoremstyle{remark}
  \newtheorem{remark}[theorem]{Remark}  
    
  \newtheorem{example}[theorem]{Example}

  \makeatletter\let\c@equation=\c@theorem\makeatother

  \hyphenation{equi-variant}

   {\end{list}}

  \DeclareMathOperator{\diam}{diam}
  
  \DeclareMathOperator{\Fin}{Fin}
  
  \DeclareMathOperator{\id}{id}
  
  \DeclareMathOperator{\Image}{Im}

  \DeclareMathOperator{\map}{map}
  \DeclareMathOperator{\MCG}{Mod(\Sigma)}

  \DeclareMathOperator{\Prob}{Prob}
  
  \DeclareMathOperator{\pt}{pt}

  \DeclareMathOperator{\SL}{SL}

  \DeclareMathOperator{\GL}{GL}
  
  \DeclareMathOperator{\Wh}{Wh}
  \DeclareMathOperator{\VCyc}{VCyc}

  \newcommand{\abelian}{{\mathit{ab}}}
  
  \newcommand{\ANR}{{\mathit{ANR}}}
  
  \newcommand{\CAT}{\mathit{CAT}}
  \newcommand{\CF}{\mathit{CF}}
  
  \newcommand{\dd}{{\partial}}
  \newcommand{\e}{{\varepsilon}}
  \newcommand{\EAC}{{\matheurm{AC}}}
  
  \newcommand{\Eac}{{\matheurm{ac}}}
  \renewcommand{\ER}{{\mathit{ER}}}
  \newcommand{\EVNil}{{\matheurm{VNil}}}
  \newcommand{\EVSol}{{\matheurm{VSol}}}

  \newcommand{\FS}{{\mathit{FS}}}

  \newcommand{\longthin}{{\mathit{long}}}
  \newcommand{\MF}{{\mathit{\calm \hspace{-.1ex} \calf}}}

  \newcommand{\PMF}{{\mathit{\calp \hspace{-.6ex} \calm \hspace{-.1ex} \calf}}}

  \newcommand{\smint}[1]{\mbox{$\int_{#1}$}}
  
  \newcommand{\thin}{{\mathit{thin}}}
  \newcommand{\thick}{{\mathit{thick}}}

  \newcommand{\x}{{\times}}

\begin{document}
\begin{abstract}We prove the Farrell-Jones Conjecture for mapping class groups. 
  The proof uses the Masur-Minsky theory of the large
  scale geometry of mapping class groups and the geometry of the thick
  part of Teichm\"uller space. The
  proof is presented in an axiomatic setup, extending the projection
  axioms in \cite{bbf}. More specifically, we prove that the action of
  $\MCG$ on the Thurston compactification of Teichm\"uller space is
  finitely $\mathcal F$-amenable for the family $\mathcal F$
  consisting of virtual point stabilizers.
\end{abstract}

\maketitle
\tableofcontents

\section*{Introduction}

\noindent
The goal of this paper is to prove the following theorem.
 
\begin{ABCtheorem} \label{thm:FJC-for-MCG}
  The mapping class group $\MCG$ of any oriented surface $\Sigma$ of finite type satisfies the Farrell-Jones Conjecture.
\end{ABCtheorem}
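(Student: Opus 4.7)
The plan is to obtain the Farrell--Jones Conjecture (FJC) for $\MCG$ by verifying the criterion stated in the abstract, namely that the action of $\MCG$ on the Thurston compactification $\overline{\calt}(\Sigma) = \calt(\Sigma) \cup \PMF(\Sigma)$ is finitely $\calf$-amenable, where $\calf$ is the family of virtual stabilizers of points in $\overline{\calt}(\Sigma)$. The finite $\calf$-amenability criterion of Bartels--L\"uck--Reich type (to be formulated axiomatically in the body of the paper) then yields the FJC for $\MCG$ relative to $\calf$. To upgrade this to the absolute FJC, I would run an outer induction on the complexity $\xi(\Sigma) = 3g - 3 + p$ of $\Sigma$: every $H \in \calf$ will be built from virtually abelian groups and from mapping class groups of proper subsurfaces, which satisfy FJC by inductive hypothesis.

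The first substantive step is identifying the members of $\calf$. By Nielsen realization, stabilizers of interior points of $\calt(\Sigma)$ are finite. For $[\mu] \in \PMF(\Sigma)$ with $\mu$ minimal, uniquely ergodic and filling, the stabilizer is virtually cyclic. Otherwise $\mu$ has a canonical collection of essential reducing curves, and the stabilizer is commensurable with an extension of the mapping class group of the complementary subsurfaces by a free abelian group of Dehn twists along those curves. Combining inheritance of FJC under extensions by virtually polycyclic groups with the inductive hypothesis on lower complexity, every $H \in \calf$ will satisfy FJC. The base cases ($\MCG$ finite, virtually free, or commensurable with $\SL_2(\IZ)$) are already known.

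The heart of the argument is then to construct the covers demanded by finite $\calf$-amenability: for every finite $F \subset \MCG$, an open cover $\calu$ of $\overline{\calt}(\Sigma)$ of controlled dimension, whose members have stabilizers in $\calf$, and such that for each $x$ some $U \in \calu$ contains $Fx$. Reflecting the two-phase geometry of $\MCG$, I would combine two kinds of covers. For points deep in the thick part of $\calt(\Sigma)$, the Teichm\"uller flow yields flow-box covers with finite stabilizers, in the spirit of the Farrell--Jones work on hyperbolic and $\CAT(0)$-flow spaces. For points near a face of $\PMF(\Sigma)$ supported on a proper subsurface $Y$, the Masur--Minsky subsurface projection to $Y$ combined with the projection-complex axioms of \cite{bbf} reduces the local geometry to that of $\MCGnoSigma(Y)$, and one inherits covers from the inductive hypothesis while adding virtually abelian covers that absorb the Dehn twists along $\partial Y$. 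The two families of covers are then glued along hierarchy paths, using the extended projection axioms to control overlaps.

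The main obstacle, and the reason one must genuinely extend \cite{bbf} rather than merely apply it, is that $\MCG$ is not hyperbolic: a flow-box argument that controls only distances in the curve graph does not see escape in directions dictated by subsurface projections, so a single flow on the thick part cannot supply all of the cover. The key technical task is therefore to axiomatize a mechanism that simultaneously produces flow-box-like covers from the thick part and boundary-face covers from subsurface projections, and to verify that their union has bounded dimension and the required $\calf$-amenability. Carrying out this construction in an axiomatic setup, so that the induction on $\xi(\Sigma)$ closes cleanly, is where I expect the real work to lie.
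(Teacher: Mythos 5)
Your proposal matches the paper's argument in all essentials: you verify finite $\calf$-amenability of the action on the Thurston compactification for the family $\calf$ of virtual point stabilizers, split the cover construction into a coarse geodesic-flow argument on the thick part and a Masur--Minsky subsurface-projection argument (via an extension of the \cite{bbf} projection axioms to foliations) on the thin part, and close the argument by induction on complexity using inheritance of FJC under taking subgroups, finite-index overgroups, products, and extensions with finitely generated (virtually) abelian kernel. This is precisely the paper's strategy, differing only in minor bookkeeping (the paper works on $\PMF$ first and then extends to $\overline{\calt}$, organizes the induction via the class $\EAC(\EVNil)$, and handles the two partial covers by taking a union rather than a ``gluing'').
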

  
We will review the Farrell-Jones Conjecture and its motivation later in the introduction.
The main step in our proof of Theorem~\ref{thm:FJC-for-MCG} is the verification of a regularity condition, called finite $\calf$-amenability in~\cite{Bartels-Coarse-flow}, for the action of $\MCG$ on the space $\PMF$ of projective measured foliations on $\Sigma$, see Theorem~\ref{thm:finitely-amenable-action-MCG} below.
Theorem~\ref{thm:FJC-for-MCG} is then a consequence of the axiomatic results of L\"uck, Reich and the first author for the Farrell-Jones Conjecture from~\cite{Bartels-Lueck-Borel, Bartels-Lueck-Reich-K-FJ-hyp} and an induction on the complexity of the surface.
A similar induction has been used for $\GL_n(\IZ)$~\cite{Bartels-Lueck-Reich-Rueping-GLnZ}. 

\subsubsection*{Topologically amenable actions}
Let $G$ be a group.
The space $\Prob(G)$ of probabilty measures on $G$ is a subspace of $\ell^1(G)$.
An action of $G$ on a compact space $\Delta$ is said to be \emph{topologically amenable} if there exists a sequence of $\text{weak}^*$ continuous maps $f_n \colon \Delta \to \Prob(G)$ satisfying the following condition: for any $g \in G$ 
\begin{equation*}
	\sup_{x \in \Delta} \| f_n(gx) - g f_n(x) \|_1 \to 0  \; \text{as} \; n \to \infty.
\end{equation*}
It will be convenient to refer to sequences of maps satisfying this condition as \emph{almost equivariant maps}.
Groups that admit a topologically amenable action on a compact space are often said to be \emph{boundary amenable}.
This condition is known to imply the Novikov conjecture~\cite{Higson-bivariant+Novikov}.
Since the Stone-\u{C}ech compactification $\beta G$ maps to any compact $G$-space, it follows that a group is boundary amenable if and only if its action on $\beta G$ is topologically amenable.
Hamenst\"adt proved that the mapping class group of a surface $\Sigma$ is boundary amenable by proving that its action on the space of complete geodesic laminations on $\Sigma$ is topologically amenable~\cite{Hamenstaedt-boundary-amenable}.
Kida showed that the action of the mapping class group on its Stone-\u{C}ech compactification is topologically amenable~\cite[App.~C]{Kida-MCG-measure-equiv-Memoirs}.

\subsubsection*{Finite asymptotic dimension}
For any $N$ the subspace $\Prob(G)^{(N)}$ of probability measures supported on sets of cardinality at most $N+1$ is naturally a simplicial complex of dimension $N$.
The isotropy groups for this action all belong to the family $\Fin$ of finite subgroups.
For an amenable action of $G$ on a compact space $\Delta$ one can ask whether the almost equivariant maps $f_n$ from above can be chosen to have image in $\Prob(G)^{(N)}$ for some $N$ independent of $n$.
For $\Delta = \beta G$ such maps $f_n$ exist if and only if the asymptotic dimension of $G$ is $\leq N$~\cite[Thm.~6.5]{Guentner-Willett-Yu-dyn-asy-dim}.
Together with Bromberg and Fujiwara the second author proved that the mapping class group has finite asymptotic dimension~\cite{bbf}. 
This result implies a stronger form of the Novikov conjecture~\cite{Kasprowski-On-FDC}, often called the integral Novikov conjecture, i.e., the integral injectivity of the assembly maps of the assembly maps in algebraic $K$-theory and $L$-theory relative to the family of finite subgroups. 
These assembly maps are briefly reviewed in Subsection~\ref{subsec:FJC}.

\subsubsection*{Finite $\calf$-amenability} 
The axiomatic condition from~\cite{Bartels-Lueck-Reich-K-FJ-hyp} that we will use requires an action on a space much nicer than the Stone-\u{C}ech compactification.
For the mapping class group this will be the Thurston compactification of Teichm\"uller space, i.e., a disk.
Technically, the requirement is that the space is a Euclidean retract. 
For actions on such spaces there are typically infinite isotropy groups and this obstructs the existence of almost equivariant maps $f_n$ into a finite dimensional simplicial complex with a proper simplicial $G$-action. 
The condition from~\cite{Bartels-Lueck-Reich-K-FJ-hyp} is therefore formulated relative to a family $\calf$ of subgroups; the action on the target simplicial complex is then allowed to have isotropy in this family.
Maps to a finite dimensional simplicial space translate to finite dimensional covers of the source, as we can pull back standard coverings of the simplicial complex.
This translation is used in the formulation of the regularity
condition that we recall now in detail.

Let $\calf$ be a \emph{family of subgroups} of a group $G$, i.e., $\calf$ is set of subgroups of $G$ that is closed under conjugation and taking subgroups. 
A subset $U$ of a $G$-space is said to be an \emph{$\calf$-subset} if there is $F \in \calf$ such that $gU = U$ for all $g \in F$ and $gU \cap U = \emptyset$ if $g \not\in F$. 
A cover $\calu$ of a $G$-space is said to be an \emph{$\calf$-cover} if $\calu$ is $G$-invariant and consists of $\calf$-subsets. 
If all members of $\calu$ are in addition open, then we say that $\calu$ is an open $\calf$-cover.
For $N \in \IN$ an action of $G$ on a space $\Delta$ is said to be \emph{$N$-$\calf$-amenable} if for all finite subsets $S$ of $G$ there exists an open $\calf$-cover $\calu$  of $G \x \Delta$ (equipped with the diagonal $G$-action $g \cdot (h,x) = (gh,gx)$) such that
\begin{itemize}
     \item the order of $\calu$ is at most $N$;
     \item the cover $\calu$ is \emph{$S$-long (in the group coordinate)},
        i.e., for every $(g,x) \in G \x \Delta$ there is $U \in \calu$ with
        $gS \x \{ x \} \subseteq U$.
\end{itemize}
An action that is $N$-$\calf$-amenable for some $N$ is said to be
\emph{finitely $\calf$-amenable}.

Given such a cover one obtains almost equivariant maps $f_n$ from  $\Delta$ to simplicial complexes of dimension $\leq N$ as follows.
For each $\calu$ a partition of unity subordinate to $\calu$ provides a $G$-equivariant map $f_\calu$ from $G \x \Delta$ to the Nerve of $\calu$. 
By the first condition this nerve is of dimension at most $N$.
The second condition translates into the almost equivariance of the restrictions of the $f_\calu$ to $\{ e \} \x \Delta$.

It is straightforward to check that the action of the mapping class group on Teichm\"uller space is finitely $\Fin$-amenable.
The key point for us is to understand the action on the boundary of Teichm\"uller space, i.e., on the space $\PMF$ of projective measured foliations.

\begin{ABCtheorem} \label{thm:finitely-amenable-action-MCG} 
    Let $\Sigma$ be a closed oriented surface of genus $g$ with $p$ punctures where 
    $6g + 2p - 6 > 0$.     
    Let $\calf$ be the family of subgroups that virtually fix an essential simple closed curve
    (up to isotopy) or are virtually cyclic. 
    Then the action of its mapping class group $\MCG$ on the
    space $\PMF$ of projective measured foliations on $\Sigma$
    is finitely $\calf$-amenable.   
\end{ABCtheorem}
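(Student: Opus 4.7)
For each finite $S \subset \MCG$ the plan is to construct an open $\calf$-cover of $\MCG \x \PMF$ of bounded order that is $S$-long in the group coordinate by stratifying $\PMF$ according to Masur--Minsky subsurface projections and treating each stratum with its own geometric model. Fix a threshold $T$ to be chosen large depending on $S$, and call $\mu \in \PMF$ \emph{thin at $Y$} if $\dd Y$ is a leaf of $\mu$ or the subsurface projection $\pi_Y(\mu)$ has diameter $> T$ in the curve complex of $Y$, and \emph{thick} otherwise. Thick foliations are limits of Teichm\"uller geodesics that stay in the thick part of Teichm\"uller space, and they have virtually cyclic stabilizer (pseudo-Anosov); thin foliations sit near the boundary stratum of some proper essential subsurface $Y$, whose setwise $\MCG$-stabilizer lies in $\calf$ when $Y$ is an annulus, and otherwise reduces via induction on $6g+2p-6$ to a mapping class group of strictly smaller complexity.

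On the thick region I would exploit the hyperbolicity of the thick part of Teichm\"uller space in the Teichm\"uller metric (Masur--Minsky, Rafi), which makes Teichm\"uller geodesics there uniformly Morse and allows one to define a quasi-flow with hyperbolic-type shadowing. A long/thin cover construction modelled on the hyperbolic and $\CAT(0)$-group arguments of Bartels--L\"uck then yields an open $\VCyc$-cover of the thick slice of $\MCG \x \PMF$ that is $S$-long and has order bounded in terms of the complexity of $\Sigma$ only. On the thin region I would use the Bestvina--Bromberg--Fujiwara quasi-tree of metric spaces built from the collection of subsurface stabilizers where thinness occurs: each vertex of this quasi-tree furnishes an open $\calf$-set in $\MCG \x \PMF$, the Behrstock inequality and the projection axioms ensure that the resulting family has bounded order, and the inductive hypothesis on complexity absorbs the stabilizers that are themselves mapping class groups of proper subsurfaces.

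The main obstacle is gluing the thick and thin layers so that the total order remains bounded uniformly in $S$ and every set $gS \x \{\mu\}$ is contained in a single cover member. This demands choosing $T$ as a function of $S$ large enough that $S$-translation cannot push a point across the threshold between strata, and it demands the axiomatic extension of the BBF projection axioms developed in this paper so that the quasi-tree cover and the thick-part flow cover only interact through controlled overlaps along the interface $\pi_Y(\mu) \approx T$. Once these compatibility estimates are available, the two partial $\calf$-covers combine into a single $N$-$\calf$-cover with $N$ depending only on the topological complexity of $\Sigma$, which is precisely the finite $\calf$-amenability asserted in the theorem.
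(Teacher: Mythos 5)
Your high-level architecture matches the paper: split $\MCG\x\PMF$ into a thick part handled by a geodesic-flow-type argument and a thin part handled via subsurface projections and the Bestvina--Bromberg--Fujiwara machinery, then union the two covers. That said, there are three substantive gaps.

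First, the thick/thin dichotomy must be a property of the \emph{pair} $(g,\xi)$, not of the foliation $\xi$ alone, and it must be formulated as a relative projection distance rather than a diameter. Your criterion ``$\pi_Y(\mu)$ has diameter $>T$'' is vacuous for essentially all $\mu$: the projection of a single lamination to a subsurface has uniformly bounded diameter. What the paper uses (Rafi's theorem, recorded here as Theorem~\ref{large-proj}, which is the thick-or-thin Axiom~\ref{axiom:large-projection}) is that either the Teichm\"uller ray $c_{gX_0,\xi}$ stays in a fixed cocompact set, or there is a subsurface $Y$ with $d^\pi_Y(gX_0,\xi)>\Theta$ --- the distance in $\mathcal{AC}(Y)$ between the projection of the \emph{basepoint orbit} $gX_0$ and the projection of $\xi$. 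Making the dichotomy depend on $(g,\xi)$ is essential; without $g$ in the picture you cannot even phrase what ``the ray starts thick'' means, and the cover is being built on $G\x\Delta$, not on $\Delta$.

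Second, the induction on $6g+2p-6$ does not belong in the proof of Theorem~\ref{thm:finitely-amenable-action-MCG} at all. The family $\calf$ in Theorem~\ref{thm:finitely-amenable-action-MCG} already contains \emph{every} setwise subsurface stabilizer, annular or not (a subgroup fixing a non-annular subsurface virtually fixes its boundary curves). So the thin cover is automatically an $\calf$-cover without any recursion. The induction on complexity is the mechanism by which Theorem~\ref{thm:FJC-for-MCG} is deduced from Theorem~\ref{thm:finitely-amenable-action-MCG}: one needs subsurface stabilizers to themselves satisfy the Farrell--Jones Conjecture, and Lemma~\ref{lem:stab-subsurface} exhibits them (up to finite index and a central $\IZ^k$-extension) inside products of lower-complexity mapping class groups. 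Placing the induction inside the amenability proof misreads what the theorem asserts.

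Third, and most seriously, you underestimate what it takes to get a bounded-order thin cover, and you put the threshold problem in the wrong place. Combining the thick and thin covers is in fact trivial: the two collections are simply unioned and the order adds ($N_\thin+N_\thick+1$); no compatibility estimate at a thick/thin interface is required. The genuine threshold problem is \emph{internal} to the thin part: as $(g,\xi)$ is perturbed, \emph{which} subsurface $Y$ carries the first large projection can jump discontinuously, so the naive assignment ``one open set per subsurface with a large projection'' produces a collection that is neither open nor $S$-long. Your phrase ``each vertex of this quasi-tree furnishes an open $\calf$-set'' describes exactly the naive assignment that fails. The paper's resolution works in the BBF \emph{projection complex} $\calp_K(\bfY)$ (not the quasi-tree of metric spaces), introduces the notion of angles along geodesics (Section~\ref{subsec:proj-cx}), and runs a multi-scale argument with six thresholds $\Theta_0\ll\dots\ll\Theta_5$ (Section~\ref{subsec:numbers}), choosing first a coarse ``furthest large projection'' $Z(g,\xi)$ and then, at two finer scales, candidate ``first'' vertices $Y_1(g,\xi),Y_2(g,\xi)$, so that at one of the two scales the choice is stable under perturbation by $S$ and by shrinking neighborhoods of $\xi$. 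Without something of this kind --- or an equivalent device --- the proposal does not produce an open $S$-long cover, only a formally equivariant but badly discontinuous partition.
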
  

The family $\calf$ appearing in Theorem~\ref{thm:finitely-amenable-action-MCG} can
alternatively be described as the family of subgroups that virtually fix a point in $\PMF$: Firstly, every curve determines a point in $\PMF$. 
Secondly, every cyclic subgroup fixes a point in the Thurston compactification $\calt$ of Teichm\"uller space by the Brouwer fixed point theorem; since the action on Teichm\"uller space is proper, every infinite cyclic subgroup fixes a point in $\PMF$.      
On the other hand, for any finitely $\calf$-amenable action on any space all isotropy groups of the action necessarily belong to $\calf$.
Thus the family $\calf$ above is, up to finite index subgroups, the smallest family for which the action of the mapping class group on $\PMF$ can be finitely $\calf$-amenable. 

\subsubsection*{Surface groups} 
To motivate our proof of Theorem~\ref{thm:finitely-amenable-action-MCG} we recall the model argument, due to Farrell-Jones~\cite{Farrell-Jones-K-th-dynamics-I}.
We later also discuss the situation of $\SL_2(\Z)$, where the action
is not cocompact.
Let $\Sigma$ be a closed hyperbolic surface. Thus $G=\pi_1(\Sigma)$
acts on the universal cover $\widetilde\Sigma=\mathbb H^2$ and on the
circle at infinity $\Delta$. We sketch a proof that the action of $G$
on $\Delta$ is finitely $\mathcal F$-amenable for the family $\mathcal
F$ consisting of cyclic subgroups. There are two steps in the proof,
{\it long thin covers} and a {\it geodesic flow argument}.

Let $M=T_1\Sigma$ be the unit tangent bundle of $\Sigma$. This is a
closed 3-manifold equipped with a 1-dimensional foliation by the orbits
of the geodesic flow. Thus $v,w\in T_1\Sigma$ are in the same leaf if
and only if there is a geodesic line in $\Sigma$ tangent to both $v$
and $w$. For any $R$ there are only finitely many closed leaves of
length $\leq R$. 

{\it Step 1: Long thin covers.} For every $\epsilon>0$ and $R>0$
construct an open cover $\mathcal U$ of $M$ such that: 
\begin{itemize}
\item every leaf segment of length $\leq 2R$ is contained in some
  $U\in\mathcal U$,
\item every $U\in \mathcal U$ is contained in the
  $\epsilon$-neighborhood of some leaf segment of length $\leq 9R$,
\item the multiplicity of the cover is bounded above independently of
  $R$ and $\epsilon$.
\end{itemize}
The elements of the cover are going to be small neighborhoods of the
closed leaves of length $\leq R$, and otherwise they will be flow
boxes for the foliation of the form (leaf segment)$\times$(small cross
section). Care must be taken to arrange the third bullet.

Lifting this cover to $T_1\mathbb H^2$ produces an open cover
$\widetilde{\mathcal U}$. This is
going to be an $\mathcal
F$-cover if $\epsilon$ is sufficiently small so that
$\pi_1(U)\to\pi_1(M)$ has cyclic image for every $U$ (nontrivial for
neighborhoods of closed leaves, and otherwise trivial). The number $R$
will depend on the given finite set $S\subset G$ and then $\epsilon$
depends on $R$.

{\it Step 2: Geodesic flow argument.} 
First define the {\it flow
  space}
$$\FS=\{(x,p,\xi)\in \overline {\mathbb H^2}\times \mathbb
H^2\times\Delta\mid p\in [x,\xi)\}$$
where $[x,\xi)$ is the geodesic joining $x$ to $\xi$. 
There is an embedding $T_1\mathbb H^2\to \FS$ onto a closed subset
defined by $v\mapsto (\xi_-,p,\xi_+)$, where $p\in\mathbb H^2$ is the
point where $v$ is based, and $\xi_\pm\in\Delta$ are the points at
$\pm\infty$ of the geodesic tangent to $v$.
The construction of the $\mathcal F$-cover $\widetilde{\mathcal
  U}$ in the first step can also be applied to $\FS$.
(Alternatively, it suffices to extend the cover of $T_1 \IH^2$ such that it covers a neighborhood of $T_1\mathbb H^2$ in $\FS$
preserving the multiplicity, see Lemma
\ref{lem:extension-of-covers}). 
Consider for any $\tau \geq 0$ the map
$$\iota_\tau \colon G \x \Delta \to \FS$$ 
defined  as follows. 
First identify $G$ with an orbit in $\mathbb H^2$, thus
$G\subset \mathbb H^2$. Then let 
$$\iota_\tau(g,\xi)=(g,x_\tau,\xi)$$
where $x_\tau$ is the unique point on the geodesic ray $[g,\xi)$ at distance $\tau$ from $g$.
Thus $\iota_\tau$ is the map ``flow for time $\tau$''. 
Now we argue that for $\tau$
sufficiently large the cover $\iota_\tau^{-1}(\widetilde{\mathcal U})$
satisfies the requirements. This is accomplished by a geometric limit
argument: if the statement is false then for every $\tau$ we have
$(g_\tau,\xi_\tau)$ so that $\iota_\tau(B_R(g_\tau)\times
\{\xi_\tau\})$ is not contained in any $\widetilde U$. By equivariance
we may assume that the middle coordinate of $\iota_\tau(g_\tau)$
belongs to a fixed compact set $K$ (this uses cocompactness of the
action) and passing to the limit as $\tau\to\infty$ produces
$(\xi_-,p,\xi_+)\in T_1\mathbb H^2$ not contained in any $\widetilde
U$, contradiction. The key point here is that an $R$-ball gets
squeezed by the geodesic flow to a long thin set. 

\subsubsection*{The group $\SL_2(\Z)$}
Let $G$ be a torsion-free subgroup of finite index of $\SL_2(\Z)$.
Here we sketch a proof (close to what we do for mapping class groups)
that the standard action of $G$ on the circle at infinity $\Delta$ is
finitely $\mathcal F$-amenable for the family $\mathcal F$ of cyclic
subgroups. The proof for surface groups breaks down since the action
on $\mathbb H^2$ is not cocompact and it is not possible to arrange
that the point $\iota_\tau(g)$ belongs to a fixed compact set. Fix an
equivariant, pairwise disjoint collection of horoballs, so that the
action on the complement is cocompact and identify $G$ with an orbit
outside the horoballs. For a fixed $\Theta>0$ say that the pair
$(g,\xi)\in G\times\Delta$ is $\Theta$-thick if the geodesic ray
$[g,\xi)$ intersects every horoball in a segment of length
  $\leq\Theta$. Then the above argument generalizes to show that for
  any $\Theta>0$ and any $S\subset G$ there is a required cover of the
  $\Theta$-thick part of $G\times\Delta$. But we are still left to
  cover the thin part.

The naive idea is as follows. 
For every $(g,\xi)$ which is not
$\Theta$-thick let $B(g,\xi)$ be the first horoball that the ray
$[g,\xi)$ intersects in a segment of length $>\Theta$. Then for each
  horoball $B$ define the set
$$U(B)=\{(g,\xi)\mid B(g,\xi)=B\}$$
Clearly, the collection $\{U(B)\}$ is equivariant, covers the
$\Theta$-thin part of $G\times\Delta$, consists of
$\mathcal F$-sets, and any two are disjoint, i.e. the multiplicity is
1. The problem is that the sets $U(B)$ are not necessarily open, and
the cover may not be $S$-long. In both cases, the issue is the {\it
  threshold problem}, i.e., small perturbations of $(g,\xi)$ may change
$B(g,\xi)$ dramatically. 

There are two things we do to solve the threshold problem. The first
is to modify how we measure the size of intersection between a ray and
a horoball. This is formalized in the notion of an {\it angle} and its
main feature is that if a ray has long intersections with three
horoballs, perturbing $(g,\xi)$ does not affect the angle at the
middle horoball (see Section \ref{subsec:proj-cx}). To define the
angle we use the language of projection complexes \cite{bbf}, but in
the present case one could use the Farey graph, whose vertices
correspond to the horoballs.
This leaves the
threshold problem only in the case of the first large intersection,
and we solve this by working at two (or technically six) different
scales, see Section \ref{subsec:numbers}.

There is an alternative argument for $\SL_2(\IZ)$ where the failure of cocompactness is addressed on the flow space~\cite{Bartels-Lueck-Reich-Rueping-GLnZ}. 
This alternative argument seems not to be applicable to the mapping class group, for example because we have only control over the behavior of Teichm\"uller rays that stay in a thick part.

\subsubsection*{Sketch of proof of Theorem~\ref{thm:finitely-amenable-action-MCG}}
Our argument follows the model arguments sketched above,
but with several important differences. First, our geodesic flow
argument, modelled on \cite{Bartels-Coarse-flow}, is {\it coarse},
i.e. it squeezes balls to sets with bounded, rather than
$\epsilon$-small, cross-section. The flow space $\FS$ is replaced by
the coarse flow space $\CF$.

Second, instead of measuring lengths
of intersections with horoballs, we use the Masur-Minsky notion of
subsurface projection.

The hyperbolic plane is replaced with the Teichm\"uller space $\calt$ of
complete hyperbolic structures of finite area on $\Sigma\smallsetminus
P$. It is
equipped with the Teichm\"uller metric, which is invariant under the
action of $\MCG$, and it is compactified by $\PMF$. Fix a basepoint
$X_0\in\calt$ and identify $\MCG$ with the orbit of $X_0$. Given a
pair $(g,\xi)\in \MCG\times\PMF$ there is a unique Teichm\"uller ray
$c_{g,\xi}$ that starts at $g(X_0)$ and is ``pointing towards $\xi$''
(technically, the vertical foliation of the quadratic differential is
$\xi$). The construction of the required cover of $\MCG\times\PMF$ is
divided into two parts. For $\epsilon>0$ the pair $(g,\xi)$ is
$\epsilon$-thick if no geodesic along $c_{g,\xi}$ has length
$<\epsilon$, and otherwise the pair is $\epsilon$-thin. 

Given a finite set $S\subset\MCG$ we first find $\epsilon>0$ and an
$S$-long cover of the $\epsilon$-thin part. Then we cover the
$\epsilon$-thick part.

When covering the thin part the main tool is the Masur-Minsky notion
of subsurface projections. As in \cite{bbf}, the collection of all
subsurfaces is divided into finitely many subcollections $\bfY^i$,
$i=1,2,\cdots,k$ so that any two subsurfaces in the same subcollection
overlap. There is a finite index subgroup $G<\MCG$ that preserves each
subcollection. Our cover of the thin part will consist of $k$
collections of sets, one for each $\bfY^i$.  Roughly speaking, a
theorem of Rafi~\cite{rafi-short} guarantees that if $(g,\xi)$ is in
the thin part then for some subsurface $Y$ we have a large projection
distance in $Y$ between $g(X_0)$ and $\xi$, and the elements of the
cover will be parametrized by such subsurfaces.  Here we use for each
$\bfY^i$ the projection complex~\cite{bbf} (see also \cite{bbfs} for a
streamlined construction) to obtain a good notion of \emph{first}
subsurface with large projection for a given pair $(g,\xi)$, similar
to the first horoball in the model case of $\SL_2(\IZ)$ above.  To
this end we find it useful to extend the axiomatic setup for the
projection complex to include projections of foliations $\xi\in\PMF$.

The strategy for covering the thick part is a coarse version of the
model arguments recalled earlier.  Here we use hyperbolicity
properties of the thick part of Teichm\"uller space. We summarized
the properties we use in the form of flow axioms.  These axioms are
formulated in a quasi-isometry invariant way.  We note that our fellow
traveler axiom~\ref{axiom:flow:fellow-travel} is weaker than what is
actually known now about Teichm\"uller geodesics in the thick part,
see for example~\cite[\S 8]{Eskin-Mirzakhani-Rafi-Counting-in-Orbits}.

\subsubsection*{Outline by sections}
We start by listing all the axioms in Section \ref{s:axioms}. 
In Section \ref{sec:proj-covers} we construct the cover of the thin part
using the projection axioms and in Section \ref{sec:cover-thick} we
construct the cover of the thick part using the flow axioms. 
Section \ref{s:fac} contains a general discussion of the Farrell-Jones
Conjecture and finite $\mathcal F$-amenability. In Section \ref{s:mcg}
we review the basics of mapping class groups: Teichm\"uller space,
measured foliations and geodesic laminations, Teichm\"uller metric,
quadratic differentials. In Section \ref{s:proj} we review the Masur-Minsky
subsurface projections and verify the projection axioms. 
In Section \ref{s:2} we verify the flow axioms. The main ingredients
are Minsky's Contraction Theorem \cite{minsky-contracting}, the Masur
Criterion \cite{masur_criterion}, and Klarreich's description of the
boundary of the curve complex \cite{klarreich}.
In Section \ref{sec:teich-geod-in-thin} we discuss Teichm\"uller geodesics that
enter the thin part and in Section \ref{sec:FJC-for-MCGs} we provide
the roadmap showing which axioms are verified where.

\subsubsection*{The Farrell-Jones Conjecture}
   Surgery theory as developed by Brow\-der-Novikov-Sullivan-Wall relates the classification of manifolds (of dimension at least $5$) to algebraic invariants, i.e., to the algebraic $K$-groups and $L$-groups of $\IZ[G]$, the integral group ring over the fundamental group of the manifold.
   Farrell and Jones~\cite{Farrell-Jones-Isom-Conj} formulated a general conjecture about the structure of these $K$- and $L$-groups.
   Informally, the conjecture asserts that the computation of $K$- and $L$-groups of $\IZ[G]$ can be reduced (modulo group homology) to the computation of $K$- and $L$-groups of group rings $\IZ[V]$, where $V$ varies over the family $\VCyc$ of virtually cyclic subgroups of $G$.
   Often it is beneficial to consider as an intermediate step a family of subgroups $\calf$ 
   containing $\VCyc$ and to consider 
   the Farrell-Jones Conjecture relative to $\calf$.
   The formulation of the Farrell-Jones Conjecture that we will be using is recalled in Subsection~\ref{subsec:FJC}.
   
   Building on the work of Farrell and Jones their Conjecture has been
   verified for many classes of groups. Among these are hyperbolic
   groups, $\CAT(0)$-groups, solvable groups and lattices in Lie
   groups~\cite{Bartels-Lueck-Borel, Bartels-Lueck-Reich-K-FJ-hyp,
     Kammeyer-Lueck-Rueping-lattices, Wegner-2012CAT0,
     Wegner-FJ-solv}. 
   A more complete list can be found in~\cite[Thm.~2]{Kammeyer-Lueck-Rueping-lattices}. 
   
   The Farrell-Jones Conjecture implies the Novikov Conjecture, but is stronger.
   Roughly speaking, the Novikov Conjecture asserts that a certain map is (rationally) injective whereas the Farrell-Jones Conjecture asserts it is bijective.
   More concretely, as reviewed below, the Farrell-Jones Conjecture implies that aspherical manifolds are determined up to homeomorphism by their fundamental group (in dimension $\geq 5$), while the Novikov Conjecture only asserts that their higher signatures are determined by the fundamental group.

\subsubsection*{The surgery exact sequence} 
Applications to the classification of manifolds was the main motivation for the Conjecture and the monumental works surrounding it of Farrell and Jones.
We give a short summary of the key instance of such an applications. 
In the following manifolds are always topological manifolds, i.e., we
do not require a smooth or $\mathit{PL}$-structure; the topological
implications of the Farrell-Jones Conjecture are cleanest. 
Let $M$ be a closed oriented manifold of dimension $n \geq 5$. 
The simple topological structure set $\cals(M)$ of $M$ consists of
equivalence classes $[f \colon X \to M]$, where $X$ is a closed
topological manifold and $f$ is a simple homotopy equivalence.
We have $[f \colon X \to M] = [f' \colon X' \to M]$ if and only if
there is a homeomorphism $h \colon X \to X'$ with $f' \circ h$
homotopic to $f$.  
Thus understanding the structure set amounts to classifying all manifolds in the simple homotopy type of $M$.
To understand the structure set surgery theory provides the surgery exact sequence, which we outline next.
Let $\bfL$ be the $L$-theory spectrum of the ring $\IZ$ and let $\bfL\langle 1 \rangle$ be its connective cover.
In particular, $\pi_n(\bfL \langle 1 \rangle) = \pi_n(\bfL) =
L_n(\IZ)$ for $n \geq 1$ and $\pi_n(\bfL \langle 1 \rangle) = 0$ for
$n \leq 0$.
We write $L^s_*(\IZ[G])$ for the simple $L$-groups of the integral group ring over the fundamental group of $M$. 
The \emph{surgery exact sequence for topological manifolds} can be formulated as follows
\begin{equation*} \label{eq:surgery-exact-seq}
	H_{n+1}(M;\bfL) \xrightarrow{\alpha_M} L^s_{n+1}(\IZ[G]) \to \cals(M) \to H_{n}(M;\bfL \langle 1 \rangle) \xrightarrow{\alpha'_M}  L^s_n(\IZ[G]). 
\end{equation*}
Here $H_*(-;\bfL \langle 1 \rangle)$ and $H_*(-;\bfL)$ are  the homology theories associated to the corresponding spectra, $\alpha_M$ is Quinn's assembly map for $M$ and $\alpha'_M$ is the composition of $\alpha_M$ with the map induced from $\bfL\langle 1 \rangle \to \bfL$.
Exactness of this sequence includes the statement that the structure set $\cals(M)$ carries a natural group structure (which is not easily described). 
There is an extension of this sequence to a long exact sequence.
The construction and exactness of the surgery exact sequence for topological manifolds depends among other things on the work of Kirby-Siebenmann on topological manifolds and Ranicki's identification of the geometric assembly map with the algebraic assembly map.
For a more detailed summary of these results see the introduction of~\cite{Ranicki-blue-book}.

If $G$ is trivial, then $L^s_*(\IZ[G]) = L_*(\IZ) = H_*(\pt;\bfL)$ and $\alpha_M$ is the map induced by the projection $M \to \pt$ for $H_*(-;\bfL)$.
Consequently, $\alpha_M$ and $\alpha'_M$ are both surjective.
If we specialize further to $M = S^n$, then $H_n(S^n;\bfL \langle 1 \rangle) \cong L_n(\IZ)$ and $\cals_n(S^n) = \{ \id_{S^n} \}$, i.e., we recover the high-dimensional Poincar\'e conjecture from the surgery exact sequence.

The Farrell-Jones Conjecture gives information about the $L$-groups in the surgery exact sequence.
This information has the cleanest form if $G$ is torsion free.
So assume now that $G$ satisfies the Farrell-Jones Conjecture and is torsion free.
Then, after some algebraic manipulations, there is an isomorphism $\alpha_{BG} \colon H_*(BG;\bfL) \to L^s_n(\IZ[G])$, see~\cite[Prop.~23]{Lueck-Reich-BC-FJ-survey}. 
Moreover, if $\kappa \colon M \to BG$ is the classifying map (inducing $\pi_1(M) = G$), then
$\alpha_M = \alpha_{BG} \circ \kappa_*$.
This yields an injection of the structure set $\cals(M)$ into the relative homology group
$H_{n+1}(BG,M;\bfL)$ (where we think of $M$ as a subspace of $BG$ via $\kappa$).
The cokernel of this inclusion is a subgroup of $H_n(M;L_0(\IZ)) = L_0(\IZ) = \IZ$\footnote{In fact, under our assumptions, $H_{n+1}(BG,M;\bfL)$ can be identified with the $\ANR$-homology manifold structure set and the map to $\IZ$ is the Quinn obstruction~\cite[\S25]{Ranicki-blue-book}.}.
Moreover, as reviewed below, the $K$-theory part of the Farrell-Jones conjecture implies that the Whitehead group of $G$ is trivial and therefore every homotopy equivalence $X \to M$ is simple.
Consequently, the Farrell-Jones Conjecture yields an identification of all manifold structures on $M$ with relative homology classes.
Finally, let us assume in addition that $M$ is aspherical, i.e., that $\kappa$ is a homotopy equivalence.
Then of course $H_{n+1}(BG,M;\bfL) = 0$ and therefore $\cals(M) = \{ [\id_M] \}$.
In other words, every homotopy equivalence $X \to M$ is homotopic to a homeomorphism, i.e., the Borel Conjecture holds for $M$.

For the $K$-theory of the integral group ring $\IZ[G]$ of a torsion free group the Farrell-Jones Conjecture predicts that the Loday assembly map $H_n(BG;\bfK_\IZ) \to K_n(\IZ[G])$ is bijective.
Here $H_n(-;\bfK_\IZ)$ is the homology theory associated to the $K$-theory spectrum $\bfK_\IZ$ of the integers $\IZ$.
Since $K_n(\IZ) = 0$ for $n < 0$, $K_0(\IZ) = \IZ$, $K_1(\IZ) = \IZ^\times \cong \IZ/2\IZ$,
the Farrell-Jones conjecture predicts for torsion free groups $K_n(ÍZ[G]) = 0$ for $n < 0$, $K_0(\IZ[G]) = \IZ$ and $K_1(\IZ[G]) = \IZ/2\IZ \times G_\abelian$; in particular, it predicts the vanishing of the reduced projective class group $\tilde K_0(\IZ[G])$ and of the Whitehead group $\Wh(G)$~\cite[Cor.~67]{Lueck-Reich-BC-FJ-survey}. 
The Farrell-Jones Conjecture has applications to a number of further
conjectures about group rings, for example Kaplansky's idempotent
conjecture, Bass' Conjecture about the Hattori-Stallings rank, and
Serre's Conjecture that groups of type FP are of type
FF~\cite{Bartels-Lueck-Reich-FJ+appl,Lueck-ICM2010,Lueck-Reich-BC-FJ-survey}.

\subsection*{Acknowledgements}
Our collaboration started during a workshop at the Hausdorff Institute for Mathematics in Bonn in April 2015.
We thank Wolfgang L\"uck for inviting us to this workshop.

We thank Ken Bromberg, Jon Chaika, Howard Masur, and Kasra Rafi for
many interesting conversations about Teichm\"uller theory. We
especially thank Kasra Rafi for his help with Theorem
\ref{large-proj}.

We thank the referee for a long list with helpful
comments.

The first author is supported by the SFB 878 in M\"unster. The second
author is supported by the NSF under the grant number DMS-1308178.

  \section{Axioms}\label{s:axioms}

  Let $G$ be a group, and let $\Delta$ be a $G$-space.
  We assume that $\Delta$ is compact, metrizable and finite dimensional.
  In this section we discuss somewhat elaborate axioms that will imply
  that the action of $G$ on $\Delta$ is finitely $\calf$-amenable.
  The axioms will be defined in the presence of two pieces of further data.
  
  \begin{definition} \label{def:proj-data}
    \emph{Projection data} for the action of $G$ on $\Delta$ consists of
    \begin{itemize}
      \item a finite collection of $G$-sets $\caly = \{ \bfY^1,\bfY^2,\cdots,\bfY^k \}$;
      \item for each $\bfY\in \caly$ and each $Y \in \bfY$ an open
        subspace $\Delta(Y) \subseteq \Delta$ and a
        map 
        $$d^\pi_Y \colon \big( \bfY \smallsetminus \{ Y \} \big) \x 
               \big(\Delta(Y) \amalg \bfY \smallsetminus \{Y\} \big) \to [0,\infty].$$
    \end{itemize}
    We also require $G$-equivariance, i.e., for $g \in G$, $Y \in \bfY$, 
    we require $\Delta(gY) = g\Delta(Y)$ and for
    $X \in \bfY \smallsetminus \{ Y \},\xi \in \Delta(Y) \amalg \bfY \smallsetminus \{ Y \}$ we require
    $d^\pi_{gY}(gX,g\xi) = d^\pi_{Y}(X,\xi)$.
    We also require $d_Y^\pi(X,Z) < \infty$ for $X,Y,Z \in \bfY$.
    We will refer to the $d^\pi_Y$ as \emph{projection distances}.
  \end{definition}

  \begin{definition} \label{def:flow-data}
    \emph{Flow data} for the action of $G$ on $\Delta$ consists of
    \begin{itemize}
     \item a proper metric space $T$ with a proper isometric $G$-action;
     \item a compact metrizable space $\overline{T} = T \sqcup \Delta$ 
       such that $T \subseteq \overline{T}$ is open and the
       $G$-actions on $T$ and $\Delta$ combine to a continuous $G$-action on $\overline{T}$;
     \item for every compact subset $K \subseteq T$ a 
      collection $\calg_{K}$ of $(\mu,A_K)$-quasi-geodesic rays
       $c \colon [0,\infty) \to G \cdot K \subseteq T$.
    \end{itemize}
    The space $\overline{T}$ is assumed to be finite dimensional, separable and metrizable.    As indicated with the notation, the additive constant $A_K$ for quasi-geodesic rays 
    is uniform for all $c \in \calg_K$, but is allowed to depend on $K$, 
    while the multiplicative constant 
    $\mu$ is required to be independent of $K$ as well. In our
    application we will have $(\mu,A_K)=(1,0)$, i.e. each
    $c\in\calg_K$ will be a geodesic ray.
    We also require that $\calg_K$ is $G$-invariant.
    Finally, we require that each ray $c \in \calg_K$ has a limit $c(t) \to c(\infty) \in \Delta$.
  \end{definition}

\begin{remark}
Examples will follow the axioms, but we point out the
following. First, the projection distance $d^\pi_Y(X,Z)$ will always
be finite when $X,Y,Z$ are distinct elements of $\bfY$. On the other
hand, $d^\pi_Y(X,\xi)$ may be infinite or undefined when $\xi\in
\Delta$. Second, the collection $\calg_K$ should be thought of as
really depending only on $G\cdot K$ and consists of (quasi)geodesic
rays in $T$ that are contained in $G\cdot K$, which is thought of as
the ``thick part''.
\end{remark}

  We now fix base points $X_\bfY \in \bfY$ and a base point $x_0 \in T$.
  The choice of the base points 
  only affects the values of $\Theta$ and $K$ later on, but not
  the validity of the axioms.

  \begin{axiom}[Projections] \label{axiom:projections}
    For each $\bfY \in \caly$ the projection distances $(d^\pi_Y)_{Y \in \bfY}$ satisfy the following 
    projection axioms with respect to some constant $\theta \geq 0$.
    \begin{enumerate}[label=(P\arabic*)]
    \item \emph{Symmetry.} \label{axiom:proj:symmetry}
       For $X,Z \in \bfY \smallsetminus \{Y\}$, 
       $$d^\pi_Y(X,Z)=d^\pi_Y(Z,X).$$
    \item \label{axiom:proj:triangle} \emph{Triangle inequality.}
       For all $X,Z \in \bfY \smallsetminus \{ Y \}, \xi \in \Delta(Y) \amalg \bfY \smallsetminus \{ Y \}$ 
       $$d^\pi_Y(X,Z) + d^\pi_Y(Z,\xi) \geq d^\pi_Y(X,\xi).$$
           \item \label{axiom:proj:triples} \emph{Inequality on triples.}
       For all $\xi \in \Delta(Y) \cap \Delta(Y') \amalg \bfY \smallsetminus \{ Y,Y' \}$, $Y \neq Y'$ we have 
       $$\min\{d^\pi_Y(Y',\xi),d^\pi_{Y'}(Y,\xi)\} < \theta.$$
    \item \label{axiom:proj:finiteness} \emph{Finiteness.} 
       For all $X,Z\in\bfY$ the set $$\big\{Y\in\bfY \setminus \{X,Z\}\mid d^\pi_Y(X,Z)>\theta\big\}$$ is finite.
    \item \label{axiom:proj:coarse-semi-cont} \emph{Coarse semi-continuity.} 
       For $\xi \in \Delta(Y)$, $X \in \bfY \setminus \{ Y \}$, $\theta < \Theta < \infty$ 
     with $d_Y(X,\xi) \geq \Theta$ 
     there exists a neighborhood $U$ of $\xi$ in $\Delta$ such that $U \subseteq \Delta(Y)$ 
     and for all $\xi' \in U$, $$d_Y^\pi(X,\xi') > \Theta - \theta.$$

    \end{enumerate}
  \end{axiom}

  \begin{axiom}[Thick or thin] \label{axiom:large-projection}
    For every $\Theta > 0$ there is $K \subseteq T$ compact such that for any 
    $(g,\xi) \in G \x \Delta$ 
    \begin{enumerate}
      \item \label{axiom:large-proj:thick}
         either, there is $c \in \calg_K$ with $c(0) = gx_0$ and $c(\infty)=\xi$
      \item \label{axiom:large-proj:thin} 
         or, there are $\bfY \in \caly$, $Y \in \bfY$ with $\xi \in \Delta(Y)$ and
         $d^\pi_Y(gX_\bfY,\xi) > \Theta$.
    \end{enumerate} 
  \end{axiom}

  Pairs $(g,\xi)$ to which~\ref{axiom:large-proj:thick} of
  Axiom~\ref{axiom:large-projection} applies will be said to belong to
  the \emph{$K$-thick part of $G \x \Delta$}.  Pairs $(g,\xi)$ to
  which~\ref{axiom:large-proj:thin} of
  Axiom~\ref{axiom:large-projection} applies will be said to
  \emph{admit a $\Theta$-large projection}.

  \begin{axiom}[Flow axioms] \label{axiom:flow}
   Let $K \subseteq T$ be compact.
  \begin{enumerate}[label=(F\arabic*)]
    \item \label{axiom:flow:small-at-infty}
       \emph{Small at $\infty$.} 
       Let $c_n \in \calg_K$, $x_n \in T$ such that 
       \begin{equation*}
         d_T(\Image(c_n),x_0) \; \text{and} \; d_T(c_n(0),x_n)
       \end{equation*}
       are bounded. Then $x_n \to \xi \in \Delta$, if and only if
       $c_n(0) \to \xi$.

    \item \label{axiom:flow:fellow-travel} 
       \emph{Fellow traveling.}  
       For any $\rho > 0$ there is $R > 0$ with the following property.
       For all $x \in T$, all $\xi_+ \in \Delta$, all $t \in [0,\infty)$ 
       there exists an open neighborhood $U_+$ of $\xi_+$ in $\Delta$ with the following property.
       Let $c,c' \in \calg_K$ be two quasi-geodesic rays that both start in the $\rho$-neighborhood
       of $x$ and satisfy $c(\infty),c'(\infty) \in U_+$.
       We require that $d_T(c(t),c'(t)) < R$. 
    \item \label{axiom:flow:infinite}
       \emph{Infinite quasi-geodesics.}  
       For all $\rho > 0$ there is $R > 0$ with the following property.
       For $\xi_-,\xi_+ \in \Delta$ we define $T_{K,\rho}(\xi_-,\xi_+) \subseteq T$ 
       to consist of all $x$
       for which there are $c_n \in \calg_K$ with $c_n(0) \to \xi_-$, $c_n(\infty) \to \xi_+$ and
       $d_T(\Image(c_n),x) \leq \rho$. 
       If $T_{K,\rho}(\xi_-,\xi_+) \neq \emptyset$, then we require that there exists a quasi-geodesic
       $c \colon \IR \to T$ such that $T_{K,\rho}(\xi_-,\xi_+)$ is contained in
       the $R$-neighborhood of $c$.
       Here the additive constant for $c$ depends only on $K$, while the multiplicative 
       constant for $c$ is required to not depend on any choices. 
    \end{enumerate}
  \end{axiom}

  \begin{remark}
    The fellow traveling axiom~\ref{axiom:flow:fellow-travel} implies that for 
    any $\alpha > 0$ there is $R > 0$ such that for all $c,c' \in \calg_K$
    with $d_T(c(0),c'(0)) \leq \alpha$ and $c(\infty)=c'(\infty)$ we have 
    $$\forall t \geq 0 \quad d_T(c(t),c'(t)) \leq R.$$ 
  \end{remark}

\begin{remark}
If $T$ is a proper $\delta$-hyperbolic space and $\Delta$ is its Gromov boundary,
then (F1)-(F3) hold, with $\calg_K$ consisting of all geodesics
contained in the ``thick part'' $G\cdot K$. Moreover, (F1)-(F2) hold
when $T$ is a proper $CAT(0)$ space and $\Delta$ its visual
boundary. However, (F3) may fail, e.g. when $T=\R^2$ and $\xi_{\pm}$
are two antipodal points on the boundary circle. It is possible to
weaken (F3) and only demand that $T_{K,\rho}(\xi_-,\xi_+)$ has a
doubling property, see Proposition \ref{prop:CFK}. The collection
$\mathcal F$ in Theorem~\ref{thm:axioms->-fin-amenable} below would have to be suitably enlarged to include stabilizers
of such sets.
\end{remark}

\begin{example} \label{ex:SL_2}
To fix ideas we point out that the axioms hold for the group
$G=SL_2(\Z)$ acting on the hyperbolic plane $T=\mathbb H^2$ with its Gromov
boundary $\Delta=S^1$. Fix a pairwise disjoint and equivariant
collection $\bfY$ of open horoballs. Projection data $\mathcal Y$ will
consists of the single collection $\bfY$, and we will set
$\Delta(Y)=\Delta$ for all $Y\in\bfY$. If $Y,Z$ are distinct
horoballs, denote by $\pi_Y(Z)$ the nearest point projection of $Z$ to
the closure of $Y$. This is an open interval in the horocycle boundary
of $Y$ and it has uniformly bounded diameter. If $X,Y,Z$ are distinct
horoballs we set $$d^\pi_Y(X,Z)=\diam \big(\pi_Y(X)\cup\pi_Y(Z)\big)$$
where the diameter is taken with respect to the metric in $T$ (or if
one wishes in the path metric of the horocycle; they are coarsely
equivalent and the distinction is irrelevant). Similarly, if
$\xi\in\Delta$ is a point on the circle that does not belong to the
closure of the horoball $Y$, there is a well defined nearest point
projection $\pi_Y(\xi)$ in the boundary of $Y$, and we again define
$$d^\pi_Y(X,\xi)=\diam \big(\pi_Y(X)\cup\pi_Y(\xi)\big)$$
Finally, if $\xi$ is in the closure of $Y$, we define
$d^\pi_Y(X,\xi)=\infty$ for all horoballs $X\neq Y$. Here $\pi_Y(\xi)$
is not defined, but we may think of it as the point at infinity of
the horocycle.

To complete the description of the flow data, we define $\calg_K$ as
the set of geodesic rays contained in $G\cdot K$, for any compact
$K\subset T$. Verification of the axioms  is
left to the reader. 
\end{example}

\begin{example}
Let $G$ be a hyperbolic group relative to a finitely generated subgroup $H$ (or more generally relative to a collection of such subgroups).
Then $G$ acts on a proper $\delta$-hyperbolic space (see \cite{bowditch,farb,gromov,groves-manning})
with Gromov boundary $\Delta$ and the action is
cocompact in the complement of a pairwise disjoint equivariant
collection of horoballs. 
Projection and flow data can be constructed
in the same way as above and all axioms hold.

The Farrell-Jones conjecture for relatively hyperbolic groups was proved in~\cite{Bartels-Coarse-flow} and generalizing this argument to mapping class groups was the motivation for the present work.
\end{example}

For the description of flow and projection data in the case of mapping
class groups, see Section \ref{sec:FJC-for-MCGs}.

  \begin{theorem}
    \label{thm:axioms->-fin-amenable}
    Assume that there is projection data and flow data for the action of $G$ on $\Delta$
    satisfying the axioms from~\ref{axiom:projections},~\ref{axiom:large-projection} and~\ref{axiom:flow}.
    Assume that $\Delta$ is compact, metrizable and finite dimensional. 
    If, in addition, $\calf$ contains the
    family $\VCyc$ of virtually cyclic subgroups of $G$ and all isotropy groups for the
    action of $G$ on all the $\bfY \in \caly$, then
    the action of $G$ on $\Delta$ is finitely $\calf$-amenable.
  \end{theorem}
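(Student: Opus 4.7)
The plan is to construct, for each finite $S \subseteq G$, an open $\calf$-cover of $G \x \Delta$ whose order is bounded independently of $S$ and which is $S$-long in the group coordinate. The strategy mirrors the model cases of closed hyperbolic surfaces and $\SL_2(\IZ)$ sketched in the introduction: use Axiom~\ref{axiom:large-projection} to decompose $G \x \Delta$ into a \emph{thick} part (pairs $(g,\xi)$ joined by a ray in $\calg_K$ for a suitable compact $K \subseteq T$) and a \emph{thin} part (pairs admitting a $\Theta$-large projection), cover each piece separately with the appropriate technology, and take the union. Given $S$ I would first fix a large threshold $\Theta = \Theta(S)$. By equivariance and the finiteness axiom~\ref{axiom:proj:finiteness}, the number $D_S := \sup_{s \in S,\, \bfY \in \caly,\, Y \in \bfY} d^\pi_Y(X_\bfY, sX_\bfY)$ is finite, so after taking $\Theta \gg D_S$ the subsurfaces $Y$ witnessing large projection for $(g,\xi)$ remain stable when $g$ is replaced by any $gs$, $s \in S$. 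Axiom~\ref{axiom:large-projection} then produces the compact $K$ and the dichotomy.

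\textbf{The thin part.} For each $\bfY^i \in \caly$ I would use the projection axioms~\ref{axiom:proj:symmetry}--\ref{axiom:proj:coarse-semi-cont} to emulate the projection complex machinery of \cite{bbf}, extended to include boundary points $\xi \in \Delta$ as in Example~\ref{ex:SL_2}. The inequality on triples~\ref{axiom:proj:triples}, together with finiteness~\ref{axiom:proj:finiteness}, singles out a well-defined ``first'' $Y \in \bfY^i$ where the pair $(g,\xi)$ acquires a $\Theta$-large projection, playing the role of the first horoball in the $\SL_2(\IZ)$ sketch. Thickening this combinatorial partition into an open cover requires the coarse semi-continuity axiom~\ref{axiom:proj:coarse-semi-cont} to produce open sets in $\Delta$, and the two-scale (technically six-scale) trick alluded to in the introduction to absorb the threshold problem in the group coordinate. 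The resulting sets $U(Y)$ are $\calf$-subsets because the isotropy of $Y$ lies in $\calf$ by hypothesis, and within a single $\bfY^i$ they are pairwise disjoint; summing over $i = 1, \dots, k$ bounds the order of the thin-part cover by $k$.

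\textbf{The thick part.} On the thick part I would run the coarse flow argument of \cite{Bartels-Coarse-flow}. For each $(g,\xi)$ in the thick part, choose $c_{g,\xi} \in \calg_K$ with $c_{g,\xi}(0) = gx_0$ and $c_{g,\xi}(\infty) = \xi$, and consider the family of flow maps $\iota_\tau(g,\xi) = c_{g,\xi}(\tau) \in T$. The plan is to build a $G$-invariant open $\VCyc$-cover $\widetilde\calu$ of a neighborhood (inside $T \sqcup \Delta$) of the bi-infinite quasi-geodesics in $G \cdot K$, with long thin elements along flow lines and bounded multiplicity, and then pull it back by $\iota_\tau$ for $\tau = \tau(S,K)$ sufficiently large. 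The flow axioms deliver exactly the geometric input required: \ref{axiom:flow:small-at-infty} ensures that the boundary behaviour of $\calg_K$ rays is controlled, so $\widetilde\calu$ extends continuously to $\Delta$; \ref{axiom:flow:fellow-travel} is the coarse contraction that squeezes an $S$-translate $gS \x \{\xi\}$ into a single element of $\iota_\tau^{-1}(\widetilde\calu)$ once $\tau$ is large; and \ref{axiom:flow:infinite} gives the quasi-geodesic backbone needed to organise flow lines joining two boundary points and to verify that the relevant stabilisers are virtually cyclic. Since $\VCyc \subseteq \calf$, this is an $\calf$-cover of bounded order.

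\textbf{Combining and the main obstacle.} The union of the two covers is open, has order at most the sum of the two orders (independent of $S$), and is $S$-long because the choice of $\Theta$, $K$ and $\tau$ was coordinated with $S$; $\calf$-membership holds on both pieces. The step I expect to be hardest is the thin-part construction, specifically the threshold problem: a small perturbation of $\xi$, or the replacement of $g$ by $gs$, can a priori change which $Y$ is the ``first'' large-projection subsurface, and resolving this requires using~\ref{axiom:proj:coarse-semi-cont} to get openness in $\Delta$ and the multi-scale trick to enlarge the stable window in $G$, while still checking that the resulting $U(Y)$ is genuinely an $\calf$-subset (i.e.\ that no element outside $\mathrm{Stab}(Y)$ maps $U(Y)$ into itself). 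A secondary technical point is managing the seam between the thick and thin parts, which is handled by choosing the thin-part threshold slightly larger than the threshold used in Axiom~\ref{axiom:large-projection} to leave a buffer.
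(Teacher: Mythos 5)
Your proposal matches the paper's proof: decompose $G \times \Delta$ via Axiom~\ref{axiom:large-projection} into the $\Theta$-large-projection (thin) part and the $K$-thick part, cover the thin part using the projection-complex machinery for each $\bfY^i$ (the content of Theorem~\ref{prop:projection-cover}), cover the thick part by pulling back a long-thin cover of a coarse flow space along $\iota_\tau$ for large $\tau$ (the content of Theorem~\ref{prop:cover-flow}), and union the two, with constants chosen in the same order ($\Theta$ and the thin cover from $S$, then $K$, then $\tau$). You also correctly identify the threshold problem (stability of the ``first'' large-projection $Y$ under perturbing $(g,\xi)$) as the genuine technical obstacle, which is exactly what the multi-scale construction and the projection-complex angles are designed to resolve in Section~\ref{sec:proj-covers}.
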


  \begin{proof}
    This will be an easy consequence of
    Theorems~\ref{prop:projection-cover} and~\ref{prop:cover-flow}
    that we prove later.

    Let $S \subseteq G$ finite.
    Let us say that a collection of open subsets of $G \x \Delta$ is \emph{$S$-long} at $(g,\xi)$ if
    for one of its members $U$ we have $gS \x \{ \xi \} \subseteq U$.

    Theorem~\ref{prop:projection-cover} provides for each $\bfY \in \caly$ a $G$-invariant 
    collection $\calu_\bfY$ of
    open $\calf$-subsets and a number $\Theta$ such that $\calu_{\thin} := \bigcup_{\bfY \in \caly} \calu_\bfY$
    is $S$-long at all $(g,\xi)$ that admit a $\Theta$-large projection.
    The order of each $\calu_\bfY$ is at most $1$ and therefore the order of
    $\calu_\thin$ is at most $N_\thin := 2 k - 1$.

    Next we use Axiom~\ref{axiom:large-projection}.
    Thus there is $K \subseteq G$ compact such that all $(g,\xi)$ that do not admit a $\Theta$-large projection
    belong to the $K$-thick part of $G \x \Delta$.

    Theorem~\ref{prop:cover-flow} provides a $G$-invariant collection $\calu_\thick$ 
    of open $\VCyc$-subsets of $G \x \Delta$ that is $S$-long at all $(g,\xi)$ from the $K$-thick part.
    Moreover, the order of $\calu_\thick$ is bounded by a number $N_\thick$ independent from $K$ and $S$.
    
    Altogether $\calu_\thin \cup \calu_\thick$ is the cover we need and the action 
    of $G$ on $\Delta$ is $(N_\thin + N_\thick  + 1)$-$\calf$-amenable. 
  \end{proof}

  \section{Partial covers from the projection complex}
    \label{sec:proj-covers}

  \subsection{Projection covers} \label{subsec:proj-covers}
    
    Throughout this section we consider a $G$-space $\Delta$.
    We assume that we are given a $G$-set $\bfY$, subsets $\Delta(Y) \subseteq \Delta$
    for $Y \in \bfY$ and projection distances 
    $$d^\pi_Y \colon \big( \bfY \smallsetminus \{ Y \} \big) \x 
               \big(\Delta(Y) \amalg \bfY \smallsetminus \{Y\} \big) \to [0,\infty].$$  
    For $\xi \in \Delta$, $X \in \bfY$ we set
    $d_\bfY^\pi(X,\xi) := \sup \{ d^\pi_Y(X,\xi) \mid {\xi \in \Delta(Y)} \}$.
    If there is no such $Y$, then we set $d_\bfY^\pi(X,\xi) = -\infty$.
    We also write $\calf_\bfY$ for the family of subgroups of $G$ that fix an element of $\bfY$.

  \begin{thm}
    \label{prop:projection-cover}
    Assume that the projection distances $(d^\pi_Y)_{Y \in \bfY}$ satisfy the 
    axioms~\ref{axiom:proj:symmetry} to~\ref{axiom:proj:coarse-semi-cont}  
    listed in~\ref{axiom:projections}.
    Pick a base point $X_\bfY \in \bfY$.
    Let $S \subseteq G$ be finite.
    Then there is $\Theta \geq 0$ and     
    a $G$-invariant collection $\calu$ of open $\calf_\bfY$-subsets 
    of $G \x \Delta$ such that
    \begin{enumerate}
    \item \label{prop:projection-cover:order}
       the order of $\calu$ is at most $1$;
    \item \label{prop:projection-cover:long} 
       for any $(g,\xi) \in (G \x \Delta)$ with $d_\bfY^\pi(gX_\bfY,\xi) \geq \Theta$ 
       there is $U \in \calu$ with $gS \x \{ \xi \} \subseteq U$.
    \end{enumerate}
  \end{thm}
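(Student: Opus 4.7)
The plan is to define, for each $Y \in \bfY$, an open subset $U_Y \subseteq G \times \Delta$ consisting of those $(g,\xi)$ for which $Y$ is the ``first'' element of $\bfY$ along the projection-complex geodesic from $gX_\bfY$ toward $\xi$ with large projection distance. This directly mimics Example~\ref{ex:SL_2}, where $Y$ is the first horoball that the geodesic $[g,\xi)$ meets with a long intersection. Pairwise disjointness of the $U_Y$'s (order $\leq 1$) will follow from the uniqueness of ``first''; the $\calf_\bfY$-subset property will follow from $G$-equivariance of the definition together with disjointness, which forces the stabilizer of $U_Y$ to equal $\mathrm{Stab}(Y) \in \calf_\bfY$; and the $S$-long property will follow by showing that shifting $g$ by elements of $S$ does not disturb which $Y$ is first, provided $\Theta$ is taken large enough.

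First I would extract a uniform constant $D$ bounding $d^\pi_Y(gX_\bfY, gsX_\bfY)$ over all $g \in G$, $s \in S$, and $Y \in \bfY$. By $G$-equivariance this equals $d^\pi_{g^{-1}Y}(X_\bfY, sX_\bfY)$, and for each of the finitely many $s \in S$ the finiteness axiom~\ref{axiom:proj:finiteness} bounds the finitely many projection distances of the pair $(X_\bfY, sX_\bfY)$ that exceed $\theta$. The triangle inequality~\ref{axiom:proj:triangle} then yields $|d^\pi_Y(gX_\bfY, \xi) - d^\pi_Y(gsX_\bfY, \xi)| \leq D$ for all admissible $\xi$. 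Choose thresholds $\theta \ll \Theta_1 < \Theta_2 < \Theta$ with each successive gap well above $D + \theta$, and set
\begin{equation*}
U_Y := \bigl\{(g,\xi) : d^\pi_Y(gX_\bfY,\xi) > \Theta_2 \text{ and } \forall Y' \in \bfY \setminus \{Y\} \text{ with } d^\pi_{Y'}(gX_\bfY,\xi) > \Theta_1, \ d^\pi_Y(Y',\xi) < \theta\bigr\}.
\end{equation*}
The second condition encodes that $Y$ lies before every other subsurface with large projection along the projection-complex geodesic from $gX_\bfY$ to $\xi$.

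For disjointness (order $\leq 1$): if $(g,\xi) \in U_Y \cap U_{Y'}$ with $Y \neq Y'$, the large projection $d^\pi_Y(gX_\bfY, \xi) > \Theta_2$ and the ``first'' condition $d^\pi_Y(Y',\xi) < \theta$ combined with~\ref{axiom:proj:triangle} force $d^\pi_Y(gX_\bfY, Y') > \Theta_2 - \theta$, whence~\ref{axiom:proj:triples} applied to $(Y, Y', gX_\bfY)$ gives $d^\pi_{Y'}(Y, gX_\bfY) < \theta$; combined with the symmetric ``first'' condition from $U_{Y'}$, the triangle inequality yields $d^\pi_{Y'}(gX_\bfY, \xi) < 2\theta < \Theta_2$, contradicting $(g,\xi) \in U_{Y'}$. $G$-equivariance is built into the definition, so $g \cdot U_Y = U_{gY}$, and disjointness identifies the stabilizer of $U_Y$ with $\mathrm{Stab}(Y) \in \calf_\bfY$. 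For the $S$-long property: given $(g_0, \xi)$ with $d^\pi_\bfY(g_0 X_\bfY, \xi) \geq \Theta$, pick $Y_0$ essentially attaining the supremum; for any $s \in S$, the bound $D$ gives $d^\pi_{Y_0}(g_0 s X_\bfY, \xi) > \Theta_2$, and for any $Y' \neq Y_0$ with $d^\pi_{Y'}(g_0 s X_\bfY, \xi) > \Theta_1$ we have $d^\pi_{Y'}(g_0 X_\bfY, \xi) > \Theta_1 - D$, so that combining~\ref{axiom:proj:triples} on $(Y_0, Y', \xi)$ and $(Y_0, Y', g_0 X_\bfY)$ with~\ref{axiom:proj:triangle} and the enormous value of $d^\pi_{Y_0}(g_0 X_\bfY, \xi)$ forces $d^\pi_{Y_0}(Y',\xi) < \theta$.

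The main obstacle is the openness verification. The coarse semi-continuity~\ref{axiom:proj:coarse-semi-cont} is only one-sided: while the large projection $d^\pi_Y(gX_\bfY, \xi) > \Theta_2$ is stable on a neighborhood (with the usual $\theta$-loss absorbed into the gaps), a new subsurface $Y'$ could conceivably have $d^\pi_{Y'}(gX_\bfY, \xi')$ cross the threshold $\Theta_1$ as $\xi$ is perturbed, potentially breaking the ``first'' condition at $\xi'$. Handling this requires combining~\ref{axiom:proj:triples} and~\ref{axiom:proj:triangle} to argue that any such offending $Y'$ must already satisfy $d^\pi_Y(Y', \xi) > \Theta_2 - \theta$ (i.e., be ``far after'' $Y$ in the $\xi$-direction from $Y$'s viewpoint), at which point a sufficiently careful choice of thresholds, in the spirit of the two-scale setup used for $\SL_2(\IZ)$, should make the definition of $U_Y$ robust enough to be open.
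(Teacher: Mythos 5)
Your proposal is the ``first subsurface with large projection'' scheme that the paper's introduction explicitly flags as failing: ``The problem is that the sets $U(B)$ are not necessarily open, and the cover may not be $S$-long.'' You identify the openness obstacle yourself, and the dichotomy you derive there is correct --- a subsurface $Y'$ that newly crosses $\Theta_1$ at $\xi'$ has either $d^\pi_Y(Y',\xi') < \theta$ (harmless) or $d^\pi_Y(Y',\xi') > \Theta_2 - 2\theta$ (meaning $Y'$ is ``before'' $Y$ toward $\xi'$, which breaks the first condition). But you cannot rule out the second case: coarse semi-continuity~\ref{axiom:proj:coarse-semi-cont} is one-sided, so $d^\pi_{Y'}(gX_\bfY,\cdot)$ can jump up across $\Theta_1$ while $Y'$ is genuinely before $Y$. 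Hence $U_Y$ as written need not be open; you would have to replace it with its interior, and the whole burden then shifts to showing the interior is still $S$-long, which your proposal does not do.

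Your $S$-long argument does not supply this, and moreover has an independent flaw. Choosing $Y_0$ to (nearly) maximize $d^\pi_\bfY(g_0X_\bfY,\xi)$ does not make $Y_0$ first: there can be a $Y'$ before $Y_0$ with $d^\pi_{Y'}(g_0X_\bfY,\xi)$ large but less than $d^\pi_{Y_0}(g_0X_\bfY,\xi)$, in which case $d^\pi_{Y_0}(Y',\xi)$ is large and $(g_0,\xi)\notin U_{Y_0}$ to begin with. The hypotheses you list are perfectly compatible with this, so \ref{axiom:proj:triangle}~and~\ref{axiom:proj:triples} alone do not force $d^\pi_{Y_0}(Y',\xi)<\theta$. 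The paper's actual proof closes these gaps with two devices you do not use: it replaces raw projection distances by angles at internal vertices of geodesics in the projection complex of~\cite{bbf} (Subsection~\ref{subsec:proj-cx}, Lemma~\ref{lem:angles}), whose local-estimate and attraction properties make them stable under changing endpoints; and it works at several nested scales $\Theta_0 < \cdots < \Theta_5$ simultaneously, defining $Y_i(g,\xi)$ as the first vertex with angle $\geq\Theta_i$ for $i=0,1,2$ and taking the final cover $\calu := \calu(1)\cup\calu(2)$, with the case analysis of Lemma~\ref{lem:calu(g,i)-long} showing that at least one of the two scales is $S$-long in the interior. The multi-scale device is precisely what absorbs the threshold problem that a single-scale first-subsurface definition cannot.
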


  The proof of Theorem~\ref{prop:projection-cover} occupies the remainder of this section.

  \subsection{The projection complex and angles.} \label{subsec:proj-cx}

  The restrictions of the $d^\pi_Y$ to $\bfY \smallsetminus \{Y\}$ satisfy the projection axioms
  (PC1) to (PC4) from~\cite[Sec.~3.1]{bbf}.  
  These axioms allow, for a constant $K >> \theta$, 
  the construction of the projection complex $\calp_K(\bfY)$~\cite[Sec.~3.3]{bbf}.
  This complex is a graph whose set of vertices is $\bfY$.
  We write $d_\calp$ for the path metric with edges of length $1$ on $\calp_K(\bfY)$.
  The action of $G$ on $\bfY$ extends to a simplicial action on $\calp_K(\bfY)$.
  Crucial for us will be the following two properties from~\cite{bbf} of the projection complex.

  \begin{proposition}
    \label{prop:proj-cx}
    There is a constant $\theta'_\calp > 0$ (depending on $K$) such that the following holds.
    \begin{enumerate}
      \item \label{prop:proj-cx:loc-estimate} \emph{Local estimate.}
        If $Y$ is an internal vertex of a geodesic in $\calp_K(\bfY)$ from $X$ to $Z$, and if
        $X'$ and $Z'$ are two vertices on this geodesic such that $X'$ is between $X$ and $Y$, 
        and $Z'$ is between $Y$ and $Z$ then
        $|d^\pi_Y(X,Z) - d^\pi_Y(X',Z')| < \theta'_\calp$.  
      \item \label{prop:proj-cx:attraction} \emph{Attraction property.}
        If $d^\pi_Y(X,Z) > \theta'_\calp$, then any geodesic in $\calp_K(\bfY)$ between $X$ and $Z$ will
        pass through $Y$. 
    \end{enumerate} 
  \end{proposition}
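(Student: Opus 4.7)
The plan is to deduce both parts from the structural theorem of Bestvina--Bromberg--Fujiwara in~\cite{bbf} about geodesics in $\calp_K(\bfY)$. The BBF construction introduces a modified projection distance $\tilde d_Y(X,Z)$ that differs from $d^\pi_Y(X,Z)$ by at most a constant $C=C(K,\theta)$ whenever $X,Y,Z$ are pairwise distinct, and the edges of $\calp_K(\bfY)$ are placed between $X$ and $Z$ exactly when $\tilde d_W(X,Z) \leq K$ for every $W \neq X,Z$. The key BBF theorem states that for any two vertices $X,Z$ the set $\{W : \tilde d_W(X,Z) > 2K\}$ is finite, carries a canonical linear order, and its elements appear in that order as the internal vertices of every $\calp_K(\bfY)$-geodesic from $X$ to $Z$.

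The attraction property is then immediate: if $\theta'_\calp > 2K + C$, then $d^\pi_Y(X,Z) > \theta'_\calp$ forces $\tilde d_Y(X,Z) > 2K$, so $Y$ lies on every geodesic from $X$ to $Z$ by the BBF structure theorem.

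For the local estimate, I would run the above observation in reverse and then invoke the triples axiom~\ref{axiom:proj:triples}. Suppose $X'$ is an internal vertex of a geodesic from $X$ to $Z$ lying between $X$ and $Y$; then $X'$ is in particular an internal vertex of the sub-geodesic from $X$ to $Y$, so the BBF theorem gives $\tilde d_{X'}(X,Y) > 2K$, hence $d^\pi_{X'}(X,Y) > 2K-C$. Assuming $K$ is chosen so that $2K-C > \theta$, axiom~\ref{axiom:proj:triples} applied with the two distinct projections $\{X',Y\}$ and test point $\xi=X$ yields
$$\min\bigl\{\,d^\pi_{X'}(Y,X),\; d^\pi_Y(X',X)\,\bigr\} < \theta,$$
and since the first term is too large, we conclude $d^\pi_Y(X',X) < \theta$. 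A symmetric argument, using that $Z'$ lies between $Y$ and $Z$, gives $d^\pi_Y(Z,Z') < \theta$. Two applications of the triangle inequality~\ref{axiom:proj:triangle} together with symmetry~\ref{axiom:proj:symmetry} then give $|d^\pi_Y(X,Z) - d^\pi_Y(X',Z')| < 2\theta$, and we take $\theta'_\calp := \max(2K+C+1,\, 2\theta+1)$. The only real obstacle is bookkeeping on constants: $K$ must simultaneously be in the range where the BBF construction and its structure theorem apply and be large enough compared to $\theta$ and $C$ that the triples axiom can be invoked in the form above; once $K$ is fixed, $\theta'_\calp$ absorbs all remaining error terms.
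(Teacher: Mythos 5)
Your derivation of the attraction property is correct and matches the paper's approach: both deduce it from the BBF result that vertices with large modified projection lie on every geodesic (the paper cites \cite[Lem.~3.18]{bbf}), combined with the fact that the modified distance $d_Y$ used to build $\calp_K(\bfY)$ is a bounded perturbation of $d^\pi_Y$.

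Your route to the local estimate has a genuine gap. The key step asserts that if $X'$ is an internal vertex of a geodesic from $X$ to $Y$, then the modified projection distance at $X'$ between $X$ and $Y$ exceeds $2K$. This is the \emph{converse} of the attraction property, and it is not one of the structural results of~\cite{bbf}. It fails in general: the BBF theory shows that geodesics in $\calp_K(\bfY)$ lie within bounded Hausdorff distance of the ``standard path'' through the finitely many vertices $W$ with large $d_W(X,Y)$, but it does not say geodesics visit \emph{only} such vertices. Since $\calp_K(\bfY)$ is a quasi-tree rather than a tree, geodesics between fixed endpoints need not be unique and can detour through intermediate vertices with small projection. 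Without a lower bound on $d^\pi_{X'}(X,Y)$, the triples axiom~\ref{axiom:proj:triples} cannot be invoked in the way you propose. The paper avoids this by citing \cite[Cor.~3.15]{bbf}, which gives an \emph{upper} bound on $d_Y(X,X')$ --- that is, on the projection distance measured at the \emph{endpoint} $Y$, a different quantity from the one you need --- for every internal vertex $X'$ of a geodesic from $X$ to $Y$. This translates into uniform bounds on $d^\pi_Y(X,X')$ and $d^\pi_Y(Z,Z')$, and the local estimate then follows from the triangle inequality~\ref{axiom:proj:triangle} alone, with no appeal to the triples axiom.
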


  \begin{proof}
    The construction of $\calp_K(\bfY)$ depends on a bounded perturbation $d_Y$ of the restrictions 
    of the $d^\pi_Y$ to $\bfY \smallsetminus \{Y\}$,
    i.e., $|d_Y^\pi(X,Z) - d_Y(X,Z)|$ is uniformly bounded in $X,Y$ and $Z$, 
    see~\cite[Thm.~3.3(B)]{bbf}.
    Let $c$ be a geodesic in $\calp_K(\bfY)$ from $X$ to $Y$ and let
    $X'$ be any internal vertex of $c$. 
    Then $d_{Y}(X,X')$  is uniformly bounded by~\cite[Cor.~3.15]{bbf}.
    Thus $d^\pi_{Y}(X,X')$ is also uniformly bounded and the local estimate follows from the
    triangle inequality.
 
    For $d_Y$ the attraction property is the content of the first statement of~\cite[Lem.~3.18]{bbf}.
    Since $d_Y$ is a uniformly bounded perturbation of $d^\pi_Y$, the attraction property follows 
    for $d^\pi_Y$ as well.
  \end{proof}
  
  For an internal vertex $Y$ of a geodesic $c$ in the projection complex we define the
  \emph{angle of $c$ at $Y$} as $\varangle_Y c := d^\pi_Y(X,Z)$ where $X$ and $Z$ are the two vertices
  on $c$ adjacent to $Y$.  
  If $Y$ is disjoint from $c$, then we set $\varangle_Y c = 0$; 
  if $Y$ is the start or end point of $c$, then $\varangle_Y c$ remains undefined.
  For $X,Z \neq Y$ we set $d^{\max}_Y(X,Z) := \max \{ \varangle_Y c \}$,
  where $c$ varies over the set of all geodesics from $X$ to $Z$. 

  We record the following consequences of Proposition~\ref{prop:proj-cx}
  for these the definitions. 
  Item~\ref{lem:angles:equal-after-attraction} is the 
  main advantage $d^{\max}$ has over $d^\pi$.  

  \begin{lemma}
    \label{lem:angles}
    There is $\theta_\calp > 0$ such that for all vertices $X,Z \neq Y$
    \begin{enumerate}
    \item \label{lem:angles:pi-vs-minmax} 
       $|d^\pi_Y(X,Z) - d^{\max}_Y(X,Z)| < \theta_\calp$;
    \item \label{lem:angles:min-vs-max}
      for any geodesic $c$ from $X$ to $Z$ we have $d^{\max}_Y(X,Z) - \varangle_Y c < \theta_\calp$;
    \item \label{lem:angles:attraction} if $d^{\max}_Y(X,Z) \geq \theta_\calp$ or $d^\pi_Y(X,Z) \geq \theta_\calp$
            then any geodesic from $X$ to $Z$ passes through $Y$;
     \item \label{lem:angles:equal-after-attraction} 
        let $c$ be a geodesic from $X$ to $Z$ that passes through $Y_0$ and $Y$ in this order,
        if $$\max \{ d_{Y_0}^\pi(X,Y), d_{Y_0}^\pi(X,Z), \varangle_{Y_0} c \} \geq \theta_\calp,$$ 
        then $d^{\max}_{Y}(X,Z) = d^{\max}_{Y}(Y_0,Z)$.
    \end{enumerate}
  \end{lemma}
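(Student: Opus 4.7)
The plan is to set $\theta_\calp$ strictly larger than $3\theta'_\calp$, where $\theta'_\calp$ is the constant from Proposition~\ref{prop:proj-cx}, and then derive each of~\ref{lem:angles:pi-vs-minmax}--\ref{lem:angles:equal-after-attraction} directly from the local estimate and attraction property. A standing observation is that, since $Y \neq X, Z$, on any geodesic $c$ from $X$ to $Z$ the vertex $Y$ is either internal or disjoint; in the internal case the local estimate applied to the two neighbors of $Y$ along $c$ immediately gives $|\varangle_Y c - d^\pi_Y(X,Z)| < \theta'_\calp$.

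For~\ref{lem:angles:pi-vs-minmax} I would split on whether some geodesic from $X$ to $Z$ has $Y$ as an internal vertex. If so, the preceding observation bounds every such angle within $\theta'_\calp$ of $d^\pi_Y(X,Z)$, and taking the maximum yields the inequality. If not, then $d^{\max}_Y(X,Z) = 0$ by convention and the contrapositive of the attraction property forces $d^\pi_Y(X,Z) \leq \theta'_\calp$. Item~\ref{lem:angles:min-vs-max} follows by the same dichotomy: either $Y$ is internal to $c$ and the bounds above give $d^{\max}_Y(X,Z) - \varangle_Y c < 2\theta'_\calp$, or $Y$ is disjoint from $c$ and $\varangle_Y c = 0$; in the latter case a hypothetical geodesic $c'$ with $\varangle_Y c' \geq 2\theta'_\calp$ would, by~\ref{lem:angles:pi-vs-minmax}, force $d^\pi_Y(X,Z) > \theta'_\calp$, hence by attraction force $Y$ onto every geodesic, contradicting the existence of $c$. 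Item~\ref{lem:angles:attraction} is then immediate: a large $d^{\max}_Y(X,Z)$ forces a large $d^\pi_Y(X,Z)$ via~\ref{lem:angles:pi-vs-minmax}, and attraction applies.

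The substantive step is~\ref{lem:angles:equal-after-attraction}. Here I would first note that the subsegment of $c$ from $X$ to $Y$ is itself a geodesic and that the pair of vertices adjacent to $Y_0$ along it is the same as the pair adjacent to $Y_0$ along $c$. Applying the local estimate to $c$ viewed both as a geodesic from $X$ to $Z$ and to its subsegment from $X$ to $Y$ shows that the three quantities $d^\pi_{Y_0}(X,Z)$, $d^\pi_{Y_0}(X,Y)$, and $\varangle_{Y_0} c$ are pairwise within $2\theta'_\calp$ of one another. Hence if any of them exceeds $\theta_\calp > 3\theta'_\calp$, then $d^\pi_{Y_0}(X,Z) > \theta'_\calp$, and attraction forces every geodesic from $X$ to $Z$ through $Y_0$. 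Combined with the fact that $c$ passes through $Y_0$ and then $Y$, one obtains $d_\calp(X,Z) = d_\calp(X,Y_0) + d_\calp(Y_0,Z)$, so geodesics from $X$ to $Z$ are exactly concatenations of geodesics from $X$ to $Y_0$ with geodesics from $Y_0$ to $Z$. Since the angle at $Y$ depends only on the second factor, the sets of angles at $Y$ realised by geodesics from $X$ to $Z$ and by geodesics from $Y_0$ to $Z$ coincide, which delivers the desired equality of maxima.

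The main obstacle is bookkeeping rather than geometry: one needs a single value of $\theta_\calp$ serving all four items without circularity, and care in~\ref{lem:angles:min-vs-max} to handle geodesics missing $Y$. Once $\theta_\calp > 3\theta'_\calp$ is fixed, every move reduces to a short contradiction argument built from the local estimate and attraction property of Proposition~\ref{prop:proj-cx}, and no further input from~\cite{bbf} is required.
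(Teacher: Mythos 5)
Your proof is correct and follows essentially the same route as the paper: derive~\ref{lem:angles:pi-vs-minmax}--\ref{lem:angles:attraction} from the local estimate and attraction property of Proposition~\ref{prop:proj-cx}, and obtain~\ref{lem:angles:equal-after-attraction} by using attraction to show geodesics from $X$ to $Z$ decompose as concatenations through $Y_0$. You supply more detail than the paper on why the three quantities in the hypothesis of~\ref{lem:angles:equal-after-attraction} are pairwise close (the paper only notes this without proof); one tiny bookkeeping slip is that in~\ref{lem:angles:min-vs-max} the deduction ``$\varangle_Y c' \geq 2\theta'_\calp$ forces $d^\pi_Y(X,Z) > \theta'_\calp$'' should cite the local estimate directly rather than item~\ref{lem:angles:pi-vs-minmax}, whose bound of $\theta_\calp$ is too weak for that step.
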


  Note that there is a uniform bound on the difference between any of the three numbers appearing in 
  the hypothesis of item~\ref{lem:angles:equal-after-attraction} in the above lemma.

  \begin{proof}[Proof of Lemma~\ref{lem:angles}]
    For $\theta_\calp > 2 \theta'_\calp$, properties~\ref{lem:angles:pi-vs-minmax}
    and~\ref{lem:angles:min-vs-max} are a consequence of the local estimate provided there
    exists a geodesic from $X$ to $Z$ that passes through $Y$.
    If there is no such geodesic,  properties~\ref{lem:angles:pi-vs-minmax}
    and~\ref{lem:angles:min-vs-max} follow from the attraction property. 
    For $\theta_\calp > 3 \theta'_\calp$, property~\ref{lem:angles:attraction} follows
    again from the attraction property.
     Property~\ref{lem:angles:attraction} implies that under the
    assumption of~\ref{lem:angles:equal-after-attraction} 
    any geodesic from $X$ to $Z$ can be built by concatenation of a geodesic from $X$ to $Y_0$ with a geodesic 
    from $Y_0$ to $Z$. 
    Thus~\ref{lem:angles:equal-after-attraction} holds. 
  \end{proof}
  
  \subsection{The numbers $\Theta_0,\dots,\Theta_5$.}  \label{subsec:numbers}

  \begin{lemma} \label{lem:Theta-bfS}
    For $\bfS \subset \bfY$ finite, there is $\theta'_\bfS \geq 0$ such that
    for all $X,X' \in \bfS$, $Y,Z \in \bfY$, 
      $$|d^\pi_{Y}(X,Z) - d^\pi_{Y}(X',Z)| < \theta'_\bfS.$$
  \end{lemma}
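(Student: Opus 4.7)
The plan is to reduce the bound on $|d^\pi_Y(X,Z) - d^\pi_Y(X',Z)|$ to a bound on $d^\pi_Y(X,X')$ that is uniform in $Y$, and then use the finiteness axiom together with finiteness of $\bfS$.

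First I would apply the triangle inequality~\ref{axiom:proj:triangle} and the symmetry axiom~\ref{axiom:proj:symmetry}. Substituting into the triangle inequality with appropriate reorderings of the three entries (and treating $Z$ as the $\xi$-variable), one obtains
\[
   d^\pi_Y(X,Z) \;\le\; d^\pi_Y(X,X') + d^\pi_Y(X',Z), \qquad
   d^\pi_Y(X',Z) \;\le\; d^\pi_Y(X',X) + d^\pi_Y(X,Z),
\]
whenever all the expressions are defined, i.e.\ whenever $Y \neq X, X', Z$; these cases are the only cases we need to consider because the statement of the lemma implicitly requires the projection distances to be defined. Combined with symmetry this yields the key reduction
\[
   |d^\pi_Y(X,Z) - d^\pi_Y(X',Z)| \;\le\; d^\pi_Y(X,X').
\]

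The remaining task is to show that $\sup_{Y \in \bfY \setminus \{X,X'\}} d^\pi_Y(X,X')$ is finite. For this I would fix a pair $(X,X') \in \bfS \times \bfS$ with $X \neq X'$ (if $X = X'$ the bound is $0$) and apply the finiteness axiom~\ref{axiom:proj:finiteness} with the constant $\theta$: the set $\bfE_{X,X'} := \{Y \in \bfY \mid d^\pi_Y(X,X') > \theta\}$ is finite. For $Y$ outside $\bfE_{X,X'}$ (and $Y \neq X, X'$) we have $d^\pi_Y(X,X') \le \theta$; for $Y \in \bfE_{X,X'} \setminus \{X,X'\}$, by the last requirement of Definition~\ref{def:proj-data}, $d^\pi_Y(X,X') < \infty$, and since this is a finite set we can take the maximum over it.

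Finally, since $\bfS$ is finite, the set of pairs $(X,X') \in \bfS \times \bfS$ is also finite, so I would take $\theta'_\bfS$ to be slightly larger than the maximum, over all such pairs, of the uniform bound just obtained. There is no substantive obstacle here; the content is simply that the triangle inequality converts a quantity depending on $Y$ and $Z$ into one depending only on the pair $(X,X')$ from the finite set $\bfS$, and then the finiteness axiom kicks in.
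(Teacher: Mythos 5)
Your proof is correct and is essentially the paper's argument spelled out in full: the paper's one-line proof (``By finiteness, $d^\pi_Y(X,X')$ is bounded for $Y \in \bfY$, $X,X' \in \bfS$'') implicitly relies on exactly the triangle-inequality reduction $|d^\pi_Y(X,Z)-d^\pi_Y(X',Z)| \le d^\pi_Y(X,X')$ followed by the finiteness axiom and the requirement that $d^\pi_Y(X,Z)<\infty$ for $X,Y,Z\in\bfY$, and you supply those steps explicitly. No gap.
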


  \begin{proof}
    By finiteness~\ref{axiom:proj:finiteness}, $d^\pi_Y(X,X')$ is bounded for $Y \in \bfY$, $X,X' \in \bfS$.
  \end{proof}

  \begin{lemma}
    \label{lem:Theta-bfS-angles}
    For $\bfS \subset \bfY$ finite, there is $\theta_\bfS \geq 0$ such that
    for $X \in \bfS$, $Y,Z \in \bfY$, $X,Z \neq Y$ we have  
    \begin{enumerate}
    \item \label{lem:bfS-angles:X-vs-X'-ds} 
       the distance between any two numbers from 
       $$\{ d^\pi_Y(X',Z), d^{\max}_Y(X',Z) \mid X' \in \bfS \}$$ 
       is $< \theta_\bfS$;
    \item \label{lem:bfS-angles:attraction} 
      if $d^{\max}_Y(X,Z) \geq \theta_\bfS$ or $d^\pi_Y(X,Z) \geq \theta_\bfS$
      then for any $X' \in \bfS$ any geodesic from $X'$ to $Z$ passes through $Y$;
    \item \label{lem:bfS-angles:equal-after-attraction} 
      suppose there is a geodesic $c$ from $X$ to $Z$ that passes through $Y_0$ and $Y$ in this order,
      if one of the numbers
      $$d_{Y_0}^\pi(X,Y), d_{Y_0}^{\max}(X,Y),d_{Y_0}^\pi(X,Z),d_{Y_0}^{\max}(X,Z)$$
      is $\geq \theta_\bfS$, then for any $X' \in \bfS$
      $d^{\max}_{Y}(X',Z) = d^{\max}_{Y}(Y_0,Z)$. 
    \end{enumerate}
  \end{lemma}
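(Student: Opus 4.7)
The plan is to propagate the properties of Lemma~\ref{lem:angles} from the single vertex $X$ to an arbitrary $X' \in \bfS$, using Lemma~\ref{lem:Theta-bfS} as the bridge. The constant $\theta_\bfS$ will be chosen sufficiently large in terms of $\theta_\calp$, $\theta'_\calp$, $\theta'_\bfS$, and a universal bound $C$ on projections at internal vertices of geodesics in $\calp_K(\bfY)$, as supplied by \cite[Cor.~3.15]{bbf}.

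For part~(a), I would combine $|d^\pi_Y(X_1', Z) - d^\pi_Y(X_2', Z)| < \theta'_\bfS$ from Lemma~\ref{lem:Theta-bfS} with $|d^\pi_Y(X', Z) - d^{\max}_Y(X', Z)| < \theta_\calp$ from Lemma~\ref{lem:angles}\ref{lem:angles:pi-vs-minmax}; by the triangle inequality, any two of the four numbers in question differ by at most $\theta'_\bfS + 2\theta_\calp$. For part~(b), the hypothesis together with Lemma~\ref{lem:Theta-bfS} and Lemma~\ref{lem:angles}\ref{lem:angles:pi-vs-minmax} forces $d^\pi_Y(X', Z) \geq \theta_\bfS - (\theta'_\bfS + \theta_\calp)$ for every $X' \in \bfS$; choosing $\theta_\bfS$ so that this exceeds $\theta_\calp$ reduces (b) to Lemma~\ref{lem:angles}\ref{lem:angles:attraction} applied at $X'$.

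Part~(c) is the main point. First, Lemma~\ref{lem:angles}\ref{lem:angles:pi-vs-minmax} puts each $d^{\max}_{Y_0}$ in the hypothesis within $\theta_\calp$ of its $d^\pi_{Y_0}$ counterpart, and Proposition~\ref{prop:proj-cx}\ref{prop:proj-cx:loc-estimate} applied to $c$ at the internal vertex $Y_0$, with $Y$ playing the role of the ``$Z'$''-vertex (it lies on $c$ between $Y_0$ and $Z$), yields $|d^\pi_{Y_0}(X,Y) - d^\pi_{Y_0}(X,Z)| < \theta'_\calp$. Hence each of the four hypotheses produces $d^\pi_{Y_0}(X,Z) \geq \theta_\bfS - (\theta_\calp + \theta'_\calp)$, and Lemma~\ref{lem:Theta-bfS} then gives $d^\pi_{Y_0}(X', Z) \geq \theta_\calp$ for every $X' \in \bfS$, once $\theta_\bfS$ is chosen large enough. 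Lemma~\ref{lem:angles}\ref{lem:angles:attraction} at $Y_0$ now guarantees that every geodesic from $X'$ to $Z$ passes through $Y_0$ and hence decomposes as $c_1' \cdot c_2'$ with $c_1'$ a geodesic from $X'$ to $Y_0$ and $c_2'$ a geodesic from $Y_0$ to $Z$. Since any such $c_2'$ can be realized by prepending any $c_1'$, the set of angles at $Y$ realized by geodesics from $X'$ to $Z$ contains the set realized by geodesics from $Y_0$ to $Z$; to obtain the reverse inclusion and the claimed equality of maxima, I must rule out that $Y$ appears as an internal vertex of some $c_1'$. For that I apply \cite[Cor.~3.15]{bbf} to the sub-geodesic $c|_{[X,Y]}$, in which $Y_0$ is internal and $Y$ is the endpoint, obtaining the uniform bound $d^\pi_Y(X, Y_0) \leq C$. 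Lemma~\ref{lem:Theta-bfS} upgrades this to $d^\pi_Y(X', Y_0) \leq C + \theta'_\bfS$ for every $X' \in \bfS$, which, together with a sufficiently large initial choice of the projection-complex threshold $K$ in the construction of $\calp_K(\bfY)$, forbids $Y$ from being an internal vertex of any geodesic from $X'$ to $Y_0$ and completes~(c).

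The hardest step is this last one: preventing $Y$ from lying on the prefix $c_1'$. It exploits the actual construction of the projection complex in \cite{bbf} beyond the axioms recorded in Axiom~\ref{axiom:projections}, through the uniform bound on $d^\pi_Y(X', Y_0)$; the rest is careful but routine bookkeeping of constants, with $\theta_\bfS$ fixed once and for all at the outset to be large enough for all three conclusions.
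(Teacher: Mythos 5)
Your treatment of parts (a) and (b) is correct and amounts to the paper's approach, which simply combines Lemma~\ref{lem:Theta-bfS} with Lemma~\ref{lem:angles}. Part (c) is where a genuine gap appears. You correctly reduce to showing that, for each geodesic $c'$ from $X'$ to $Z$ (all of which pass through $Y_0$ by the attraction property applied at $Y_0$ with the $X'$-version of the hypothesis), the vertex $Y$ cannot occur as an internal vertex of the prefix $c_1'$ from $X'$ to $Y_0$. But your resolution of this point fails. You deduce a uniform bound $d^\pi_Y(X',Y_0)\le C+\theta'_\bfS$ and then assert that this, together with choosing the projection-complex threshold $K$ large, forbids $Y$ from lying on any geodesic from $X'$ to $Y_0$. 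Two objections: the implication ``$d^\pi_Y(A,B)$ small $\Rightarrow$ $Y$ not internal on any geodesic from $A$ to $B$'' is a converse-attraction statement that the paper does not record in Proposition~\ref{prop:proj-cx} and that you neither prove nor pin down; and, more fundamentally, $K$ (and with it $\theta'_\calp$, $\theta_\calp$) is fixed once and for all before the finite set $S$ (hence $\bfS$ and $\theta'_\bfS$) is chosen, so you may not retroactively make $K$ exceed $C+\theta'_\bfS$ without reorganizing the whole quantifier structure of Section~\ref{sec:proj-covers}.

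The step you are trying to handle is in fact elementary and requires none of this machinery. Since $Y$ lies strictly between $Y_0$ and $Z$ on $c$, we have $d_\calp(Y_0,Z)=d_\calp(Y_0,Y)+d_\calp(Y,Z)$, and since $Y_0$ lies on a geodesic from $X'$ to $Z$, $d_\calp(X',Z)=d_\calp(X',Y_0)+d_\calp(Y_0,Z)$. If $Y$ were also strictly between $X'$ and $Y_0$ on some geodesic, then $d_\calp(X',Y_0)=d_\calp(X',Y)+d_\calp(Y,Y_0)$, and combining with the triangle inequality $d_\calp(X',Z)\le d_\calp(X',Y)+d_\calp(Y,Z)$ forces $2d_\calp(Y,Y_0)\le 0$, i.e.\ $Y=Y_0$, a contradiction. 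Alternatively, and this is what the paper intends, one can avoid re-deriving the decomposition altogether: after using Lemma~\ref{lem:Theta-bfS} to transfer the largeness of $d^\pi_{Y_0}(X,\cdot)$ to $d^\pi_{Y_0}(X',\cdot)$, the hypotheses of Lemma~\ref{lem:angles}\ref{lem:angles:equal-after-attraction} hold with $X'$ in place of $X$ (take the geodesic obtained by prepending any geodesic from $X'$ to $Y_0$ to $c|_{[Y_0,Z]}$), so one simply cites that item; the concatenation concern you isolate is already absorbed there.
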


  \begin{proof}
     This follows by combining Lemma~\ref{lem:Theta-bfS} with Lemma~\ref{lem:angles}. 
  \end{proof}

  Fix now $S \subseteq G$ finite.
  Let $\theta_S := \theta_{S \cdot X_\bfY}$ as in Lemma~\ref{lem:Theta-bfS-angles}.
  Next choose numbers $0 << \Theta_0 << \Theta_1 << \Theta_2 << \Theta_3 << \Theta_4 << \Theta_5$. 
  Later we will need estimates of the form $\Theta_i > \Theta_j + C$
  for $i>j$ 
and for $C$ a constant depending
  on $\theta$ and $\theta_S$ and it will be clear that we can choose the $\Theta_i$ at this point  
  to satisfy all required estimates.
  (On the other hand, $\Theta_i := 10 \cdot (i+1) \cdot (\theta + \theta_S)$ will certainly work.) 

  \subsection{The finite projections $Z(g,\xi)$.} \label{subsec:proj-Z}

  For all $(g,\xi) \in G \x \Delta$ with $d^\pi_\bfY(gX_\bfY,\xi) > \Theta_4$
  we pick $Z(g,\xi) \in \bfY$ 
  such that $d^\pi_{Z(g,\xi)}(gX_\bfY, \xi) > \Theta_4$.
  In addition, if possible, choose $Z(g,\xi)$ so that 
  $d^\pi_{Z(g,\xi)}(gX_\bfY, \xi) > \Theta_5$.  
  We can arrange this map to be $G$-equivariant, i.e., such that $Z(hg,h\xi) = hZ(g,\xi)$ for $h \in G$.
  
  \begin{remark} 
    The use of the axiom of choice to produce the $Z(g,\xi) \in \bfY$ 
    may seem a little heavy handed.
    Assuming that $\bfY$ is countable we can do this, with a little more care, 
    using only countable choice:
     
    By equivariance it suffices to choose the $Z(1,\xi)$ with $1 \in G$ the unit.
    Fix a countable basis $U_i$ of open sets for
    $\Delta$ and consider all pairs $(U_i,Y)$ such that
    $d^\pi_Y(X_\bfY,\xi)>\Theta_4$ for all $\xi\in U_i$. 
    Choose an ordering of this countable set of pairs. 
    Then for $\xi$ let $Z(1,\xi)=Y$
    where $(U_i,Y)$ is the first pair with $\xi \in U_i$.
    By the coarse semi-continuity axiom~\ref{axiom:proj:coarse-semi-cont} 
    this produces $Z(1,\xi)$ for all $(1,\xi)$ with $d^\pi_\bfY(1,\xi)>\Theta_4+\theta$.
    It is not difficult to adjust the constants in the rest of our argument to account
    for this slightly weaker statement.
  \end{remark}

  \begin{lemma}
    \label{lem:Y}
    Let $(g,\xi) \in G \x \Delta$ with $d^\pi_\bfY(gX_\bfY,\xi) > \Theta_5$. 
    Then there is an open neighborhood $U$ of $\xi$ in $\Delta$ and $Y \in \bfY$ such that
    for any $s \in S$, $\xi' \in U$ either $Z(gs,\xi') = Y$ or 
    $d^{\max}_Y(gsX_\bfY,Z(gs,\xi)) > \Theta_3$.

  \end{lemma}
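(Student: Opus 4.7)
The plan is to take $Y := Z(g,\xi)$; the hypothesis $d^\pi_\bfY(gX_\bfY,\xi) > \Theta_5$ and the $\Theta_5$-preference built into the definition of $Z(g,\xi)$ guarantee $d^\pi_Y(gX_\bfY,\xi) > \Theta_5$. To build $U$ I would first apply the coarse semi-continuity axiom (P5) at $Y$ to find an open $U_0 \ni \xi$ with $d^\pi_Y(gX_\bfY,\xi') > \Theta_5 - \theta$; next, for each $s \in S$ with $Z(gs,\xi)$ defined, apply (P5) at $Z(gs,\xi)$ to obtain an open $U_s \ni \xi$ with $d^\pi_{Z(gs,\xi)}(gsX_\bfY,\xi') > \Theta_4 - \theta$; then let $U$ be the finite intersection. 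Combining with the $\theta_S$-closeness of Lemma~\ref{lem:Theta-bfS-angles}(a) (shifting the first argument from $gX_\bfY$ to $gsX_\bfY$) yields, for all $\xi' \in U$ and $s \in S$,
\[
 d^\pi_Y(gsX_\bfY,\xi') > \Theta_5 - \theta - \theta_S,
\]
which is much larger than $\Theta_4$ for constants chosen as in Subsection~\ref{subsec:numbers}.

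For each $s \in S$ separately, the second alternative of the lemma depends only on $\xi$ and $s$, not on $\xi'$. Write $Z_0 := Z(gs,\xi)$ and suppose first $Z_0 \ne Y$. Apply the triples axiom (P3) to $(Y, Z_0, \xi)$: in the branch $d^\pi_Y(Z_0,\xi) < \theta$, the triangle inequality (P2) combined with the lower bound $d^\pi_Y(gsX_\bfY,\xi) > \Theta_5 - \theta_S$ (Lemma~\ref{lem:Theta-bfS-angles}(a) at $\xi$) yields $d^\pi_Y(gsX_\bfY, Z_0) > \Theta_5 - \theta - \theta_S$, whence Lemma~\ref{lem:angles}(a) gives $d^{\max}_Y(gsX_\bfY, Z_0) > \Theta_3$, which is the desired second alternative.

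The main obstacle is the complementary branch $d^\pi_{Z_0}(Y,\xi) < \theta$. There the triangle inequality delivers $d^\pi_{Z_0}(gsX_\bfY, Y) > \Theta_4 - \theta$, and a second use of (P3) on $(Y, Z_0, gsX_\bfY)$ then forces $d^\pi_Y(gsX_\bfY, Z_0) < \theta$, so $d^{\max}_Y(gsX_\bfY, Z_0) \le \theta + \theta_\calp$ fails to exceed $\Theta_3$. The plan is to rule out this branch (under $Z_0 \ne Y$) by exploiting the $\Theta_5$-preference in the definition of $Z(gs,\xi)$: with the constants of Subsection~\ref{subsec:numbers} arranged so that the two-scale gap between $\Theta_4$ and $\Theta_5$ absorbs $\theta + \theta_S$, the large bound $d^\pi_Y(gsX_\bfY,\xi) > \Theta_5 - \theta_S$ certifies $Y$ as a $\Theta_5$-preferred candidate for $Z(gs,\xi)$, so the preference upgrades the choice to one with $d^\pi_{Z_0}(gsX_\bfY, \xi) > \Theta_5$; combining this upgrade with the inequalities already derived in the projection complex then contradicts $Z_0 \ne Y$. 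This is precisely the ``threshold problem'' from the introduction, and it is the reason for the two thresholds $\Theta_4, \Theta_5$ in the definition of $Z$. In the remaining case $Z_0 = Y$, one promotes $Z(gs,\xi) = Y$ to $Z(gs,\xi') = Y$ for all $\xi' \in U$ by shrinking $U$ further, invoking the canonical construction of $Z$ described in the remark following Subsection~\ref{subsec:proj-Z}, which makes $Z(gs,\cdot)$ locally constant near $\xi$.
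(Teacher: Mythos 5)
Your choice $Y := Z(g,\xi)$ does not work in general, and this is precisely the point at which the paper's proof departs from the naive approach. The paper makes a genuinely different choice of $Y$: it first shows that for every pair $(s,\zeta) \in S \times U$ with $Z(s,\zeta) \neq Z(e,\xi)$, either $Z(s,\zeta)$ ``attracts'' geodesics from $s'X_\bfY$ to $Z(e,\xi)$ with $\Theta_3$-large angle, or vice versa with the roles swapped; and then, if any $(s,\zeta)$ falls into the first case, it takes $Y$ to be the $Z(s,\zeta)$ of this type that is \emph{closest to} $X_\bfY$. This $Y$ can be different from $Z(e,\xi)$, and it has to be.

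The scenario you call Branch B (where $d^\pi_{Z_0}(Y,\xi) < \theta$, i.e.\ $Z_0 = Z(gs,\xi)$ sits between $gsX_\bfY$ and $Y$ in the linear order towards $\xi$) cannot be ruled out. Your argument via the $\Theta_5$-preference does not go through: the preference rule only says that $Z(gs,\xi)$ is chosen to be \emph{some} $\Theta_5$-candidate whenever one exists, so knowing $d^\pi_Y(gsX_\bfY,\xi)$ is large gives you at best that $d^\pi_{Z_0}(gsX_\bfY,\xi) > \Theta_5$ as well, which is perfectly consistent with $Z_0 \neq Y$ --- both $Z_0$ and $Y$ can be distinct $\Theta_5$-candidates sitting at different places in the chain between $gsX_\bfY$ and $\xi$. (There is also a quantitative slip: $d^\pi_Y(gsX_\bfY,\xi) > \Theta_5 - \theta_S$ need not exceed the fixed threshold $\Theta_5$, so the preference may not even be triggered.) In Branch B one actually computes $d^\pi_Y(gsX_\bfY,Z_0) < \theta$ via (P3), so $d^{\max}_Y(gsX_\bfY,Z_0)$ is bounded above by $\theta+\theta_\calp$ and the second alternative of the lemma fails outright for $Y=Z(g,\xi)$. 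The paper escapes this exactly by switching $Y$ to that troublesome $Z(s,\zeta)$.

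Finally, the appeal to the ``canonical'' countable-choice construction of $Z$ to promote $Z(gs,\xi)=Y$ to $Z(gs,\xi')=Y$ for all $\xi'$ near $\xi$ does not work either: for $\xi'$ arbitrarily close to $\xi$ one may have $\xi'$ entering an earlier basic open set $U_j$ that does not contain $\xi$, in which case $Z(gs,\xi')$ jumps to a different subsurface. The function $Z(gs,\cdot)$ is not locally constant; that is the threshold problem the lemma is designed to handle, and it has to be handled by the choice of $Y$ rather than wished away by shrinking $U$.
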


  \begin{proof}  
    It suffices to consider $g=e$.
    By coarse semi-continuity~\ref{axiom:proj:coarse-semi-cont} 
    we find a neighborhood $U$ of $\xi$ in $\Delta$ such that
    $d^\pi_{Z(e,\xi)}(X_\bfY,\zeta) > \Theta_5 - \theta$ for all $\zeta \in U$.
    For $s \in S$ then $d^\pi_{Z(e,\xi)}(sX_\bfY,\zeta) > \Theta_5 - \theta_S - \theta > \Theta_4$
    by Lemma~\ref{lem:Theta-bfS-angles}~\ref{lem:bfS-angles:X-vs-X'-ds}.
    In particular, for all $s \in S$, $\zeta \in U$, the vertex $Z(s,\zeta)$ is defined.
    We claim that for $(s,\zeta) \in S \x U$ with $Z(s,\zeta) \neq Z(e,\xi)$ 
    \begin{equation} \label{eq:one-is-large}
      \max \{ d^\pi_{Z(s,\zeta)}(X_\bfY,Z(e,\xi)), d^\pi_{Z(e,\xi)}(X_\bfY,Z(s,\zeta)) \} > 
                 \Theta_3 + \theta_S. 
    \end{equation}
    Indeed, assume $d^\pi_{Z(e,\xi)}(X_\bfY,Z(s,\zeta)) \leq \Theta_3 + \theta_S$.
    Then 
    \begin{align*}  
      d^\pi_{Z(e,\xi)}(Z(s,\zeta),\zeta) 
          \geq d^\pi_{Z(e,\xi)}(X_\bfY,\zeta)   -  d^\pi_{Z(e,\xi)}(X_\bfY,Z(s,\zeta)) & \\
          > \Theta_5 - \Theta_3 & - \theta_S  > \theta.
    \end{align*}

    The inequality on triples implies $d^\pi_{Z(s,\zeta)}(Z(e,\xi),\zeta) < \theta$.
    Thus 
    \begin{align*}
       d^\pi_{Z(s,\zeta)}(X_\bfY,Z(e,\xi)) > \qquad &  \\
         d^\pi_{Z(s,\zeta)}(sX_\bfY,\zeta) -  d^\pi_{Z(s,\zeta)}&(X_\bfY,sX_\bfY) -d^\pi_{Z(s,\zeta)}(Z(e,\xi),\zeta) \\
         & \qquad >  \Theta_4 - \theta_S - \theta > \Theta_3 + \theta_S, 
    \end{align*}
    proving~\eqref{eq:one-is-large}.
    Now we combine~\eqref{eq:one-is-large} with 
    Lemma~\ref{lem:Theta-bfS-angles}~\ref{lem:bfS-angles:attraction},
            \ref{lem:bfS-angles:equal-after-attraction}.
    Thus for $(s,\zeta) \in S \x U$ with $Z(s,\zeta) \neq Z(e,\xi)$ we have 
    \begin{itemize}
    \item either, for every $s' \in S$, every geodesic from $s'X_\bfY$ to $Z(e,\xi)$ passes through
       $Z(s,\zeta)$ and $d^{\max}_{Z(s,\zeta)}(s'X_\bfY,Z(e,\xi)) \geq \Theta_3$,
    \item or, for every $s' \in S$, every geodesic from $s'X_\bfY$ to $Z(s,\zeta)$ passes through
       $Z(e,\xi)$ and $d^{\max}_{Z(e,\xi)}(s'X_\bfY,Z(s,\zeta)) \geq \Theta_3$. 
    \end{itemize}
    If there is no $(s,\zeta)$ to which the first case applies, then we set $Y := Z(e,\xi)$.
    Otherwise, we pick $Y$ among the $Z(s,\zeta)$ to which the first case applies of minimal distance
    to $X_\bfY$.
    Since this $Y$ is also among those the $Z(s,\zeta)$ of maximal distance from $Z(e,\xi)$, it follows
    that it is also of minimal distance from $s'X_\bfY$ for all $s' \in S$. 
    Now, Lemma~\ref{lem:Theta-bfS}~\ref{lem:bfS-angles:equal-after-attraction} implies that 
    whenever $Z(s,\zeta) \neq Y$, then $d^{\max}_Y(sX_\bfY,Z(s,\zeta)) \geq \Theta_3$.  
  \end{proof}
  
\begin{remark} 
A key tool from~\cite{bbf} are linear orders constructed from the projection distances.
For $X,Z \in \bfY$ there is a linear order on the set of all $Y \in \bfY$ for
which $d^\pi_Y(X,Z)$ is defined and large. 
It is natural to add $X$ as a minimal element and $Z$ as a maximal element to this linear order;
we will then call it the linear order from $X$ to $Z$.
In this order $Y < Y'$ if and only if $d^\pi_Y(X,Y')$ is large.
Using these orders the construction of $Y$ in Lemma~\ref{lem:Y} can be summarized as follows:
For each pair $(s,\zeta)$ either $Z(s,\zeta)$ belongs to the linear order from $X_\bfY$ to $Z(e,\xi)$
or $Z(e,\xi)$ belongs to the order from $X_\bfY$ to $Z(s,\zeta)$.
The possible positions of $Z(s,\zeta)$ in these orders are sketched in Figure~\ref{fig:Z(s,zeta)}.
Then $Y$ can be defined as the minimal element in the order from $X_\bfY$ to $Z(e,\xi)$ among
all $Z(s,\zeta)$ to which the first case applies.
This is well defined as these linear orders are finite by the finiteness axiom~\ref{axiom:proj:finiteness}.
\begin{figure}[h]
\begin{tikzpicture}
  \draw (0,0) -- (10,0) (-.5,1) -- (1,0) (0,-1.5) -- (2,0)  (7,0) -- (9.5,-1.5) (8,0) -- (10.5,1.25);
  \fill [black,opacity=.5] (0,0) circle (2pt); \draw (-.5,0) node {$X_\bfY$};
  \fill [black,opacity=.5] (10,0) circle (2pt);  \draw (10.5,0) node {$\xi$};
  \fill [black,opacity=.5] (-.5,1) circle (2pt); \draw (-1,.8) node {$s'X_\bfY$};
  \fill [black,opacity=.5] (0,-1.5) circle (2pt); \draw (-.5,-1.5) node {$sX_\bfY$};
  \fill [black,opacity=.5] (9.5,-1.5) circle (2pt); \draw (10,-1.5) node {$\zeta$};
  \fill [black,opacity=.5] (10.5,1.25) circle (2pt); \draw (10.8,1.25) node {$\zeta'$};   
  \fill [black,opacity=.5] (3,0) circle (2pt); \draw (3,.5) node {$Z(s,\zeta)$};
  \fill [black,opacity=.5] (5,0) circle (2pt); \draw (5,.5) node {$Z(e,\xi)$};
  \fill [black,opacity=.5] (9,.5) circle (2pt); \draw (8.7,1) node {$Z(s',\zeta')$};
\end{tikzpicture}
\caption{Possible positions of $Z(s,\zeta)$} \label{fig:Z(s,zeta)}
\end{figure}
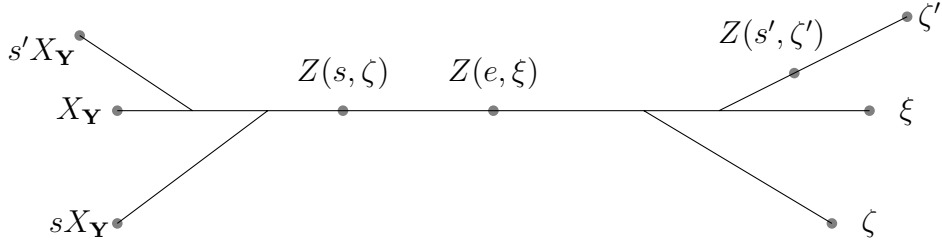  
\end{remark} 
  
  \subsection{The open sets $U(Y,i)$.} \label{subsec:U(Y,i)}

  For $(g,\xi) \in G \x \Delta$ with $d^\pi_\bfY(gX_\bfY,\xi) > \Theta_4$ we now use
  the projection complex $\calp_K(\bfY)$ to make the following definitions.
  \begin{itemize}
  \item For $i=0,1,2$  we define vertices $Y_i(g,\xi)$ of $\calp_K$ as 
    the unique vertex with the following two properties.
    Firstly, $d^{\max}_Y(gX_\bfY, Y_i(g,\xi)) < \Theta_i$ for all $Y \in \bfY \smallsetminus \{ gX_\bfY, Y_i(g,\xi) \}$.
    Secondly, $Y_i(g,\xi) = Z(g,\xi)$ or there exists a geodesic $c$ from
    $gX_\bfY$ to $Z(g,\xi)$ with $d^{\max}_{Y_i(g,\xi)} (gX_\bfY, Z(g,\xi)) \geq \Theta_i$.
    (The uniqueness of $Y_i(g,\xi)$ is a consequence of Lemma~\ref{lem:angles} since
    $\Theta_i > \theta_\calp$.)
  \item For $Y \in \bfY$, $i=1,2$ we define $U_+(Y,i) \subseteq G \x \Delta$ to consist of all
    $(g,\xi)$ with $d^\pi_\bfY(gX_\bfY,\xi) > \Theta_4$ and $Y = Y_i(g,\xi)$.
    We define $U(Y,i)$ as the interior of $U_+(Y,i)$ in $G \x \Delta$.
    For $i = 1,2$ we set $\calu(i) := \{ U(Y,i) \mid Y \in \bfY \}$.
  \end{itemize}

  \begin{lemma}
    \label{lem:U(Y,i)-disjoint} 
    For $Y \neq Y' \in \bfY$ we have $U(Y,i) \cap U(Y',i) = \emptyset$.
    For $g \in G$, $Y \in \bfY$ we have $g(U(Y,i)) = U(gY,i)$.
  \end{lemma}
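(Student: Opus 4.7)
The plan is to verify both statements by unwinding the definition of $U_+(Y,i)$ and transferring the conclusions to its interior $U(Y,i)$. Recall that $U_+(Y,i)$ is defined as the set of $(g,\xi)$ with $d^\pi_\bfY(gX_\bfY,\xi) > \Theta_4$ and $Y = Y_i(g,\xi)$, where $Y_i$ is the unique vertex of $\calp_K(\bfY)$ satisfying the two conditions listed in Subsection~\ref{subsec:U(Y,i)}.

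For the disjointness, the main point is that $Y_i(g,\xi)$ is a well-defined function of $(g,\xi)$ in the domain $\{(g,\xi) : d^\pi_\bfY(gX_\bfY,\xi) > \Theta_4\}$ (the uniqueness, which was noted parenthetically in the definition, follows from Lemma~\ref{lem:angles} together with $\Theta_i > \theta_\calp$). Consequently the family $\{U_+(Y,i)\}_{Y \in \bfY}$ is automatically pairwise disjoint, and taking interiors preserves disjointness; hence $U(Y,i) \cap U(Y',i) = \emptyset$ whenever $Y \neq Y'$.

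For the equivariance, the claim reduces to checking $Y_i(hg,h\xi) = hY_i(g,\xi)$ for all $h \in G$. The projection distances $d^\pi_Y$ are $G$-equivariant by Definition~\ref{def:proj-data}, hence so are the distances $d^{\max}_Y$ built from them; and the map $Z$ was arranged in Subsection~\ref{subsec:proj-Z} to satisfy $Z(hg,h\xi) = hZ(g,\xi)$. Both defining conditions of $Y_i$ are therefore $G$-equivariant, giving $Y_i(hg,h\xi) = hY_i(g,\xi)$ and hence $h U_+(Y,i) = U_+(hY,i)$. Since $G$ acts on $G \x \Delta$ by homeomorphisms, taking interiors commutes with the $G$-action, and we conclude $g(U(Y,i)) = U(gY,i)$.

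There is no real obstacle here; the lemma is a bookkeeping statement that the function $Y_i$ partitions its domain equivariantly. The only subtlety worth double-checking is that the uniqueness of $Y_i(g,\xi)$ really does follow from the bounds on $\Theta_i$ together with Lemma~\ref{lem:angles}, so that ``$Y = Y_i(g,\xi)$'' is a well-defined condition rather than a choice.
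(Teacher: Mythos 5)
Your proof is correct and takes the same route as the paper, which simply observes that both claims are direct consequences of the definition of $U(Y,i)$. You have unpacked the two points that make this true: well-definedness of $Y_i(g,\xi)$ as a function (so the $U_+(Y,i)$ partition their common domain) and $G$-equivariance of the data defining $Y_i$, together with the fact that $G$ acts by homeomorphisms so taking interiors preserves both disjointness and equivariance.
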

 
  \begin{proof}
    This is a direct consequence of the definition of $U(Y,i)$.
  \end{proof}
    
  \begin{lemma}
    \label{lem:calu(g,i)-long}
    Let $(g,\xi) \in G \x \Delta$ with $d^\pi(gX_\bfY,\xi) > \Theta_5$.
    Then there are $Y \in \bfY$ and $i \in \{ 1,2 \}$ with 
    $gS \x \{ \xi \} \subseteq U(Y,i)$.
  \end{lemma}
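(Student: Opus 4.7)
The plan is to combine Lemma~\ref{lem:Y} with the two-scale device encoded in $Y_1, Y_2$: Lemma~\ref{lem:Y} yields a single vertex $Y$ that plays a special role uniformly over $gS$ and a neighborhood $U$ of $\xi$, and the two scales $\Theta_1, \Theta_2$ give the flexibility needed to absorb perturbation errors arising from varying $s \in S$ and $\xi' \in U$.

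First, I would apply Lemma~\ref{lem:Y} to obtain $Y$ and $U$, then further shrink $U$ using coarse semi-continuity (axiom~\ref{axiom:proj:coarse-semi-cont}) together with Lemma~\ref{lem:Theta-bfS} so that $d^\pi_\bfY(gsX_\bfY, \xi') > \Theta_4$ for every $(s, \xi') \in S \x U$. Then $Z(gs, \xi')$ is defined, and by Lemma~\ref{lem:Y}, $Y$ lies on the geodesic $[gsX_\bfY, Z(gs, \xi')]$ in $\calp_K(\bfY)$ either as the endpoint or as an interior vertex with angle at least $\Theta_3 - \theta_\calp$. In particular $Y_i(gs, \xi')$ always lies at or before $Y$ on this geodesic, so $Y_i(gs, \xi') = Y$ is equivalent to the statement that no intermediate vertex of the subsegment $[gsX_\bfY, Y]$ carries angle $\geq \Theta_i$.

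Next, I would compare the relevant intermediate angles with those on the base segment $[gX_\bfY, Y]$. Using Lemma~\ref{lem:Theta-bfS} to account for the change from $gX_\bfY$ to $gsX_\bfY$, the triple inequality~\ref{axiom:proj:triples} to pass from $Y$ to the possibly-further endpoint $Z(gs, \xi')$, and coarse semi-continuity~\ref{axiom:proj:coarse-semi-cont} to pass from $\xi$ to $\xi'$, all these angles differ from the corresponding angles $a_k$ on $[gX_\bfY, Y]$ by at most a uniform constant $\eta = O(\theta + \theta_S + \theta_\calp)$, after further shrinking $U$. Since the scales were chosen with $\Theta_2 - \Theta_1 \gg \eta$, any single angle can be ``near'' at most one of the two thresholds.

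I would then pick $i$ based on the maximum intermediate angle $M = \max_k a_k$ on $[gX_\bfY, Y]$: use $i = 1$ if $M < \Theta_1 - \eta$, and $i = 2$ otherwise. The first choice trivially gives $Y_1(gs, \xi') = Y$, since then every perturbed intermediate angle remains below $\Theta_1$. For the second, one additionally needs $M < \Theta_2 - \eta$, which is the main obstacle of the proof. To verify it I would exploit the minimality built into the construction of $Y$ in Lemma~\ref{lem:Y}: an intermediate vertex $V$ strictly before $Y$ with angle $\geq \Theta_2$, combined with the inequality $d^\pi_{Z(g, \xi)}(gX_\bfY, \xi) > \Theta_5$ coming from the $\Theta_5$-preference in Subsection~\ref{subsec:proj-Z} and with the triple inequality~\ref{axiom:proj:triples} applied at $V$ and $Z(g, \xi)$, should force $V$ itself to satisfy the threshold condition making it an alternate Case-1 $Z$-candidate lying strictly closer to $gX_\bfY$ than $Y$. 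This contradicts the minimality clause in Lemma~\ref{lem:Y}. The calibration of the constants $\Theta_0 \ll \cdots \ll \Theta_5$ relative to the errors $\theta, \theta_S, \theta_\calp$ is designed precisely to make this implication go through.
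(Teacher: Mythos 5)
The proposal misidentifies the target of the argument, and this leads to a step that cannot be repaired.

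You take $Y$ to be the vertex produced by Lemma~\ref{lem:Y} and try to show that $Y_i(gs,\xi')=Y$ for a single $i\in\{1,2\}$ uniformly in $(s,\xi')\in S\times U$. But the conclusion of Lemma~\ref{lem:calu(g,i)-long} asks for \emph{some} $Y\in\bfY$ with $gS\times\{\xi\}\subseteq U(Y,i)$, and this $Y$ is not in general the output of Lemma~\ref{lem:Y}. In the paper's notation the output of Lemma~\ref{lem:Y} plays the role of $Y_3$, while the lemma is proved by exhibiting the cover member $U(Y_1(g,\xi),1)$ or $U(Y_2(g,\xi),2)$, and $Y_1(g,\xi),Y_2(g,\xi)$ typically lie strictly between $gX_\bfY$ and $Y_3$ on a geodesic of $\calp_K(\bfY)$. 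Your plan collapses all three cases of the paper's case analysis into showing $Y_i(gs,\xi')=Y_3$, which is simply false in the case $Y_0\neq Y_2\neq Y_3$.

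The concrete failure is your attempted verification of $M<\Theta_2-\eta$. You argue that an intermediate vertex $V$ on $[gX_\bfY,Y_3]$ with angle $\geq\Theta_2$ would contradict the minimality of $Y$ in Lemma~\ref{lem:Y}, by forcing $V$ to be an ``alternate Case-1 $Z$-candidate.'' This is not so: the minimality in Lemma~\ref{lem:Y} ranges only over vertices of the form $Z(s,\zeta)$, which were fixed once and for all by the (equivariant) choice in Subsection~\ref{subsec:proj-Z}. An arbitrary vertex $V$ on the geodesic is not one of the $Z(s,\zeta)$, so there is no conflict with minimality. (Moreover, $\Theta_2<\Theta_4$, so even the numerical precondition $d^\pi_V(gX_\bfY,\xi)>\Theta_4$ needed for $V$ to qualify as a potential $Z$ does not follow from the angle at $V$ being $\geq\Theta_2$.) There is in fact no reason for $M<\Theta_2-\eta$ to hold: the geodesic from $gX_\bfY$ to $Z(g,\xi)$ can carry several intermediate angles of magnitude $\geq\Theta_4$, and then $Y_2(g,\xi)$ is strictly before $Y_3$, which is exactly the situation the auxiliary vertices $Y_0,Y_1,Y_2$ are introduced to capture.

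A correct route keeps the two scales but applies them to the definitions of Subsection~\ref{subsec:U(Y,i)}: show that the perturbed angles at vertices of the base geodesic before $Y_0$ remain $<\Theta_1$, that the angle at $Y_0$ moves by $<\theta_S$, and that the angles at vertices strictly between $Y_0$ and $Y_3$ are \emph{exactly} preserved (Lemma~\ref{lem:Theta-bfS-angles}~\ref{lem:bfS-angles:equal-after-attraction}, rather than only up to a constant $\eta$). One then splits according to whether $Y_0$ coincides with $Y_1$ and $Y_2$ or not, choosing $i=1$ or $i=2$ accordingly, and uses $\Theta_2-\Theta_1\gg\theta_S$ to resolve the remaining threshold ambiguity at $Y_0$. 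This is what the paper does; your version would need to be reorganized along these lines rather than pinned to the single vertex $Y_3$.
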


  \begin{proof}  
    We can assume $g=e$. 
    Set $Y_i := Y_i(e,\xi)$ for $i=0,1,2$.

    According to Lemma~\ref{lem:Y} there is $Y_3 \in \bfY$ and an open neighborhood $U$ of $\xi$ in $\Delta$ 
    such that for any $s \in S$, $\zeta \in U$, either $Y_3 = Z(s,\zeta)$ or 
    $d^{\max}_{Y_3}(sX_\bfY,Z(s,\zeta)) > \Theta_3$.
    This implies, since $\Theta_3 > \theta_\calp$, by 
    Lemma~\ref{lem:angles}~\ref{lem:angles:equal-after-attraction},
    \begin{equation}
      \label{eq:Y_3-vs-Z}
      d^{\max}_Y(sX_\bfY,Z(s,\zeta)) = d^{\max}_Y(sX_\bfY,Y_3)
    \end{equation}
    for any $Y$ on a geodesic from $sX_\bfY$ to $Y_3$.  
   
    We claim that for any internal vertex $Y$ of a geodesic from $Y_0$ to $Y_3$ and 
    any $s \in S$, $\zeta \in U$ we have 
    \begin{equation} \label{eq:same-between-Y0-and-Y3}
      d^{\max}_{Y}(sX_\bfY,Z(s,\zeta)) = d^{\max}_{Y}(X_\bfY,Z(e,\xi)).
    \end{equation}
    To prove this claim we observe first
    \begin{equation*}
      d^{\max}_{Y_0}(X_\bfY,Y_3)  > 
         d^{\max}_{Y_0}(X_\bfY,Z(e,\xi)) - \theta_S \geq \Theta_0 - \theta_S > \theta_S. 
    \end{equation*}
    Therefore Lemma~\ref{lem:Theta-bfS-angles}~\ref{lem:bfS-angles:equal-after-attraction} implies
    $d^{\max}_Y(sX_\bfY,Y_3) = d^{\max}_Y(X_\bfY,Y_3)$ for all $s \in S$.
    Now~\eqref{eq:same-between-Y0-and-Y3} follows from~\eqref{eq:Y_3-vs-Z}.
 
    Next we claim that, provided $Y_0 \neq Y_3$, we have   
    \begin{equation}
      \label{eq:at-Y0}
      |d^{\max}_{Y_0}(sX_\bfY,Z(s,\zeta)) - d^{\max}_{Y_0}(X_\bfY,Z(e,\xi))| < \theta_S
    \end{equation}
    for all $s \in S$, $\zeta \in U$.
    To prove this claim we note that by~\eqref{eq:Y_3-vs-Z} to we have,  
    $d^{\max}_{Y_0}(X_\bfY,Y_3) = d^{\max}_{Y_0}(X_\bfY,Z(e,\xi)) \geq \Theta_0 > \theta_\bfS$.
    Thus, by Lemma~\ref{lem:Theta-bfS-angles}~\ref{lem:bfS-angles:attraction},
    any geodesic from $sX_\bfY$ to $Y_3$ will pass through $Y_0$.
    Using again~\eqref{eq:Y_3-vs-Z} we have $d^{\max}_{Y_0}(sX_\bfY,Z(s,\zeta)) = d^{\max}_{Y_0}(sX_\bfY,Y_3)$  
    for $s \in S$, $\zeta \in U$. 
    Now Lemma~\ref{lem:Theta-bfS-angles}~\ref{lem:bfS-angles:X-vs-X'-ds} implies~\eqref{eq:at-Y0}.

    Let for $s \in S$, $Y$ be an internal vertex of a geodesic from $sX_\bfY$ to $Y_0$.
    We claim that then for any $\zeta \in U$
    \begin{equation}
      \label{eq:before-Y0}
      d^{\max}_{Y}(sX_\bfY,Z(s,\zeta)) < \Theta_1.
    \end{equation}
    Suppose, by contradiction, $d^{\max}_{Y}(sX_\bfY,Z(s,\zeta)) \geq \Theta_1$.
    Then, by~\eqref{eq:Y_3-vs-Z}, $d^{\max}_{Y} (sX_\bfY,Y_3) \geq \Theta_1$.
    Lemma~\ref{lem:Theta-bfS-angles}~\ref{lem:bfS-angles:X-vs-X'-ds}
    implies $d^{\max}_{Y}(X_\bfY,Y_3) > \Theta_1 - \theta_{S} > \Theta_0$.
    Using~\eqref{eq:Y_3-vs-Z} again, we have $d^{\max}_{Y}(X_\bfY,Z(e,\xi)) > \Theta_0$.
    By definition of $Y_0$ and $Y_3$, this implies that $Y$ is closer to $Y_3$ then $Y_0$.
    But this contradicts that $Y$ is closer to $sX_\bfY$ than $Y_0$.
    This establishes~\eqref{eq:before-Y0}. 
   
    Now, if $Y_0 = Y_1 = Y_2 \neq Y_3$, then, by~\eqref{eq:at-Y0}, 
    $$d^{\max}_{Y_0}(sX_\bfY,Z(s,\zeta)) \geq d^{\max}_{Y_0}(X_\bfY,Z(e,\xi))
    - \theta_S \geq \Theta_2 - \theta_S > \Theta_1.$$
    Using~\eqref{eq:before-Y0} this implies $Y_0 = Y_1 = Y_1(s,\zeta)$
    for all $s \in S$, $\zeta \in U$.  Thus, in this case, $S \x U
    \subseteq U_+(Y_1,1)$ and therefore $S \x \{\xi\} \subseteq
    U(Y_1,1)$.  

    If $Y_0 \neq Y_2 \neq Y_3$ then~\eqref{eq:same-between-Y0-and-Y3},
    \eqref{eq:at-Y0} and~\eqref{eq:before-Y0}, imply $Y_2(s,\zeta) =
    Y_2$ for all $s \in S$, $\zeta \in U$.  Thus, in this case
    $S \x U \subseteq U_+(Y_2,2)$ and therefore $S \x \{\xi\}
    \subseteq U(Y_2,2)$. 

    Finally, if $Y_2 = Y_3$, then we use in addition that
    $d^{\max}_{Y_3}(sX_\bfY,Z(s,\zeta)) > \Theta_3$ for all $s \in S$, $\zeta \in U$
    with $Z(s,\zeta) \neq Y_3$.
    Combining this with~\eqref{eq:same-between-Y0-and-Y3},
    \eqref{eq:at-Y0} and~\eqref{eq:before-Y0}, we find again $Y_2(s,\zeta) =
    Y_2$ for all $s \in S$, $\zeta \in U$.  Thus, also in this case
    $S \x U \subseteq U_+(Y_2,2)$ and therefore $S \x \{\xi\}
    \subseteq U(Y_2,2)$.  
  \end{proof}
  
\begin{remark} 
	Informally the key observation in the proof of Lemma~\ref{lem:calu(g,i)-long} is:
	angles at $Y_0$ and $Y_3$ depend on $(s,\zeta)$ only up to a bounded error and 
	all other angles behave as indicated in Figure~\ref{fig:Y_i}. 
    \begin{figure}[H]
    \begin{tikzpicture}
       \draw (0,0) -- (10,0) (0,-1.5) -- (3,0)  (7,0) -- (9.5,-1.5);
       \fill [black,opacity=.5] (0,0) circle (2pt); \draw (-.5,0) node {$X_\bfY$};
       \fill [black,opacity=.5] (10,0) circle (2pt);  \draw (10.5,0) node {$\xi$};
       \fill [black,opacity=.5] (0,-1.5) circle (2pt); \draw (-.5,-1.5) node {$sX_\bfY$};
       \fill [black,opacity=.5] (9.5,-1.5) circle (2pt); \draw (10,-1.5) node {$\zeta$};  
       \fill [black,opacity=.5] (3,0) circle (2pt); \draw (3,.5) node {$Y_0$};
       \fill [black,opacity=.5] (3.6,0) circle (2pt); \draw (3.6,.5) node {$Y_1$};
       \fill [black,opacity=.5] (5,0) circle (2pt); \draw (5,.5) node {$Y_2$};
       \fill [black,opacity=.5] (7,0) circle (2pt); \draw (7,.5) node {$Y_3$};
       \draw [dotted] (3,.3) -- (3,-3)  (7,.3) -- (7,-3);
       \draw (1,-2.3) node {angles are small};
       \draw (5,-1) node {angles do not};
       \draw (5,-1.6) node {depend on $(s,\zeta)$}; 

    \end{tikzpicture}
     \caption{The position of the $Y_i$} \label{fig:Y_i}
    \end{figure}
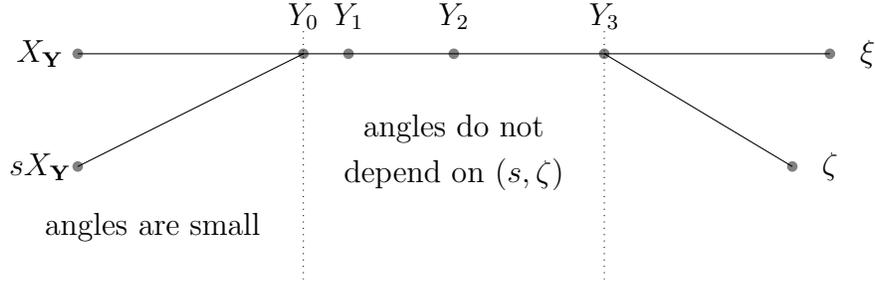  
\end{remark}  
  
  \begin{proof}[Conclusion of Proof of Theorem~\ref{prop:projection-cover}]
    We can use $\calu := \calu(1) \cup \calu(2)$ and $\Theta := \Theta_5$.
    Lemma~\ref{lem:U(Y,i)-disjoint} implies that the order of $\calu$ is at most $1$
    and that its members are $\calf_\bfY$-sets.
    Lemma~\ref{lem:calu(g,i)-long} states that $\calu$ has the
    property required in
    \ref{prop:projection-cover:long} in the $G$-direction.
  \end{proof}

  \section{Partial covers from a flow space}
     \label{sec:cover-thick}

   Throughout this section $\Delta$ will be a finite dimensional, metrizable, compact
   space with a $G$-action.
   Moreover, $\overline T = T \cup \Delta$,  $\calg_K$ will
   be flow data as in Definition~\ref{def:flow-data}.

   We fix a base point $x_0 \in T$ and define for $K \subseteq T$ compact 
   $$(G \x \Delta)_{K} \subseteq G \x \Delta$$ 
   to consist of all $(g,\xi)$ for which
   there exists  a ray $c \in \calg_K$ with $c(0)=gx_0$ and $c(\infty)=\xi$, i.e.,
   $(G \x \Delta)_{K}$ is the $K$-thick part of $G \x \Delta$.

   \begin{thm} \label{prop:cover-flow}
     Assume that the flow axioms~\ref{axiom:flow:small-at-infty},~\ref{axiom:flow:fellow-travel} 
     and~\ref{axiom:flow:infinite} listed in~\ref{axiom:flow} are satisfied.
     Then there exists a number $N_{\thick}$ with the following property.
     For $S \subseteq G$ finite and $K \subseteq T$ compact,
     there exists a $G$-invariant collection $\calu_{\thick}$ of
     open $\calf$-subsets of $G \x \Delta$ such that the following two
     conditions are satisfied:
     \begin{enumerate}
     \item \label{prop:cover-flow:dim}
        the order of $\calu_{\thick}$ is at most $N_{\thick}$;
     \item \label{prop:cover-flow:long}
        for any $(g,\xi) \in (G \x \Delta)_{K}$ there is $U \in \calu_{\thick}$
        with $gS \x \{ \xi \} \subseteq U$.
    \end{enumerate}
   \end{thm}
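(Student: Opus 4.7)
The plan is to adapt the coarse flow argument of~\cite{Bartels-Coarse-flow}: build a $G$-equivariant coarse flow space $\CF_K$ from the data of Axiom~\ref{axiom:flow}, produce a $G$-invariant long-thin open $\VCyc$-cover of $\CF_K$ of bounded order, and pull it back to $(G \x \Delta)_K$ via a ``flow for time $\tau$'' map for $\tau$ large. Axiom~\ref{axiom:flow:infinite} provides for each nonempty $T_{K,\rho}(\xi_-,\xi_+)$ an infinite quasi-geodesic axis $c_{\xi_-,\xi_+}\colon \IR \to T$ whose $R$-neighborhood contains $T_{K,\rho}(\xi_-,\xi_+)$. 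I would take $\CF_K$ to be the subset of $\overline{T} \x T \x \overline{T}$ consisting of triples $(\xi_-,p,\xi_+)$ with $p \in T_{K,\rho}(\xi_-,\xi_+)$, equipped with the diagonal $G$-action; each point of $\CF_K$ then comes with a well-defined local flow direction along $c_{\xi_-,\xi_+}$, and properness of the $G$-action on $T$ forces the $G$-stabilizer of any such triple to be virtually cyclic.

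For any chosen $L > 0$ I would then construct, by the standard long thin cover machinery (see e.g.~\cite{Bartels-Lueck-Reich-K-FJ-hyp,Bartels-Coarse-flow}), a $G$-invariant open cover $\calv$ of $\CF_K$ of order at most some $N_\thick$, such that every flow segment of length $\leq L$ is contained in a single member of $\calv$. Members of $\calv$ are either bounded neighborhoods of axes (with infinite cyclic stabilizer) or flow boxes of length $\sim L$ with small transverse cross-section and finite stabilizer; in either case they are $\VCyc$-sets. Finite dimensionality and separability of $\overline{T}$ (and hence of $\CF_K$) keep $N_\thick$ bounded independently of $L$.

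Pick for each $(g,\xi) \in (G \x \Delta)_K$ a ray $c_{g,\xi} \in \calg_K$ from $gx_0$ to $\xi$ and set $\iota_\tau(g,\xi) := \bigl(gx_0,\, c_{g,\xi}(\tau),\, \xi\bigr)$. After composing with a partition of unity subordinate to $\calv$ and pulling back, as in~\cite{Bartels-Coarse-flow}, one obtains the desired cover $\calu_\thick$ of $(G \x \Delta)_K$ by open $\VCyc$-sets of order at most $N_\thick$. For the $S$-longness: by Axiom~\ref{axiom:flow:fellow-travel}, for each $s \in S$ the ray $c_{gs,\xi}$ stays within some $R=R(S)$ of $c_{g,\xi}$, so $\iota_\tau(gs,\xi)$ is within bounded distance of $\iota_\tau(g,\xi)$ along the flow direction; provided $L$ is chosen much larger than $R$ and the quasi-geodesic constants, any flow box containing $\iota_\tau(g,\xi)$ sufficiently deep must contain all $\iota_\tau(gs,\xi)$ as well, giving $gS \x \{\xi\} \subseteq U$ for some $U \in \calu_\thick$.

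The main obstacle is ensuring that, for $\tau$ sufficiently large, $\iota_\tau(g,\xi)$ actually sits deep inside some flow box of $\calv$; this is where the failure of cocompactness of the $G$-action on $T$ makes the model argument from the introduction insufficient. One can only translate by $G$ so that $c_{g,\xi}(\tau)$ lies in a fixed compact subset, not so that $gx_0$ does. Arguing by contradiction from a sequence of counterexamples indexed by $\tau \to \infty$, Axioms~\ref{axiom:flow:small-at-infty} and~\ref{axiom:flow:infinite} are exactly what allow the extraction of a subsequential geometric limit that is a bi-infinite quasi-geodesic sitting inside $\CF_K$, while Axiom~\ref{axiom:flow:fellow-travel} ensures that the $S$-indexed fellow traveling survives in the limit; the resulting limit triple would lie well inside some flow box of $\calv$, contradicting the choice of the counterexamples. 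Verifying that this limit argument closes cleanly under the three flow axioms is the technical heart of the proof.
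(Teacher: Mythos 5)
You take essentially the same route as the paper's proof --- a coarse flow space, a long thin $\VCyc$-cover of it via the black-box theorem~\cite[Thm.~1.1]{Bartels-Coarse-flow}, a coarse flow-for-time-$\tau$ map used to pull the cover back to $G\times\Delta$, and a contradiction/limit argument powered by Axiom~\ref{axiom:flow:small-at-infty} for the $S$-longness --- so the overall strategy is sound. The one genuine gap is the justification of the order bound. You assert that finite dimensionality and separability of $\overline T$ alone keep $N_\thick$ bounded independently of the chosen flow length; that is not what the cited theorem provides. The order it gives is controlled by the dimension of the flow space \emph{together with} a uniform doubling constant for the coarse flow lines $\{v : (x_-,v,\xi_+)\in\CF\}$, and establishing that these flow lines quasi-isometrically embed into $\IZ$ with multiplicative constant independent of $K$ and $\rho$ --- which is exactly where Axiom~\ref{axiom:flow:infinite} earns its keep, and which also supplies the $\VCyc$-isotropy hypothesis --- is the technical heart of the argument and is absent from your sketch. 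Your informal picture of the members of $\calv$ as ``neighborhoods of axes or flow boxes with small cross-section'' is a heuristic, not a construction: in the axiomatic setting one extracts the cover from the black box after verifying its doubling and isotropy hypotheses, rather than building it by hand. Two smaller but relevant points of divergence from the paper: the middle coordinate of the coarse flow space is taken in the discrete orbit $V = Gx_0$ rather than in all of $T$ (this is what makes the doubling and isotropy verifications tractable, and matches the hypotheses of the black-box theorem), and $\iota_\tau(g,\xi)$ is defined as a coarse, set-valued map rather than by fixing a single ray $c_{g,\xi}$, so that $G$-equivariance holds on the nose and the limit argument closes without any coherent equivariant choice of rays.
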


   The proof of this result is in two steps.
   We first build a coarse flow space that admits long thin covers, i.e., covers that have a large Lebesgue number in the direction of the flow.
   Then we coarsely map $(G \x \Delta)_K$ to this flow space and show that for large time the coarse flow sends $gS \x \{ \xi \}$ into such long and thin sets.
   The cover $\calu_{\thick}$ is then obtained by pulling back the long thin cover from the coarse flow space.  
   To guide the reader through this proof we comment on the dependence of the appearing constants.
   In Theorem~\ref{prop:cover-flow} we are given $K \subseteq T$ compact and $S \subseteq G$ finite.
   Together, $K$ and $S$ determine a number $\rho := d_T(K \cup Sx_0, x_0)$.
   Through the fellow traveler axiom~\ref{axiom:flow:fellow-travel} $\rho$ determines a number $\beta$ in Lemma~\ref{lem:beta-rho}.
   Finally, the time $\tau$ for which the coarse flow is applied is provided in Lemma~\ref{lem:thick-from-long}.

   \subsection{Coarse flow spaces}

   We set $V := Gx_0 \subseteq T$.
   Note that since the action of $G$ on $T$ is proper, $\overline{V} := V \cup \Delta$ 
   is a closed and therefore compact subspace of $\overline{T}$. 
   We will define the coarse flow space as a subspace of $\overline{V} \x V \x \Delta$.
   Informally, it consists of all triples $(\xi_-,v,\xi_+)$ for which $v$ coarsely belongs to a quasi geodesic from $\xi_-$ to $\xi_+$.
   
   \begin{definition}  \label{def:CFK}
     For $K \subseteq T$ compact and $\rho > 0$
     we define $\CF_{0}(K,\rho)$ be the subspace of $V \x V \x \Delta$ consisting of all
     triples $(v_-,v,\xi_+) \in V \x V \x \Delta$ for which there is $c \in \calg_K$ with
     $d_T(c(0), v_-) \leq \rho$, $d_T(\Image(c),v) \leq \rho$ and $c(\infty) = \xi_+$. 

     We define the \emph{coarse $(K,\rho)$-flow space $\CF(K,\rho)$}  as the closure
     of $\CF_{0}(K,\rho)$ inside of $\overline{V} \x V \x \Delta$.
     For $(x_-,\xi_+) \in \overline{V} \x \Delta$ we define the \emph{coarse flow line} 
     between $x_-$ and $\xi_+$ as 
     \begin{equation*}
        V_{K,\rho}({x_-,\xi_+})  :=  \{ v \in V \mid (x_-,v,\xi_+) \in \CF({K,\rho}) \} \subseteq V.
     \end{equation*} 
   \end{definition}
   
  \begin{lemma} 
    \label{lem:V_K-rho}
    For $K \subseteq T$ compact and $\rho > 0$ there is $R > 0$ with the following property.
    \begin{enumerate}
    \item \label{lem:V_K-rho:ray} 
      Let $(v_-,\xi_+) \in V \x \Delta$ with $V_{K,\rho}(v_-,\xi_+) \neq \emptyset$.
      Then there exists a quasi-geodesic ray $c \colon [0,\infty) \to T$ such that
      the coarse flow line $V_{K,\rho}(v_-,\xi_+)$ is contained in the $R$-neighborhood
      of the image of $c$.
    \item \label{lem:V_K-rho:bi-infinite}
      Let $(\xi_-,\xi_+) \in \Delta \x \Delta$ with $V_{K,\rho}(\xi_-,\xi_+) \neq \emptyset$. 
      Then there exists a quasi-geodesic $c \colon \IR \to T$ such that
      the coarse flow line $V_{K,\rho}(\xi_-,\xi_+)$ is contained in the $R$-neighborhood
      of the image of $c$.
    \end{enumerate}
    Here the additive constant for the quasi-geodesic (ray) $c$
    depends only on $K$ and $\rho$, while the multiplicative constant is independent from
    $K$ and $\rho$.
  \end{lemma}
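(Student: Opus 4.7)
The plan is to handle (b) as a direct consequence of Axiom~\ref{axiom:flow:infinite}, and (a) by an Arzel\`a--Ascoli argument leveraging Axiom~\ref{axiom:flow:fellow-travel}. A preliminary observation used in both parts: since the $G$-action on $T$ is proper, the orbit $V = Gx_0$ is discrete in $T$, and as $T$ is open in $\overline{T}$, any sequence in $V$ converging in $\overline V$ to a point $v \in V$ is eventually constant equal to $v$. Unwinding the definition of $v \in V_{K,\rho}(x_-,\xi_+)$ therefore produces rays $c_n \in \calg_K$ with $c_n(0)$ within $\rho$ of points $v_-^n \to x_-$ in $\overline V$, $c_n(\infty) = \xi_+^n \to \xi_+$ in $\Delta$, and $d_T(\Image(c_n), v) \leq \rho$.

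For (b), where $x_- = \xi_- \in \Delta$: the quantities $d_T(\Image(c_n), x_0)$ and $d_T(c_n(0), v_-^n)$ are bounded (by $d_T(v, x_0) + \rho$ and $\rho$ respectively), so Axiom~\ref{axiom:flow:small-at-infty} forces $c_n(0) \to \xi_-$. This places $v$ in $T_{K,\rho}(\xi_-,\xi_+)$, and Axiom~\ref{axiom:flow:infinite} then supplies the required bi-infinite quasi-geodesic with the advertised dependence of constants.

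For (a), where $x_- = v_- \in V$, I would pick any $v_0 \in V_{K,\rho}(v_-,\xi_+)$ together with approximating rays $c_n \in \calg_K$; by the preliminary observation these satisfy $c_n(0) \in B_\rho(v_-)$. They form a uniformly $(\mu, A_K)$-quasi-geodesic family starting in a bounded set, so a standard Arzel\`a--Ascoli argument in the proper space $T$ produces a subsequential limit $c \colon [0,\infty) \to T$ that is itself a $(\mu, A_K)$-quasi-geodesic ray; this $c$ is the ray demanded in the conclusion. To verify the neighborhood condition, take any $v \in V_{K,\rho}(v_-,\xi_+)$ with its own rays $c_n^{(v)}$, and let $s_n \geq 0$ satisfy $d_T(c_n^{(v)}(s_n), v) \leq \rho$. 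Quasi-geodesicity bounds $s_n$ uniformly in $n$ in terms of $d_T(v,v_-)$, $\rho$, $\mu$, and $A_K$, so a further subsequence has $s_n \to s^* \in [0,\infty)$. Then Axiom~\ref{axiom:flow:fellow-travel} applied with parameters $(v_-, \xi_+, s^*)$ and the resulting neighborhood $U_+$ yields $d_T(c_n(s^*), c_n^{(v)}(s^*)) < R_0$ for all large $n$, where $R_0$ is the fellow-traveling constant; combining with the inequality $d_T(c_n^{(v)}(s^*), v) \leq \rho + \mu|s^* - s_n| + A_K$ and $c_n(s^*) \to c(s^*)$ gives $d_T(c(s^*), v) \leq R_0 + \rho + A_K$, so $V_{K,\rho}(v_-,\xi_+)$ lies in the $(R_0 + \rho + A_K)$-neighborhood of $\Image(c)$.

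The main subtlety is to verify that constants have the advertised dependencies: the multiplicative constant $\mu$ of the limit ray $c$ is inherited from $\calg_K$ and is therefore independent of $K$ and $\rho$, while the additive contribution $R_0 + \rho + A_K$ depends only on $K$ and $\rho$. The absence of a ``ray'' version of Axiom~\ref{axiom:flow:infinite} is what forces the Arzel\`a--Ascoli detour in (a), and the fellow-traveling axiom substitutes for the uniformity that~\ref{axiom:flow:infinite} would have supplied in the bi-infinite case.
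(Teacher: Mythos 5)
Your part (b) matches the paper's argument: using the small-at-infinity axiom (F1) you place every $v \in V_{K,\rho}(\xi_-,\xi_+)$ inside $T_{K,\rho}(\xi_-,\xi_+)$ and then quote the infinite-quasi-geodesic axiom (F3). For part (a), you take a genuinely different route to construct the ray $c$. The paper makes no limiting argument: it invokes the fellow-traveling axiom (F2) at each time $t$ to obtain a nested family of neighborhoods $W_{t}\supseteq W_{t'}$ (for $t\leq t'$) of $\xi_+$, picks for each $t$ a ray $c_t\in\calg_K$ with $c_t(0)\in B_\rho(v_-)$ and $c_t(\infty)\in W_t$ (available because $V_{K,\rho}(v_-,\xi_+)\neq\emptyset$), and simply sets $c(t):=c_t(t)$; the (F2) estimate then shows this is a quasi-geodesic ray whose additive constant grows by at most $R$, and that all of $V_{K,\rho}(v_-,\xi_+)$ sits in its $(R+\rho)$-neighborhood. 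You instead fix one $v_0$, extract a subsequential limit of its approximating rays, and compare an arbitrary $v$ to the limit via (F2). Your fellow-traveling estimate and constant bookkeeping are correct, but the Arzel\`a--Ascoli step needs a small adjustment: the rays in $\calg_K$ are $(\mu,A_K)$-quasi-geodesics, not assumed continuous, so the classical Arzel\`a--Ascoli theorem does not apply as stated. A diagonal argument giving pointwise convergence along $\IN$ (or a countable dense subset of $[0,\infty)$), followed by interpolation, produces the needed quasi-geodesic ray at the cost of an extra $O(A_K)$ in the final constant; this does not affect the required dependence on $K$ and $\rho$. The paper's piecewise definition sidesteps this technicality entirely, which is what buys its brevity, while your construction has the merit of making explicit where (F2) substitutes for the absent ray version of (F3).
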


  \begin{proof}
    \ref{lem:V_K-rho:ray}
    We use $R$ from the fellow traveling axiom~\ref{axiom:flow:fellow-travel}.
    Still from~\ref{axiom:flow:fellow-travel} we obtain for all $t \in [0,\infty)$
    a neighborhood $W_t$ of $\xi_+$ in $\Delta$ such that for all $c,c' \in \calg_K$
    with $d_T(c(0),v_-), d_T(c'(0),v_-) \leq \rho$ and $c(\infty), c'(\infty) \in W_t$ 
    we have $d_T(c(t),c'(t)) \leq R$.
    As $c$ is a quasi-geodesics with uniform constants, $d_T(c(n),c(t))$ is uniformly bounded for $t \in [n,n+1)$, and similar for $c'$. 
    After increasing the constant $R$, if necessary, we can assume $W_t$ is constant on intervals
    $[n,n+1)$, $n \in \IN$.
    Now we can also assume, that for $t \geq t'$ we have $W_t \subseteq W_{t'}$.
    If $V_{K,\rho}(v_-,\xi_+) \neq \emptyset$, then for $t \geq 0$ there is $c_t \in \calg_K$
    with $d_T(c_t(0),v_-) \leq \rho$ and $c_t(\infty) \in W_t$.
    We now define the quasi-geodesic ray $c$ by $c(t) := c_t(t)$.
    The multiplicative constant for $c$ agrees with the multiplicative constant of the 
    rays from $\calg_K$, while the additive constant may increase by at most $R$.
    It is not difficult to check that $V_{K,\rho}(v_-,\xi_+)$ is contained in the
    $R + \rho$-neighborhood of the image of $c$. 
    \\[1ex]
    \ref{lem:V_K-rho:bi-infinite} 
    The quasi-geodesic $c$ and $R$ are provided by the
    infinite quasi-geodesic axiom~\ref{axiom:flow:infinite}.
    Using in addition the small at $\infty$ axiom~\ref{axiom:flow:small-at-infty}
    it follows that $V_{K,\rho}(\xi_-,\xi_+)$ is contained in the $R$-neighborhood
    of the image of $c$.  
  \end{proof}

  \begin{lemma}
    \label{lem:qi-embedding}
    Let $K \subseteq T$ compact and  $\rho > 0$. 
    Then for any $(x_-,\xi_+) \in \overline{V} \x \Delta$  there 
    exists a quasi isometric embedding 
    $V_{K,\rho}(x,\xi) \to \IZ$.

    Moreover, the additive constant for this embedding depends only on $K$ and $\rho$, while the
    multiplicative constant is also independent from $K$ and $\rho$.
  \end{lemma}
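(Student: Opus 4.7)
The plan is to use the quasi-geodesic from Lemma~\ref{lem:V_K-rho} as the ``axis'' of the coarse flow line and parameterize $V_{K,\rho}(x_-,\xi_+)$ by its time coordinate along this axis.

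More precisely, first I would apply Lemma~\ref{lem:V_K-rho} to obtain a quasi-geodesic $c\colon I\to T$, where $I=[0,\infty)$ when $x_-\in V$ and $I=\IR$ when $x_-\in\Delta$, with multiplicative constant $\mu$ independent of $K$ and $\rho$, additive constant $A$ depending only on $K$ and $\rho$, and such that $V_{K,\rho}(x_-,\xi_+)$ lies in the $R$-neighborhood of $\Image(c)$ for some $R=R(K,\rho)$. For each $v\in V_{K,\rho}(x_-,\xi_+)$ choose a time $t(v)\in I$ with $d_T(v,c(t(v)))\le R$, and define $\varphi\colon V_{K,\rho}(x_-,\xi_+)\to\IZ$ by $\varphi(v):=\lfloor t(v)\rfloor$.

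To verify that $\varphi$ is a quasi-isometric embedding, I would combine the quasi-geodesic estimates
\[
\tfrac{1}{\mu}\lvert t(v)-t(v')\rvert - A \;\le\; d_T\bigl(c(t(v)),c(t(v'))\bigr) \;\le\; \mu\lvert t(v)-t(v')\rvert + A
\]
with the triangle inequality and the bounds $d_T(v,c(t(v)))\le R$, $d_T(v',c(t(v')))\le R$ to deduce
\[
\tfrac{1}{\mu}\,d_T(v,v') - C \;\le\; \lvert t(v)-t(v')\rvert \;\le\; \mu\,d_T(v,v') + C,
\]
where $C=\mu(A+2R)$. Rounding to $\IZ$ changes both sides by at most $1$, so $\varphi$ is a $(\mu,C+1)$-quasi-isometric embedding. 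Since $\mu$ is provided by Lemma~\ref{lem:V_K-rho} independently of $K$ and $\rho$ while $A$ and $R$ depend only on $K$ and $\rho$, the multiplicative constant is independent of $K$ (and in fact of $\rho$ as well) and the additive constant depends only on $K$ and $\rho$, as required.

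There is no real obstacle: the statement is essentially just a reformulation of Lemma~\ref{lem:V_K-rho} using that a subset of a bounded neighborhood of a quasi-geodesic inherits a quasi-isometric embedding into the parameterizing line. The only minor point to check is that the choices of $t(v)$ can be made at all (which is immediate from the definition of the $R$-neighborhood) and that the rounding to $\IZ$ costs only an additive constant, which is routine.
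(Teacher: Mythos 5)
Your proof is correct and takes essentially the same approach as the paper's, which simply cites Lemma~\ref{lem:V_K-rho} and observes that the $R$-neighborhood of a quasi-geodesic (ray) is quasi-isometric to $\IZ$ (resp.\ $\IN\subset\IZ$); you have merely made the routine verification explicit, including the harmless rounding step and the bookkeeping of constants, which is consistent with the claimed dependence on $K$ and $\rho$.
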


  \begin{proof}
    This follows from Lemma~\ref{lem:V_K-rho} since the $R$-neighborhood of a quasi-geodesic
    ray is quasi-isometric to $\IN \subset \IZ$ and the $R$-neighborhood of a quasi-geodesic
    is quasi-isometric to $\IZ$.
  \end{proof}

  \subsection{Long thin covers.}

  A subset $W \subseteq V$ is said to be $R$-separated if
  $d_T(w,w') \geq R$ for all $w \neq w' \in W$.
  A subset $V_0 \subset V$ is said to be $(D,R_0)$-doubling if the 
  following holds for all $R \geq R_0$: if $W \subseteq V_0$ 
  is $R$-separated and contained in a ball of radius $2R$,
  then the cardinality of $W$ is at most $D$.   
  
  \begin{proposition}
    \label{prop:CFK}
    Let $K \subseteq T$ be compact and $\rho > 0$.
    \begin{enumerate}
    \item \label{prop:CFK:dim} $\dim \CF(K,\rho) \leq 2 \dim \Delta < \infty$. 
    \item \label{prop:CFK:doubling} 
       For all $(x_-,\xi_+) \in \overline{V} \x \Delta$
       the set $V_{K,\rho}({x_-,\xi_+})$ is $(D,R_0)$-doubling.
       Here the constant $D$ is independent of $K$, $\rho$, $x_-$ and $\xi_+$, while
       the constant $R_0$ depends on $K$ and $\rho$, but not on $x_-$ and $\xi_-$.
    \item \label{prop:CFK:isotropy}
       For each $(x_-,v,\xi_+) \in \CF({K,\rho})$, the isotropy group
       $G_{(x_-,\xi_+)} := \{ g \in G \mid g(x_-,\xi_+) = (x_-,\xi_+) \}$ 
       of $(x_-,\xi_+) \in \overline{V} \x \Delta$ 
       is virtually cyclic.
    \end{enumerate}
  \end{proposition}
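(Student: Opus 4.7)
The proof splits along the three parts of the statement, each assembled from Lemmas~\ref{lem:V_K-rho} and~\ref{lem:qi-embedding} together with classical tools.

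For (a), the plan is purely dimension-theoretic. Since the $G$-action on $T$ is proper, the orbit $V = Gx_0$ is discrete, hence $\dim V = 0$. The closure $\overline V \subseteq \overline T$ decomposes as the open subset $V$, itself a countable union of isolated points, together with the closed subset $\overline V \smallsetminus V \subseteq \Delta$. Writing $\overline V = (\overline V \smallsetminus V) \cup \bigcup_{v \in V} \{v\}$ as a countable union of closed subsets and invoking the sum theorem for covering dimension in separable metric spaces yields $\dim \overline V \leq \dim \Delta$. The product theorem then gives $\dim \CF(K,\rho) \leq \dim(\overline V \x V \x \Delta) \leq \dim \overline V + \dim V + \dim \Delta \leq 2\dim \Delta$.

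For (b), the plan is a short quasi-geodesic computation. Lemma~\ref{lem:V_K-rho} places $V_{K,\rho}(x_-,\xi_+)$ inside the $R$-neighborhood of a $(\mu,A)$-quasi-geodesic $c$, where $\mu$ is universal and $A$ depends only on $K$ and $\rho$. Any $R'$-separated subset of that neighborhood lying in a ball of radius $2R'$ projects within distance $R$ of $c$ to points whose parameter values are pairwise at least $(R'-A)/\mu$ apart, yet all lie in an interval of length at most $\mu(2R' + 2R) + A$. For $R' \geq R_0$, with $R_0$ depending only on $K$ and $\rho$, this forces the cardinality to be bounded by a constant $D$ depending only on $\mu$, giving the $(D,R_0)$-doubling property with $D$ uniform in $K$, $\rho$, $x_-$, $\xi_+$.

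For (c), the plan is to turn the quasi-isometric embedding $\psi\colon V_{K,\rho}(x_-,\xi_+) \to \IZ$ from Lemma~\ref{lem:qi-embedding} into a virtual cyclicity statement for $H := G_{(x_-,\xi_+)}$. Since $\calg_K$ is $G$-invariant, so is $\CF(K,\rho)$, and the set $V_{K,\rho}$ is $G$-equivariant; hence $H$ preserves it and acts on it properly by isometries. Pick $v \in V_{K,\rho}(x_-,\xi_+)$ and restrict $\psi$ to the orbit $Hv$. If $H$ is finite we are done. Otherwise, the doubling property from (b) combined with properness shows that only finitely many $H$-translates of $v$ can have $\psi$-image in any bounded interval, so $H$ is finitely generated by the ``unit-step'' elements that push $v$ one doubling-scale along the quasi-line; the resulting proper cocompact action of $H$ on a space quasi-isometric to a subset of $\IZ$ allows the Milnor--\u{S}varc lemma to identify $H$ as quasi-isometric to $\IZ$ (or a half-line), and therefore virtually cyclic.

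The main obstacle is precisely the last step: passing from a proper isometric action on the orbit $Hv \subseteq V_{K,\rho}(x_-,\xi_+)$ to virtual cyclicity of $H$ without assuming finite generation a priori. The key technical input is the uniform doubling bound from part~(b), which forces the coset count of any candidate infinite cyclic subgroup to be finite, so the three parts are naturally proved in the order stated.
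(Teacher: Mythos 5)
Your proposal follows the paper's approach for all three parts. For (a) you supply the dimension-theoretic details (countable sum theorem giving $\dim \overline V \leq \dim \Delta$, then the product theorem) that the paper compresses into one line. For (b) you essentially unroll Lemma~\ref{lem:qi-embedding}: the paper instead notes that $\IZ$ is $(3,0)$-doubling and that the doubling property transfers along quasi-isometric embeddings with controlled dependence of constants, then cites Lemma~\ref{lem:qi-embedding}; your direct computation from Lemma~\ref{lem:V_K-rho} gives the same conclusion (your constants $(R'-A)/\mu$ and $\mu(2R'+2R)+A$ should really account for the $R$-neighborhood error on both sides, but this only changes constants, not the argument). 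For (c) the paper also just says: $G_{(x_-,\xi_+)}$ acts properly by isometries on $V_{K,\rho}(x_-,\xi_+)$, which quasi-isometrically embeds in $\IZ$ by Lemma~\ref{lem:qi-embedding}, hence is virtually cyclic — the same fact you are after. Your patch via ``unit-step'' generators and Milnor--\u{S}varc is a reasonable instinct, but as stated it is not quite the right tool: the coarse flow line (and the orbit $Hv$ in it) is a discrete set, not a geodesic or length space, so Milnor--\u{S}varc does not apply directly, and the claim that the unit-step elements generate needs the bounded-gap property for consecutive orbit points, which itself requires a short argument. The cleaner route, and what the paper is implicitly invoking, is the standard fact that a group acting properly by isometries on a metric space that quasi-isometrically embeds into $\IZ$ is virtually cyclic: pass to the index-$\leq 2$ subgroup preserving the coarse order on the embedded image, observe that $h \mapsto \psi(hv)$ is then a proper quasi-morphism to $\IZ$, and conclude. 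So: correct approach, same as the paper, with the caveat that your Milnor--\u{S}varc step should be replaced by the quasi-morphism argument (or simply by citing the standard fact, as the paper does).
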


  \begin{proof}
    \ref{prop:CFK:dim} 
    As $\overline{V}$ and $\Delta$ are separable and  metrizable any subspace of 
    $\overline{V} \x V \x \Delta$ is of
    dimension at most $\dim \overline{V} + \dim \Delta = 2 \dim \Delta$.
    \\[1ex]
    \ref{prop:CFK:doubling}
    The metric space $\IZ$ is $(D,R_0)$-doubling with $D=3$, $R_0 = 0$.
    It follows that every space  
    that quasi isometrically embeds into
    $\IZ$ is $(D',R_0')$-doubling with $D'$ depending on $D$ and the multiplicative
    constant of the quasi isometry and 
    $R_0'$ depending on $D,R_0$ and the constants for the quasi isometry.
    Therefore~\ref{prop:CFK:doubling} is a consequence of Lemma~\ref{lem:qi-embedding}.  
    \\[1ex]
    \ref{prop:CFK:isotropy}
    The isotropy group $G_{x_-,\xi_+}$ acts properly and isometrically on the coarse flow line
    $V_{L,\rho}(x_-,\xi_+)$.
    Since $V_{L,\rho}(x,\xi)$ embeds quasi-iso\-metri\-cally into $\IZ$
    by Lemma~\ref{lem:qi-embedding} it follows that $G_{x,\xi}$ is virtually cyclic.
  \end{proof}

  We can now prove that our coarse flow spaces admit long thin covers.

  \begin{proposition}
    \label{prop:long-thin-cover-CFK}
    There is a number $N_\longthin$ such that for any $K \subseteq T$ compact   
    and any $\rho, \beta > 0$ 
    there exists a $G$-invariant 
    cover $\calu_{\longthin}$ of $\CF(K,\rho)$ by open $\VCyc$-sets such that the following two
    conditions are satisfied:
    \begin{enumerate}
    \item the order of $\calu_\longthin$ is at most $N_\longthin$;
    \item for any $(x_-,v,\xi_+) \in \CF(K,\rho)$ there is $U \in \calu_\longthin$ with
          $$\{x_-\} \x B_\beta(v) \x \{ \xi_+ \} \cap \CF(K,\rho) \subseteq U.$$
    \end{enumerate}
    Here $B_\beta(v)$ is the $\beta$-neighborhood of $v$ in $V$.
  \end{proposition}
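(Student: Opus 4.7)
The plan is to build the cover one coarse flow line at a time, using that each $V_{K,\rho}(x_-,\xi_+)$ quasi-isometrically embeds into $\IZ$ (Lemma~\ref{lem:qi-embedding}) with virtually cyclic isotropy (Proposition~\ref{prop:CFK}(c)); then to thicken to open neighbourhoods in $\CF(K,\rho)$ whose order is controlled by finite-dimensionality (Proposition~\ref{prop:CFK}(a)) together with the uniform doubling of Proposition~\ref{prop:CFK}(b). This mirrors the construction of long thin covers for flow spaces of hyperbolic and relatively hyperbolic groups in~\cite{Bartels-Lueck-Reich-K-FJ-hyp, Bartels-Coarse-flow}.

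First, for each $(x_-,\xi_+) \in \overline V \x \Delta$ with $V_{K,\rho}(x_-,\xi_+) \neq \emptyset$ I choose a $G_{(x_-,\xi_+)}$-equivariant set of centres along the flow line, spaced roughly $5\beta$ apart (after first replacing $\beta$ by $\max\{\beta,R_0\}$, with $R_0$ the doubling threshold, which does not weaken the conclusion). To each centre $v$ I associate $W_v := \{w \in V : d_T(v,w) < 10\beta\} \cap V_{K,\rho}(x_-,\xi_+)$. Every $\beta$-ball in $V$ meeting the flow line is contained in some $W_v$, and by the $(D,R_0)$-doubling property the multiplicity of $\{W_v\}_v$ is bounded by a constant $D'$ depending only on $D$.

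Second, I thicken each $W_v$ to an open subset $\wt U_v \subseteq \CF(K,\rho)$ by taking a small open neighbourhood in the $(x_-,\xi_+)$-coordinates. The neighbourhoods are chosen so that the stabilizer of $\wt U_v$ is contained in $G_{(x_-,\xi_+)}$, which is virtually cyclic; hence each $\wt U_v$ is an open $\VCyc$-set. A choice of representatives for the $G$-orbits of pairs $(x_-,\xi_+)$ and translation by $G$ yields a $G$-invariant collection. The long property is preserved because the thickening is purely in the $\overline V \x \Delta$-direction, while the length $10\beta$ in the $V$-direction is retained.

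Third, I bound the order independently of $K$, $\rho$ and $\beta$. The collection restricted to any single coarse flow line has order $\leq D'$ by construction. Since $\dim \CF(K,\rho) \leq 2\dim\Delta$, a standard cover-shrinking argument (Ostrand-type, or an inductive construction using partitions of unity) refines the collection to one of order at most $N_\longthin := D' \cdot (2\dim\Delta + 1)$, without destroying either the openness, $G$-invariance, $\VCyc$-property, or long property. The resulting $N_\longthin$ depends only on $\dim\Delta$ and $D$, as required.

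The main obstacle is the second step. The $G$-action on $\overline V \x \Delta$ need not have a Hausdorff quotient, so one cannot simply pass to orbits. I would address this by fixing, for each representative pair $(x_-,\xi_+)$, one $G_{(x_-,\xi_+)}$-equivariant cover on the flow line, thickening so that each tube's stabilizer is exactly a subgroup of $G_{(x_-,\xi_+)}$, and only then spreading by $G$. A uniform tube size across orbits is enabled by the compactness of $\overline V \x \Delta$ together with the continuity of the $G$-action. Some additional care is also needed to ensure openness: since $V_{K,\rho}(x_-,\xi_+)$ need not be open in $V$, each $W_v$ must be taken as the trace on $V_{K,\rho}(x_-,\xi_+)$ of an open subset of $V$, and the thickening then produces an honest open set in $\CF(K,\rho)$.
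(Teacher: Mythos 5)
Your proposal aims to reconstruct the cover from scratch, whereas the paper's proof of this proposition is a one-line reduction: Proposition~\ref{prop:CFK} verifies the three hypotheses (finite dimensionality, uniform doubling of coarse flow lines, virtually cyclic isotropy) of~\cite[Thm.~1.1]{Bartels-Coarse-flow}, and that theorem does all the work. So the comparison is really between your sketch and the proof of that cited theorem.

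The first two steps of your sketch are reasonable in outline, but the third step is a genuine gap, and it is precisely where the content of~\cite[Thm.~1.1]{Bartels-Coarse-flow} lives. You propose to take the flow-line-by-flow-line collection of tubes and then ``refine the collection to one of order at most $N_\longthin := D'\cdot(2\dim\Delta+1)$, without destroying either the openness, $G$-invariance, $\VCyc$-property, or long property'' by an ``Ostrand-type'' argument. But the standard refinement procedures that bring the order of an open cover down to $\dim+1$ work by shrinking and cutting the sets, and doing so would destroy exactly the properties you need to keep: a naive Ostrand refinement has no reason to remain $G$-invariant, no reason to keep each refined piece inside the stabilizer of a single $(x_-,\xi_+)$ (so the $\VCyc$-set property can fail), and most seriously has every reason to cut tubes shorter in the $V$-direction, which destroys the $\beta$-long property your first step established. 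The difficulty of simultaneously bounding order, preserving equivariance, preserving the isotropy family, and preserving length in the flow direction is precisely what makes long thin covers a theorem rather than a corollary of covering dimension, and this is what the cited result handles via a careful inductive coloring construction that uses the doubling property in an essential combinatorial way across flow lines, not merely as an a posteriori multiplicity count along a single flow line.

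A secondary issue in your second step: the claim that ``a uniform tube size across orbits is enabled by the compactness of $\overline V \times \Delta$'' is not justified. Nearby points in $\CF(K,\rho)$ with close $(x_-,\xi_+)$-coordinates can lie on coarse flow lines that differ wildly, so a uniform thickening in the $(x_-,\xi_+)$-direction that keeps each tube's stabilizer inside $G_{(x_-,\xi_+)}$ and simultaneously controls overlaps across flow lines is not obviously available from compactness and continuity alone; this too is part of what the cited theorem establishes.
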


  \begin{proof}
    Using Proposition~\ref{prop:CFK} this follows from~\cite[Thm.~1.1]{Bartels-Coarse-flow}. 
  \end{proof}

  If $U \in \calu_\longthin$ as above, then, typically, as a subset of $\overline{V} \x V \x \Delta$ the set $U$ will be very small (i.e. thin) in the $\overline{V}$- and the $\Delta$-coordinate, while the coordinate in $V$ varies over a subset that is long in the coarse flow lines (and thus long and coarsely thin).    

  \subsection{The coarse flow.}

  Informally, we have a coarse flow on the coarse flow space that moves towards $\xi_+$ along coarse flow lines.
  The partially defined coarse maps $\iota_\tau$ in the next definition should be thought of as the composition of the coarse flow for time $\tau$ with the map $\iota_0$ that sends
      $(g,\xi)$ to the initial point $gx_0$ in the coarse flow line $V_{K,\rho}(gx_0,\xi)$.
      
  \begin{definition}
    \label{def:coarse-flow}
    Let $K \subseteq T$ compact and $\rho \geq 0$. 
    For $\tau \geq 0$ and $(g,\xi) \in G \x \Delta$ 
    we define $$\iota_\tau(g,\xi) \subseteq V_{K,\rho}(gx_0,\xi)$$ to
    consist of all $v \in V$ for which there is $c \in \calg_K$ with
    $d_T(c(0),gx_0) \leq \rho$, $d_T(c(\tau),v) \leq \rho$ and $c(\infty) = \xi$.
  \end{definition}

  For $K \subseteq T$ compact and $S \subseteq G$ finite we enlarge $(G \x \Delta)_K$ to  
  \begin{equation*}
    (G \x \Delta)_K^S := \{ (gs,\xi) \mid s \in S, (g,\xi) \in (G \x \Delta)_K\}
  \end{equation*}
  in order to have a space that contains $gS \x \{ \xi \}$ for all $(g,\xi) \in (G \x \Delta)_K$. 
  We now use $\iota_\tau$ to pull back open sets from the coarse flow space $\CF(K,\rho)$ to $G \x \Delta$ and to open subsets of $(G \x \Delta)_K^S$.

  \begin{definition}
    \label{def:iota-U}
    Let $K \subseteq T$ compact and $\rho \geq \diam K \cup \{x_0\}$.
    For $U \subseteq \CF(K,\rho)$ and $\tau > 0$ we define 
    $\iota^{-\tau} U \subseteq G \x \Delta$
    to consist of all $(g,\xi)$ for which
    \begin{equation*}
      \{ gx_0 \} \x \iota_\tau(g,\xi) \x \{ \xi \} \subseteq U
    \end{equation*} 
    For $S \subseteq G$ finite we define 
    $$\iota_S^{-\tau} U \subseteq (G \x \Delta)_K^S$$ 
    as the interior of $\iota^{-\tau} U \cap (G \x \Delta)_K^S$ in $(G \x \Delta)_K^S$. 
    If $\calu$ is a collection of open subsets of $\CF(K,\rho)$, then we set
    $\iota^{-\tau}_S \calu := \{ \iota^{-\tau}_S U \mid U \in \calu \}$.
  \end{definition}

  In the proof of Theorem~\ref{prop:cover-flow} we will
  later use a collection of the form
  $\iota^{-\tau}_S \calu_\longthin$, where 
  $\calu_\longthin$ comes from Proposition~\ref{prop:long-thin-cover-CFK}.

  The intersection with $(G \x \Delta)_K^S$ in the definition of $\iota_S^{-\tau} U$
  is used to guarantee $\iota_\tau(g,\xi) \neq \emptyset$ in the proof of the following lemma. 

  \begin{lemma}
    \label{lem:iota-tau}
    Let $K \subseteq T$ compact, $S \subseteq G$ finite.
    Let $\rho \geq \diam K \cup Sx_0 \cup \{x_0\}$.
    Let $U,U' \subseteq \CF(K,\rho)$, $\tau > 0$ and $\gamma \in G$.
    Then
    \begin{enumerate}
    \item \label{lem:iota-tau:disjoint}
      if $U \cap U' = \emptyset$, then $\iota_S^{-\tau}U \cap \iota_S^{-\tau} U' = \emptyset$;
    \item \label{lem:iota-tau:G}
      $\iota_S^{-\tau}(\gamma U) = \gamma (\iota_S^{-\tau}U)$.
    \end{enumerate}
  \end{lemma}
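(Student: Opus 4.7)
The plan is to unwind the definitions carefully. Both parts reduce to essentially formal manipulations once one establishes the only substantive point, namely that $\iota_\tau(g,\xi)$ is non-empty whenever $(g,\xi) \in (G \x \Delta)_K^S$. This non-emptiness is what makes the hypothesis $\rho \geq d_T(K \cup Sx_0, x_0)$ necessary and is the crux of part~(a): without it, the inclusion $\{gx_0\} \x \iota_\tau(g,\xi) \x \{\xi\} \subseteq U$ would be vacuously true for every $U$ and disjointness would fail. I therefore expect this non-emptiness verification to be the main (and essentially only) obstacle.

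To prove non-emptiness I would argue as follows. Given $(g,\xi) \in (G \x \Delta)_K^S$, write $g = hs$ with $s \in S$ and $(h,\xi) \in (G \x \Delta)_K$; then there is some $c \in \calg_K$ with $c(0) = hx_0$ and $c(\infty) = \xi$. Since $h$ acts by isometry, $d_T(c(0), gx_0) = d_T(hx_0, hsx_0) = d_T(x_0, sx_0) \leq \rho$. Because $c$ takes values in $G \cdot K$, we can write $c(\tau) = g'k$ with $g' \in G$ and $k \in K$; then $v := g'x_0 \in V$ satisfies $d_T(c(\tau), v) = d_T(k, x_0) \leq \rho$. Thus $v \in \iota_\tau(g, \xi)$ as witnessed by $c$.

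For part~(a), if $(g,\xi) \in \iota_S^{-\tau}U \cap \iota_S^{-\tau}U'$, then $(g,\xi) \in (G \x \Delta)_K^S$ and $(g,\xi) \in \iota^{-\tau}U \cap \iota^{-\tau}U'$. The previous paragraph gives some $v \in \iota_\tau(g,\xi)$, and then $(gx_0, v, \xi)$ lies in $U \cap U' = \emptyset$, a contradiction.

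For part~(b), I would first check the equivariance identity $\iota_\tau(\gamma g, \gamma \xi) = \gamma \iota_\tau(g,\xi)$, which is immediate: the $G$-action on $T$ is isometric and $\calg_K$ is $G$-invariant, so any $c \in \calg_K$ witnessing $v \in \iota_\tau(g,\xi)$ yields $\gamma c \in \calg_K$ witnessing $\gamma v \in \iota_\tau(\gamma g, \gamma \xi)$, and conversely. It follows that $(g,\xi) \in \iota^{-\tau}U$ iff $(\gamma g, \gamma \xi) \in \iota^{-\tau}(\gamma U)$, i.e., $\iota^{-\tau}(\gamma U) = \gamma\,\iota^{-\tau}U$. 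The subspace $(G \x \Delta)_K^S$ is itself $G$-invariant by the same reasoning, so $\gamma$ is a self-homeomorphism of $(G \x \Delta)_K^S$ sending $\iota^{-\tau}U \cap (G \x \Delta)_K^S$ onto $\iota^{-\tau}(\gamma U) \cap (G \x \Delta)_K^S$, and therefore sending interiors to interiors, which is exactly $\gamma\,\iota_S^{-\tau}U = \iota_S^{-\tau}(\gamma U)$.
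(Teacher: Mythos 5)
Your proposal is correct and follows essentially the same path as the paper: both reduce part~(a) to the non-emptiness of $\iota_\tau(g,\xi)$ for $(g,\xi)\in(G\times\Delta)_K^S$, establish this by using the witnessing ray $c\in\calg_K$ together with $d_T(Sx_0,x_0)\le\rho$ to bound $d_T(c(0),gx_0)$ and $d_T(K,x_0)\le\rho$ to bound $d_T(c(\tau),hx_0)$ for some $h\in G$, and dispose of part~(b) as a formal consequence of the equivariance built into the definitions. Your write-up is, if anything, slightly more detailed than the paper's on part~(b), where the paper simply says it is a direct consequence of the definitions.
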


  \begin{proof} 
     We start with \ref{lem:iota-tau:disjoint}.
     Let $(g,\xi) \in \iota_S^{-\tau} U \cap \iota_S^{-\tau} U'$.
     Then $(g,\xi) \in (G \x \Delta)_K^S$ and 
     $\{gx_0 \} \x \iota_\tau(g,\xi) \x \{\xi\} \subseteq U \cap U'$.
     It remains to show that $\iota_\tau(g,\xi) \neq \emptyset$.
     Since $(g,\xi) \in (G \x \Delta)_K^S$ there are $s \in S$ and $c \in \calg_K$ with
     $c(0) = gs^{-1}x_0$, $c(\infty) = \xi$.
     There is $h \in G$ such that $c(\tau) \in hK$, since $c \in \calg_K$.
     Now $\rho \geq \diam K \cup \{x_0\}$ implies $d_T(c(\tau),hx_0) \leq \rho$, and $\rho \geq diam Sx_0 \cup \{x_0\}$ implies $d_T(c(0),gx_0) = d_T(gs^{-1}x_0,gx_0) = d_T(x_0,sX_0) \leq \rho$. 
     Thus $hx_0 \in \iota_\tau(g,\xi)$.
     
     Assertion~\ref{lem:iota-tau:G} is a direct consequences of the definitions.

  \end{proof}

  \subsection{Construction of $\calu_\thick$}

  By construction, the coarse flow lines are thickenings of the quasi geodesics from $\calg_K$.
  In the next lemma we replace one of the two quasi geodesics appearing in the fellow traveler axiom~\ref{axiom:flow:fellow-travel} with a coarse flow line. 
  
  \begin{lemma}
    \label{lem:beta-rho}
    Let $K \subseteq T$ be  compact  and $\rho > 0$.     
    There is $\beta > 0$ such that the following holds.
    Let $c \in \calg_K$ and $\tau \geq 0$ and such that $c(\tau) \in K$.
    Then there exists a neighborhood $W$ of $\xi := c(\infty)$ such that for all
    $\xi' \in W$, $g' \in G$, $v' \in \iota_\tau(g',\xi')$ with $d_T(c(0),g'x_0) \leq \rho$
    we have $$d_T(x_0,v') \leq \beta.$$     
  \end{lemma}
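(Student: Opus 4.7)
The plan is to derive the uniform bound directly from the fellow traveling axiom~\ref{axiom:flow:fellow-travel} together with the triangle inequality, choosing the neighborhood $W$ to be the one guaranteed by the axiom. Concretely, I set $\rho' := 2\rho$, let $R > 0$ be the constant provided by~\ref{axiom:flow:fellow-travel} applied to this $\rho'$, and define
\[
   \beta := \diam(K \cup \{x_0\}) + R + \rho.
\]
Note that $\beta$ depends only on $K$, $S$, and $\rho$, but not on the particular choice of $c$ or $\tau$, as required by the statement.

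Given now $c \in \calg_K$ and $\tau \geq 0$ with $c(\tau) \in K$, I apply the fellow traveling axiom with $x := c(0)$, $\xi_+ := \xi = c(\infty)$, and $t := \tau$ to obtain an open neighborhood $U_+$ of $\xi$ in $\Delta$; this neighborhood is what I take as $W$. The point is that any $c'' \in \calg_K$ with $d_T(c''(0), c(0)) \leq 2\rho$ and $c''(\infty) \in U_+$ satisfies $d_T(c(\tau), c''(\tau)) < R$ (note $\xi \in U_+$, so $c$ itself qualifies on one side of the fellow-traveling comparison).

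Now let $\xi' \in W$, $g' \in G$, and $v' \in \iota_\tau(g', \xi')$ with $d_T(c(0), g' x_0) \leq \rho$. By the definition of $\iota_\tau$, there exists $c' \in \calg_K$ with $d_T(c'(0), g' x_0) \leq \rho$, $d_T(c'(\tau), v') \leq \rho$, and $c'(\infty) = \xi'$. The triangle inequality gives $d_T(c'(0), c(0)) \leq 2\rho$, and $c'(\infty) = \xi' \in U_+ = W$, so the fellow-traveling conclusion applies to the pair $c, c'$, yielding $d_T(c(\tau), c'(\tau)) < R$.

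Finally, combining these estimates with $c(\tau) \in K$ (hence $d_T(x_0, c(\tau)) \leq \diam(K \cup \{x_0\})$) and $d_T(c'(\tau), v') \leq \rho$ gives
\[
   d_T(x_0, v') \leq d_T(x_0, c(\tau)) + d_T(c(\tau), c'(\tau)) + d_T(c'(\tau), v') \leq \diam(K \cup \{x_0\}) + R + \rho = \beta,
\]
as desired. There is no serious obstacle here: the argument is essentially a bookkeeping step, whose only subtlety is verifying that $\rho \geq d_T(S x_0, x_0)$ is exactly what lets us absorb the displacement from $g' x_0$ into the $2\rho$-hypothesis of the fellow traveling axiom uniformly over $s \in S$ (implicit via the hypothesis $d_T(c(0), g'x_0) \leq \rho$), so that $R$ — and thus $\beta$ — can be chosen independently of $c$ and $\tau$.
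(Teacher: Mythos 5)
Your proof is correct and follows essentially the same route as the paper: apply the fellow traveling axiom at scale $2\rho$ with $x = c(0)$, $\xi_+ = c(\infty)$, $t = \tau$, take $W$ to be the resulting neighborhood, and chain triangle inequalities to bound $d_T(x_0, v')$ by $\diam(K \cup \{x_0\}) + R + \rho$. One small misstatement in your closing remark: the hypothesis $\rho \geq d_T(Sx_0, x_0)$ is not actually used in this proof (neither here nor in the paper's) since $d_T(c(0), g'x_0) \leq \rho$ is given directly; that constraint on $\rho$ only matters when the lemma is invoked in Lemma~\ref{lem:thick-from-long}.
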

   
  \begin{proof} 
    We apply the fellow traveler axiom~\ref{axiom:flow:fellow-travel} for $2\rho$ and obtain a number $R \geq 0$.
    Still from~\ref{axiom:flow:fellow-travel} we obtain for $x := c(0)$, $\xi := c(\infty)$, and $t := \tau$ a neighborhood $W$ of $\xi$. 
    Let now $\xi' \in W$, $g' \in G$, $v' \in \iota_\tau(g'x_0,\xi')$ with $d_T(c(0),g'x_0) \leq \rho$.
    Then there is $c' \in \calg_K$ with $d_T(c'(0),g'x_0) \leq \rho$, $d_T(c'(\tau),v') \leq \rho$
    and $c'(\infty) = \xi'$.
    Now $d_T(c(0),c'(0)) \leq d_T(c(0),g'x_0) + d_T(g'x_0,c'(0)) \leq 2\rho$.
    Therefore, the assertion of the fellow traveling property yields
    $d_T(c(\tau),c'(\tau)) \leq R$.
    This implies
    \begin{align*}
      d_T(x_0,v') \leq d_T(x_0,c(\tau)) +  d_T(c(\tau)&,c'(\tau))  + d_T(c'(\tau),v') \\ \leq & 
      \diam K \cup \{x_0\} + R + \rho =: \beta.
    \end{align*}  
  \end{proof}

  To prove Theorem~\ref{prop:cover-flow} we will use a $\beta$-long thin cover $\calu_\longthin$ from Proposition~\ref{prop:long-thin-cover-CFK} where $\beta$ comes from Lemma~\ref{lem:beta-rho}.
  In the next lemma we use the small at $\infty$ axiom~\ref{axiom:flow:small-at-infty} to show that for sufficiently large $\tau$ the pull back of $\calu_\longthin$ with $\iota_\tau$ to $(G \x \Delta)_K^S$ is $S$-long for all $(g,\xi) \in (G \x \Delta)_K$, i.e., it satisfies the assertion~\ref{prop:cover-flow:long} in Theorem~\ref{prop:cover-flow}. 

  \begin{lemma}
    \label{lem:thick-from-long}
    Let $K \subseteq T$ be  compact  and $S \subseteq G$ be  finite.
    Let $\rho \geq \diam Sx_0 \cup \{x_0\} \cup K$. 
    Let $\beta$ be as in Lemma~\ref{lem:beta-rho}.

    Let $\calu_\longthin$ be the cover of $\CF(K,\rho)$ appearing in
    Proposition~\ref{prop:long-thin-cover-CFK}.
    Then there is $\tau > 0$ such that 
    for all $(g,\xi) \in (G \x \Delta)_{K}$ there is $U \in \calu_\longthin$ with 
    \begin{equation*}
      gS \x \{ \xi \} \subseteq \iota_S^{-\tau} U.
    \end{equation*}
  \end{lemma}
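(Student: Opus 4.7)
The plan is to argue by contradiction. Suppose the conclusion fails: for each $n \in \IN$ there exist $\tau_n > n$ and $(g_n,\xi_n) \in (G \x \Delta)_K$ for which no $U \in \calu_\longthin$ satisfies $g_n S \x \{\xi_n\} \subseteq \iota_S^{-\tau_n} U$. First I choose $c_n \in \calg_K$ with $c_n(0) = g_n x_0$ and $c_n(\infty) = \xi_n$, pick $h_n \in G$ with $c_n(\tau_n) \in h_n K$, and normalize by setting $\tilde c_n := h_n^{-1} c_n$, $\tilde g_n := h_n^{-1} g_n$, $\tilde \xi_n := h_n^{-1} \xi_n$. The $G$-equivariance in Lemma~\ref{lem:iota-tau}~\ref{lem:iota-tau:G} combined with the $G$-invariance of $\calu_\longthin$ means it suffices to produce a suitable cover element for the normalized data and translate back by $h_n$.

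Next I would extract subsequences so that $\tilde c_n(\tau_n) \to y \in K$, $\tilde \xi_n \to \xi_+ \in \Delta$, and $\tilde c_n(0) \to \xi_- \in \Delta$; the last statement uses that each $\tilde c_n$ is a $(\mu, A_K)$-quasi-geodesic, so $d_T(\tilde c_n(0), \tilde c_n(\tau_n)) \to \infty$ and $\tilde c_n(0)$ escapes every compact subset of $T$. The axiom of smallness at infinity~\ref{axiom:flow:small-at-infty}, applied with $x_n := \tilde g_n s x_0$ for $s \in S$ (using $d_T(\tilde g_n s x_0, \tilde c_n(0)) = d_T(sx_0, x_0) \leq \rho$ and boundedness of $d_T(\Image(\tilde c_n), x_0)$), forces any subsequential limit of $\tilde g_n s x_0$ to coincide with $\xi_-$, so $\tilde g_n s x_0 \to \xi_-$ for every $s \in S$. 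Each triple $(\tilde c_n(0), x_0, \tilde \xi_n)$ lies in $\CF_0(K, \rho)$ via the ray $\tilde c_n$, so closedness of $\CF(K,\rho) \subseteq \overline V \x V \x \Delta$ yields the limit triple $(\xi_-, x_0, \xi_+) \in \CF(K, \rho)$.

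Applying Proposition~\ref{prop:long-thin-cover-CFK} to this limit triple I obtain $U \in \calu_\longthin$ with $\{\xi_-\} \x B_\beta(x_0) \x \{\xi_+\} \cap \CF(K, \rho) \subseteq U$. The set $B_\beta(x_0) \subseteq V$ is finite because the $G$-action on $T$ is proper and $V$ carries the discrete topology as a subspace. Openness of $U$ in $\CF(K,\rho)$ combined with a finite intersection of neighborhoods over $B_\beta(x_0)$ then produces open sets $W^{\overline V} \ni \xi_-$ in $\overline V$ and $W^\Delta \ni \xi_+$ in $\Delta$ with
\begin{equation*}
W^{\overline V} \x B_\beta(x_0) \x W^\Delta \cap \CF(K,\rho) \subseteq U.
\end{equation*}
For all sufficiently large $n$, $\tilde g_n s x_0 \in W^{\overline V}$ for every $s \in S$ and $\tilde \xi_n \in W^\Delta$.

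Finally, Lemma~\ref{lem:beta-rho} applied to $\tilde c_n$ with $g' := \tilde g_n s$ (the hypothesis $d_T(\tilde c_n(0), \tilde g_n s x_0) \leq \rho$ is immediate) furnishes an open neighborhood $\tilde W_n \ni \tilde \xi_n$ in $\Delta$ such that every $v' \in \iota_{\tau_n}(\tilde g_n s, \xi')$ with $\xi' \in \tilde W_n$ lies in $B_\beta(x_0)$. Hence for $\xi'$ in the open neighborhood $\tilde W_n \cap W^\Delta$ of $\tilde \xi_n$ and every $s \in S$, the triple $(\tilde g_n s x_0, v', \xi')$ belongs to $W^{\overline V} \x B_\beta(x_0) \x W^\Delta \cap \CF(K, \rho) \subseteq U$, giving $(\tilde g_n s, \tilde \xi_n) \in \iota_S^{-\tau_n} U$. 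Translating by $h_n$ then yields $h_n U \in \calu_\longthin$ working for $(g_n, \xi_n)$, contradicting the choice. The main obstacle is the bookkeeping around the topology of $\CF(K,\rho) \subseteq \overline V \x V \x \Delta$: because the long-thin property thickens only the middle $V$-coordinate, no single cover element directly covers the several starting points $\tilde g_n s x_0$ coming from different $s \in S$, so the argument must apply the long-thin property at the limit configuration and use openness together with finiteness of $B_\beta(x_0)$ to absorb both the varying normalized starting points and the varying endpoints.
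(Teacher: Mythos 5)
Your proof is correct and takes essentially the same contradiction argument as the paper: extract a convergent normalized subsequence landing in $\CF(K,\rho)$, cover the limit triple by a long thin set $U$, and propagate backward using openness, finiteness of $B_\beta(x_0)$, the small-at-$\infty$ axiom, and Lemma~\ref{lem:beta-rho}. The only differences are cosmetic — you make the $G$-normalization via $h_n$ explicit where the paper says ``we may assume $c_\tau(\tau)\in K$'', and you spell out the subsequence-and-compactness step needed to run axiom~\ref{axiom:flow:small-at-infty} in the direction ``$c_n(0)\to\xi_-$ forces $x_n\to\xi_-$''.
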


  \begin{proof}
    We argue by contradiction and assume that the assertion fails.
    Then, for $\tau \to \infty$, there are $(g_\tau,\xi_\tau) \in (G \x \Delta)_{K}$
    such that 
    \begin{equation}
      \label{eq:no-U}
      g_\tau S \x \{ \xi_\tau \} \not \subseteq \iota_S^{-\tau} U 
        \quad \text{for all} \quad U \in \calu_\longthin.
    \end{equation}
    Since $(g_\tau,\xi_\tau) \in (G \x \Delta)_K$ there is $c_\tau \in \calg_K$ with
    $c_\tau(0) = g_\tau x_0$ and $c_\tau(\infty) = \xi_\tau$.
    Since $\calu_\longthin$ is $G$-invariant, we may assume that $c_\tau(\tau) \in K$ for all $\tau$.
    Using $\rho \geq \diam K \cup \{ x_0 \}$, we obtain $x_0 \in V_{K,\rho}(g_\tau x_0, \xi_\tau)$ for all $\tau$.

    Since $\overline{V}$ and $\Delta$ are compact we can, after a subsequence, assume that
    \begin{equation*}
      \lim_{\tau \to \infty} (g_\tau x_0,x_0,\xi_\tau)  = (\xi_-,x_0,\xi_+) \in \CF({K,\rho})
    \end{equation*}
    exists.
    By Proposition~\ref{prop:long-thin-cover-CFK} there is $U \in \calu_\longthin$
    such that $$\{\xi_-\} \x B_\beta(x_0) \x \{ \xi_+ \} \cap \CF(K,\rho) \subseteq U.$$
    As $U$ is open and $B_\beta(x_0)$ is finite there are open neighborhoods 
    $U_- \subseteq \overline{V}$ of $\xi_-$ and $U_+ \subseteq \Delta$ of $\xi_+$ such that
    \begin{equation} \label{eq:to-be-in-U}
      U_- \x B_\beta(x_0) \x U_+ \cap \CF(K,\rho) \subseteq U.
    \end{equation}
    The small at $\infty$ axiom~\ref{axiom:flow:small-at-infty} implies that for 
    all $s \in S$ eventually $g_\tau sx_0 \in U_-$.
    We find now $\tau$ such that $g_\tau sx_0 \in U_-$ for all $s \in S$ and $\xi_\tau \in U_+$.
    We claim that     
    \begin{equation} \label{eq:in-iota-S-tau-U}
      g_\tau S \x \{ \xi_\tau \} \subseteq \iota_S^{-\tau} U.
    \end{equation}
    This will contradict~\eqref{eq:no-U}. 
    To prove~\eqref{eq:in-iota-S-tau-U} we apply Lemma~\ref{lem:beta-rho} 
    to $c := c_\tau$
    and obtain a neighborhood $W$ of $\xi_\tau$ in $\Delta$.
    After shrinking $W$ we may assume $W \subseteq U_+$.
    We claim that
    \begin{equation} \label{eq:W-in-iota-U}
      g_\tau S \x W \cap (G \x \Delta)_K^S \subseteq \iota^{-\tau} U
    \end{equation}
    which will imply~\eqref{eq:in-iota-S-tau-U}.
    Let $s \in S$ and $\xi' \in W$ with $(g_\tau s,\xi') \in (G \x \Delta)_K^S$. 
    We need to show that 
    $$\{ g_\tau sx_0 \} \x \iota_\tau(g_\tau s,\xi') \x \{ \xi' \} \subseteq U.$$
    Let $v' \in \iota_\tau(g_\tau s,\xi')$.
    Since $\xi' \in W$ and since $$d_T(c_\tau(0),g_\tau sx_0) = d_T(g_\tau x_0,g_\tau sx_0) \leq \rho$$ 
    we can use the assertion of Lemma~\ref{lem:beta-rho} (for $g' := g_\tau s$) to obtain 
    $d_T(x_0,v') \leq \beta$.
    Since $g_\tau sx_0 \in U_-$ and since $\xi' \in W \subseteq U_+$ we obtain
    $(g_\tau sx_0,v',\xi') \in U$ from~\eqref{eq:to-be-in-U}.
    Thus~\eqref{eq:W-in-iota-U} holds.
  \end{proof}

  \begin{proof}[Proof of Theorem~\ref{prop:cover-flow}]
    Set $N_\thick := N_\longthin$, where $N_\longthin$ is from 
    Proposition~\ref{prop:long-thin-cover-CFK}.
    Let $S \subseteq G$ finite and $K \subseteq T$ compact be given.
    Let $\rho := diam  K \cup Sx_0 \cup  \{x_0\}$.
    Let $\beta$ as in Lemma~\ref{lem:beta-rho}.
    Let $\calu_\longthin$ be the cover of $\CF(K,\rho)$ appearing in
    Proposition~\ref{prop:long-thin-cover-CFK}.
    As $\calu$ consists of $\VCyc$-subsets and is $G$-invariant, the same holds for
    $\iota_S^{-\tau} \calu_\longthin$ for any $\tau$ by Lemma~\ref{lem:iota-tau}.
    Lemma~\ref{lem:iota-tau} also implies that
    for any $\tau$, the order of $\iota^{-\tau}_S \calu_\longthin$, does
    not exceed the order of $\calu_\longthin$.
    By definition $\iota_S^{-\tau}$ consists of open subsets of $(G \x \Delta)_K^S$.
    As $\Delta$ is metrizable there exists a $G$-invariant metric on $G \x \Delta$.
    This allows us to extend each $V \in \iota^{-\tau}_S \calu$ to an open subset
    $V' \subseteq G \x \Delta$ such that $\calu_\thick := \{ V' \mid V \in \iota^{-\tau}_S \calu \}$
    also consists of $\VCyc$-subsets, also is $G$-invariant and also is of order at most $N_\thick$,
    see Lemma~\ref{lem:extension-of-covers} below for more details.
    Finally, there is, by Lemma~\ref{lem:thick-from-long}, $\tau > 0$ such that for 
    each $(g,\xi) \in (G \x \Delta)_K$ there is $V = \iota^{-\tau}_S U$ with
    \begin{equation*}
      gS \x \{ \xi \} \subseteq V \subseteq V' \in \calu_\thick.
    \end{equation*}
  \end{proof}

  \begin{lemma} 
     \label{lem:extension-of-covers}
     Let $X$ be a $G$-space and $Y$ be a $G$-invariant subspace.
     Assume that the topology on $X$ can be generated by a
     $G$-invariant metric $d$.
     Let $\calu$ be a $G$-invariant collection of open $\calf$-subsets of $Y$.
     Then there exits a $G$-invariant collection $\calu^+$ of open $\calf$-subsets of $X$
     such that
     \begin{enumerate}
     \item the order of $\calu^+$ equals the order of $\calu$;
     \item for each $U \in \calu$ there is $U^+ \in \calu^+$ with $U = U^+ \cap Y$. 
     \end{enumerate}
   \end{lemma}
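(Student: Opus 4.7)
The plan is to define, for each $U \in \calu$, an open thickening $U^+ \subseteq X$ using the $G$-invariant metric $d$ on $X$, with the thickening radius shrinking to zero as one approaches $Y \setminus U$. Concretely, I would set
$$r_U(x) := \tfrac{1}{3} d(x, Y \setminus U)$$
(with the convention $d(x,\emptyset) = +\infty$, so $r_U \equiv +\infty$ when $U = Y$), and
$$U^+ := \{x \in X \mid d(x, U) < r_U(x)\}, \qquad \calu^+ := \{U^+ \mid U \in \calu\}.$$
Since $d(\cdot, U)$ and $r_U$ are continuous in $x$, each $U^+$ is open; since $d$ is $G$-invariant and $Y \setminus gU = g(Y \setminus U)$, one has $(gU)^+ = gU^+$, so $\calu^+$ is $G$-invariant. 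The identity $U^+ \cap Y = U$ of condition (ii) holds because for $y \in U$ openness of $U$ in $Y$ gives $d(y, Y \setminus U) > 0 = d(y,U)$, while for $y \in Y \setminus U$ we have $r_U(y) = 0 \leq d(y, U)$.

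The crux is to simultaneously verify that each $U^+$ is an $\calf$-subset of $X$ and that $\calu^+$ has the same order as $\calu$. For the $\calf$-property, let $F \in \calf$ be the stabilizer of $U$; equivariance gives $gU^+ = U^+$ for $g \in F$. For $g \notin F$ one has $gU \cap U = \emptyset$, hence $gU \subseteq Y \setminus U$ and $U \subseteq Y \setminus gU$. If some $x \in U^+ \cap (gU)^+$ existed, chaining the two defining inequalities with these two inclusions would give
$$d(x,U) < \tfrac{1}{3} d(x, Y \setminus U) \leq \tfrac{1}{3} d(x, gU) < \tfrac{1}{9} d(x, Y \setminus gU) \leq \tfrac{1}{9} d(x, U),$$
forcing $d(x,U) = 0$; but then the inclusion $U \subseteq Y \setminus gU$ gives $d(x, Y \setminus gU) = 0 = r_{gU}(x)$, contradicting $x \in (gU)^+$. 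Thus $gU^+ \cap U^+ = \emptyset$, and combined with $U^+ \cap Y = U$ this forces the full stabilizer of $U^+$ in $G$ to equal $F$.

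For the order, one direction is immediate from $U \subseteq U^+$. Conversely, given $x \in U_1^+ \cap \cdots \cap U_k^+$ for distinct $U_1,\ldots, U_k \in \calu$, I would let $i_0$ minimize $d(x, U_{i_0})$ and for each $\epsilon > 0$ pick $y_\epsilon \in U_{i_0}$ with $d(x, y_\epsilon) < d(x, U_{i_0}) + \epsilon$. If $y_\epsilon \notin U_j$ for some $j$, then $y_\epsilon \in Y \setminus U_j$ gives
$$d(x, Y \setminus U_j) \leq d(x, y_\epsilon) < d(x, U_{i_0}) + \epsilon \leq d(x, U_j) + \epsilon < \tfrac{1}{3} d(x, Y \setminus U_j) + \epsilon,$$
so $\tfrac{2}{3} d(x, Y \setminus U_j) < \epsilon$. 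Since $x \in U_j^+$ forces $d(x, Y \setminus U_j) > 0$, taking $\epsilon < \min_j \tfrac{2}{3} d(x, Y \setminus U_j)$ makes $y_\epsilon \in U_1 \cap \cdots \cap U_k$, so the $k$-fold intersection in $\calu$ is nonempty. The main obstacle is that both the $\calf$-disjointness and the order bound hinge on the very same two-step chaining of the defining inequality of $U^+$; the factor $\tfrac{1}{3}$ (rather than $\tfrac{1}{2}$) in $r_U$ is exactly what supplies the slack needed for both estimates.
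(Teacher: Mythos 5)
Your proposal is correct and follows essentially the same approach as the paper, which simply sets $U^+ := \{ x \in X \mid d(x,U) < d(x,Y \smallsetminus U)\}$ and cites~\cite[App.~B]{Bartels-Coarse-flow} for the routine verification; you have supplied the details explicitly. One small remark: the shrinking factor $\tfrac{1}{3}$ in your $r_U$ is not actually needed. With the paper's formula (factor $1$) the disjointness chain still closes, since $d(x,U) < d(x,Y\smallsetminus U) \leq d(x,gU) < d(x,Y\smallsetminus gU) \leq d(x,U)$ is already a contradiction, and the order argument still works because the strict gap $d(x,Y\smallsetminus U_j) - d(x,U_j) > 0$ (forced by $x\in U_j^+$) provides the slack needed to choose $\epsilon$. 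So your closing claim that $\tfrac{1}{3}$ is ``exactly what supplies the slack needed'' overstates matters; also note that your chain $d(x,U) < \tfrac{1}{9}d(x,U)$ already has no solution in $[0,\infty)$, so the follow-up case analysis at $d(x,U)=0$ is superfluous. These are cosmetic; the argument is sound.
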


   \begin{proof}
     For $U \in \calu$ set $U^+ := \{ x \in X \mid d(x,U) < d(x,Y
     \smallsetminus U)\}$.  It is not difficult to check that $\calu^{+} :=
     \{ U^+ \mid U \in \calu \}$ has the required properties, see for
     example~\cite[App.~B]{Bartels-Coarse-flow}.
   \end{proof}

  \section{Finitely $\calf$-amenable actions.} \label{s:fac}

   \subsection{Finite extensions and $N$-$\calf$-amenability}

The main results in this section are Propositions
\ref{prop:finite-index+fin-F-amenabilty} and
\ref{prop:finite-ind-calf-amenable-product}. While in principle it is
possible to prove these directly from the definition, we find it
convenient to reformulate $N$-$\calf$-amenability in terms of maps to
simplicial complexes and prove the statements from this point of
view. The covers one would naturally write down from the definition
would not be $\calf$-covers, corresponding to actions on simplicial
complexes with elements that leave a simplex invariant without fixing
it pointwise. This is fixed by barycentrically subdividing, while the
operation on covers is less transparent.

   Let $E$ be a simplicial complex with vertex set $V(E)$.
   Every point of $E$ can be written as $y = \sum_{v \in V_C} y_v \cdot v$ with
   $y_v \in [0,1]$ and $\sum_{v \in V(E)} y_v = 1$. 
   The $\ell^1$-metric on $E$ is defined by $d_E^1(y,y') := \sum_{v \in V(E)} |y_v - y'_v|$. 
   We briefly discuss products of simplicial complexes.
   Let $E$ be a simplicial complex.
   For $n \in \IN$ we define a simplicial structure on the $N$-fold cartesian product 
   $E^{\x n}$ 
   of $E$ as follows:
   First we replace $E$ by its barycentric subdivision.  
   This is a locally ordered simplicial complex;
   for each simplex the set of its vertices has a linear order and this order is
   compatible with taking faces of simplices.       
   The product of locally ordered simplicial complexes is canonically a simplicial complex.
   For a simplicial action of a group $G$ on $E$ the product action on $E^{\x n}$ 
   is also simplicial.
   On $E^{\x n}$ it will be convenient to use the product metric
   $d_{E^{\x n}}$ defined by 
   $d_{E^{\x n}}( (y_1,\dots,y_n), (y_1',\dots,y_n')) = \max_{1 \leq i \leq n} d_{E}^1(y_i,y'_i)$.
   This is not the $\ell^1$-metric $d^1_{E^{\x n}}$, but the change is
   uniformly controlled, provided that $E$ is finite dimensional.

   \begin{lemma}
     \label{lem:unif-cont}
     Fix $n,N \in \IN$. 
     Then for any $\e > 0$ there is $\e_0 > 0$ such that for any 
     simplicial complex $E$ of dimension at most $N$ we have
     for all $y = (y_1,\dots,y_n), y' = (y_1',\dots,y_n') \in E^{\x n}$
     \begin{eqnarray*}
        d_{E^{\x n}}( y,y') < \e_0 \;  & \implies & d^1_{E^{\x n}}(y,y') < \e, \\
        d^1_{E^{\x n}}( y,y') < \e_0 \; & \implies & d_{E^{\x n}}(y,y') < \e.
     \end{eqnarray*}
   \end{lemma}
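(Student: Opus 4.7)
The plan is to prove the two implications separately.

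\emph{The implication $d^1_{E^{\x n}}(y,y') < \e_0 \Rightarrow d_{E^{\x n}}(y,y') < \e$} is immediate with $\e_0 = \e$: each factor projection $\pi_i \colon E^{\x n} \to E$ is $1$-Lipschitz in $\ell^1$, since by grouping the vertices of $E^{\x n}$ according to their $i$-th component and applying the triangle inequality,
\[
d^1_E(y_i, y'_i) \;=\; \sum_{v \in V(E)} \Big|\sum_{w \colon w_i = v}(y_w - y'_w)\Big| \;\leq\; \sum_w |y_w - y'_w| \;=\; d^1_{E^{\x n}}(y, y').
\]
Taking the maximum over $i$ yields $d_{E^{\x n}}(y, y') \leq d^1_{E^{\x n}}(y, y')$.

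\emph{For the reverse implication}, I would establish the stronger linear estimate $d^1_{E^{\x n}}(y, y') \leq C(n, N) \cdot d_{E^{\x n}}(y, y')$ for a constant depending only on $n$ and $N$; this yields the implication with $\e_0 = \e / C(n, N)$. The starting observation is that $\dim E \leq N$ forces every simplex of $E^{\x n}$ to have at most $nN + 1$ vertices in $V(E)^n$. When $y$ and $y'$ lie in a common simplex $\sigma$ with vertices $w_0, \dots, w_k$, writing $\mu = y - y'$ in barycentric coordinates (so $\sum_j \mu_j = 0$), both sides of the desired inequality become norms on the hyperplane $\{\sum_j \mu_j = 0\} \subseteq \IR^{k+1}$. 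The $d_{E^{\x n}}$ side is a genuine norm, not merely a seminorm, because the factor projection $E^{\x n} \to E^n$ is a bijection, so any nonzero $\mu$ in its kernel would produce two distinct nearby points of $\sigma$ with identical projections. The ratio between these two norms depends only on the \emph{combinatorial type} of $\sigma$---the tuple of equivalence relations $j \sim_i j'$ iff $(w_j)_i = (w_{j'})_i$---and there are only finitely many such types when $k \leq nN$, so a compactness argument yields a uniform bound.

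The main obstacle is extending this per-simplex estimate globally, since $y$ and $y'$ need not share a common simplex of $E^{\x n}$. I would handle this by a local-to-global reduction: if $d_{E^{\x n}}(y, y') < 1/(N+1)$, then for each $i$, pigeonhole on the support of $y_i$ (of size at most $N+1$) yields a vertex $v_i \in V(E)$ with $(y_i)_{v_i} \geq 1/(N+1)$; the $\ell^1$ bound then forces $(y'_i)_{v_i} > 0$ as well, so both $y$ and $y'$ lie in the closed star of the vertex $v^* = (v_1, \dots, v_n) \in V(E^{\x n})$. Using that this star is a cone over its link with apex $v^*$, the per-simplex estimate propagates across simplices of the star (decompose each point into its $v^*$-component and a link component, and induct on the dimension of the link, which is bounded by $nN - 1$); the trivial bound $d^1_{E^{\x n}} \leq 2$ handles the complementary regime $d_{E^{\x n}}(y, y') \geq 1/(N+1)$ after enlarging $C(n, N)$.
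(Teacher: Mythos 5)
Your route is genuinely different from the paper's, and the harder direction has a real gap.

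The paper proves both implications at once by a compactness reduction. For any $y,y'\in E^{\x n}$, let $F\subset E$ be the subcomplex spanned by the (at most $2n$) simplices of $E$ containing the coordinates $y_i,y_i'$; it has at most $2n(N+1)$ vertices. A subcomplex inclusion is an isometry for $d^1$, so both $d_{E^{\x n}}(y,y')$ and $d^1_{E^{\x n}}(y,y')$ agree with their intrinsic counterparts in $F^{\x n}$, and $F$ embeds into a simplex $\Delta^{N'}$, $N'=2n(N+1)-1$, again preserving both quantities. Uniform continuity of the identity between the two metrics on the single compact space $(\Delta^{N'})^{\x n}$ then gives the lemma. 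This reduction to one compact model is exactly what lets the paper bypass the local-to-global problem that your argument runs into.

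Your estimate $d_{E^{\x n}}\le d^1_{E^{\x n}}$ via $1$-Lipschitzness of the factor projections is a clean observation and handles the second implication (after noting that the change of barycentric coordinates from $E'$ to $E$ is a column-stochastic linear map, so $d_E^1\le d_{E'}^1$).

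The gap is in the pigeonhole-and-star step of the first implication. The simplicial structure on $E^{\x n}$ uses the barycentric subdivision $E'$, so the vertices of the simplex $\tau\subset E^{\x n}$ containing $y$ are a chain $w^{(0)}\le\cdots\le w^{(k)}$ in $(V(E'))^n$, and the $E'$-support of $y_i$ is $\{w^{(j)}_i\}_j$. For $v^*=(v_1,\dots,v_n)$ to be a vertex of $\tau$ you need the $v_i$ to be the coordinates of a \emph{single} common vertex $w^{(j_0)}$; choosing each $v_i$ independently from the $E$-support of $y_i$ does not guarantee this. Worse, $y_i$ may be interior to a simplex of $E'$ all of whose vertices are proper barycenters (e.g.\ $y_i$ the midpoint of an edge of $E$), in which case no $v_i\in V(E)$ lies in the $E'$-support of $y_i$ and $y$ is not in the closed star of \emph{any} $v^*\in (V(E))^n$. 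Finally, the cone-over-link induction is not formulated: the link of $v^*$ in $E^{\x n}$ does not inherit an obvious product metric, so it is unclear what the inductive statement should even compare. Rather than repairing all of this, the subcomplex-into-a-simplex reduction is the clean way through.
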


   \begin{proof}
     If $E = \Delta^{N'}$, then this is a consequence of compactness of $(\Delta^{N'})^{\x n}$.
     But this case implies the general case for the following reason.
     Let $y,y' \in E^{\x n}$ be given.
     Then there are simplices $\sigma_i,\sigma_i'$, $i=1,\dots,n$ of $E$
     with $y_i \in \sigma_i$, $y'_i \in \sigma_i'$.
     Let $F \subset E$ be the subcomplex spanned by the $\sigma_i$.
     This subcomplex has at most $2n(N+1)$-many vertices and embeds therefore 
     into an $N'$-simplex $\sigma \cong \Delta^{N'}$, where $N' = 2n(N+1)-1$.
     Since $d_{E^{\x n}}(y,y') = d_{F^{\x n}}(y,y') = d_{\sigma^{\x n}}(y,y')$ and
     $d^1_{E^{\x n}}(y,y') = d^1_{F^{\x n}}(y,y') = d^1_{\sigma^{\x n}}(y,y')$,
     the general case follows.
   \end{proof}

   Let $E$ be a simplicial complex equipped with a simplicial $G$-action and $\Delta$ be a $G$-space.
   For $S \subset G$ finite and $\e > 0$ a continuous map $f \colon \Delta \to E$ is said to
   be \emph{$(S,\e)$-equivariant} if $$\sup_{x \in \Delta,s \in S} d_E^1( s f(x),f(sx)) < \e.$$ 
   Let $\calf$ be a family of subgroups of $G$.
   By an $(G,\calf)$-simplicial complex we mean a simplicial complex $E$ with a simplicial
   $G$-action such that all isotropy groups belong to $\calf$.
   
   \begin{lemma}
     \label{lem:almost-equiv-vs-cover}
     Let $\calf$ be a family of subgroups of $G$ and 
     $\Delta$ be a compact metrizable space with a $G$-action.
     Then the following are equivalent.
     \begin{enumerate}
     \item \label{lem:almost-equiv-vs-cover:amenable}
       The action of $G$ on $\Delta$ is $N$-$\calf$-amenable;
     \item \label{lem:almost-equiv-vs-cover:almost-equi}
       For any $S \subseteq G$ finite and $\e > 0$ there exists
       a $(G,\calf)$-simplicial complex of dimension at most $N$ and 
       an $(S,\e)$-equivariant map $\Delta \to E$.
     \end{enumerate}
   \end{lemma}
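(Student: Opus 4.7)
The plan is to prove both implications through the standard dictionary between open $\calf$-covers of $G \times \Delta$ and $G$-equivariant maps to $(G, \calf)$-simplicial complexes: the forward direction passes to the nerve of the cover, while the reverse direction pulls back stars in the target complex.

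For $(a) \Rightarrow (b)$, given $S$ and $\e$, apply $N$-$\calf$-amenability to a sufficiently large $S' \supseteq S$ to obtain an open $\calf$-cover $\calu$ of $G \times \Delta$ of order $\leq N$ that is $S'$-long, and let $E$ be the nerve of $\calu$. Then $\dim E \leq N$ and the induced simplicial $G$-action makes $E$ into a $(G, \calf)$-simplicial complex, by the following observation: for a simplex $\sigma = \{U_0, \ldots, U_k\}$, any $g$ sending $U_0$ to some $U_i$ with $i \neq 0$ would force $gU_0 \cap U_0 \supseteq U_0 \cap \cdots \cap U_k \neq \emptyset$, so $g \in \mathrm{Stab}(U_0)$ by the $\calf$-subset condition; hence the setwise and pointwise stabilizers of $\sigma$ coincide and lie in $\calf$. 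Using a $G$-equivariant continuous partition of unity $\{\phi_U\}$ subordinate to $\calu$, define $f(g,x) := \sum_U \phi_U(g,x)\,U$ and $\bar f(x) := f(e,x)$; a short computation reduces the $(S,\e)$-equivariance of $\bar f$ to the bound $\sum_V |\phi_V(e,z) - \phi_V(s,z)| < \e$ for $s \in S, z \in \Delta$. This is the one technical point: it is arranged by taking $S' = S^k$ for $k$ large and using a standard smoothing/averaging recipe in the group coordinate so that each $\phi_U$ varies by $O(1/k)$ along $S$-steps.

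For $(b) \Rightarrow (a)$, fix $S \subseteq G$ finite and choose $\e < \tfrac{1}{N+1} - \tfrac{1}{N+2}$. Apply (b) to produce a $(G, \calf)$-simplicial complex $E$ with $\dim E \leq N$ and an $(S,\e)$-equivariant map $f \colon \Delta \to E$; after one barycentric subdivision the $G$-action on $E$ has no inversions. Extend $f$ to the $G$-equivariant map $\tilde f(g,x) := g f(g^{-1}x)$, and cover $E$ by the shrunken open stars $W_v := \{y \in E : y_v > \tfrac{1}{N+2}\}$ indexed by the vertices of $E$. The cover $\{W_v\}$ is $G$-invariant, of order $\leq N$ (each point of $E$ has at most $N+1$ nonzero barycentric coordinates summing to $1$), covers $E$ (its maximum coordinate is always $\geq \tfrac{1}{N+1}$), and consists of $\calf$-subsets: the no-inversion property forces $W_{gv} \cap W_v = \emptyset$ whenever $gv \neq v$, and $\mathrm{Stab}(W_v) = \mathrm{Stab}(v) \in \calf$. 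Setting $\calu := \{\tilde f^{-1}(W_v) : v \in V(E)\}$ thus gives a $G$-invariant open $\calf$-cover of $G \times \Delta$ of order $\leq N$.

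To verify $S$-longness, fix $(g,x)$ and let $v^*$ maximize $\tilde f(g,x)_v$; then $\tilde f(g,x)_{v^*} \geq \tfrac{1}{N+1}$. For $s \in S$, the isometric $G$-action on $(E,d_E^1)$ together with almost-equivariance of $f$ gives
\begin{equation*}
d_E^1(\tilde f(gs,x), \tilde f(g,x)) \;=\; d_E^1\bigl(s \cdot f(s^{-1}g^{-1}x),\, f(g^{-1}x)\bigr) \;<\; \e,
\end{equation*}
so $\tilde f(gs,x)_{v^*} > \tfrac{1}{N+1} - \e > \tfrac{1}{N+2}$, placing $(gs,x) \in \tilde f^{-1}(W_{v^*})$. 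Hence $gS \times \{x\} \subseteq \tilde f^{-1}(W_{v^*}) \in \calu$, as required.
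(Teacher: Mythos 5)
The paper does not prove this lemma directly; it cites Guentner--Willett--Yu~\cite[Prop.~4.2]{Guentner-Willett-Yu-dyn-asy-dim} and records two small adjustments (compactness of $\Delta$ lets one pass to cofinite subcovers, and the nerve action on an $\calf$-cover is cellular, so one may drop their hypothesis that $\calf$ is closed under finite-index overgroups). You reconstruct the underlying nerve/stars dictionary from scratch, which is a legitimate and more self-contained route. Your verification that the nerve of an $\calf$-cover is a $(G,\calf)$-simplicial complex, and your verification that the shrunken stars $W_v$ are $\calf$-subsets after a single barycentric subdivision, are both correct; these are precisely the points behind the paper's ``the action on the nerve is cellular'' remark.

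There is, however, a genuine gap in your $(a)\Rightarrow(b)$: the bound $\sum_V|\phi_V(e,z)-\phi_V(s,z)|<\varepsilon$ is the whole content of that implication, and deferring it to a ``standard smoothing/averaging recipe \ldots\ so that each $\phi_U$ varies by $O(1/k)$ along $S$-steps'' is not a proof. In particular, the naive reading of ``averaging in the group coordinate'' --- Ces\`aro-averaging a given partition of unity over $S^k$ --- destroys subordination to $\calu$ (the supports spread to $\bigcup_{h\in S^k}h^{-1}U$) and runs into F\o lner-type difficulties for non-amenable $G$. A construction that does work: using compactness of $\Delta$ and $G$-invariance, extract a uniform Lebesgue number $\delta>0$ for an $S^{2k}$-long cover, so that every $(g,x)$ has some $U\in\calu$ with $gS^{2k}\times B_\delta(x)\subseteq U$; then set
\begin{equation*}
\mu_U(g,x):=\sum_{j=0}^{2k}\min\Bigl\{1,\ \tfrac{1}{\delta}\,\sup\{r\geq 0: gS^j\times B_r(x)\subseteq U\}\Bigr\},
\end{equation*}
which is continuous, supported in $U$, satisfies $\sum_V\mu_V\geq 2k+1$, and changes by at most $1$ under $g\mapsto gs$ for $s\in S$ because shifting $g$ shifts the shell index $j$ by one; finally normalize and use the order bound on $\calu$ to get $\sum_U|\phi_U-\phi_U\circ s|\leq \tfrac{4(N+1)}{2k+1}$. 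Something like this needs to be in your argument. A second, much more minor, point concerns $(b)\Rightarrow(a)$: after barycentric subdivision the $\ell^1$-metric on $E$ changes, so the $\varepsilon$-bound $d_E^1(\tilde f(gs,x),\tilde f(g,x))<\varepsilon$ you use for the subdivided barycentric coordinates is not literally the one hypothesis gives. The two metrics are uniformly comparable in terms of $N$ (this is Lemma~\ref{lem:unif-cont}), so one should first pick $\varepsilon'=\varepsilon'(N,\varepsilon)$ small enough, apply $(b)$ with $\varepsilon'$, and then subdivide. The rest of your $(b)\Rightarrow(a)$ --- the no-inversion argument that $W_{gv}\cap W_v=\emptyset$ for $gv\neq v$, the order bound, and the $S$-longness from $\tilde f(gs,x)_{v^*}>\tfrac{1}{N+1}-\varepsilon>\tfrac{1}{N+2}$ --- is correct.
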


   \begin{proof}
     This is proven in~\cite[Prop.~4.2]{Guentner-Willett-Yu-dyn-asy-dim}
     with the following two minor changes:
     Firstly, in~\cite{Guentner-Willett-Yu-dyn-asy-dim}
     the covers of $G \x \Delta$ are in addition assumed to be cofinite for the action of $G$.
     However, since $\Delta$ is compact, it is always possible to pass to a cofinite subcover. 
     Secondly, in~\cite{Guentner-Willett-Yu-dyn-asy-dim} the family $\calf$ 
     is assumed to be closed under taking supergroups of finite index.
     This has the advantage that the isotropy groups for a simplicial action belong to $\calf$ if and
     only if the isotropy groups of all vertices belong to $\calf$.
     For general $\calf$ this is only true for cellular actions.
     However, the induced action on the nerve of an $\calf$-cover is cellular.
     Therefore~\cite[Prop.~4.2]{Guentner-Willett-Yu-dyn-asy-dim} remains true without the
     assumption that $\calf$ is closed under taking supergroups of finite index. 
   \end{proof}

   We will use Lemma~\ref{lem:almost-equiv-vs-cover} to discuss the
   behavior of $N$-$\calf$-amenability under finite extensions.  This
   is closely related
   to~\cite[Sec.~5]{Bartels-Lueck-Reich-Rueping-GLnZ}.  As a
   preparation we discuss coinduction.  Let $G_0 \subset G$ be a
   subgroup of finite index.  For a $G_0$-space $E_0$ we set $E :=
   \map_{G_0}(G,E_0)$; this is the coinduction of $E_0$ from $G_0$ to
   $G$.  We obtain an action of $G$ on $E$ as follows: the action of
   $g \in G$ on $(y \colon G \to E_0) \in E$ is given by the formula
   $(gy)(a) := y(ag)$ for all $a \in G$.  If $G_y$ is the isotropy
   subgroup of $G$ for $(y \colon G \to E_0) \in E$, then $G_y \cap
   G_0$ is contained in the isotropy subgroup $(G_0)_{y(e)}$ of $G_0$
   for $y(e) \in E_0$.  If $E_0$ is a simplicial complex, then $E$ is
   a simplicial complex with the following construction.  First we
   replace $E_0$ by its barycentric subdivision.  Now it is a locally
   ordered simplicial complex and the action of $G_0$ preserves this
   order.  Write $V(E_0)$ for the vertices of $E_0$ and define the
   vertices of $E$ by $V(E) := \map_{G_0}(G,V(E_0))$.  Now
   $v_0,\dots,v_n \in V(E)$ span a simplex for $E$ if for each $g \in
   G$ the vertices $v_0(g),\dots,v_n(g)$ span a simplex (possibly of
   dimension $<n$) of $E_0$ and in the local order of this simplex we
   have $v_0(g) \leq v_1(g) \leq \dots \leq v_n(g)$.  It will be
   convenient to use the $G$-invariant metric $d_{E}$ on $E$ defined
   by $d_E(y,y') := \max_{a \in G} d^1_{E_0}(y(a),y'(a))$.  This is
   \emph{not} the $\ell^1$-metric on $E$.  However,
   since, forgetting the $G$-action, $E=E_0^{\times m}$ for $m=[G:G_0]$, 
   Lemma~\ref{lem:unif-cont} implies that if $E_0$ is finite
   dimensional, then the identity on $E$ is in both directions between
   $(E,d^{1}_E)$ and $(E,d_E)$ uniformly continuous.

   If $f_0 \colon \Delta_0 \to E_0$ is $G_0$-equivariant, then we obtain a $G$-map
   $$\map_{G_0}(G,\Delta_0) \to \map_{G_0}(G,E_0), \quad \xi \mapsto f_0 \circ \xi.$$ 
   But if $f_0$ is not $G_0$-equivariant (maybe only $(S,\e)$-equivariant), then this 
   only defines a map
   $\map_{G_0}(G,\Delta_0) \to \map(G,E_0).$ 
   If $G = G_0t_1 \sqcup \dots \sqcup G_0t_n$, then there is a projection
   $\pi \colon \map(G,E_0) \to \map_{G_0}(G,E_0)$ determined by
   $\pi(y)(t_i) = y(t_i)$ for $i=1,\dots,n$. 
   We write now $$f \colon \map_{G_0}(G,\Delta_0) \to \map_{G_0}(G,E_0)$$ 
   for the map $\pi \circ (f_0)_*$; it is determined by
   $(f(\xi))(t_i) = f_0(\xi(t_i))$ for $i=1,\dots,n$.  

   \begin{lemma}
     \label{lem:prod-is-S,e-equivariant}
     For any finite $S \subset G$ and $\e > 0$ there are $S_0 \subset G_0$ 
     finite and $\e_0 > 0$ such that the following holds.
     Suppose that $f_0 \colon \Delta_0 \to E_0$ is $(S_0,\e_0)$-equivariant
     where $\dim E_0 \leq N$,
     then $f \colon \map_{G_0}(G,\Delta_0) \to \map_{G_0}(G,E_0) =: E$ 
     with $(f(\xi))(t_i) = f_0(\xi(t_i))$ for $i=1,\dots,n$ is
     $(S,\e)$-equivariant.    
   \end{lemma}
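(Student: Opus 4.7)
The plan is to unwind the definitions, perform the relevant computation coset by coset, and then convert the estimate from the product metric $d_E$ to the $\ell^1$-metric $d^1_E$ using Lemma~\ref{lem:unif-cont}.

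First, I would fix a decomposition $G = G_0 t_1 \sqcup \cdots \sqcup G_0 t_m$ and, for each $s \in S$ and each $i \in \{1,\dots,m\}$, write
\[
 t_i s = h_{i,s} t_{j(i,s)} \quad \text{with} \quad h_{i,s} \in G_0.
\]
The finite subset of $G_0$ that I would feed to the hypothesis is
\[
 S_0 := \{ h_{i,s} \mid s \in S,\; 1 \leq i \leq m \}.
\]
Since the action of $G_0$ on $E_0$ preserves $d^1_{E_0}$, for any $y,y' \in \map_{G_0}(G,E_0)$ the values $y(h t_i) = h\cdot y(t_i)$ and $y'(h t_i) = h\cdot y'(t_i)$ give
\[
 d_E(y,y') = \max_{a\in G} d^1_{E_0}(y(a),y'(a)) = \max_{1\leq i\leq m} d^1_{E_0}(y(t_i),y'(t_i)).
\]
So it suffices to control the values of $sf(\xi)$ and $f(s\xi)$ at the $t_i$.

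Next, the main computation. For $\xi \in \map_{G_0}(G,\Delta_0)$, $s \in S$ and $i$ pick $h_{i,s}, t_{j(i,s)}$ as above; then $t_i s = h_{i,s} t_{j(i,s)}$, so
\[
 (sf(\xi))(t_i) = f(\xi)(t_i s) = f(\xi)(h_{i,s} t_{j(i,s)}) = h_{i,s} \cdot f_0(\xi(t_{j(i,s)})),
\]
while
\[
 (f(s\xi))(t_i) = f_0((s\xi)(t_i)) = f_0(\xi(t_i s)) = f_0(h_{i,s}\cdot \xi(t_{j(i,s)})).
\]
Using that $h_{i,s}$ acts by an isometry of $(E_0,d^1_{E_0})$,
\[
 d^1_{E_0}\bigl((sf(\xi))(t_i),\,(f(s\xi))(t_i)\bigr) = d^1_{E_0}\bigl(h_{i,s} f_0(x),\, f_0(h_{i,s} x)\bigr)
\]
with $x := \xi(t_{j(i,s)}) \in \Delta_0$. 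If $f_0$ is $(S_0,\e_0)$-equivariant (with $\e_0$ still to be chosen), then this last quantity is $< \e_0$, and taking the maximum over $i$ gives
\[
 d_E\bigl(sf(\xi),\,f(s\xi)\bigr) < \e_0 \quad \text{for all } s\in S,\; \xi.
\]

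Finally, I would convert from $d_E$ to $d^1_E$. Forgetting the $G$-action, $E$ is the $m$-fold product $E_0^{\times m}$, and $d_E$ is exactly the product metric $d_{E^{\times m}}$ from the discussion before Lemma~\ref{lem:unif-cont}. Since $\dim E_0 \leq N$, Lemma~\ref{lem:unif-cont} applied with $n := m$ and this value of $N$ gives an $\e_0 > 0$, depending only on $\e$, $m$, and $N$, such that $d_E(y,y') < \e_0$ implies $d^1_E(y,y') < \e$. With this choice of $\e_0$ and $S_0$ as above, the computation yields $d^1_E(sf(\xi),f(s\xi)) < \e$, i.e., $f$ is $(S,\e)$-equivariant. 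The proof is essentially bookkeeping; the only place uniformity is used is in invoking Lemma~\ref{lem:unif-cont}, which is why the hypothesis $\dim E_0 \leq N$ is needed.
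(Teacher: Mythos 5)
Your proof is correct and takes essentially the same approach as the paper: same choice of $S_0$ (the two descriptions of $S_0$ coincide since the cosets are disjoint), same coset-by-coset computation, and the same appeal to Lemma~\ref{lem:unif-cont} to pass between $d_E$ and $d^1_E$. Your write-up is in fact a bit more careful than the paper's, which informally conflates $\e_0$ and $\e$ in the body of the argument; you make the $\e_0 = \e_0(\e,m,N)$ dependence explicit.
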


   \begin{proof} 
     Because of Lemma~\ref{lem:unif-cont} we can use the metric $d_E$ instead of $d^1_E$.
     Note that $d_E(y,y') = \max_{i} d^1_{E_0}(y(t_i),y'(t_i))$, since the action of $G_0$ on
     $E_0$ is isometric for $d^1_{E_0}$.
  
     Let $S \subset G$ be finite.
     Set $S_0 := \{ t_i s t_j^{-1} \mid s \in S, 1 \leq i,j \leq n \} \cap G_0$.
     Let $f_0 \colon \Delta_0 \to E_0$ be $(S_0,\e)$-equivariant.
     For $s \in S$, $\xi \in \map_{G_0}(G,\Delta_0)$ and $t_i$ we pick
     $t_j$ with $t_is \in G_0t_j$, thus $t_i s t_j^{-1} \in G_0$.
     Then
     \begin{align*}
       (f(s \xi)) (t_i) &= f_0((s\xi)(t_i)) = f_0( \xi(t_i s)) \\
          & \qquad = f_0( \xi (t_i s t_j^{-1} t_j)) = 
                f_0( t_i s t^{-1}_j \xi(t_j)) \\
       (s(f(\xi))) (t_i) &= (f(\xi))(t_is) = (f(\xi))(t_i s t_j^{-1} t_j) \\ 
          & \qquad = t_i  s t_j^{-1} (f(\xi)(t_j)) 
           = t_i s t_j^{-1} f_0(\xi(t_j))
     \end{align*}
     and therefore $d_{E_0}^{1}( (f(s \xi)) (t_i), (s(f(\xi))) (t_i) ) < \e$.
     It follows that $$d_E( f(s\xi), s(f(\xi))) < \e$$ and
     that $f$ is $(S,\e)$-equivariant. 
   \end{proof}

   \begin{proposition}
     \label{prop:finite-index+fin-F-amenabilty}
     Let $G$ act on the compact metrizable space $\Delta$. 
     Let $G_0$ be a subgroup of finite index $n$ in $G$ and let
     $\calf_0$ be a family of subgroups of $G_0$.
     Suppose that the restriction of the action on $\Delta$ to the subgroup $G_0$ 
     is $N$-$\calf_0$-amenable.
     Then the action of $G$ on $\Delta$ is $n \cdot N$-$\calf$-amenable, where
     $\calf$ is the family of subgroups $F$ of $G$ for which
     $F \cap G_0$ belongs to $\calf_0$.
   \end{proposition}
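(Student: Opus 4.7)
The plan is to reformulate $N$-$\calf$-amenability in terms of almost equivariant maps into low dimensional simplicial complexes via Lemma~\ref{lem:almost-equiv-vs-cover}, and then use the coinduction machinery developed just before the statement together with Lemma~\ref{lem:prod-is-S,e-equivariant}. Given $S \subseteq G$ finite and $\e > 0$, we want to construct a $(G,\calf)$-simplicial complex $E$ of dimension at most $nN$ together with an $(S,\e)$-equivariant map $\Delta \to E$.

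First I apply Lemma~\ref{lem:prod-is-S,e-equivariant} to the given $(S,\e)$ to obtain the constants $S_0 \subseteq G_0$ finite and $\e_0 > 0$ for dimension bound $N$. By the hypothesis and Lemma~\ref{lem:almost-equiv-vs-cover} applied to the restricted $G_0$-action, there is a $(G_0,\calf_0)$-simplicial complex $E_0$ of dimension at most $N$ and an $(S_0,\e_0)$-equivariant map $f_0 \colon \Delta \to E_0$ (we take $\Delta_0 = \Delta$ regarded as a $G_0$-space). Set
\[
   E := \map_{G_0}(G, E_0),
\]
viewed as a simplicial complex with the product simplicial structure recalled before Lemma~\ref{lem:prod-is-S,e-equivariant}. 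Since, after forgetting the $G$-action, $E$ is the $n$-fold product of $E_0$, we have $\dim E \leq nN$. For the isotropy condition: if $G_y$ is the stabilizer of $y \in E$, then, as already noted in the text preceding Lemma~\ref{lem:prod-is-S,e-equivariant}, $G_y \cap G_0 \subseteq (G_0)_{y(e)} \in \calf_0$, so $G_y \in \calf$ by definition of $\calf$. Hence $E$ is a $(G,\calf)$-simplicial complex of the desired dimension.

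To produce the map $\Delta \to E$, note that the \emph{diagonal} map
\[
   \iota \colon \Delta \to \map_{G_0}(G,\Delta), \qquad \iota(\xi)(a) := a\xi,
\]
is $G$-equivariant: indeed, $\iota(g\xi)(a) = ag\xi = \iota(\xi)(ag) = (g \cdot \iota(\xi))(a)$. Applying the construction of Lemma~\ref{lem:prod-is-S,e-equivariant} to $f_0$ gives the map $f \colon \map_{G_0}(G,\Delta) \to E$, and that lemma shows $f$ is $(S,\e)$-equivariant. Composing with the $G$-equivariant $\iota$ preserves $(S,\e)$-equivariance, producing the desired $(S,\e)$-equivariant map $f \circ \iota \colon \Delta \to E$. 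Invoking Lemma~\ref{lem:almost-equiv-vs-cover} in the opposite direction then yields that the $G$-action on $\Delta$ is $nN$-$\calf$-amenable.

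The main obstacle, and the reason for introducing the reformulation in Lemma~\ref{lem:almost-equiv-vs-cover}, is managing the transition from approximately $G_0$-equivariant data to approximately $G$-equivariant data while simultaneously controlling dimension and isotropy. Working directly with covers, one would get an order bound and coset-indexed product of covers, but the resulting members need not be $\calf$-sets (a group element can preserve a simplex without fixing it), which is exactly the phenomenon remarked on at the start of Subsection~5.1. The simplicial-complex picture absorbs this issue into the (already understood) passage to the barycentric subdivision used in defining the product simplicial structure, and the estimates of Lemma~\ref{lem:unif-cont} bridge between the $\ell^1$-metric and the product metric so that the almost-equivariance constants behave uniformly in the dimension.
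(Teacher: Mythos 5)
Your proof is correct and follows essentially the same route as the paper: apply Lemma~\ref{lem:almost-equiv-vs-cover} to reformulate in terms of $(S,\e)$-equivariant maps, pull back the constants via Lemma~\ref{lem:prod-is-S,e-equivariant}, coinduce the target complex $E_0$ to $E = \map_{G_0}(G,E_0)$, and compose the induced map $f$ with the diagonal $G$-equivariant map $\Delta \to \map_{G_0}(G,\Delta)$. You have also spelled out the dimension count and the isotropy verification for $E$, which the paper leaves implicit.
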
 

   \begin{proof}
     It suffices to show that condition~\ref{lem:almost-equiv-vs-cover:almost-equi}
     from Lemma~\ref{lem:almost-equiv-vs-cover}
     passes from $G_0$ to $G$.
     Let $S \subseteq G$ be finite and $\e > 0$ be given.
     Write $\Delta_0$ for $\Delta$ with the action restricted to $G_0$.
     Let $f_0 \colon \Delta_0 \to E_0$ be $(S_0,\e_0)$-equivariant where
     $E_0$ is an $(G_0,\calf_0)$-simplicial complex of dimension at most $N$
     and $S_0 \subseteq G_0$ finite and $\e_0 > 0$ come from Lemma~\ref{lem:prod-is-S,e-equivariant}.
     Let $E := \map_{G_0}(G,E_0)$; this is an $(G,\calf)$-simplicial complex of
     dimension $\leq n \cdot N$. 
     By Lemma~\ref{lem:prod-is-S,e-equivariant} there exists an $(S,\e)$-equivariant 
     map $f \colon \map_{G_0} (G,\Delta_0) \to E$.
     Composing $f$ with the $G$-equivariant map $\Delta \to \map_{G_0}(G,\Delta_0)$,
     $\xi \mapsto (g \mapsto g\xi)$ we obtain a $(S,\e)$-equivariant map
     $\Delta \to E$.  
   \end{proof}
  
   \begin{proposition}
     \label{prop:finite-ind-calf-amenable-product}
     Let $G_0$ be a subgroup of $G$ of finite index $n$.
     Let $\calf_0$ be a family of subgroups of $G_0$.
     Let $\calf$ be the family of subgroups $F$ of $G$ for which
     $F \cap G_0$ belongs to $\calf$.
     Assume that there exists an $N$-$\calf_0$-amenable action 
     of $G_0$ on a compact metrizable space $\Delta_0$. 
     Then the induced action of $G$ on $\map_{G_0}(G,\Delta_0) \cong \Delta_0^n$ is 
     $n \cdot N$-$\calf$-amenable. 
   \end{proposition}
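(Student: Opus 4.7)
The plan is to reduce to the almost-equivariant reformulation provided by Lemma~\ref{lem:almost-equiv-vs-cover} and then apply the coinduction machinery already assembled in Lemma~\ref{lem:prod-is-S,e-equivariant}. Indeed, the proof of Proposition~\ref{prop:finite-index+fin-F-amenabilty} already contains essentially all the ingredients we need; the present statement is simpler because we do not start with a given $G$-action on $\Delta_0$, so we do not need the final composition with $\Delta \to \map_{G_0}(G,\Delta_0)$.

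First, I would fix a finite set $S \subseteq G$ and $\e > 0$, and feed them into Lemma~\ref{lem:prod-is-S,e-equivariant} to extract a finite $S_0 \subseteq G_0$ and $\e_0 > 0$ with the property that $(S_0,\e_0)$-equivariance of a map $f_0 \colon \Delta_0 \to E_0$ into a simplicial complex of dimension at most $N$ promotes to $(S,\e)$-equivariance of the coinduced map $f \colon \map_{G_0}(G,\Delta_0) \to \map_{G_0}(G,E_0)$. Next, using the hypothesis that the action of $G_0$ on $\Delta_0$ is $N$-$\calf_0$-amenable together with Lemma~\ref{lem:almost-equiv-vs-cover}, I would produce a $(G_0,\calf_0)$-simplicial complex $E_0$ of dimension at most $N$ and an $(S_0,\e_0)$-equivariant map $f_0 \colon \Delta_0 \to E_0$. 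Applying Lemma~\ref{lem:prod-is-S,e-equivariant} then yields the desired $(S,\e)$-equivariant map $f \colon \map_{G_0}(G,\Delta_0) \to E$, where $E := \map_{G_0}(G,E_0)$.

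The remaining issue is to verify that $E$ qualifies as a $(G,\calf)$-simplicial complex of dimension at most $nN$. The dimension bound is immediate from $E \cong E_0^{\times n}$ as a simplicial complex (after barycentric subdivision of $E_0$), using the product simplicial structure on the $n$-fold product. For the isotropy, if $y \in E$ is fixed by $F \leq G$, then from the discussion of coinduction we have $F \cap G_0 \subseteq (G_0)_{y(e)}$, and since $(G_0)_{y(e)} \in \calf_0$ by the assumption on $E_0$, it follows that $F \cap G_0 \in \calf_0$, hence $F \in \calf$ by the definition of $\calf$. Invoking Lemma~\ref{lem:almost-equiv-vs-cover} once more gives $n \cdot N$-$\calf$-amenability of the action of $G$ on $\map_{G_0}(G,\Delta_0)$.

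I don't anticipate any real obstacle: the only part requiring mild care is the isotropy check, which just unwinds the definitions of the coinduced simplicial structure and of the family $\calf$. Everything else is a direct appeal to Lemma~\ref{lem:almost-equiv-vs-cover} and Lemma~\ref{lem:prod-is-S,e-equivariant}.
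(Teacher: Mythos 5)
Your proof is correct and follows the same route as the paper: the paper's own argument is just the terse observation that $\map_{G_0}(G,E_0)$ is a $(G,\calf)$-simplicial complex of dimension $n\cdot N$, combined with Lemmas~\ref{lem:prod-is-S,e-equivariant} and~\ref{lem:almost-equiv-vs-cover}. You have simply unwound the same two lemmas and spelled out the isotropy verification (via $G_y \cap G_0 \subseteq (G_0)_{y(e)}$) that the paper leaves implicit.
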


   \begin{proof} 
     If $E_0$ is an $(G_0,\calf_0)$-simplicial complex
     of dimension $N_0$, then 
     $E := \map_{G_0}(G,E_0)$ is an $(G,\calf)$-simplicial complex
     of dimension $n \cdot N_0$.
     The result follows from Lemma~\ref{lem:prod-is-S,e-equivariant} and 
     Lemma~\ref{lem:almost-equiv-vs-cover}.
   \end{proof}

  \subsection{The Farrell-Jones Conjecture}
     \label{subsec:FJC}

  Let $G$ be a group.
  Let $\cala$ be an additive category with a strict $G$-action 
  and a strict direct sum.
  Following~\cite[Sec.~4.1]{Bartels-Lueck-Borel} we will call
  such a category an additive $G$-category.
  For such a category $\cala$ there is then an additive category $\int_G \cala$.
  If $\cala$ is equivalent to the category of finitely generated free $R$-modules 
  and the $G$-action is trivial, then $\int_G \cala$ is equivalent to the category of
  finitely generated free $R[G]$-modules.
  Given a family $\calf$ of subgroups of $G$ there is the $K$-theoretic $\calf$-assembly map
  \begin{equation*}
    \alpha^K_{G,\calf} \colon H^G_*(E_\calf G;\bfK_\cala) \to K_*(\smint{G} \cala).
  \end{equation*}
  The $K$-theoretic Farrell-Jones Conjecture (with coefficients) asserts that 
  this map is an isomorphism if we use for $\calf$
  the family $\VCyc$ of virtually cyclic subgroups~\cite[Conj.~3.2]{Bartels-Reich-coeff-for-FJ}. 
  Farrell and Jones' original formulation~\cite{Farrell-Jones-Isom-Conj} for the group
  ring $\IZ[G]$ is a special case of this more general formulation.
  
  If $\cala$ is in addition equipped with a strict involution~\cite[Sec.~4.1]{Bartels-Lueck-Borel},
  then $\int_G \cala$ inherits an involution and there is for any family $\calf$ of subgroups of
  $G$ the $L$-theoretic $\calf$-assembly map
  \begin{equation*}
    \alpha^L_{G,\calf} \colon H^G_*(E_\calf G;\bfL_\cala^{-\infty}) \to 
                                   L^{\langle -\infty \rangle}_*(\smint{G} \cala).
  \end{equation*}   
  The $L$-theoretic Farrell-Jones Conjecture (with coefficients) asserts that 
  this map is an isomorphism if we use for $\calf$ the family $\VCyc$ 
  of virtually cyclic subgroups~\cite{Bartels-Lueck-Coeff-L}. 
  Again, Farrell and Jones' original formulation~\cite{Farrell-Jones-Isom-Conj} for the group
  ring $\IZ[G]$ is a special case of this more general formulation.

  We will say that a group $G$ \emph{satisfies the Farrell-Jones Conjecture relative to $\calf$}
  if for all $G$-additive categories $\cala$ (with strict involution in the $L$-theory case)
  the assembly maps $\alpha^K_{G,\calf}$ and $\alpha^L_{G,\calf}$ are isomorphisms.
  If $\calf = \VCyc$ then we will say that $G$ \emph{satisfies the Farrell-Jones Conjecture}.
  We will need the following results.

  \begin{theorem}[Transitivity principle]
    \label{thm:transitivity-principle} 
       Let $\calf$ be a family of subgroups of $G$.
       Assume that $G$ satisfies the Farrell-Jones Conjecture relative to $\calf$ and 
       that any subgroup $F \in \calf$ satisfies the Farrell-Jones Conjecture.
       Then $G$ satisfies the Farrell-Jones Conjecture.
  \end{theorem}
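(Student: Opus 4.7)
The plan is to factor the $\VCyc$-assembly map for $G$ through the $\calf$-assembly map, and then reduce the resulting relative comparison to assembly maps for the subgroups $F \in \calf$, where the hypothesis on $F$ applies.

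First, I would observe that we may assume $\VCyc \subseteq \calf$: virtually cyclic groups trivially satisfy the Farrell-Jones Conjecture, so enlarging $\calf$ to $\calf \cup \VCyc$ (closed under conjugation and subgroups) preserves both hypotheses. There is then a $G$-equivariant map $E_\VCyc G \to E_\calf G$, unique up to $G$-homotopy, and the $\VCyc$-assembly map for $G$ factors as
\begin{equation*}
   H^G_*(E_\VCyc G;\bfK_\cala) \to H^G_*(E_\calf G;\bfK_\cala) \xrightarrow{\alpha^K_{G,\calf}} K_*(\smint{G} \cala),
\end{equation*}
and analogously in $L$-theory. The right-hand arrow is an isomorphism by the hypothesis on $G$, so the task reduces to showing that the relative assembly arrow on the left is an isomorphism for every $G$-additive category $\cala$ (with strict involution in the $L$-case).

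Second, to establish this, I would apply the standard cell-by-cell argument from equivariant homology: a $G$-map of $G$-CW complexes with isotropy in $\calf$ induces an isomorphism on $H^G_*(-;\bfK_\cala)$ provided that, for each $F \in \calf$, its restriction to $F$ does. Now $\res_F E_\calf G$ is $F$-equivariantly contractible, because $F \in \calf$ fixes a point in $E_\calf G$, while $\res_F E_\VCyc G$ is a model for $E_{\VCyc_F} F$, where $\VCyc_F$ denotes the family of virtually cyclic subgroups of $F$. Under these identifications the relative assembly map for $F$ becomes the $\VCyc_F$-assembly map $\alpha^K_{F,\VCyc_F}$ for $F$ with coefficients in the restricted additive $F$-category $\res_F \cala$, which is an isomorphism by the Farrell-Jones Conjecture assumed for $F$.

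The $L$-theory case works verbatim, since a strict involution on $\cala$ restricts to one on $\res_F \cala$. I expect the main technical point to be the cell-by-cell reduction in the second step; this is a standard piece of equivariant homology that has been carried out in detail in the transitivity arguments of L\"uck-Reich and Bartels-Echterhoff-L\"uck on the Farrell-Jones Conjecture with coefficients, so one could alternatively just cite the general transitivity principle from that literature rather than repeat the argument here.
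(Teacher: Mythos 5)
Your overall route is the standard one and is the argument that the cited source [Bartels--Farrell--L\"uck, Thm.~2.10] carries out: factor $\alpha^K_{G,\VCyc}$ through $H^G_*(E_\calf G;\bfK_\cala)$ via the $G$-map $E_\VCyc G \to E_\calf G$, and reduce the relative term cell by cell to the assembly maps $\alpha^K_{F,\VCyc_F}$ for $F \in \calf$ with coefficients $\res_F\cala$, which are isomorphisms by the hypothesis on the members of $\calf$. The paper itself does not reprove this; it only cites the literature, so your sketch is supplying the intended argument.

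There is, however, a gap in your opening reduction. You claim that one may ``assume $\VCyc \subseteq \calf$'' because enlarging $\calf$ to $\calf' := \calf \cup \VCyc$ preserves both hypotheses. That is clear for the second hypothesis, but not for the first. To pass from ``$\alpha^K_{G,\calf}$ is an isomorphism'' to ``$\alpha^K_{G,\calf'}$ is an isomorphism'' you would have to know that the map $\psi\colon H^G_*(E_\calf G;\bfK_\cala) \to H^G_*(E_{\calf'} G;\bfK_\cala)$ is surjective (injectivity is automatic from the factorization $\alpha^K_{G,\calf} = \alpha^K_{G,\calf'}\circ\psi$). By exactly the cell-by-cell criterion you invoke in your second step, $\psi$ is an isomorphism if and only if for each $H \in \calf'$ the assembly map for $H$ relative to $\calf|_H$ is an isomorphism; for $H \in \calf$ this is trivial, but for $H \in \VCyc\setminus\calf$ it is the statement that $H$ satisfies the Farrell--Jones Conjecture relative to the subfamily $\calf|_H$ of $\VCyc_H$, which is neither among the hypotheses nor true in general (for $H$ infinite dihedral and $\calf|_H = \Fin$ the relative term is the UNil/Nil obstruction). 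So this step is not a harmless normalization; it is itself a transitivity assertion.

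The clean fix is to add $\VCyc\subseteq\calf$ as a standing hypothesis, which is how the transitivity principle is stated in the cited reference, and which is satisfied in every application made in this paper (in each case $\calf$ is constructed so as to contain all virtually cyclic subgroups, e.g.\ because fixed points of cyclic groups are produced by the Brouwer fixed point theorem on the compact $\ER$, or because $\calf$ is a pullback family containing all preimages of virtually cyclic groups). With that hypothesis in place, the rest of your argument --- the factorization, the identification of $\res_F E_\calf G$ with a point and of $\res_F E_\VCyc G$ with $E_{\VCyc_F}F$, and the identification of the fiberwise map with $\alpha^K_{F,\VCyc_F}$ over $\res_F\cala$, together with the parallel $L$-theoretic statement --- is correct.
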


  \begin{proof}
    See for example~\cite[Thm.~2.10]{Bartels-Farrell-Lueck-cocompact-lattices}.
  \end{proof}
 
  \begin{remark} 
  	\label{rem:FJ-and-extensions}
  	An application of the Transitivity principle~\ref{thm:transitivity-principle} is the following inheritance property for the Farrell-Jones conjecture for extensions, see for example~\cite[Thm.~2.7]{Bartels-Farrell-Lueck-cocompact-lattices}.
    Let $N \to \hat G \to G$ be an extension.
    Suppose that $G$ satisfies the Farrell-Jones Conjecture and that the preimage in $\hat G$ of any virtually cyclic subgroup of $G$ also satisfies the Farrell-Jones Conjecture.
    Then $\hat G$ satisfies the Farrell-Jones Conjecture.
  \end{remark} 
 
  In the next result $\ER$ stands for Euclidean retract. Recall that a
  compact space $X$ is a {\it Euclidean retract} (or ER) if it can be
  embedded in some $\R^n$ as a retract. A compact metrizable space $X$
  is an ER if and only if it is a finite-dimensional contractible ANR.

  \begin{theorem}
    \label{thm:FJC-finitely-amenable-action}
    Let $\calf$ be a family of subgroups of $G$ that is closed under taking finite index 
    overgroups.
    Suppose that $G$ admits a finitely $\calf$-amenable action on a compact $\ER$.
    Then $G$ satisfies the Farrell-Jones Conjecture relative to $\calf$.
  \end{theorem}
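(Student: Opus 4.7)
The plan is to reduce the statement to the axiomatic criteria for the Farrell--Jones Conjecture developed by L\"uck, Reich and the first author in~\cite{Bartels-Lueck-Reich-K-FJ-hyp, Bartels-Lueck-Borel} (with Wegner's adaptation covering the $L$-theoretic case). In those papers one isolates a controlled-algebra regularity condition on a $G$-action --- essentially the existence, for each finite $S\subseteq G$ and each additive $G$-category (with involution in the $L$-theory case), of suitable $S$-long open $\calf$-covers of $G\times X$ of bounded multiplicity on a compact space $X$ with good topological properties --- and shows that this regularity condition forces the assembly maps $\alpha^K_{G,\calf}$ and $\alpha^L_{G,\calf}$ to be isomorphisms. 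The hypothesis of the theorem supplies exactly this combinatorial input on the space $\Delta$.

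The first step is to turn the hypothesis into the axiomatic input: finite $\calf$-amenability directly produces, for every finite $S\subseteq G$, an open $\calf$-cover $\calu$ of $G\times\Delta$ of bounded order whose members are $S$-long in the group coordinate. The assumption that $\Delta$ is a compact Euclidean retract (equivalently, a finite-dimensional contractible compact ANR) is the second essential ingredient: it lets one apply the controlled-algebra machinery to the coefficient system on $\Delta$, because $\Delta$ admits small simplicial approximations and a retraction from a Euclidean ball, and because contractibility plus metrizability of $\Delta$ give the homotopy data (including the transfer) needed in the nerve-theorem reformulation used in~\cite{Bartels-Lueck-Reich-K-FJ-hyp}.

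The second step is to check that the isotropy of the resulting simplicial complex (the nerve of $\calu$, or after barycentric subdivision to obtain a cellular action) lies in $\calf$. Here one uses that $\calf$ is closed under passing to finite-index overgroups: stabilizers of simplices in the nerve of an $\calf$-cover are only guaranteed to lie in $\calf$ after subdivision, since an element of $G$ may permute the vertices of a simplex without fixing it pointwise; closedness under finite-index overgroups absorbs this discrepancy in the same way as in Lemma~\ref{lem:almost-equiv-vs-cover} and the subsequent coinduction discussion.

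The last step is to invoke the main theorems of~\cite{Bartels-Lueck-Reich-K-FJ-hyp} and~\cite{Bartels-Lueck-Borel} (and their $L$-theoretic counterpart) for each additive $G$-category $\cala$, which upgrade the existence of such covers on a compact ER to bijectivity of $\alpha^K_{G,\calf}$ and $\alpha^L_{G,\calf}$. The main obstacle is the bookkeeping needed to match our cover-based formulation of finite $\calf$-amenability with the precise transfer-reducibility formulation used in the cited papers; this matching, however, is essentially carried out in~\cite{Bartels-Coarse-flow}, and the present theorem can be read off from that translation together with the transitivity principle~\ref{thm:transitivity-principle}.
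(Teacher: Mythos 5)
Your approach matches the paper's: the paper's proof is itself just a pointer to the axiomatic results of \cite{Bartels-Lueck-Borel, Bartels-Lueck-Reich-K-FJ-hyp}, with a remark that the hypotheses are stated differently there but that the translation is routine (citing \cite[Thm.~4.3]{Bartels-Coarse-flow} for the comparison). Your sketch elaborates exactly this translation and is correct in its broad strokes. Two small inaccuracies are worth flagging, though.

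First, the closing invocation of the transitivity principle~\ref{thm:transitivity-principle} is out of place. Theorem~\ref{thm:FJC-finitely-amenable-action} only asserts the Farrell--Jones Conjecture \emph{relative to $\calf$}; no transitivity is needed for that. Transitivity enters only later, in Lemma~\ref{lem:ac(F)}, when one wants to upgrade from FJC relative to $\calf$ to the full FJC by knowing each $F\in\calf$ already satisfies FJC.

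Second, your explanation of where the hypothesis ``$\calf$ closed under finite-index overgroups'' enters is inconsistent with the paper's own Lemma~\ref{lem:almost-equiv-vs-cover}. That lemma argues explicitly that the closedness assumption is \emph{not} needed for the nerve reformulation, because the $G$-action on the nerve of an $\calf$-cover is cellular, so simplex stabilizers already coincide with vertex stabilizers and hence lie in $\calf$ without any barycentric-subdivision fix. The finite-index-overgroups hypothesis is used to match the precise formulation of transfer reducibility in the cited references (and in the comparison carried out in~\cite[Thm.~4.3]{Bartels-Coarse-flow}), not to control simplex stabilizers in the nerve. The rest of your sketch --- finite $\calf$-amenability supplies the $S$-long covers, the compact-$\ER$ hypothesis supplies the homotopy/transfer data, and the main theorems then yield bijectivity of $\alpha^K_{G,\calf}$ and $\alpha^L_{G,\calf}$ for every additive $G$-category --- is accurate and in line with the paper.
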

 
  \begin{proof}
    This follows from the main axiomatic results 
    of~\cite{Bartels-Lueck-Borel, Bartels-Lueck-Reich-K-FJ-hyp}.
    The assumptions on $G$ are formulated somewhat differently in these references, 
    but it is not hard
    to check that the present assumptions imply the assumptions in these references,
    see also~\cite[Thm.~4.3]{Bartels-Coarse-flow}.
  \end{proof}
   
  If $\EF$ is a class of groups that is closed under taking subgroups and isomorphism, then
  for a group $G$ we denote  by $\EF(G)$ the family of subgroups of $G$ that belong 
  to $\EF$.
  For such a class of groups $\EF$ we define the  class $\Eac(\EF)$ of groups to consist of
  all groups $G$ that admit a finitely $\EF(G)$-amenable action on a compact $\ER$.
  Using the action on a point we see $\EF \subseteq \Eac(\EF)$.

  \begin{lemma}
    \label{lem:ac(F)}
    Let $\EF$ be a class of groups that is closed under isomorphisms, taking subgroups, 
    taking finite index overgroups, finite products and 
    central extensions with finitely generated kernel. Then
    \begin{enumerate}
    \item \label{lem:ac(F):FJC} 
       if all groups in $\EF$ satisfy the Farrell-Jones Conjecture, then
       all groups in $\Eac(\EF)$ satisfy the Farrell-Jones Conjecture;
    \item \label{lem:ac(F):over}
       the class $\Eac(\EF)$ is again closed under isomorphisms, taking subgroups, 
       taking finite index overgroups, finite products and central extensions
       with finitely generated kernel.
    \end{enumerate}
  \end{lemma}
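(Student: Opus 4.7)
For part (a), the plan is immediate: given $G \in \Eac(\EF)$, $G$ admits by definition a finitely $\EF(G)$-amenable action on a compact $\ER$, and since $\EF$ (and hence $\EF(G)$) is closed under finite index overgroups, Theorem~\ref{thm:FJC-finitely-amenable-action} yields the Farrell-Jones Conjecture for $G$ relative to $\EF(G)$. Every subgroup in $\EF(G)$ lies in $\EF$ and satisfies the Farrell-Jones Conjecture by hypothesis, so the Transitivity Principle~\ref{thm:transitivity-principle} upgrades this to the full Farrell-Jones Conjecture for $G$.

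For part (b), I would verify each of the five closure properties separately, in every case reducing the verification to the corresponding closure property of $\EF$. Isomorphism closure is immediate by transport of structure. For a subgroup $H \leq G$ with $G \in \Eac(\EF)$ realized by a cover $\calu$ of $G \x \Delta$, the collection $\{ U \cap (H \x \Delta) : U \in \calu \}$ is an $H$-invariant open cover of $H \x \Delta$ of the same order; the $H$-stabilizer of $U \cap (H \x \Delta)$ equals $F \cap H$ where $F \in \EF(G)$ is the $G$-stabilizer of $U$, and thus lies in $\EF(H)$ by subgroup closure of $\EF$, while $S$-longness for $S \subseteq H$ descends directly from the $G$-level. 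For a finite index overgroup $G$ of $H \in \Eac(\EF)$ of index $n$, Proposition~\ref{prop:finite-ind-calf-amenable-product} already produces an $nN$-$\calf$-amenable action of $G$ on $\map_H(G,\Delta) \cong \Delta^n$ for the family $\calf = \{ F \leq G : F \cap H \in \EF(H) \}$; each such $F$ satisfies $[F:F \cap H] \leq [G:H] = n$, so $F$ is a finite index overgroup of $F \cap H \in \EF$ and hence lies in $\EF$ by the corresponding closure. For a finite product $G_1 \x G_2$ with $G_i \in \Eac(\EF)$ I would pass to almost equivariant maps via Lemma~\ref{lem:almost-equiv-vs-cover}, take products $f_1 \x f_2 \colon \Delta_1 \x \Delta_2 \to E_1 \x E_2$ of such maps into $(G_i, \EF(G_i))$-simplicial complexes of dimension $\leq N_i$, and observe that the product isotropy groups $F_1 \x F_2 \in \EF$ by product closure, so that $E_1 \x E_2$ is a $(G_1 \x G_2, \EF(G_1 \x G_2))$-simplicial complex of dimension $\leq N_1 + N_2$. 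Finally, for a central extension $1 \to A \to \hat G \to G \to 1$ with $A$ finitely generated and $G \in \Eac(\EF)$, pull back both the $G$-action on $\Delta$ and any finitely $\EF(G)$-amenable cover of $G \x \Delta$ through the quotient $\pi \colon \hat G \to G$; order and $\hat S$-longness (from $\pi(\hat S)$-longness on the $G$-level) transfer immediately, and the new stabilizers sit in central extensions $1 \to A \to \pi^{-1}(F) \to F \to 1$ with $F \in \EF(G)$, hence lie in $\EF$ by the central extension with finitely generated kernel closure.

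The main obstacle throughout (b) is purely bookkeeping: the five listed closure properties of $\EF$ are used in one-to-one correspondence with the five cases, and this matching is exactly why precisely these hypotheses appear. The slightly most delicate case is the central extension, where one must note that pulling back an $\calf$-subset through the surjection $\hat G \to G$ preserves the $\calf$-subset property, with the stabilizer enlarged by exactly the finitely generated central kernel $A$.
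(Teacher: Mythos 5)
Your proposal is correct and follows essentially the same route as the paper: part (a) combines Theorem~\ref{thm:FJC-finitely-amenable-action} (noting that $\EF(G)$ inherits closure under finite index overgroups from $\EF$) with the Transitivity Principle, and part (b) reduces each of the five closure properties to the corresponding one for $\EF$, using Proposition~\ref{prop:finite-ind-calf-amenable-product} for the finite index case and pullback of actions through the quotient for central extensions. The paper is terser — it dispatches isomorphism/subgroup closure as ``clear from the definition'' and asserts the product amenability without detail — but the underlying argument is identical; your route through Lemma~\ref{lem:almost-equiv-vs-cover} for products is a reasonable way to justify that one-line assertion.
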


  \begin{proof}
    \ref{lem:ac(F):FJC} By Theorem~\ref{thm:FJC-finitely-amenable-action} every group $G$ from
    $\Eac(\EF)$ satisfies the Farrell-Jones Conjecture relative to $\EF(G)$.
    By assumption every group from $\EF(G)$ satisfies the Farrell-Jones Conjecture.
    Therefore the transitivity 
    principle~\ref{thm:transitivity-principle}
    implies that $G$ satisfies the Farrell-Jones Conjecture.
    \\[1ex]
    \ref{lem:ac(F):over} That $\Eac(\EF)$ is closed under isomorphism and taking subgroups
    is clear from the definition. 
    Proposition~\ref{prop:finite-ind-calf-amenable-product} implies that
    it is also closed under finite index overgroups. 

    Let for $i=1,2$ the group $G_i$ act finitely $\calf_i$-amenable on $\Delta_i$.
    Then the product action of $G_1 \x G_2$ on $\Delta_1 \x \Delta_2$ is 
    finitely $\calf_1 \x \calf_2$-amenable.
    Since $\EF$ is assumed to be closed under finite products it follows that
    $\Eac(\EF)$ is also closed under finite products.

    Let $C \to \hat G \to G$ be a central extension with $C$ finitely generated.
    If $G$ acts finitely $\calf$-amenable on $\Delta$, then $\hat G$ acts via the projection
    $\hat G \to G$ finitely $\calf'$-amenable on $\Delta$, where $\calf'$ consists of
    central extensions with finitely generated kernel of groups in $\calf$. 
    Therefore, if $\calf \subseteq \EF$, then also $\calf' \subseteq \EF$.
    It follows that $\Eac(\EF)$ is closed under central extensions with finitely 
    generated kernel.       
  \end{proof}

  Starting with $\Eac^0(\EF) := \EF$ we can define inductively
  $\Eac^{n+1}(\EF) := \Eac (\Eac^{n}(\EF))$.
       We set $\EAC(\EF) := \bigcup \Eac^{n}(\EF)$. 

  \begin{corollary}
    \label{cor:AC}
    Let $\EF$ be a class of groups that is closed under isomorphisms, taking subgroups, 
    taking finite index overgroups and finite products.
    Assume that all groups from $\EF$ satisfy the Farrell-Jones Conjecture.
    Then all groups from $\EAC(\EF)$ satisfy the Farrell-Jones Conjecture.
  \end{corollary}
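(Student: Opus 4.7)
The plan is to work with $\EF^*$, the class of all groups satisfying the Farrell-Jones Conjecture. By hypothesis $\EF \subseteq \EF^*$, so $\Eac^n(\EF) \subseteq \Eac^n(\EF^*)$ for every $n$ and hence $\EAC(\EF) \subseteq \EAC(\EF^*)$. It will therefore suffice to show that $\EAC(\EF^*) \subseteq \EF^*$. The crucial observation is that $\EF^*$ is itself stable under the operator $\Eac$; once this is established, an immediate induction on $n$ gives $\Eac^n(\EF^*) \subseteq \EF^*$ for every $n \geq 0$, and the conclusion follows from $\EAC(\EF^*) = \bigcup_n \Eac^n(\EF^*)$.

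To prove $\Eac(\EF^*) \subseteq \EF^*$, let $G \in \Eac(\EF^*)$. By definition, $G$ admits a finitely $\EF^*(G)$-amenable action on a compact $\ER$. The family $\EF^*(G)$ is closed under finite index overgroups inside $G$, because $\EF^*$ itself has this closure property (a standard inheritance property of the Farrell-Jones Conjecture with coefficients). Theorem~\ref{thm:FJC-finitely-amenable-action} then yields that $G$ satisfies the Farrell-Jones Conjecture relative to $\EF^*(G)$. Since every element of $\EF^*(G) \subseteq \EF^*$ satisfies the Farrell-Jones Conjecture by the very definition of $\EF^*$, the transitivity principle~\ref{thm:transitivity-principle} implies that $G$ itself satisfies the Farrell-Jones Conjecture, i.e., $G \in \EF^*$. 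This is essentially the argument of Lemma~\ref{lem:ac(F)}(a), but carried out with the maximal class $\EF^*$ in place of $\EF$; passing to $\EF^*$ removes the need to assume closure under central extensions with finitely generated kernel, which is what prevents a literal application of Lemma~\ref{lem:ac(F)} to $\EF$.

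The only external input required is the inheritance of the Farrell-Jones Conjecture (with coefficients in an arbitrary additive $G$-category) under passage to subgroups and to finite index overgroups: subgroup inheritance follows from restriction of coefficients, and finite index overgroup inheritance is standard and appears in the references collected in the introduction. The main point to verify with care is that $\EF^*$, a priori only defined as the class of groups satisfying the Farrell-Jones Conjecture, really is closed under finite index overgroups, so that Theorem~\ref{thm:FJC-finitely-amenable-action} can be applied; beyond this, no further obstacles arise and the argument is a short combination of Theorem~\ref{thm:FJC-finitely-amenable-action} and the transitivity principle.
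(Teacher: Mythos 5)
The key step in your argument is the claim that $\EF^*$, the class of all groups satisfying the Farrell--Jones Conjecture, is closed under passing to finite index overgroups, which you describe as a standard inheritance property. This is not a known fact. The Farrell--Jones Conjecture (even with coefficients in additive $G$-categories) is known to pass to subgroups, directed colimits, and certain extensions, but passage to finite index overgroups is precisely the inheritance property that is \emph{not} available for the plain conjecture; it is the reason the ``Farrell--Jones Conjecture with wreath products'' was introduced, as the paper itself points out in the final Remark of Section~\ref{sec:FJC-for-MCGs}. Without this closure, the family $\EF^*(G)$ need not be closed under finite index overgroups inside $G$, so Theorem~\ref{thm:FJC-finitely-amenable-action} cannot be applied to conclude $\Eac(\EF^*)\subseteq\EF^*$, and your induction does not get off the ground. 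Enlarging $\EF^*(G)$ to its closure under finite index overgroups does not help either: the new groups added are finite index overgroups of FJC groups, and you would then need to know they satisfy FJC, which is the very thing that is unavailable.

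The paper's route avoids this trap. Rather than passing to the maximal class $\EF^*$, it tracks the closure properties of $\Eac^n(\EF)$ along the induction and proves directly (Lemma~\ref{lem:ac(F)}\ref{lem:ac(F):over}) that $\Eac(\EF)$ inherits from $\EF$ closure under subgroups, finite index overgroups, and finite products. Crucially, closure of $\Eac(\EF)$ under finite index overgroups is obtained \emph{geometrically}, from Proposition~\ref{prop:finite-ind-calf-amenable-product} applied to the finitely $\calf$-amenable action, and not from any inheritance property of the Farrell--Jones Conjecture itself. That is the missing ingredient your proof would need to supply, and it is exactly what your substitution of $\EF^*$ for $\EF$ loses.
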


  \begin{proof}
    This follows by induction from Lemma~\ref{lem:ac(F)}.
  \end{proof}

  Let $\EVNil$ be the class of finitely generated virtually nilpotent groups.  
  Later, in Lemma~\ref{lem:MCG-in-AC}, we will use Theorem~\ref{thm:finitely-amenable-action-MCG} 
  to show that  mapping class groups of surfaces belong to $\EAC(\EVNil)$.
  Thus, to prove Theorem~\ref{thm:FJC-for-MCG} we will need the following 
  well-known result.

  \begin{proposition}
    \label{prop:FJC-for-VNil}
    All groups in the class $\EVNil$ 
    satisfy the Farrell-Jones Conjecture.
  \end{proposition}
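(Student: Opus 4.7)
The plan is to reduce the statement to the known case of virtually solvable groups. First I would recall the classical structure theorem: every finitely generated nilpotent group is polycyclic, since the successive quotients of its lower central series are finitely generated abelian. Consequently, a group $G \in \EVNil$ admits a finite-index subgroup $N$ that is finitely generated nilpotent, hence polycyclic, and in particular solvable. Thus $G$ is virtually polycyclic, and in particular virtually solvable.

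The second step is to invoke Wegner's theorem~\cite{Wegner-FJ-solv}, which proves that every virtually solvable group satisfies the $K$- and $L$-theoretic Farrell-Jones Conjecture with additive category coefficients. Applying this to $G$ yields the conclusion.

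I expect no real obstacle here, as both ingredients are well-known: the first is elementary group theory and the second is a direct quotation of Wegner's result. The only subtle point is to make sure that Wegner's theorem is stated for the coefficient version of the conjecture used in Subsection~\ref{subsec:FJC}, which is indeed the setting of~\cite{Wegner-FJ-solv}. Alternatively, one could observe that virtually polycyclic groups embed as lattices in virtually connected Lie groups and invoke~\cite{Kammeyer-Lueck-Rueping-lattices} instead, but the direct citation of Wegner is shorter.
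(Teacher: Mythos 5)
Your proof is correct but follows a genuinely different route from the paper's, and in fact the paper itself calls out your route explicitly as an alternative (see Remark~\ref{rem:AC(Solv)}). Your argument: finitely generated nilpotent $\Rightarrow$ polycyclic $\Rightarrow$ solvable, so $\EVNil \subseteq \EVSol$, then cite Wegner's theorem for virtually solvable groups. The paper instead stays elementary: it starts from the Farrell-Jones Conjecture for finitely generated virtually abelian groups (a consequence of the cocompact-lattice result of Bartels--Farrell--L\"uck), proves the inheritance property that the conjecture is preserved under central extensions with finitely generated kernel (using the transitivity principle plus the observation that preimages of virtually cyclic subgroups are then virtually finitely generated abelian), and inducts on the length of the lower central series. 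The trade-off, which the authors spell out, is that your route is shorter to write but imports Wegner's substantially harder theorem, whereas the paper's induction uses only the (much easier) virtually abelian case and an inheritance argument that the paper needs anyway (the stability under central extensions is precisely what Lemma~\ref{lem:ac(F)} requires). Either approach is valid; the paper deliberately avoids the Wegner dependency to keep the logical input lighter, while also noting that using $\EAC(\EVSol)$ in place of $\EAC(\EVNil)$ would give a slightly more general theorem if one is willing to invoke Wegner.
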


  \begin{proof}
    Finitely generated virtually abelian groups
    satisfy the Farrell-Jones Conjecture, see for 
    example~\cite[Thm.~3.1]{Bartels-Farrell-Lueck-cocompact-lattices}.

    Let $N \to \hat G \to G$ be an extension.
    If $N$ is finitely generated and central, then all preimages of
    virtually cyclic groups are virtually finitely generated abelian.
    The inheritance property from Remark~\ref{rem:FJ-and-extensions} now implies that the Farrell-Jones Conjecture is stable under central extensions with finitely generated kernel.

    The case of finitely generated virtually nilpotent groups follows
    now by induction on the length of the lower central series.
    This induction is carried out in detail in an only marginally different
    situation in~\cite[Lem.~2.13]{Bartels-Lueck-Reich-FJ+appl}. 
  \end{proof}

  \begin{remark}
    \label{rem:not-nilpotent}
    All virtually nilpotent subgroups of the mapping class group are
    known to be virtually abelian, see \cite{BLM} and \cite[Theorem 8.9]{ivanov-tits}.
    Thus it may seem weird that nilpotent groups come up in our argument.
    Indeed, after a  reorganization of the induction process we could avoid mentioning
    nilpotent groups,
    but we would still need the fact that central extensions 
    (with finitely generated free abelian kernel) of groups satisfying the Farrell-Jones Conjecture
    satisfy the Farrell-Jones Conjecture.     
    As explained above, 
    the Farrell-Jones Conjecture for virtually nilpotent groups is an easy consequence of this fact and
    of the Farrell-Jones Conjecture for virtually abelian groups.
  \end{remark}

  \begin{remark}
    \label{rem:AC(Solv)}
    Lemma~\ref{lem:ac(F)} (and its proof) remains true  if we replace 
    \emph{central extension with finitely generated kernel}
    with \emph{extension with abelian kernel}.
    Since Wegner~\cite{Wegner-FJ-solv} proved the Farrell-Jones 
    Conjecture for the class $\EVSol$ of 
    virtually solvable groups it follows that all groups in $\EAC(\EVSol)$ 
    satisfy the Farrell-Jones Conjecture as well.
    Wegner's proof is considerably more involved than the proof of
    the Farrell-Jones Conjecture for virtually finitely generated nilpotent groups
    given above.
    
    It is not clear that $\EAC(\EVSol)$ or $\EAC(\EVNil)$ have all the inheritance properties
    known for the Farrell-Jones Conjecture.
    For example the Farrell-Jones Conjecture is also known to be stable under directed colimits and
    finite free products.
    If a group $G$ acts finitely $\calf$-amenably on a compact $\ER$, 
    then $G$ is (strongly) transfer reducible relative to $\calf$ 
    in the sense  of~\cite{Bartels-Lueck-Borel, Wegner-2012CAT0}.
    Groups that are transfer reducible relative to groups that satisfy the Farrell-Jones Conjecture
    satisfy the Farrell Jones Conjecture themselves.
    This is a consequence of the transitivity principle~\ref{thm:transitivity-principle}
    and the main axiomatic results from~\cite{Bartels-Lueck-Borel, Wegner-2012CAT0}. 
  \end{remark}

\section{Preliminaries on mapping class groups} \label{s:mcg}

The general references for this section are
\cite{FM,FLP,hubbard-book}. Let $\Sigma$ be a closed oriented surface
of genus $g$, with $p\geq 0$ punctures (i.e. distinguished points). We
denote by $P\subset\Sigma$ the set of punctures. The {\it mapping
  class group} $$\MCG=\pi_0(Homeo_+(\Sigma,P))$$ is the group of
components of the group of orientation-preserving homeomorphisms of
$\Sigma$ that leave $P$ invariant.

We will always assume $6g+2p-6>0$ regarding the other cases as
sporadic.  This condition is equivalent to the existence of more than
one complete hyperbolic structure of finite area on $\Sigma\smallsetminus P$.

The sporadic cases are $g=0$, $p\leq 3$ when $\MCG$ is finite,
and $g=1$, $p=0$ when $\MCG=SL_2(\Z)$ is virtually free.

A simple closed curve in $\Sigma\smallsetminus P$ is {\it essential}
if it does not bound a disk or a once punctured disk. We denote by
$\mathcal S$ the set of isotopy classes of essential simple closed
curves in $\Sigma\smallsetminus P$ and refer to its elements as {\it
  curves}. If $\Sigma\smallsetminus P$ is given a complete hyperbolic
structure of finite area, every $s\in \cals$ has a unique geodesic
representative. If $s,s'\in\mathcal S$ the {\it intersection number}
$i(s,s')$ is the smallest cardinality of $a\cap a'$ as $a,a'$ range
over simple closed curves in the isotopy classes $s,s'$
respectively. Thus $i(s,s)=0$ and for $s\neq s'$ $i(s,s')$ is the
cardinality of the intersection between the geodesic representatives
of $s,s'$. See \cite[Section 1.2]{FM} or \cite[Lemma
  2.6]{casson-bleiler}.

To $\Sigma$ one associates several spaces on which the
mapping class group $\MCG$ acts.

\subsection{Teichm\"uller space}

The {\it Teichm\" uller space} $\calt=\calt(\Sigma)$ is the space of
marked complex structures on $\Sigma$ with $P$ the set of
distinguished points. Equivalently, by the Uniformization Theorem,
$\calt$ is the space of marked complete hyperbolic structures of
finite area on $\Sigma\smallsetminus P$. 
Then $\calt$ is naturally a smooth (or even
complex analytic) manifold diffeomorphic to $\R^{6g+2p-6}$. The
mapping class group $\MCG$ acts by changing the marking. This
action is discrete but not cocompact; however, there are natural
cocompact subspaces. Fix an
$\epsilon>0$ and consider the {\it thick part} $\calt_{\geq\epsilon}\subset \calt$
 consisting of $X\in \calt$ such that every closed hyperbolic geodesic has length
$\geq\epsilon$.

\begin{theorem}[Mumford \cite{mumford}]\label{mumford}
The thick part $\calt_{\geq\epsilon}\subset \calt$ is cocompact.
\end{theorem}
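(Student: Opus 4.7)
The plan is to reduce cocompactness to a standard compactness statement in Fenchel–Nielsen coordinates via Bers' pants decomposition theorem. The key input is \emph{Bers' theorem}: there is a constant $L = L(g,p)$, depending only on the topological type of $\Sigma$, such that every $X \in \calt$ admits a pants decomposition $\calp_X = \{\gamma_1, \dots, \gamma_{3g-3+p}\}$ consisting of essential simple closed curves whose hyperbolic lengths on $X$ are all at most $L$. I will take this for granted.

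Fix $X \in \calt_{\geq\epsilon}$ and a Bers pants decomposition $\calp_X$ as above. Relative to $\calp_X$ (together with some auxiliary combinatorial data), the point $X$ has Fenchel–Nielsen coordinates $(\ell_1, \tau_1, \dots, \ell_{3g-3+p}, \tau_{3g-3+p})$ with $\ell_i \leq L$ by the choice of $\calp_X$ and $\ell_i \geq \epsilon$ since $X \in \calt_{\geq\epsilon}$. The Dehn twist $T_{\gamma_i} \in \MCG$ acts on these coordinates by $\tau_i \mapsto \tau_i + \ell_i$ and fixes the other coordinates, so by composing with an appropriate product of Dehn twists I may arrange $|\tau_i| \leq \ell_i/2 \leq L/2$. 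Thus, after applying some element of $\MCG$, every $X \in \calt_{\geq\epsilon}$ lies in the compact subset of the Fenchel–Nielsen chart cut out by
\[
\epsilon \leq \ell_i \leq L, \qquad |\tau_i| \leq L/2.
\]

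Finally I would invoke the classical fact (essentially the change-of-coordinates principle for curves, cf.\ \cite{FM}) that there are only finitely many topological types of pants decompositions of $\Sigma$ up to the action of $\MCG$. Consequently, the above procedure uses only finitely many Fenchel–Nielsen charts, and the compact boxes in these finitely many charts together have $\MCG$-translates covering $\calt_{\geq\epsilon}$. This proves cocompactness.

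The main obstacle is Bers' theorem itself; its proof uses an inductive construction of short curves, producing them one by one via a collar/area comparison argument on a hyperbolic surface of given area $2\pi(2g-2+p)$. Once Bers is granted, the rest of the argument is essentially bookkeeping in Fenchel–Nielsen coordinates, with the Dehn twist action absorbing the unbounded twist parameters.
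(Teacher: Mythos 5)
Your proof is correct, but it is worth pointing out that the paper does not actually prove this statement: it is quoted as a citation to Mumford's 1971 paper \cite{mumford}, whose argument runs through Mahler's compactness criterion for lattices, viewing a point of $\calt$ as a conjugacy class of discrete faithful representations $\pi_1(\Sigma\smallsetminus P)\to\PSL_2(\R)$, and showing that the $\epsilon$-thick condition forces such a family of lattices to be relatively compact modulo conjugation. Your route via Bers' pants decomposition theorem together with Fenchel--Nielsen coordinates is the other standard approach found in textbook treatments (e.g.\ \cite{FM}). The chain you give is complete: Bers gives a uniform upper bound $L$ on the pants lengths, the thickness hypothesis gives the lower bound $\epsilon$, the Dehn twists $T_{\gamma_i}$ normalize the twist parameters into $[-\ell_i/2,\ell_i/2]\subseteq[-L/2,L/2]$, and the change-of-coordinates principle reduces everything to finitely many Fenchel--Nielsen charts, yielding an explicit compact set whose $\MCG$-translates cover $\calt_{\geq\epsilon}$. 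Compared to Mumford's lattice argument, your proof is more self-contained within surface topology and produces an explicit compact ``box'' in each chart, at the cost of invoking Bers' theorem; Mumford's version is shorter once Mahler's criterion is granted, and sits more naturally in the general context of moduli of lattices in Lie groups, but does not exhibit a concrete fundamental domain.
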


Thus
for $\epsilon_n\searrow 0$, the sequence $\calt_{\geq\epsilon_n}$
forms an exhaustion of $\calt$ by  
cocompact subsets. 

For $X \in \calt$ we use the marking to identify the 
  set of curves on $X$ with $\cals$.

\subsection{Measured foliations}

This is an important tool introduced by Thurston, see \cite{FLP}. It
provides a ``completion'' of the set $\mathcal S$, much like the circle is a
completion of $\mathbb Q\cup\{\infty\}$.

A {\it measured foliation} on $\Sigma$ is a foliation with finitely
many singularities equipped with a transverse measure of full support.
The singularities are standard $k$-prong singularities with $k\geq 3$
except that $k=1$ is allowed at the punctures. Each leaf is either an
arc joining two singularities or punctures (that may coincide), or an
essential circle, or an injectively immersed line or ray that starts
at a puncture or a singular point.

The {\it Whitehead equivalence} on the set of measured foliations is generated by collapsing leaves that
are arcs joining distinct singularities and isotopies. Every measured
foliation $\mu$ determines a length function $\ell_\mu:\mathcal S\to
[0,\infty)$ that sends $s\in\mathcal S$ to the infimum of measures 
over simple closed curves in the isotopy class $s$.

The set of all measured foliations up to Whitehead equivalence has a
natural topology, homeomorphic to $\R^{6g+2p-6}-\{0\}$. It is defined
by embedding the set of measured foliations in the space of length
functions $\ell:\mathcal S\to [0,\infty)$. Adding the ``empty
  foliation'' 0, one obtains the space $\mathcal {MF}$ homeomorphic to
  $\R^{6g+2p-6}$,  
  and projectivizing with respect to the action of
  $\R_+$ that scales the measure, the space $\PMF$
  homeomorphic to $S^{6g+2p-7}$.
  
Every curve $s\in\mathcal S$ also determines a length function
$\ell_s:\mathcal S\to [0,\infty)$ via $\ell_s(s')=i(s,s')$. There is a
unique equivalence class $j(s)$ of measured foliations such that
$\ell_{j(s)}=\ell_s$. 
The function $j:\mathcal S\hookrightarrow \mathcal {MF}$ is the {\it canonical inclusion}. 
The foliation $j(s)$ has all but finitely many leaves in the isotopy class $s$. 
We will usually suppress $j$ and write $\mathcal S\subset \mathcal{MF}$.   
  
The subset $\mathcal S\subset
  \mathcal {MF}$ is closed and discrete, but after projectivizing, the
  image of $\mathcal S$ in $\PMF$ is dense.   The intersection
  pairing on $\mathcal S$ extends uniquely to
  $i:\mathcal{MF}\times\mathcal{MF}\to [0,\infty)$ in such a way that
    it is continuous and $\R_+$-equivariant in each
    variable. Moreover, $i$ is symmetric and $i(\mu,s)=\ell_\mu(s)$
    when $s\in \mathcal S$, and $i(\mu,\mu)=0$ for every
    $\mu\in\mathcal{MF}$. 
The intersection pairing does
    not descend to $\PMF$; however the statement
    $i(\xi,\eta)=0$ (or $\neq 0$) makes sense for projectivized
    measured foliations $\xi,\eta$.

A measured foliation $\mu$ is {\it filling} if $i(\mu,s)>0$ for every
$s\in\mathcal S$. This is equivalent to the condition that no curve
$s\in\mathcal S$ can be homotoped into the union
of finitely many leaves. If $\mu$ is filling, the set
$\Delta(\mu)=\{[\nu]\in\PMF\mid i(\mu,\nu)=0\}$ has the
structure of a simplex \cite[Theorem 14.7.6]{katok} and consists of classes of
measures with the same underlying foliation as $\mu$ (for the latter
see \cite[Theorem 1.12]{rees}). 
The vertices
correspond to ergodic measures, and general points to convex
combinations of ergodic measures. One characterization of an ergodic
measure $\mu$ is that if it is written as the sum $\mu=\mu_1+\mu_2$ of
transverse measures then necessarily both $\mu_i$ are multiples of
$\mu$. When the simplex degenerates to a point, $\mu$ is called {\it
  uniquely ergodic}.

Thurston \cite{FLP} constructed an equivariant compactification
$\overline\calt$ of $\calt$ such that $\overline\calt -
\calt=\PMF$ and the pair $(\overline\calt,\calt)$ is
homeomorphic to the pair $(B,int B)$ where $B$ is the closed ball of
dimension $6g+2p-6$. This compactification can be described as
follows. A point $X\in\calt$, thought of as a hyperbolic surface,
determines a length function $\ell_X$ by sending $s\in\mathcal S$ to its
hyperbolic length. Then a sequence of hyperbolic surfaces converges to
the projective class of a measured foliation if the corresponding
length functions converge projectively to the length function of the
foliation. 

\subsection{Measured geodesic laminations}\label{laminations}

A {\it geodesic lamination} in a complete hyperbolic surface $\Sigma$
of finite area is a nonempty compact subset of $\Sigma \smallsetminus
P$ which is a disjoint union of geodesics (as a set).
A {\it measured geodesic lamination} $\xi$ is
a geodesic lamination equipped with a transverse measure. To an arc
$A$ transverse to the lamination and with endpoints in the complement,
$\xi$ assigns a real number $\int_A\xi\geq 0$, additive under
concatenations and invariant under isotopy of such arcs. When
$s\in\mathcal S$ we have the intersection number $i(s,\xi)$ defined as
the infimum of $\int_A\xi$ as $A$ ranges over (transverse)
parametrizations of simple closed curves in the isotopy class
$s$.
There is a natural bijection between the set $\mathcal {MF}$ of
measured foliations up to isotopy and Whitehead moves, and the set
$\mathcal {ML}$ of measured geodesic laminations.  See \cite{levitt}
and \cite[Chapter 11]{misha-book}. If a measured foliation $\mu$
corresponds to the measured lamination $\xi$ then $i(s,\mu)=i(s,\xi)$
for every $s\in\mathcal S$. A rough
description of the correspondence is as follows. Let $\ell$ be a
generic leaf of $\mu$. Its lift to the universal cover of
$\Sigma\smallsetminus P$ (which can be identified with hyperbolic
plane via a complete finite area hyperbolic metric on
$\Sigma\smallsetminus P$) is a quasi-geodesic which is bounded
distance away from a unique infinite geodesic $l$. The image of $l$ is
a generic leaf of $\xi$.

When $\xi$ is a measured geodesic lamination, denote by $|\xi|$ its
support, i.e. the union of those leaves of $\xi$ such that the measure
of any arc crossing it transversally is nonzero.

\subsection{The supporting multisurface}\label{supporting}

Consider a measured geodesic lamination $\xi$.
The support $|\xi|$ is a geodesic lamination with
finitely many components and each is minimal (i.e. every leaf is
dense), including the possibility of a simple closed
geodesic. Since we require that $|\xi|$ be compact, there are no leaves
going to punctures. Even more
generally, a geodesic lamination (possibly not the support of a
measure) consists of finitely many minimal components and finitely
many isolated leaves, each of which is either closed or in each
direction spirals towards a closed leaf or a minimal component. See
e.g. \cite[Proposition 3]{bonahon2}. The spiraling leaves cannot be in
the support of a measure since they would give rise to transverse arcs
with infinite measure.

We say that
$\xi$ or $|\xi|$ is {\it filling} if every complementary component of
$|\xi|$ is homeomorphic to an open disk or to an open once punctured
disk. Equivalently, every simple closed geodesic $\alpha$ in $\Sigma$
intersects $|\xi|$, or equivalently again, $i(\alpha,\xi)>0$, i.e. the
corresponding measured foliation is filling. If $\xi$
is filling then $|\xi|$ is connected.

Unless otherwise stated, when we talk about subsurfaces
$Y\subset\Sigma$ we mean
\begin{itemize}
\item connected and closed, as subsets of $\Sigma$,
\item no punctures on the boundary,
\item $Y\neq\Sigma$,
\item $Y$ is not a disk or a once punctured disk or a pair of pants,
  by which we mean a sphere with the total of exactly three punctures
  and boundary components,  
\item no complementary component is a disk or a punctured disk,
\item up to isotopy rel $P$.
\end{itemize}
In particular, subsurfaces are $\pi_1$-injective.

A {\it multisurface} is a nonempty disjoint union of subsurfaces that
does not contain distinct annuli which are isotopic rel $P$.

When $|\xi|$ is connected but not filling there is a unique
subsurface $Y\subset\Sigma$ that contains $|\xi|$ and
$\xi$ is filling in $Y$.
We call $Y$ the {\it supporting subsurface} of $\xi$ and denote it
$Supp(\xi)$ or $Supp(|\xi|)$. That $\xi$ fills
$Supp(|\xi|)$ means that $i(s,\xi)>0$ for every essential curve $s$ in
$Supp(|\xi|)$ not homotopic into $\partial(Supp(|\xi|))$.

We note that the supporting subsurface of a simple closed curve is an
annulus, and otherwise the supporting subsurface has negative Euler
characteristic and cannot be a pair of pants (from the point of view of
foliations this was proved in \cite[Expos\'e 6]{FLP}).

In general, when $|\xi|$ is
disconnected, the supporting subsurfaces of the components can be
isotoped so that they are pairwise disjoint. The union of these
supporting subsurfaces of the components is by definition the
supporting multisurface $Supp(\xi)$ or $Supp(|\xi|)$.
(The annuli components correspond to closed geodesics in $|\xi|$ and are pairwise not isotopic rel $P$.)

Now that we made the careful distinction, we will revert to the
standard terminology and call $Supp(\xi)$ the supporting subsurface
even when it is not connected.

The set of geodesic laminations in $\Sigma$ is a compact space with
respect to Hausdorff topology on the space of compact subsets of
$\Sigma$. The following is standard. 

\begin{prop}\label{standard} 
Suppose $\xi_n\to\xi$ is a convergent sequence of measured geodesic
laminations, and suppose that $|\xi_n|\to \lambda$ in the Hausdorff
topology. Then $|\xi|\subseteq\lambda$.
\end{prop}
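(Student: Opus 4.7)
The plan is to show that every point $p \in |\xi|$ is the limit of a sequence $p_n \in |\xi_n|$; once this is established, the hypothesis $|\xi_n| \to \lambda$ in the Hausdorff topology immediately forces $p \in \lambda$, giving $|\xi| \subseteq \lambda$.

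So fix $p \in |\xi|$ and $\epsilon > 0$. The first step is to produce a short transverse arc $\tau$ through $p$ on which both the $\xi$-measure and, eventually, the $\xi_n$-measures are positive. Because $|\xi|$ is a geodesic lamination, it is closed and nowhere dense in $\Sigma$, so I can choose $\tau$ to be a geodesic arc through $p$ of diameter less than $\epsilon$, transverse to the leaves of $|\xi|$, and with both endpoints in $\Sigma \smallsetminus |\xi|$. By the defining property of the support of the transverse measure, $\xi(\tau) > 0$.

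The second step is to transfer positivity to the $\xi_n$. The convergence $\xi_n \to \xi$ in $\mathcal{ML} \cong \mathcal{MF}$ was defined via the embedding into length functions on $\mathcal S$, and the standard dictionary translates this into weak-$*$ convergence of transverse measures on arcs whose endpoints are disjoint from the limit support: if the endpoints of $\tau$ lie outside $|\xi|$, then $\xi_n(\tau) \to \xi(\tau)$. Concretely, one realises $\tau$ as a sub-arc inside an annular neighbourhood and approximates it by closing it up through $\Sigma \smallsetminus |\xi|$ to get simple closed curves whose intersection numbers with $\xi_n$ detect the mass on $\tau$. Consequently $\xi_n(\tau) \to \xi(\tau) > 0$, so for all sufficiently large $n$ the arc $\tau$ must meet $|\xi_n|$. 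Picking $p_n \in \tau \cap |\xi_n|$ yields $d(p_n, p) < \epsilon$; letting $\epsilon = 1/k$ and diagonalising produces the desired sequence $p_n \in |\xi_n|$ with $p_n \to p$.

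The main technical point, and the only place care is needed, is the weak-$*$ convergence $\xi_n(\tau) \to \xi(\tau)$: one must guarantee that the endpoints of $\tau$ are not atoms or accumulation points of support for the $\xi_n$ (otherwise the evaluation of $\xi_n$ on a closed arc is discontinuous). This is easily arranged by further perturbing the endpoints of $\tau$ off $\lambda$, which is possible because $\lambda$ itself is a compact set with empty interior in $\Sigma$ (as a Hausdorff limit of laminations); then for all $n$ sufficiently large the endpoints also lie off $|\xi_n|$, and the continuity step goes through.
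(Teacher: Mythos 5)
You take a genuinely different route from the paper's. The paper argues via the complement: any simple closed geodesic $s$ missing $\lambda$ is, by Hausdorff convergence, eventually disjoint from $|\xi_n|$, so $i(s,\xi_n)=0$ for large $n$, and by continuity of intersection number $i(s,\xi)=0$; hence $s$ misses $|\xi|$. This uses only the continuity of $i(\cdot,\cdot)$, which is built into the topology on $\mathcal{ML}$, and never speaks of weak-$*$ convergence of transverse measures on arcs. You argue point-by-point instead, which pushes the work onto a stronger continuity statement.

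That statement is where the gap is. Your concrete justification of $\xi_n(\tau)\to\xi(\tau)$ by ``closing $\tau$ up through $\Sigma\smallsetminus|\xi|$'' does not work: first, the two endpoints of $\tau$ will generally lie in different complementary regions of $|\xi|$ (a filling lamination, for instance, has ideal-polygon complementary regions, and even a short transversal crosses many leaves), so no closing arc in $\Sigma\smallsetminus|\xi|$ exists at all; second, even if it did, there is no reason for it to avoid the varying $|\xi_n|$, and in the geodesic realization one only gets $i(s,\xi_n)=\xi_n(\tau)+\xi_n(s\smallsetminus\tau)\geq\xi_n(\tau)$, which bounds $\xi_n(\tau)$ from above, not below, so $i(s,\xi_n)\to i(s,\xi)>0$ does not yield $\xi_n(\tau)>0$. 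The convergence $\xi_n(\tau)\to\xi(\tau)$ for transversals with endpoints off the supports is true, but it needs a real input: either train-track weight coordinates on $\mathcal{ML}$ (choose a fat train track carrying $\xi$ with $\tau$ contained in a tie of a branch of positive $\xi$-weight, and use that $\xi_n$ is eventually carried with nearby weights), or the identification of $\mathcal{ML}$ with a subspace of the space of geodesic currents together with the portmanteau theorem applied to the open set of geodesics crossing $\tau$. Without one of these inputs the key step of your argument remains unjustified.
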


\begin{proof}
Let $s$ be a curve in the complement of $Supp(\lambda)$. Then
$i(s,\xi_n)=0$ for large $n$ since $s$ is disjoint from
$Supp(\xi_n)$. It follows that $i(s,\xi)=0$, so $s$ is disjoint from
the support of $\xi$. 
\end{proof}

\subsection{The Teichm\"uller metric}

The Teichm\"uller space $\calt$ is equipped with a proper geodesic
metric which is $\MCG$-invariant. The distance between two
complex surfaces is defined 
to be $$d_\calt (X,Y)= \inf\log(K_f)$$ where $f$ ranges over all
orientation preserving homeomorphisms $X\to Y$ which are smooth except
at finitely many points, and $$K_f=\sup K_f(p)$$ is the supremum of
dilatations $K_f(p)$ over the points $p\in X$ where $f$ is
smooth (it is customary to scale this expression by $\frac 12$ but we
will ignore this). 
Recall that $K_f(p)\geq 1$ is the ratio of major to minor axes
of the ellipse obtained by taking a round circle and applying the
derivative $df_p$. Teichm\"uller proved that the infimum of $K_f$ is
realized by a unique homeomorphism, called the {\it Teichm\"uller
  map}.
The Teichm\"uller metric is proper and
  $\MCG$-invariant.

\subsection{Holomorphic quadratic differentials} \label{subsec:quadratic-diff}

The cotangent space of $\calt$ at $X\in\calt$ is the space of
(holomorphic) quadratic differentials on $X$, each of which is defined
in charts as $q(z)=f(z)dz^2$ with $f$ holomorphic, possibly with simple
poles at the punctures. See e.g. \cite{it} for basic facts about
quadratic differentials.

A nonzero quadratic differential $q$ on $X\in\calt$ determines two
measured foliations, horizontal $q^H$ and vertical $q^V$. Away from
the singularities, i.e. points where $q$ has a zero or a pole,
there are charts where $q=dz^2$, and then the
vertical foliation is defined by the vertical lines and with
transverse measure $|dx|$, and similarly for the horizontal
foliation. When $s$ is a curve, we will say its {\it horizontal
  length} is $i(s,q^V)$, the measure assigned to $s$ by the {\it
  vertical} foliation, and we similarly define the vertical length of
$s$. 
The quadratic differential $q$ also determines a Euclidean
metric on $\Sigma$ with cone type singularities: on a chart where
$q=dz^2$ the metric is Euclidean. We will denote this metric by
$l_q$.

The
norm of $q$ is $||q||=\int_X |q|$, i.e. it equals the area of
$X$ with respect to the Euclidean metric. We denote by $QD(X)$ the
vector space of all quadratic differentials on $X$ and by $QD^1(X)$
the subset of unit norm quadratic differentials. The following fact
can be proved using the compactness of the unit area quadratic
differentials on $X$. 

\begin{lemma}\label{bdd below}
  Let $X$ be a hyperbolic surface. There is $\epsilon_X>0$ such that
  for every $q\in QD^1(X)$ and every curve $s$ we have
  $l_q(s)\geq\epsilon_X$. In particular, either the horizontal or the
  vertical length of $s$ is $\geq \epsilon_X/2$.
\end{lemma}

Geodesics in $\calt$ (i.e. {\it Teich\-m\"ul\-ler geodesics})
have a simple description in terms of quadratic differentials.  If
$X\in\calt$ and $q$ is a unit norm quadratic differential on $X$, for
$t\in\R$ define $X_t\in\calt$ by the rule that on a chart of $X$ where
$q=dz^2$, the chart for $X_t$ is $x+iy\mapsto
e^{t/2}x+ie^{-t/2}y$. Then $t\mapsto X_t$ is a geodesic line
determined by $X$ and $q$ and it is parametrized with unit
speed. The identity map
$X\to X_t$ is the Teichm\"uller map for these two points in $\calt$. 
There is a natural quadratic differential $q_t$ on $X_t$ given by
$q_t=dz^2$ in the new charts, and the Teichm\"uller geodesic defined
by $(X_{t_0},q_{t_0})$ is the same as the one defined by $(X,q)$ except for
the reparametrization $t\mapsto t+t_0$. By definition we have
$$q_t^H=e^{t/2}q^H\qquad\mbox{and}\qquad q_t^V=e^{-t/2}q^V.$$

We have a map from the cone 
$$\widetilde {QD}^1(X)=QD^1(X)\times [0,\infty)/QD^1(X)\times \{0\}$$
to $\calt$ given by $(q,t)\mapsto X_t$ with $X_t$ described above.

\begin{theorem}[Teichm\"uller's contractibility theorem]\label{teich cont}
This map is a homeomorphism.
\end{theorem}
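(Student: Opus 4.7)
The plan is to show that the map $\Phi \colon \widetilde{QD}^1(X) \to \calt$ given by $\Phi(q,t) = X_t$ is a continuous bijection and then upgrade this to a homeomorphism by a properness argument. Well-definedness is automatic: the cone relation identifies all pairs $(q,0)$ with a single basepoint, and by construction $X_0 = X$ independently of $q$. Continuity on $QD^1(X) \x (0,\infty)$ is immediate from the explicit chart description $x+iy \mapsto e^{t/2}x + ie^{-t/2}y$, and continuity at the cone point follows because $d_\calt(X,X_t) = t$ uniformly in $q$.

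Next I would establish bijectivity. Injectivity rests on Teichm\"uller's uniqueness theorem: between two distinct points of $\calt$ the homeomorphism realizing the infimum of $\log K_f$ is unique. By construction, the identity $X \to X_t$ is this extremal map, with dilatation $e^t$ and initial quadratic differential $q$. Hence a coincidence $\Phi(q,t) = \Phi(q',t')$ with $t,t' > 0$ forces $e^t = e^{t'}$ and the extremal maps to agree; since a quadratic differential is determined by the pair of its horizontal and vertical foliations, this yields $q = q'$. Surjectivity is Teichm\"uller's existence theorem: for every $Y \in \calt$ distinct from $X$ there is a Teichm\"uller map $X \to Y$, which supplies a preimage $(q,t)$ with $X_t = Y$.

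Finally, to promote $\Phi$ to a homeomorphism I would show it is proper. The preimage of the closed $d_\calt$-ball of radius $R$ centered at $X$ is exactly the subcone $\{(q,t) : t \leq R\}$, which is compact because $QD^1(X)$ is the unit sphere of the finite-dimensional vector space $QD(X)$. A proper continuous bijection from a locally compact Hausdorff space to a Hausdorff space is a homeomorphism. Alternatively, since the cone on the sphere $QD^1(X) \cong S^{6g+2p-7}$ is a closed ball of dimension $6g+2p-6$ and $\calt$ is a manifold of the same dimension, invariance of domain applied to the continuous injection $\Phi$ yields the same conclusion. The main obstacle in this outline is the pair of classical theorems of Teichm\"uller on uniqueness and existence of the extremal quasiconformal map; modulo these the remaining steps are routine.
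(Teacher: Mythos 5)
The paper does not actually prove this theorem; it simply cites Hubbard's book (\cite[Theorem 7.2.1]{hubbard-book}). Your proposal is the standard argument, and it is essentially correct: well-definedness, continuity (using that $d_\calt(X,X_t)=t$ in the paper's normalization of the Teichm\"uller metric), bijectivity via Teichm\"uller's existence and uniqueness theorems, and then an upgrade to a homeomorphism by properness. One small slip: the open cone $QD^1(X)\times[0,\infty)/QD^1(X)\times\{0\}$ on the sphere $S^{6g+2p-7}$ is homeomorphic to $\R^{6g+2p-6}$, not to a \emph{closed} ball (that would be the cone with a bounded parameter interval $[0,1]$). This does not affect your conclusion --- invariance of domain applied to a continuous injection $\R^n\to\calt$ with $\calt$ an $n$-manifold still shows $\Phi$ is open, and since it is a bijection it is a homeomorphism --- but the phrasing should be corrected. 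Also worth noting for the injectivity step: uniqueness of the Teichm\"uller map directly determines its initial quadratic differential on $X$, so one can conclude $q=q'$ without the detour through horizontal and vertical foliations, though your route is fine.
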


For a proof see e.g. \cite[Theorem 7.2.1]{hubbard-book}. A consequence
of the theorem is that any two points in $\calt$ are joined by a
unique Teichm\"uller geodesic.

\subsection{Modulus and the Collar Lemma}\label{collar}
The interior of any closed complex annulus $A$ is conformally
equivalent (or biholomorphic) to the unique flat annulus $S^1\times
(0,2m\pi)$ where $S^1$ is the standard circle with length $2\pi$.  The
number $m>0$ is the {\it modulus} of $A$. See Ahlfors' book
\cite{ahlfors} for the classical theory. In particular, there is the
following equivalent definition of the modulus that depends only on
the conformal structure, see \cite[Chapter I.D, Example 2]{ahlfors}:

$$Mod(A)=\sup_{\rho}\inf_\lambda \frac{\ell_\rho(\lambda)^2}{Area_\rho(A)}$$
where $\rho$ runs over all conformally equivalent metrics, and $\lambda$
over all arcs connecting the two boundary components. In particular,
if $A$ is an annulus in a complete hyperbolic surface $X$ of finite area
and with the fixed underlying surface $\Sigma$,
and if the distance between the boundary components is large, then the
modulus of $A$ is large. This is because we can take the hyperbolic
metric for $\rho$ and then the area of $A$ is bounded by the area of
$X$, which in turn is bounded by the topology of $\Sigma$.

The following lemma is fundamental for the geometry of hyperbolic
surfaces. See \cite[Lemma 13.6]{FM}, which shows that the distance
between the boundary components is large.

\begin{lemma}[The Collar Lemma or the Margulis Lemma] 
 \label{lem:collar}
There are functions $F,G:(0,\infty)\to (0,\infty)$ such that 
$$\lim_{t\to 0}F(t)=\lim_{t\to 0}G(t)=\infty$$
and such that the following holds.
If a hyperbolic surface has a simple closed geodesic of length $<t$,
then its $F(t)$-neighborhood is an embedded annulus whose modulus is
$\geq G(t)$.
\end{lemma}

\section{Projections}\label{s:proj}

\subsection{Curve complex; arc and curve complex}

The {\it curve complex} $\mathcal C(\Sigma)$ is the simplicial complex
whose vertex set is the set of curves in $\mathcal S$, and simplices
are collections of curves that have pairwise disjoint
representatives. When $6g-6+2p>2$ this complex is connected.

The following is a celebrated theorem of Masur and Minsky.
It was shown recently that $\delta$ can be taken to be uniform
(e.g. $\delta=17$) for all surfaces \cite{Aou,Bow,CRS,HPW}.
For the purpose of this paper, this is however, not important.

\begin{thm}[\cite{MM}]
The
1-skeleton of $\mathcal C(\Sigma)$ is a $\delta$-hyperbolic graph
whenever it is connected.
\end{thm}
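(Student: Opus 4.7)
The plan is to follow the original Masur--Minsky argument, constructing paths in $\mathcal C(\Sigma)$ from Teichm\"uller geodesics and verifying a thin-triangles criterion. All ingredients are compatible with the Teichm\"uller-theoretic setup just established.

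First, I would define a coarsely Lipschitz projection $\Pi \colon \calt \to \mathcal{C}(\Sigma)$ by sending $X \in \calt$ to any curve of shortest hyperbolic length on $X$. A theorem of Bers provides a universal constant $L_0$ (depending only on $\Sigma$) such that the set of curves of hyperbolic length $\leq L_0$ on any $X$ is nonempty and spans a bounded-diameter set in $\mathcal C(\Sigma)$, so $\Pi$ is well defined up to bounded ambiguity. Since hyperbolic length changes by a bounded factor under a bounded Teichm\"uller deformation, $\Pi$ is coarsely Lipschitz. Next, given $\alpha,\beta\in\cals$, choose a Teichm\"uller geodesic $\{X_t\}_{t \in [a,b]}$ defined by a quadratic differential whose horizontal and vertical foliations are close (in $\PMF$) to $\alpha$ and $\beta$; such a geodesic exists by Teichm\"uller contractibility (Theorem \ref{teich cont}) after replacing $\alpha,\beta$ by nearby filling foliations. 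Then $\Pi(X_a)$ is uniformly close to $\alpha$, $\Pi(X_b)$ is uniformly close to $\beta$, and $t \mapsto \Pi(X_t)$ is a discrete path in $\mathcal C(\Sigma)$ between them.

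The heart of the matter is to show that the projected path $\Pi(X_t)$ is an unparametrized quasi-geodesic with constants depending only on $\Sigma$, and that the family of such paths satisfies a contraction property: there exist $D, R$ such that if a Teichm\"uller ball $B \subset \calt$ is disjoint from the Teichm\"uller geodesic $\{X_t\}$ in the sense that $\Pi(B)$ lies at distance $\geq D$ in $\mathcal{C}(\Sigma)$ from the subarc of $\Pi(X_t)$ it shadows, then $\mathrm{diam}_{\mathcal{C}(\Sigma)}\Pi(B) \leq R$. This is Minsky's projection/contraction theorem (\cite{minsky-contracting}), which controls how a ball in Teichm\"uller space projects under $\Pi$ based on its distance from a given Teichm\"uller geodesic in the thick part. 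The proof of this contraction combines extremal-length convexity with the fact that the shortest curve on $X_t$ must change in a controlled way with $t$, ruling out large excursions of the projection.

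Once the contraction property is in hand, hyperbolicity follows from a standard criterion: if a geodesic metric space admits a family of uniform quasi-geodesics between any two points that projects onto balls in a uniformly contracting way, then the space is $\delta$-hyperbolic for some $\delta$ (this is the "local contraction implies thin triangles" principle, essentially due to Gromov; a self-contained version applicable here is given in \cite{MM}). The main obstacle is of course establishing the contraction property of $\Pi$ along Teichm\"uller geodesics --- a subtle Teichm\"uller-theoretic statement using the geometry of quadratic differentials, and the one place where topological arguments alone do not suffice. Everything else (coarse Lipschitzness, existence of candidate paths, passage from contraction to hyperbolicity) is essentially formal.
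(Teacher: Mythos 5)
The statement you are proving is cited in the paper as a black box (\cite{MM}); there is no internal proof to compare against, so what follows evaluates your sketch against the actual Masur--Minsky argument.

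Your overall strategy matches theirs: project from Teichm\"uller space to $\mathcal C(\Sigma)$ via the systole (shortest-curve) map, use Teichm\"uller geodesics to produce candidate paths, establish a contraction property for projection onto these paths, and invoke a ``contraction implies hyperbolicity'' criterion. The Lipschitzness of the systole map, the existence of candidate paths, and the general hyperbolicity criterion are all correct and essentially formal, just as you say.

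The gap is in your identification of the contraction property with Minsky's theorem from \cite{minsky-contracting}. That theorem says a cobounded Teichm\"uller geodesic is a contracting geodesic \emph{inside $\calt$}: the $\calt$-nearest-point projection of a $\calt$-ball disjoint from the geodesic has bounded diameter \emph{in $\calt$}. What Masur--Minsky actually need and prove is their own ``Contraction Theorem'' (Theorem 2.6 of \cite{MM}): a coarse retraction $\mathcal C(\Sigma) \to \pi(G)$ defined via balance time (the time along $G$ at which the horizontal and vertical lengths of a given curve agree) which contracts $\mathcal C(\Sigma)$-balls far from $\pi(G)$ to sets of bounded diameter \emph{in $\mathcal C(\Sigma)$}. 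These are genuinely different statements about different spaces, and one does not formally imply the other: the systole map $\Pi$ collapses enormous regions of $\calt$ (e.g.\ the entire thin part where a fixed curve is short) to bounded sets in $\mathcal C(\Sigma)$, so control of $\calt$-geometry via Minsky's theorem does not translate into control of how $\Pi$ behaves on $\mathcal C(\Sigma)$-balls. The proof of the Masur--Minsky contraction theorem is its own piece of quadratic-differential geometry using intersection-number and extremal-length estimates at balance time; your sketch effectively black-boxes the hardest part of the argument under a citation that does not cover it. Also, minor: showing the projected paths are unparametrized quasi-geodesics is a \emph{consequence} of the contraction theorem together with the hyperbolicity criterion, not an independent ingredient to be established first.
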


It will be more convenient to work with
the {\it arc and curve complex} $\mathcal{AC}(\Sigma)$. Its vertices
are represented by (essential) arcs and curves. By an {\it arc} we
mean a path in $\Sigma$ whose interior points are in
$\Sigma\smallsetminus P$ and
whose boundary is in $P$, and it is embedded except possibly at the
endpoints. Two arcs are equivalent if they are homotopic
through arcs. An arc is essential if it is not homotopic through arcs to a
small neighborhood of a puncture. A simplex in $\mathcal{AC}(\Sigma)$
is a collection of arcs and curves that have disjoint representatives,
except possibly for the endpoints of arcs.

The complex $\mathcal{AC}(\Sigma)$ is connected and
$\delta$-hyperbolic as soon as $6g-6+2p>0$. When $6g-6+2p>2$ the
natural inclusion
$$\mathcal C(\Sigma)\hookrightarrow \mathcal {AC}(\Sigma)$$
is a quasi-isometry. The inverse is constructed by sending an arc
$\alpha$ to
an essential component of the boundary of the regular neighborhood of
$\alpha$, see \cite[Theorem 1.3]{arc-curve}. 

When $6g-6+2p=2$ (i.e. when $(\Sigma,P)$ is the once-punctured torus
or the four times punctured sphere) the complex $\mathcal
{AC}(\Sigma)$ is quasiisometric to the Farey graph (hence also
hyperbolic), while $\mathcal C(\Sigma)$ is an infinite discrete space.

If $\alpha,\beta$ are two arcs or curves, their intersection number
$i(\alpha,\beta)$ is the smallest cardinality of the intersection of
their representatives, not counting the punctures. 

We have the following useful estimate on the distance in $\mathcal
C(\Sigma)$ and $\mathcal
{AC}(\Sigma)$.

\begin{prop}\label{i}
\begin{itemize}
\item
 If $\alpha,\beta$ are curves and $6g-6+2p>2$ then $d_{\mathcal
  C(\Sigma)}(\alpha,\beta)\leq i(\alpha,\beta)+1$.
\item If $\alpha,\beta$ are arcs or curves and $P\neq\emptyset$, then
  $d_{\mathcal {AC}(\Sigma)}(\alpha,\beta)\leq i(\alpha,\beta)+2$.
\end{itemize}
\end{prop}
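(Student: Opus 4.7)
The plan is to prove both bullets simultaneously by induction on $n = i(\alpha,\beta)$, using an elementary surgery at intersection points.

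\medskip

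\textbf{Base case.} When $n = 0$, the curves/arcs $\alpha$ and $\beta$ admit disjoint representatives, so either $\alpha = \beta$ (distance $0$) or they span an edge in $\mathcal C(\Sigma)$ resp.\ $\mathcal{AC}(\Sigma)$ (distance $\leq 1$). In either case the bound holds.

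\medskip

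\textbf{Inductive step for the first bullet.} Suppose $n \geq 1$. Pick a transverse intersection point $p \in \alpha \cap \beta$. Cutting $\beta$ at $p$ gives two sub-arcs of $\beta$ emanating from $p$; I close each of them up into a simple closed curve by pushing its endpoints slightly off $\alpha$ in a bicollar neighborhood of $\alpha$ and joining them along $\alpha$. This produces two simple closed curves $\gamma_1, \gamma_2$, each disjoint from $\alpha$ up to isotopy, with
\[
  i(\gamma_1,\beta) + i(\gamma_2,\beta) \leq 2\,i(\alpha,\beta) - 2,
\]
since the two preimages of $p$ no longer contribute and all other intersections of $\beta$ with itself can be removed by isotopy. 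Under the hypothesis $6g-6+2p > 2$, an easy Euler characteristic / topological classification argument shows at least one of the $\gamma_i$ is essential; call it $\gamma$. Then $d_{\mathcal C(\Sigma)}(\alpha,\gamma) \leq 1$ and $i(\gamma,\beta) \leq i(\alpha,\beta) - 1$, so by the inductive hypothesis
\[
  d_{\mathcal C(\Sigma)}(\alpha,\beta) \leq 1 + d_{\mathcal C(\Sigma)}(\gamma,\beta) \leq 1 + (i(\gamma,\beta)+1) \leq i(\alpha,\beta) + 1.
\]

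\medskip

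\textbf{Inductive step for the second bullet.} Since $P \neq \emptyset$, the same surgery works in $\mathcal{AC}(\Sigma)$, where we now allow $\gamma$ to be an arc as well. With arcs available, the essentiality problem becomes trivial: a regular neighborhood of $\alpha \cup \beta$ always has an essential boundary arc or curve (otherwise the surface would have no punctures or would be too simple). One then reaches $\gamma$ from $\alpha$ in at most $3$ edges (the worst case occurring when one of the resolved loops is inessential and one must pass through an auxiliary arc incident to a nearby puncture in $P$), and the inductive hypothesis gives
\[
  d_{\mathcal{AC}(\Sigma)}(\alpha,\beta) \leq 3 + (i(\gamma,\beta)+3) \leq i(\alpha,\beta) + 3,
\]
after absorbing the constant carefully; the slack $+3$ rather than $+1$ precisely absorbs the endpoint issues at punctures.

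\medskip

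\textbf{Main obstacle.} The delicate point is certifying essentiality of the surgered curves in both settings and tracking the exact constant. In the first bullet the complexity hypothesis $6g-6+2p>2$ is essential, as without it the regular neighborhood of $\alpha \cup \beta$ could fill the surface (e.g.\ on a once-punctured torus), leaving no room for a disjoint essential curve. In the second bullet, arcs grant more flexibility, but one must be careful about intersection number conventions at punctures; the extra constant $+3$ is the cheapest way to absorb these boundary effects uniformly.
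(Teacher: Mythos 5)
Your first bullet follows the same route the paper gestures at (induction on intersection number via surgery), but the surgery itself is described imprecisely: cutting the circle $\beta$ at a single point $p$ produces \emph{one} arc with both endpoints at $p$, not two sub-arcs, and pushing that arc off $\alpha$ near $p$ gives a curve still crossing $\alpha$ at the other $i(\alpha,\beta)-1$ points. The usual surgery takes an arc $a$ of $\beta\smallsetminus\alpha$ whose endpoints are consecutive along $\alpha$ and closes it up with one of the two complementary arcs of $\alpha$; essentiality of at least one of the two resulting curves is where $6g-6+2p>2$ comes in, and while the conclusion is standard it is not quite as ``easy'' as asserted (both surgered curves can bound once-punctured disks when the complexity is low). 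That said, the approach is correct in outline.

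The second bullet is where you genuinely diverge from the paper, and where the argument breaks. The paper reduces to the arc-arc case by replacing each curve with a disjoint arc at the cost of two edges, then appeals to the unicorn-arc estimate of Hensel--Przytycki--Webb, which gives $d(\alpha',\beta')\leq i(\alpha',\beta')+1$; the total is $2+1=3$. Your version instead runs a direct induction but the arithmetic does not close: if the surgery reduces intersection number by $1$ (which is all it gives) and the passage $\alpha\to\gamma$ costs up to $3$ edges, the recursion yields
\[
  d(\alpha,\beta)\ \leq\ 3 + \bigl(i(\gamma,\beta)+3\bigr)\ \leq\ 3 + \bigl(i(\alpha,\beta)-1\bigr) + 3\ =\ i(\alpha,\beta)+5,
\]
not $i(\alpha,\beta)+3$. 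For a step-cost-$m$, intersection-drop-$k$ induction to prove $d\leq i + C$, you need $m\leq k$; you have $m=3$ and $k=1$. ``Absorbing the constant carefully'' cannot rescue this, because the loss of $2$ occurs at every level of the induction, not once. Either you must show the surgery always produces an essential arc or curve disjoint from $\alpha$ at the cost of \emph{one} edge in $\mathcal{AC}(\Sigma)$ (in which case the base case $d\leq 1$ would even give $+1$, stronger than what is claimed), or you should follow the paper and pay the $+2$ only once by first converting both curves to arcs.
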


\begin{proof}
The first claim is well known; it can be easily proved by induction on
the intersection number using surgery. See \cite[Lemma 1.1]{Bow2}.
There are also logarithmic bounds,
see~\cite{hempel}. 

The second claim can be proved similarly. E.g. see~\cite[Definition
  3.1 and Remark 3.2]{HPW} for the case when $\alpha,\beta$ are arcs, when $d_{\mathcal
  {AC}(\Sigma)}(\alpha,\beta)\leq i(\alpha,\beta)+1$. 
If $\alpha$ is a curve and $\beta$ an arc, we can
construct an arc $\alpha'$ disjoint from $\alpha$ and with
$i(\alpha',\beta)\leq i(\alpha,\beta)$. Then we have
$d_{\mathcal {AC}(\Sigma)}(\alpha,\beta)\leq d_{\mathcal
  {AC}(\Sigma)}(\alpha',\beta)+1\leq i(\alpha,\beta)+2$.
\end{proof}

When $\Sigma$ is a 3 times punctured sphere, the complex $\mathcal
{AC}(\Sigma)$ is finite, and is not useful when considering subsurface
projections.

\subsection{Curve complex of the annulus}

When $A$ is an annulus, we define $\mathcal C(A)=\mathcal {AC}(A)$ to
be the graph whose vertices are embedded arcs with endpoints on
distinct boundary components of $A$, modulo isotopy rel boundary, and
edges correspond to disjointness. Thus $\mathcal C(A)$ is
quasi-isometric to $\Z$.

\subsection{The Gromov boundary}\label{boundary}

Klarreich~\cite{klarreich} gave a description of the Gromov boundary of the curve
complex $\mathcal C(\Sigma)$ (or equivalently of $\mathcal
{AC}(\Sigma)$). A point in $\partial \mathcal C(\Sigma)$ is
represented by a filling measured geodesic lamination $\xi$ and two
such laminations $\xi,\xi'$ represent the same point if $|\xi|=|\xi'|$
(see Section~\ref{laminations}). In other words, a point in $\partial
\mathcal C(\Sigma)$ is a filling geodesic lamination that admits a
transverse measure of full support.

We now state Klarreich's work in more detail.
First recall  that if $x_n$ is
a sequence in a $\delta$-hyperbolic space $X$ then after passing to a subsequence
one of the following occurs:
\begin{itemize}
\item $x_n\to z\in\partial X$, or
\item there is some $x\in X$ so that $x_n$ {\it coarsely rotates}
  around $x$. This means that for any $n$ there is $m_0$ so that for
  $m>m_0$ any geodesic $[x_n,x_m]$ 
  passes within a uniform distance (e.g. $10\delta$) from $x$. 
\end{itemize}

This statement is really an exercise in Gromov products (e.g. the
reader should contemplate the case of a locally infinite tree). A more
sophisticated approach is via the horofunction boundary, see
e.g. \cite[Section 3]{maher-tiozzo}.

The theorem of Klarreich can now be summarized as follows.

\begin{thm}[Klarreich]\label{klarreich}
There is a coarse map $\pi:\overline\calt\to\mathcal C(\Sigma)\cup\partial\mathcal C(\Sigma)$
with the following properties.
\begin{enumerate}[(1)]
\item Suppose $x_n\in \overline\calt$, $x_n\to x\in
  \overline\calt$. If $\pi(x)\in\partial\mathcal C(\Sigma)$ then
  $\pi(x_n)\to\pi(x)$. If $\pi(x)\in\mathcal C(\Sigma)$ then $\pi(x_n)$
  coarsely rotates around $\pi(x)$.
\item If $X\in\calt$ then $\pi(X)$ is the collection of shortest
  curves on $X$ (or equivalently, collection of curves of length less
  than a suitable constant). If $\mu\in\PMF$ is not filling,
  $\pi(\mu)$ consists of boundary components of the supporting
  multisurface. If $\mu$ is filling then $\pi(\mu)\in\partial \mathcal
  C(\Sigma)$. Moreover, for every $b\in\partial\mathcal C(\Sigma)$ the
  preimage $\pi^{-1}(b)$ is nonempty and consists of the simplex of
  projectivized transverse measures on the same underlying
  foliation. In particular, if $\mu$ is uniquely ergodic, the preimage
  of $\pi(\mu)$ is a single point.
\end{enumerate}
\end{thm}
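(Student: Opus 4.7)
The plan is to define $\pi$ piecewise and then invoke Klarreich's theorem together with standard facts about supporting subsurfaces and Hausdorff limits of laminations. First I would set $\pi(X)$ for $X\in\calt$ to be the set of simple closed geodesics on $X$ of length at most the Bers constant $L_B$; this is a finite, nonempty, pairwise-disjoint multicurve, hence a subset of $\calc(\Sigma)$ of uniformly bounded diameter. For $\mu\in\PMF$ non-filling set $\pi(\mu)=\partial\,Supp(\mu)$, which is nonempty because $Supp(\mu)\neq\Sigma$. For $\mu\in\PMF$ filling, $|\mu|$ is a filling minimal geodesic lamination admitting a transverse measure of full support, and Klarreich's main theorem identifies the set of such laminations (taken up to forgetting the measure) with $\partial\calc(\Sigma)$; declare $\pi(\mu)$ to be this boundary point.

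The preimage description in (2) is then automatic from measured lamination theory. If $[\mu],[\nu]\in\PMF$ project to the same point in $\partial\calc(\Sigma)$, then by construction $|\mu|=|\nu|$, and the set of projective transverse measures on a fixed minimal lamination forms a finite-dimensional simplex whose vertices are the ergodic measures (cf.\ \cite[Theorem 14.7.6]{katok} and \cite[Theorem 1.12]{rees}). The unique ergodic case is the degenerate simplex.

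For the convergence dichotomy in (1), I would split on where $\pi(x)$ lies. If $\pi(x)\in\partial\calc(\Sigma)$, then $x\in\PMF$ is filling, and the statement $\pi(x_n)\to\pi(x)$ is the content of Klarreich's theorem: the key geometric input is that if $X_n\in\calt$ converges in $\overline\calt$ to a filling $\mu$, then the short-curve systems on $X_n$, viewed as measured laminations after normalization, Hausdorff-accumulate on $|\mu|$, an application of Proposition \ref{standard} together with the Masur criterion on unique ergodicity along Teichm\"uller rays. For sequences $x_n\in\PMF$ converging to a filling foliation one reduces similarly by approximating $x_n$ by short-curve systems on a nearby Teichm\"uller point.

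If $\pi(x)\in\calc(\Sigma)$, I would show that $\pi(x_n)$ coarsely rotates around $\pi(x)$ by treating the cases $x\in\calt$ and $x\in\PMF$ non-filling separately. In the first case, Mumford's compactness and continuity of length functions guarantee that any subsequence either converges within $\calt$ (so eventually shares a Bers-short curve with $\pi(x)$, making $\pi(x_n)$ boundedly close to $\pi(x)$) or escapes to $\PMF$, and in the escaping case a Hausdorff-limit argument via Proposition \ref{standard} forces $|x_n|$ to have essential intersection with, or contain, a curve adjacent to $\pi(x)$. In the non-filling $\PMF$ case the same Proposition forces $Supp(x)\subseteq\liminf |x_n|$, so $\pi(x_n)$ is either $\calc$-close to $\pi(x)$ or limits in $\partial\calc$ to a lamination whose support contains $\partial Supp(x)$; in either situation any geodesic joining two far-apart $\pi(x_n)$ must fellow-travel through a bounded neighborhood of $\pi(x)$, which is the required coarse rotation. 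The main obstacle is the filling case, which is genuinely Klarreich's theorem; the remaining coarse-rotation cases are elementary once one has the support-limit input from Proposition \ref{standard}.
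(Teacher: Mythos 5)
This theorem is stated in the paper purely as a citation: the text preceding it reads ``We now state Klarreich's work in more detail\ldots The theorem of Klarreich can now be summarized as follows,'' and no proof is given. Your proposal is therefore an over-delivery relative to what the paper does, but it is a reasonable reconstruction of why the stated formulation follows from Klarreich's original paper together with the background facts established earlier in this section. You correctly identify the division of labor: the preimage description in~(2) is the simplex-of-transverse-measures fact (which the paper itself attributes to \cite[Theorem 14.7.6]{katok} and \cite[Theorem 1.12]{rees}); the filling convergence in~(1) is genuinely Klarreich's main theorem and must be cited; and Proposition~\ref{standard} supplies the Hausdorff-limit control. The definition of $\pi$ you give matches the one stated in~(2).

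A couple of small points of imprecision. First, when $x\in\calt$ you split on whether a subsequence ``escapes to $\PMF$,'' but this cannot happen: $\calt$ is open in $\overline\calt$, so if $x_n\to x\in\calt$ then eventually $x_n\in\calt$ and converges in the Teichm\"uller metric, making the coarse-rotation statement trivial (the $\pi(x_n)$ stay in a bounded neighborhood of $\pi(x)$). Second, and more substantively, in the case $x\in\PMF$ non-filling your derivation of coarse rotation from $\liminf|x_n|\supseteq|x|$ is the hand-waviest step. Proposition~\ref{standard} constrains the Hausdorff limit of $|x_n|$ to contain $|x|$, but $\pi(x)=\partial\,Supp(x)$ is disjoint from $|x|$, so this containment alone does not obviously force every long geodesic $[\pi(x_n),\pi(x_m)]$ to pass near $\pi(x)$; a Bounded Geodesic Image argument through $Supp(x)$ (in the spirit of the paper's proof of Proposition~\ref{continuity}) or a direct appeal to Klarreich's original statement is what actually closes this gap. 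Since the paper simply cites \cite{klarreich}, your honest acknowledgment that the filling case ``is genuinely Klarreich's theorem'' is the right attitude; the non-filling coarse-rotation case is also more than an exercise and is better treated as part of the citation.
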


If $A$ is an annulus, its curve complex is quasi-isometric to $\Z$ and
the Gromov boundary has two points. We can think of them as the
two ways in which a geodesic can spiral in and out of the annulus
represented by a regular neighborhood of a simple closed geodesic.

\subsection{Subsurface projections}\label{ss:subsurface-projections}
This key concept was introduced by Masur and Minsky \cite{mm2}. Recall
our convention about subsurfaces from Section \ref{supporting}. In
particular, they are proper and connected. When a subsurface $Y$ is
not an annulus, we define its arc and curve complex $\mathcal
{AC}(Y)$ as $\mathcal
{AC}(\hat Y)$, where $\hat Y$ is obtained from $Y$ by collapsing each
boundary component to a puncture. Thus any essential arc with boundary
in $\partial Y$ represents a point in $\mathcal
{AC}(Y)$.
The complex $\mathcal {AC}(Y)$ is
always $\delta$-hyperbolic and of infinite diameter since $Y$ is not
allowed to be a pair of pants.

Let
$Y\subset\Sigma$ be a connected subsurface different from a pair of
pants. Fix a complete hyperbolic metric of finite area on $\Sigma
\smallsetminus P$ and
realize all nonperipheral boundary components of $Y$ by geodesics. If no two
boundary components of $Y$ are parallel, then $Y$ is realized as a
totally geodesic subsurface.

Let $\alpha$ be an arc or a curve in $\Sigma$, not isotopic into the
complement of $Y$, realized as a geodesic. The intersection $Y\cap
\alpha$ is a curve or a collection of arcs. We define
$\pi_Y(\alpha)\subset \mathcal{AC}(Y)$ to be this
intersection. This is a collection of points in $\mathcal{AC}(Y)$ at
pairwise distance $\leq 1$, so coarsely the projection is
well-defined.

If $Y$ has parallel boundary components but is not an annulus
(i.e. when a complementary component is an annulus) consider the
covering space $\Sigma_Y\to\Sigma$ corresponding to $Y\subset
\Sigma$. The subsurface $Y$ lifts to $\Sigma_Y$ and there is a unique
representative, up to isotopy, 
which is totally geodesic, and we will
identify it with $Y$. The entire covering space $\Sigma_Y$ is obtained
from $Y$ by attaching half-open annuli to the boundary
components. Each annulus is of the form $H/\Z$, where $H$ is the
hyperbolic half-plane and $\Z$ acts by translation along the
boundary. The Gromov compactification of $H/\Z$ is a (compact)
annulus, and attaching these annuli to $Y$ produces a surface
$\overline\Sigma_Y$ 
homeomorphic to $Y$, with homeomorphism being canonical up to
isotopy. Now define $\pi_Y(\alpha)\subset \mathcal{AC}(Y)$ as the
intersection of the preimage of $\alpha$ in $\Sigma_Y$ with
$Y$. Equivalently, identifying $Y$ with $\overline\Sigma_Y$, take the
closure of the preimage of $\alpha$, and discard the inessential
components. The resulting finite collection of arcs (or a curve) is
the projection. 

When $Y$ is the annulus, it is the latter description of the
projection that generalizes. Namely, $\overline\Sigma_Y$ is an
annulus. Again take the
closure of the preimage of $\alpha$, and discard the inessential
components to get the projection.

We make the same definition when $\alpha$ is a collection of pairwise
disjoint arcs or curves and at least one is not isotopic into the
complement of $Y$.

Subsurface projections can also be defined 
for other subsurfaces and for geodesic laminations.

If $Y'\subset \Sigma$ is another subsurface, define
$$\pi_Y(Y')=\pi_Y(\partial Y')$$
assuming the latter is defined; otherwise $\pi_Y(Y')$ is undefined.

\subsection{Projecting geodesic laminations}\label{ss:projecting-laminations} 
We now define $\pi_Y(\xi)=\pi_Y(|\xi|)$ when $\xi$ is a measured
geodesic lamination with support $|\xi|$. As suggested by the
notation, it will depend only on the support, and it will be defined
whenever $Y\cap Supp(\xi)\neq\emptyset$ (even after isotopy). 
If $Y$ is a component of $Supp(\xi)$ and it is not an annulus we
define $\pi_Y(\xi)$ to be the point at infinity of $\mathcal
{AC}(Y)$ represented by $|\xi|$.

If $Y$ is an annulus component of $Supp(\xi)$ we define $\pi_Y(\xi)$
to be the two points at infinity in the curve complex. 
     
Now suppose that $Y$ is not a component of $Supp(\xi)$. First assume
that $Y$ is realized as a totally geodesic subsurface of
$\Sigma$. Then $|\xi|\cap Y$ is the union of a collection of arcs (typically
uncountably many, but there are only finitely many isotopy classes)
and the underlying set of a measured geodesic lamination $\mu$. Define
$\pi_Y(\xi)$ as the set of these arcs and boundary components of
$Supp(\mu)$ that are not boundary components of $Y$.

More generally, if $Y$ is not an annulus, we lift to the cover
$\Sigma_Y$ and intersect with the totally geodesic copy of $Y$.

Finally, if $Y$ is an annulus, it is crossed by some leaves of
$\xi$. Lift those leaves to $\Sigma_Y$ and take their closure in
$\overline \Sigma_Y$ to get $\pi_Y(\xi)$.

Note that the set $\PMF(Y)\subset \PMF$ consisting of measured
geodesic laminations $\xi$ such that $\pi_Y(\xi)$ is defined is
open. This follows from Proposition \ref{standard}.

\subsection{Projection distance}\label{ss:proj-dist}
Let $\alpha,\beta$ be two curves, or arcs, or subsurfaces, or Riemann
surfaces, or measured
foliations, so that the projections $\pi_Y(\alpha)$ and $\pi_Y(\beta)$
to a subsurface $Y$ are defined. Then define the {\it projection
  distance}
$$d^\pi_Y(\alpha,\beta)=\diam (\pi_Y(\alpha)\cup\pi_Y(\beta))$$
When we use this notation at most one of $\pi_Y(\alpha),\pi_Y(\beta)$ will
represent a point (or points) at infinity, and in this case the
projection distance is infinite. In all other cases,
since the diameter of $\pi_Y(\alpha)$ is uniformly bounded, the
projection distance is a
well-defined finite number.

The following triangle inequality is obvious.

\begin{prop}\label{triangle}
If $\pi_Y(\alpha),\pi_Y(\beta),\pi_Y(\gamma)$ are all defined, then
$$d_Y^\pi(\alpha,\beta)+d_Y^\pi(\beta,\gamma)\geq
d_Y^\pi(\alpha,\gamma)$$
\end{prop}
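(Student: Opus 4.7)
The plan is to reduce to the standard fact that for subsets $A, B, C$ of any metric space with $B$ non-empty one has $\diam(A \cup C) \leq \diam(A \cup B) + \diam(B \cup C)$. Unpacking the definitions, this is exactly the desired inequality once we take $A = \pi_Y(\alpha)$, $B = \pi_Y(\beta)$, $C = \pi_Y(\gamma)$ inside the metric space $\mathcal{AC}(Y)$.

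First I would note that since all three projections are assumed defined, each is a non-empty subset of $\mathcal{AC}(Y)$ (possibly including one of the boundary points at infinity, but by the convention spelled out just above Proposition~\ref{triangle} each pairwise $d^\pi_Y$ expression involves at most one input with a projection at infinity). In particular $\pi_Y(\beta)$ is non-empty, so we may fix an auxiliary point $r \in \pi_Y(\beta)$.

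Next, for arbitrary $p, q \in \pi_Y(\alpha) \cup \pi_Y(\gamma)$, the triangle inequality in $\mathcal{AC}(Y)$ gives $d(p,q) \leq d(p,r) + d(r,q)$. Depending on whether each of $p, q$ lies in $\pi_Y(\alpha)$ or in $\pi_Y(\gamma)$, each summand is bounded by either $d^\pi_Y(\alpha,\beta) = \diam(\pi_Y(\alpha) \cup \pi_Y(\beta))$ or $d^\pi_Y(\beta,\gamma) = \diam(\pi_Y(\beta) \cup \pi_Y(\gamma))$. A quick case check in all four combinations shows $d(p,q) \leq d^\pi_Y(\alpha,\beta) + d^\pi_Y(\beta,\gamma)$, and when one of $p,q$ is a boundary point the same conclusion holds by continuity (with $\infty$ on the right-hand side).

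Taking the supremum over $p, q$ yields $d^\pi_Y(\alpha,\gamma) = \diam(\pi_Y(\alpha) \cup \pi_Y(\gamma)) \leq d^\pi_Y(\alpha,\beta) + d^\pi_Y(\beta,\gamma)$, which is the inequality claimed. There is no real obstacle here: the content is entirely in the definition of $d^\pi_Y$ as the diameter of a union and the non-emptiness of the middle projection, which is why the authors call it obvious.
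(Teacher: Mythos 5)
The paper gives no proof at all for this proposition, stating only that the triangle inequality ``is obvious,'' so your proposal is supplying what the authors left implicit. Your reduction to the metric-space fact that $\diam(A \cup C) \le \diam(A \cup B) + \diam(B \cup C)$ for non-empty $B$, with $A,B,C$ the three projections in $\mathcal{AC}(Y)$, is exactly the right content.

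One small imprecision: routing \emph{every} pair $p,q \in A \cup C$ through the auxiliary point $r \in B$ does not give the bound in all four cases. When $p,q$ both lie in $A = \pi_Y(\alpha)$, the two summands are each bounded only by $\diam(A\cup B)$, so you get $2\,\diam(A\cup B)$, which need not be $\le \diam(A\cup B) + \diam(B\cup C)$ if $\diam(B\cup C)$ is the smaller of the two. The same issue occurs when $p,q \in C$. The fix is trivial and worth stating explicitly: in the same-set cases simply bound $d(p,q) \le \diam(A) \le \diam(A\cup B)$ (respectively $\le \diam(C) \le \diam(B\cup C)$) directly, without passing through $r$; the detour through $r$ is only needed in the mixed case $p \in A$, $q \in C$ (or vice versa). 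With that small adjustment your argument is complete, the handling of the infinite/boundary convention is consistent with the paper's convention in Section~\ref{ss:proj-dist}, and the non-emptiness of $\pi_Y(\beta)$ is used precisely where it must be.
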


The following key inequality was proved by Behrstock. We say that two
subsurfaces $Y,Y'$ {\it overlap} if $\partial Y\cap \partial
Y'\neq\emptyset$. 

\begin{prop}[{\cite[Theorem 4.3]{jason}}]\label{behrstock} 
There is a constant $C$ such that the following holds.  Let $Y,Z$ be
two overlapping subsurfaces of $\Sigma$
(i.e. $\dd Y \cap \dd Z \neq \emptyset$, even after
isotopy)
and let $\alpha$ be a collection of pairwise disjoint arcs and curves. Assume
$\pi_Y(\alpha)$ and $\pi_Z(\alpha)$ are defined. Then
$$d^\pi_Y(\alpha,\partial Z)\geq C\implies d^\pi_Z(\alpha,\partial
Y)\leq C$$

The same statement holds when $\alpha$ is replaced by a foliation.
\end{prop}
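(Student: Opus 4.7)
The plan is to argue directly and combinatorially, with no limiting argument. The core observation is that if some component of $\pi_Y(\alpha)$ crosses into the interior of $Z$, the pieces of this crossing that lie in $Z$ are automatically contained in $Y$, hence disjoint from $\partial Y$, and therefore lie within bounded $\mathcal{AC}(Z)$-distance of $\pi_Z(\partial Y)$.

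First I would fix an auxiliary constant $C_0$ dominating the uniform diameter bounds on $\pi_Y(\partial Z)$ and $\pi_Z(\partial Y)$ afforded by Proposition~\ref{i} (these are bounded because $i(\partial Y,\partial Z)$ is a fixed finite number), and then choose $C$ somewhat larger than $C_0$. Assuming $d^\pi_Y(\alpha,\partial Z) \geq C$, I would use the triangle inequality in $\mathcal{AC}(Y)$ to show that at least one component $a$ of $\pi_Y(\alpha)$ must actually cross $\partial Z$: otherwise every component of $\pi_Y(\alpha)$ would be disjoint from every arc of $\pi_Y(\partial Z)$, and then $d^\pi_Y(\alpha,\partial Z) \leq 1 + \diam \pi_Y(\partial Z) \leq C_0 + 1$, contradicting our choice of $C$.

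Next I would examine $a \cap Z$: these are essential arcs in $Z$ with endpoints on $\partial Z$ and, being subsets of $a \subset Y$, they are disjoint from $\partial Y$. Each such arc represents a point of $\pi_Z(\alpha)$ (after pulling back through the cover $\Sigma_Z$ as in Section~\ref{ss:subsurface-projections} if $Z$ is an annulus or has parallel boundary components) at $\mathcal{AC}(Z)$-distance at most one from $\pi_Z(\partial Y)$. Combined with the bound on $\diam \pi_Z(\partial Y)$, this yields $d^\pi_Z(\alpha,\partial Y) \leq C_0 + 1 \leq C$.

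The foliation case reduces to the curve case by realising $\alpha$ as a geodesic lamination $|\xi|$ as in Section~\ref{ss:projecting-laminations}: the same dichotomy, namely that leaves of $|\xi| \cap Y$ either miss $Z$ or, upon entering $Z$, trace arcs of $Y \cap Z$ avoiding $\partial Y$, drives the argument verbatim. The main obstacle I anticipate is not logical but bookkeeping: pinning down a single uniform constant $C$ that works simultaneously on both sides of the implication and across all cases, particularly the annulus case where one must pass to the compactified covers $\overline{\Sigma}_Y$, $\overline{\Sigma}_Z$ rather than work in $\Sigma$ directly, and the foliation case where one must confirm that $\pi_Z$ of a lamination is truly carried by the arc $a \cap Z$ rather than by some measure-zero subset.
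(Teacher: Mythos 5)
Your proposal takes a different route from the paper, which does not reprove Behrstock's inequality for arcs and curves but simply cites~\cite{jason} together with Leininger's short argument recorded in~\cite[Lemma~2.13]{johanna2}, and itself only supplies the extension to foliations (reducing the non-filling case to the curve case via $\partial Supp(\xi)$, and disposing of the degenerate case where $Y$ or $Z$ is a component of $Supp(\xi)$ by observing $d^\pi_Z(\xi,\partial Y)\leq 1$ directly). Your attempted direct proof of the arcs-and-curves statement, however, contains a genuine gap at its main step. You assert that for a component $a$ of $\pi_Y(\alpha)$ crossing $\partial Z$, the pieces of $a\cap Z$ ``are essential arcs in $Z$ with endpoints on $\partial Z$ and, being subsets of $a\subset Y$, they are disjoint from $\partial Y$.'' Both clauses cannot hold simultaneously in general. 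If $a$ denotes the geodesic arc of $\alpha\cap Y$ (so $a\subset Y$), then cutting $a$ along $\partial Z$ produces, among the pieces lying in $Z$, \emph{end} pieces running from $\partial Y$ to $\partial Z$; these are not components of $\alpha\cap Z$, are not essential arcs of $Z$, and contribute nothing to $\pi_Z(\alpha)$. If instead $a$ denotes the full component of $\alpha$ in $\Sigma$, then $a\subset Y$ is false and the claimed disjointness from $\partial Y$ fails. The single transverse intersection your triangle-inequality argument guarantees can very well produce only such end pieces.

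The missing ingredient is the quantitative half of Proposition~\ref{i}: $d^\pi_Y(\alpha,\partial Z)\geq C$ forces $i(\pi_Y(a),\pi_Y(\partial Z))$ to be on the order of $C$, and since geodesics realize intersection numbers, the geodesic arc $a$ of $\alpha\cap Y$ must cross $\partial Z$ at least about $C-3$ times \emph{inside} $Y$. With three or more crossings there is a \emph{middle} sub-arc of $a$ lying in $Y\cap Z$ with both endpoints on $\partial Z$; this is a genuine component of $\alpha\cap Z$, is essential by the no-bigon argument, is contained in $Y$ and hence disjoint from $\partial Y$, and only then does $d^\pi_Z(\alpha,\partial Y)\leq C$ follow. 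Without the multiple-crossings step the argument breaks down. Relatedly, your foliation sketch would also need to handle the case $\pi_Y(\xi)\in\partial\mathcal{AC}(Y)$ (when $Y\subset Supp(\xi)$, so $d^\pi_Y(\alpha,\partial Z)=\infty$): there is no crossing count there, and the paper addresses this with a separate observation.
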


Leininger supplied explicit constant $C=10$ and a simple argument, see
\cite[Lemma 2.13]{johanna2}. When $\alpha$ is replaced by a foliation
$\xi$ the proof is an easy consequence, as follows:
\begin{itemize}
\item Leininger's proof works with no change if some leaf of $\xi$
  intersects $\partial Y$ or $\partial Z$ (in particular, this occurs
  if $\xi$ is filling).
\item If $\xi$ is not filling let $\alpha=\partial Supp(\xi)$. If
  $\pi_Y(\alpha)$ and $\pi_Z(\alpha)$ are defined, the claim about
  $\xi$ follows (after increasing $C$ by 1) after observing that
  $d^\pi_Y(\alpha,\xi)\leq 1$ and $d^\pi_Z(\alpha,\xi)\leq 1$.
\item If $Y$ is a component of $Supp(\xi)$, then $d^\pi_Z(\xi,\partial
  Y)\leq 1$, and similarly for $Z$.
\end{itemize}

The following was first proved in \cite{mm2}. A streamlined proof with
the explicit bound is in \cite[Lemma 5.3]{bbf}.

\begin{prop}\label{finiteness}
For any two subsurfaces $Y,Z$ there are only finitely many subsurfaces
$W$ such that $d_W^\pi(Y,Z)>3$.
\end{prop}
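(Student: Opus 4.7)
I propose to prove this by induction on the complexity $\xi(\Sigma) = 3g+p$ of the surface, with the bounded geodesic image theorem serving as the engine. Since $d^\pi_W(Y,Z) = d^\pi_W(\partial Y, \partial Z)$, I set $\alpha := \partial Y$ and $\beta := \partial Z$, obtaining two multicurves of finite intersection number $I := i(\alpha,\beta)$. By Proposition \ref{i} there is a geodesic $\gamma = (v_0, v_1, \ldots, v_n)$ in $\mathcal{AC}(\Sigma)$ from a component of $\alpha$ to a component of $\beta$ of length $n \leq I + O(1)$. The base case (surfaces with $6g+2p-6 \leq 0$) is vacuous, since such surfaces admit no admissible subsurfaces in the sense of Section \ref{supporting}.

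The main tool is the bounded geodesic image theorem of Masur--Minsky, a standard consequence of $\delta$-hyperbolicity of $\mathcal{AC}(\Sigma)$: there exists a constant $M$ such that if $\pi_W(v_i)$ is defined for every vertex $v_i$ of $\gamma$, then $\mathrm{diam}\bigcup_i \pi_W(v_i) \leq M$. Consequently, whenever $d^\pi_W(\alpha,\beta) > M$, some vertex $v_i$ must be disjoint from $W$, forcing $W$ to lie in a component $\Sigma_i$ of $\Sigma \smallsetminus v_i$, which has strictly smaller complexity. Since projections to $W$ are computed from the parts of $\alpha,\beta$ that enter $W$ (hence lie in $\Sigma_i$), the inductive hypothesis applied to the pair $(\alpha \cap \Sigma_i, \beta \cap \Sigma_i)$ in the smaller surface $\Sigma_i$ yields only finitely many such $W$ per vertex $v_i$. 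Summing over the finite vertex set of $\gamma$ gives finiteness of $\{W : d^\pi_W(\alpha,\beta) > M\}$.

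The remaining subtlety is to reconcile the BGI threshold $M$ (which a priori depends on hyperbolicity constants and may exceed $3$) with the stated threshold $3$; this is the part I expect to be the main obstacle. The cleanest route, following \cite{bbf}, is to combine the finiteness just established at the large constant $M$ with Behrstock's inequality (Proposition \ref{behrstock}): any family of subsurfaces with pairwise large projection distances admits a linear order, and once finiteness is known at some threshold, a careful use of this order together with the triangle inequality and the uniform bound on projection diameters allows one to sharpen the threshold to any fixed value, in particular to $3$. A secondary technical point to watch is the case where $W$ is an annulus, where $\mathcal{AC}(W) \simeq \mathbb{Z}$ and the projection structure differs slightly, but the same argument goes through after verifying BGI in the annular setting.
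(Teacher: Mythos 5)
Your approach is genuinely different from what the paper does: the paper proves nothing here, simply citing the original argument of \cite{mm2} and the streamlined version with the explicit constant in \cite[Lemma~5.3]{bbf}. Your first step --- take a geodesic in $\mathcal{AC}(\Sigma)$ from $\partial Y$ to $\partial Z$, observe by the Bounded Geodesic Image Theorem (Theorem~\ref{bgit}) that any $W$ with $d^\pi_W(\partial Y,\partial Z)>M$ must be missed by some vertex, cut along that vertex and recurse on the smaller pieces --- is sound in outline (modulo routine care with the multi-arc restrictions after cutting, the annular case, and the base of the induction), and yields finiteness of $\{W\mid d^\pi_W(Y,Z)>M\}$ for $M$ the BGI constant.

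The genuine gap is the proposed ``sharpening'' from $M$ to $3$. Finiteness above $M$ carries no information about the band $\{W\mid 3<d^\pi_W(Y,Z)\leq M\}$, which could a priori still be infinite, and the tools you name do not close this. Behrstock's inequality (Proposition~\ref{behrstock}) constrains a \emph{pair} of overlapping subsurfaces $W,W'$ when one projects far from a reference object into the other; by itself it bounds neither the cardinality of, nor the values of $d^\pi_W(Y,Z)$ on, the band. The triangle inequality lives inside a single $\mathcal{AC}(W)$ and does not transfer information between different $W$'s. So finiteness in the band $(3,M]$ needs a fresh argument, not a formal consequence of finiteness above $M$; this is exactly what the direct proof in \cite[Lemma~5.3]{bbf} supplies. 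For what it is worth, the paper only invokes this proposition to verify axiom~\ref{axiom:proj:finiteness}, which requires finiteness above \emph{some} threshold $\theta$, so your step~1, once made rigorous, would already suffice for the paper's purposes even without the sharpening to $3$.
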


\subsection{The Bounded Geodesic Image Theorem}

This theorem was proved by Masur and Minsky \cite{mm2}. A more
combinatorial proof with a uniform bound on $M$ was given by Webb
\cite{webb}. 

\begin{thm}\label{bgit}
There exists $M=M(\Sigma)$ such that the following holds. Let
$Y\subset \Sigma$ be a subsurface and $g=x_0,x_1,\cdots,x_n$ a
geodesic in $\mathcal C(\Sigma)$ such that $\pi_Y(x_i)$ is defined for
all $i$. Then $d^\pi_Y(x_0,x_n)\leq M$.
\end{thm}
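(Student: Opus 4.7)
The plan is to prove this by contradiction, exploiting the hyperbolicity of $\mathcal C(\Sigma)$ and the structure of subsurface projections. Assume $d^\pi_Y(x_0,x_n)>M$ for $M$ to be chosen. The first ingredient is a uniform Lipschitz bound: if $x_i,x_{i+1}$ are adjacent in $\mathcal C(\Sigma)$, they admit disjoint representatives, so their intersections with $Y$ (or, in the annular case, their lifts to $\overline\Sigma_Y$) are disjoint up to boundary, and hence $d^\pi_Y(x_i,x_{i+1})\leq B$ for some uniform constant $B$ (see Proposition \ref{i}). Thus a failure of the theorem forces the geodesic $g$ to have length at least $M/B$.

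The core of the argument is to show that such a long geodesic, accumulating large projection in $Y$, must pass close to a curve disjoint from $Y$. Let $L_Y\subset\mathcal C(\Sigma)$ be the set of curves disjoint from $Y$ or isotopic into $\partial Y$; these are exactly the vertices at which $\pi_Y$ is undefined. I would show that $L_Y$ is quasi-convex in the $\delta$-hyperbolic space $\mathcal C(\Sigma)$, and then invoke the standard Morse/projection lemma for quasi-convex subsets of a hyperbolic space: if the nearest-point projections of $x_0$ and $x_n$ to $L_Y$ are far apart, then any geodesic joining them must enter the $(\delta+1)$-neighborhood of $L_Y$. The nearest-point projection to $L_Y$ coarsely refines the subsurface projection $\pi_Y$ in the following sense: two curves with vastly different projections to $Y$ have projections to $L_Y$ at coarse distance at least linear in their $Y$-distance, by repeated application of Behrstock's inequality (Proposition \ref{behrstock}) together with finiteness of the set of witnessing subsurfaces (Proposition \ref{finiteness}). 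Hence for $M$ large enough some vertex $x_i$ lies at distance $\leq 1$ from $L_Y$, i.e.\ is itself disjoint from $Y$, contradicting the hypothesis that $\pi_Y(x_i)$ is defined for all $i$.

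The main obstacle is a quantitative version of the quasi-convexity/projection step, with a constant $M$ depending only on $\Sigma$. The original Masur--Minsky argument achieves this through the hierarchy machinery: large $\pi_Y$-distance between the endpoints forces $Y$ to appear as a domain in a hierarchy refining $g$, which in turn forces $\partial Y$ (or a curve close to it) to appear as a vertex of $g$. Webb's later combinatorial proof \cite{webb} bypasses hierarchies entirely and yields a uniform $M$ independent of the complexity of $\Sigma$. In either approach, the annular case---where $\mathcal{AC}(Y)$ is quasi-isometric to $\mathbb Z$ and $\pi_Y$ records Dehn twisting about the core curve---is the most delicate point and is usually handled separately using the structure of the annular cover $\Sigma_Y$ and the fact that any curve at bounded intersection number with the core has bounded twisting.
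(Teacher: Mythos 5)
The paper does not prove Theorem~\ref{bgit}: this is the Bounded Geodesic Image Theorem of Masur--Minsky, and the paper simply cites \cite{mm2} (with the uniform constant coming from Webb \cite{webb}). So there is no ``paper's proof'' to compare your sketch against; the result is used as a black box.

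Your sketch, however, has a genuine gap in the quasi-convexity/projection step, and it is the decisive one. Let $L_Y$ be, as you define it, the set of curves disjoint from $Y$ (together with curves isotopic into $\partial Y$). This set has \emph{bounded diameter} in $\mathcal C(\Sigma)$ --- any two curves disjoint from $Y$ are each disjoint from $\partial Y$, so they lie at distance at most $2$ from each other --- hence it is trivially quasi-convex, but nearest-point projections to $L_Y$ are never far apart. The Morse/projection lemma you invoke (``if the projections of $x_0,x_n$ to $L_Y$ are far apart then the geodesic enters a bounded neighborhood of $L_Y$'') has a hypothesis that is never satisfied, and gives no conclusion. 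The intermediate claim that large $d^\pi_Y$-distance forces the $\mathcal C(\Sigma)$-projections to $L_Y$ far apart is therefore false, and Behrstock's inequality together with Proposition~\ref{finiteness} will not rescue it, since those control relations between \emph{different} subsurface projections, not between $\pi_Y$ and coarse distance in $\mathcal C(\Sigma)$. The heuristic you intend --- that accumulating large $\pi_Y$-projection forces the geodesic to pass near $\partial Y$ --- is the right intuition, but it must be established by a direct argument along the geodesic (Masur--Minsky's hierarchy machinery, or Webb's unicorn-path surgery), not by a generic projection lemma. In particular your Lipschitz observation $d^\pi_Y(x_i,x_{i+1}) \leq B$ only yields the linear bound $d^\pi_Y(x_0,x_n) \leq B n$, which grows with the length of $g$; the whole content of the theorem is that the bound is independent of $n$, and nothing in your outline closes that gap (you partially acknowledge this in your final paragraph, which is in effect an admission that the proof is being deferred to \cite{mm2} or \cite{webb}, exactly as the paper does).
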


\begin{prop}\label{continuity}
There is a constant $N=N(\Sigma)$ so that the following holds.
Suppose $\pi_Y(\nu)$ is defined and let $\alpha\in \mathcal{AC}(Y)$
and $\Theta\in [0,\infty)$ such that
  $d^\pi_Y(\alpha,\nu)\geq\Theta$. Then there is a neighborhood $U$ of
  $\nu$ in $\PMF$ such that $\pi_Y(\mu)$ is defined and in addition
$$d^\pi_Y(\alpha,\mu)> \Theta-N$$
for all $\mu\in U$.
\end{prop}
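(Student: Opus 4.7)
The plan is to argue by contradiction via a Hausdorff-convergence argument on the supports of the measured laminations. The openness of the set of foliations with defined projection to $Y$ is already recorded in Section~\ref{ss:projecting-laminations} as a consequence of Proposition~\ref{standard}, so the substance of the statement is the coarse lower semicontinuity of $\mu \mapsto \pi_Y(\mu)$ in a neighborhood of $\nu$.

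Fix a complete hyperbolic structure on $\Sigma \smallsetminus P$ and pass to measured geodesic laminations via the correspondence in Section~\ref{laminations}. Suppose, for contradiction, that for some $N$ (to be chosen only in terms of $\Sigma$) the statement fails. Then there exist $\mu_n \to \nu$ in $\PMF$ with $\pi_Y(\mu_n)$ defined and $d^\pi_Y(\alpha,\mu_n) \leq \Theta - N$. Rescale representatives in $\MF$ so that the measures converge, and pass to a subsequence so that the supports $|\mu_n|$ Hausdorff-converge to a lamination $\lambda$; by Proposition~\ref{standard}, $|\nu| \subseteq \lambda$.

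Now pick $\beta \in \pi_Y(\nu)$, realized (in the cover $\Sigma_Y$, or in $\overline{\Sigma}_Y$ if $Y$ is an annulus) as a component of the intersection of a leaf $\ell$ of $|\nu|$ with the totally geodesic copy of $Y$. Hausdorff convergence $|\mu_n| \to \lambda \supseteq |\nu|$ yields, for large $n$, a leaf $\ell_n$ of $|\mu_n|$ staying uniformly close to $\ell$ over a large compact subset of the cover containing $\ell \cap Y$. Since $Y$ has bounded geometry on this fixed compact region, the resulting arc (or spiral around the core, in the annular case) $\beta_n := \ell_n \cap Y \in \pi_Y(\mu_n)$ has geometric intersection number with $\beta$ bounded by a constant depending only on $\Sigma$. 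Proposition~\ref{i} (together with the standard fact that the projections $\pi_Y(\nu)$ and $\pi_Y(\mu_n)$ themselves have uniformly bounded diameter in $\mathcal{AC}(Y)$) then gives $d_{\mathcal{AC}(Y)}(\beta,\beta_n) \leq N_0(\Sigma)$, and hence
$$d^\pi_Y(\alpha,\mu_n) \;\geq\; d^\pi_Y(\alpha,\beta) - C_0 \;\geq\; d^\pi_Y(\alpha,\nu) - N_0 - 2C_0,$$
where $C_0$ is a universal bound on $\diam \pi_Y$. Setting $N := N_0 + 2C_0 + 1$ contradicts the assumption for large $n$.

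The main obstacle is making the Hausdorff-to-bounded-intersection step rigorous in the annular case, where the arc graph is coarsely $\mathbb{Z}$ and the vertex recorded by a crossing leaf is very sensitive to how much that leaf winds around the core of $Y$. This is handled by working in the Gromov compactification $\overline{\Sigma}_Y$ of the annular cover and observing that leaves of $|\mu_n|$ which Hausdorff-track $\ell$ over a fixed compact portion of $\overline{\Sigma}_Y$ containing the lifted core can differ from it in winding number by at most a bounded amount; the usual non-annular case is comparatively straightforward, since the totally geodesic representative of $Y$ in $\Sigma_Y$ has positive injectivity radius away from the boundary.
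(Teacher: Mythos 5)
Your Hausdorff-convergence argument is essentially the paper's second case, where some leaf $\ell$ of $|\nu|$ crosses $\partial Y$, so that $\pi_Y(\nu)$ genuinely contains an arc $\beta = \ell\cap Y$. There the tracking of $\ell$ by nearby leaves $\ell_n$ of $|\mu_n|$ over a fixed compact piece is correct and gives the bounded-intersection bound you want (the paper records the estimate $d^\pi_Y(\mu_n,\nu)\leq 2$, or $\leq 3$ for annuli, without even needing Proposition~\ref{i}). However, the proposition covers two further situations that your choice of $\beta$ does not reach, and these require genuinely different input.

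First, if $|\nu|$ fills $Y$, i.e.\ $Y$ is a component of $Supp(\nu)$, then by definition $\pi_Y(\nu)$ is a \emph{point at infinity} of $\mathcal{AC}(Y)$ and $d^\pi_Y(\alpha,\nu)=\infty$. There is no arc of the form $\ell\cap Y$ to take as $\beta$, and your appeal to ``a universal bound $C_0$ on $\diam\pi_Y$'' silently excludes this case. Here the content of the proposition is that $\pi_Y(\mu_n)$ escapes to infinity in $\mathcal{AC}(Y)$, which the paper extracts from Klarreich's identification of the Gromov boundary of the curve complex (Theorem~\ref{klarreich}), with a separate direct argument for annular $Y$.

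Second, if some component $Z$ of $Supp(\nu)$ is a proper subsurface of $Y$, then $\pi_Y(\nu)$ contains the curves $\partial Z$, which are again not of the form $\ell\cap Y$. In this case there is no reason for $\pi_Y(\mu_n)$ to contain anything close to $\partial Z$: the $\pi_Y(\mu_n)$ may coarsely \emph{rotate} about $\partial Z$ in $\mathcal{AC}(Y)$, so the naive conclusion ``$\pi_Y(\mu_n)$ is uniformly close to $\pi_Y(\nu)$'' is false. The paper's argument is structurally different here: it first shows $\pi_Z(\mu_n)\to\infty$, then invokes the Bounded Geodesic Image Theorem (Theorem~\ref{bgit}) to force any geodesic in $\mathcal{AC}(Y)$ between $\pi_Y(\mu_i)$ and $\pi_Y(\mu_j)$ through a neighborhood of $\partial Z$, and finally uses $\delta$-hyperbolicity of $\mathcal{AC}(Y)$ to conclude that $d^\pi_Y(\alpha,\mu_i)$ can drop below $\Theta-1-\delta$ for at most one index $i$. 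Your bounded-intersection-number mechanism cannot substitute for this coarse-rotation argument, so as written the proof has a real gap in exactly these two cases.
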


\begin{proof} 
We already noted that $\pi_Y$ is defined on an open subset of
$\PMF$. Let $\mu_i\to\nu$ in $\PMF$.

We first consider the case when $Y$ is a component of $Supp(\nu)$. If
$Y$ is also a component of $Supp(\mu_i)$ there is nothing to
prove. Otherwise, the projection $\pi_Y(\mu_i)$ consists of a
collection of curves (coming from boundary components of the support
of $\mu_i$) and of a collection of arcs.  Denote by $\lambda_i$ either
one of the curves in the collection, or one of the essential
nonperipheral boundary components of $I\cup\partial Y$ where $I$ is
one of the arcs in the collection.  We view $\lambda_i$ as a measured
lamination. and we may assume $\lambda_i\to\lambda\in\PMF$. Then
$\lambda$ is supported in $Y$ and satisfies $i(\lambda,\nu)=0$. Since
$\pi_Y(\nu)$ fills $Y$ it follows that $|\lambda|=|\pi_Y(\nu)|$. By
Theorem \ref{klarreich} it follows that
$d^\pi_Y(\alpha,\lambda_i)\to\infty$, so we are done in this case
since $d^\pi_Y(\lambda_i,\mu_i)\leq 5$, see Proposition \ref{i}.  When
$Y$ is an annulus we cannot use Theorem \ref{klarreich}, but argue
directly as follows. Let $\alpha$ be the closed geodesic homotopic
into $Y$. If $\mu_i$ has $\alpha$ as a closed leaf then
$d^\pi_Y(\alpha,\mu_i)=\infty$ so we are done. Otherwise, $\mu_i$ has
leaves that intersect $\alpha$ at a small angle by Proposition
\ref{standard} and then again the projections go to infinity.  

Second, consider the case when a leaf $\ell$ of $|\nu|$ intersects $\partial
Y$. By Proposition \ref{standard} there is a leaf $\ell_i$ of $\mu_i$ for large
$i$ that also intersects $\partial Y$ and an arc component of
$\ell_i\cap Y$ is isotopic to an arc component of $\ell\cap Y$. Thus
in this case $d^\pi_Y(\mu_i,\nu)\leq 2$ (if $Y$ is an annulus we only
get $d^\pi_Y(\mu_i,\nu)\leq 3$), so here $N>3$ works.

Finally, suppose that a component $Z$ of $Supp(\nu)$ is isotopic into
$Y$, but is not $Y$. By the same argument as in the first case, we see
that $\pi_Z(\mu_i)$ go to infinity in $\mathcal {AC}(Z)$ (again argue
directly if $Z$ is an annulus). After
passing to a subsequence, we may assume that for $i<j$ we have
that $d^\pi_Z(\mu_i,\mu_j)$ is large. By the Bounded Geodesic Image
Theorem \ref{bgit}, we see that any geodesic joining $\pi_Y(\mu_i)$
and $\pi_Y(\mu_j)$ must contain a curve disjoint from $Z$, 
      and so is
within distance 1 from $\pi_Y(\nu)$. Then by
$\delta$-hyperbolic geometry we deduce that
$d^\pi_Y(\alpha,\mu_i)<\Theta-1-\delta$ 
for at most one $i$. So
$N>1+\delta$ works in this case.
\end{proof}

\subsection{Partitioning the subsurfaces and the color preserving subgroup}
\label{color}

We will need the following fact.

\begin{prop}[{\cite[Proposition 5.8]{bbf}}]
The set of subsurfaces of $\Sigma$ which are not pairs of pants can be
written as a finite disjoint union
$${\bfY^1}\sqcup{\bfY^2}\sqcup\cdots\sqcup {\bfY^k}$$
so that any two subsurfaces in any $\bfY^i$ overlap, and there is a
subgroup $G<\MCG$ of finite index that preserves each ${\bfY^i}$.
\end{prop}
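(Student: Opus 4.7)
The plan is to first reduce the existence of the finite-index subgroup $G$ to the existence of the partition itself. Given any finite partition $\bfY^1 \sqcup \cdots \sqcup \bfY^k$ of the subsurfaces satisfying the stated overlap property, the mapping class group $\MCG$ permutes the parts, inducing a homomorphism $\MCG \to \mathrm{Sym}(k)$ whose kernel $G$ has finite index in $\MCG$ and preserves each $\bfY^i$ setwise. So the task reduces to constructing the partition.

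For the partition, I would first observe that there are only finitely many topological types of essential subsurfaces of $\Sigma$ that are not pairs of pants: such a subsurface is determined up to homeomorphism by its genus, number of boundary components, and number of punctures, each of which is bounded by the corresponding invariant of $\Sigma$. It therefore suffices to partition, for each topological type $T$, the set $\bfT$ of isotopy classes of subsurfaces of type $T$ into finitely many classes of pairwise overlapping subsurfaces. Equivalently, one wants a finite coloring of the \emph{non-overlap graph} $D_T$ on $\bfT$, whose edges connect pairs $Y, Y'$ that are disjoint or nested.

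The clique number of $D_T$ is bounded, since a collection of pairwise non-overlapping subsurfaces of type $T$ forms a laminar family in $\Sigma$, and the cardinality of such a family is controlled by the topology of $\Sigma$ and of $T$ (via an Euler-characteristic count when $\chi(T) < 0$, or via the pants-decomposition bound $3g+p-3$ when $T$ is an annulus). The main obstacle is converting this bound on the clique number into a bound on the chromatic number, which is not formal. To do so, I would attach to each $Y \in \bfT$ a finite combinatorial datum, such as the intersection pattern of $\partial Y$ with a fixed filling system of curves on $\Sigma$, considered up to the action of a sufficiently deep finite-index subgroup of $\MCG$. Up to this action only finitely many such patterns arise, and with a careful choice of filling system one can arrange that any two non-overlapping subsurfaces of type $T$ realize distinct patterns, yielding the desired coloring and hence the partition.
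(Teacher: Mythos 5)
Your opening reduction is incorrect as stated: it is not true that $\MCG$ automatically permutes the parts of an arbitrary finite partition satisfying the overlap property. Such a partition is just a proper coloring of the non-overlap graph, and a group acting on a graph need not preserve, or even permute, a given proper coloring; one can typically recolor a single subsurface without violating the overlap condition while destroying any equivariance. What is actually needed, and what your later construction aims for, is a coloring that is invariant under some finite-index subgroup to begin with. Once one has such a $G$-invariant coloring one can take that $G$ itself, so the homomorphism to $\mathrm{Sym}(k)$ plays no role.

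The genuine gap is in the last step, which you state but do not prove: that one can choose a finite-index subgroup $G<\MCG$ so that two non-overlapping subsurfaces of the same topological type are never in the same $G$-orbit. This is the entire content of the proposition. It is not a matter of choosing a filling system carefully; it is a matter of choosing $G$ carefully (for instance, the kernel of the action of $\MCG$ on $H_1(\Sigma;\IZ/3)$) and then establishing, by an argument specific to mapping class groups, that no element $g$ of such a $G$ can carry a subsurface $Y$ to a disjoint or boundary-sharing subsurface $g(Y)\neq Y$ of the same type. This requires real work with the intersection form on homology and with stabilizers of multicurves; it does not follow from bounded clique number, and indeed the clique-number bound you develop plays no role in the orbit-coloring you end up proposing (and in general a clique bound does not control the chromatic number). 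As written, the proposal outlines the correct shape of the argument but substitutes an assertion for its key lemma.
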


The subgroup $G$ is the {\it color preserving subgroup}. We can
further arrange that each $\bfY^i$ is a $G$-orbit. We remark that
unlike in~\cite{bbf}, $\{ \Sigma \}$ is not one of the $\bfY^i$ since
we are considering only proper subsurfaces.

\subsection{Large intersection number implies large projection}

Notice that one can have two curves with large intersection number
that are at distance 2 in the curve complex. Thus the literal converse to
Proposition \ref{i} does not hold.  
The following is the correct converse to Proposition \ref{i}.

\begin{lemma}\label{subsurface}\cite[Theorem 1.5]{watanabe}

  For every $M$ there is
  $I$ such that the following holds. If $\alpha,\beta\in\mathcal S$
  and $i(\alpha,\beta)\geq I$ then there is a subsurface $Y\subset
  \Sigma$ such that $d^\pi_Y(\alpha,\beta)\geq M$
  (where we also allow $Y=\Sigma$).

The same statement holds for pairs of arcs-or-curves in $\mathcal
{AC}(\Sigma)$. 
\end{lemma}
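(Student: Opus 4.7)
The plan is to argue by contradiction and induction on the complexity of the surface, proving the arcs-or-curves version (which subsumes the closed-curve version). Suppose the lemma fails for some $M$: pick $\alpha_n,\beta_n\in\mathcal{AC}(\Sigma)$ with $i(\alpha_n,\beta_n)\to\infty$ while $d^\pi_Y(\alpha_n,\beta_n)\leq M$ for every subsurface $Y\subseteq\Sigma$ (with $Y=\Sigma$ allowed). Using $\MCG$-equivariance I pass to a subsequence so that $\alpha_n=\alpha$ is constant. Since each transverse intersection with $\alpha$ forces a uniform amount of hyperbolic length via a collar argument, $\ell_{X_0}(\beta_n)\to\infty$ for any fixed $X_0\in\calt$, and compactness of $\PMF$ lets me further assume $[\beta_n]\to[\mu]\in\PMF$.

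The boundedness of $d^\pi_\Sigma(\alpha,\beta_n)$ (which agrees up to uniform error with distance in $\mathcal{AC}(\Sigma)$ by Proposition \ref{i}) together with Klarreich's Theorem \ref{klarreich} rules out that $\mu$ is filling: otherwise $\pi(\beta_n)\to\pi(\mu)\in\partial\mathcal{C}(\Sigma)$ would force $d_{\mathcal{C}(\Sigma)}(\alpha,\beta_n)\to\infty$. So $Y_0=Supp(\mu)\subsetneq\Sigma$ is a proper (possibly disconnected) subsurface. I would then split on whether some component of $Y_0$ meets $\alpha$ essentially. If a component $Y_0^i$ does, then $\mu$ fills $Y_0^i$ and $\pi_{Y_0^i}(\mu)$ sits at infinity of $\mathcal{AC}(Y_0^i)$, making $d^\pi_{Y_0^i}(\alpha,\mu)$ infinite; Proposition \ref{continuity} then provides a neighborhood of $\mu$ in $\PMF$ on which $d^\pi_{Y_0^i}(\alpha,\cdot)>M$, contradicting the bounded projections of $\beta_n$ for large $n$.

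In the remaining case every component of $Y_0$ is isotopable off $\alpha$. Let $Z$ denote the component of $\Sigma$ cut along $\partial Y_0$ that contains $\alpha$; then $Z$ is a proper subsurface of strictly smaller complexity than $\Sigma$. Set $\gamma_n:=\pi_Z(\beta_n)\in\mathcal{AC}(Z)$. Since $\alpha\subset Z$, all intersections of $\beta_n$ with $\alpha$ occur inside $Z$, so $i(\alpha,\gamma_n)=i(\alpha,\beta_n)\to\infty$. For any subsurface $W\subseteq Z$ the projection $\pi_W(\beta_n)$ factors through $\pi_Z(\beta_n)=\gamma_n$ up to uniformly bounded error, so the subsurface distances $d^\pi_W(\alpha,\gamma_n)$ computed in $Z$ remain bounded. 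The inductive hypothesis for $Z$ then forces $i(\alpha,\gamma_n)$ to be bounded, the final contradiction. The base case is furnished by surfaces without proper non-annular subsurfaces (for instance the once-punctured torus or four-times-punctured sphere), where any two distinct essential curves already intersect, so the disjoint case does not arise and the result reduces to the standard comparison of intersection number with Farey-graph distance.

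The main technical obstacle I anticipate is in the inductive step: verifying that subsurface projections and intersection numbers descend faithfully from $\Sigma$ to $Z$ up to uniformly bounded error, so that the inductive hypothesis genuinely applies. Care also needs to be taken that $Z$ is allowable as a subsurface in the sense of Section \ref{supporting} (in particular not a pair of pants); the pair-of-pants and other small or exceptional cases will need direct verification, treating the curve complex of an annulus or the arc complex of a pair of pants by hand.
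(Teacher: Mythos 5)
The paper offers no proof of this statement; it is cited from \cite{choi-rafi,watanabe} (and Watanabe's paper is devoted to exactly this comparison), so there is no internal proof to match your sketch against. Assessing the sketch on its own: the global structure (contradiction, compactness of $\PMF$, Klarreich to rule out a filling limit, induction on complexity) is sensible, and Case 1 is handled correctly via Proposition~\ref{continuity}. The obstacle you flag in the inductive step, however, is a genuine gap and not a matter of tracking bounded errors. In Case 2, $\pi_Z(\beta_n)$ is a bounded-diameter \emph{set} of arc classes in $\mathcal{AC}(Z)$ (only boundedly many isotopy classes, since they are pairwise disjoint), so the identity ``$i(\alpha,\gamma_n)=i(\alpha,\beta_n)$'' does not furnish a single element of $\mathcal{AC}(Z)$ to which the inductive hypothesis applies. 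Concretely, $i(\alpha,\beta_n)$ can tend to infinity while every individual arc of $\beta_n\cap Z$ meets $\alpha$ a bounded number of times, provided $i(\beta_n,\partial Z)\to\infty$; in that regime the inductive hypothesis inside $Z$ yields no contradiction, and indeed it cannot: $\alpha$ is disjoint from $\partial Z$, so $\pi_{\partial Z}(\alpha)$ is undefined, the twisting of $\beta_n$ about $\partial Z$ is invisible to $\alpha$ from within $Z$, and the subsurface that actually witnesses a large projection must straddle $\partial Z$, which an induction restricted to $Z$ never sees. Eliminating this regime is where the real content of the lemma lies; it is essentially the Masur--Minsky hierarchy and distance-formula machinery exploited in the cited references.

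Two further points to repair. You need an honest way to regard an arc $\beta_n$ as a point of $\PMF$ (for instance, replace it by the essential boundary of a regular neighborhood) before speaking of $[\beta_n]\to[\mu]$. And the base case is too quick: on the once-punctured torus or four-punctured sphere, for $\beta$ and $T_\alpha^n\beta$ the intersection number grows while the Farey-graph distance stays $\leq 2$; the witnessing subsurface is the annulus about $\alpha$, so the base case cannot be dispatched by ``comparison of intersection number with Farey-graph distance'' alone and must also treat annular projections.
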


\section{Verification of the Flow Axioms}\label{s:2}

Let $c$ be a geodesic (segment, ray or line) in a metric space $X$. 
We write $\rho_c$ for the {\it nearest point projection} from $X$ to $\Image(c)$.

\begin{definition}
The geodesic $c$ is {\it $D$-contracting} for $D\geq 0$ if for every
metric ball $B\subset X$ with $B\cap \Image(c)=\emptyset$ the set 
$$\rho_c(B):=\cup_{b\in B} \rho_c(b)\subset \Image(c)$$ 
has diameter $\leq D$.
\end{definition}

For example, if $X$ is $\delta$-hyperbolic then every geodesic is
$10\delta$-con\-tracting. It is an interesting phenomenon that many
spaces of interest, even though they are not hyperbolic, contain many
contracting geodesics. These are thought of as ``hyperbolic
directions''. Note that a line in $\R^2$ is not contracting.

\begin{definition} A geodesic (segment, ray or line) $c$ in a metric
  space is {\it Morse} if for every $A,B$ there is $N=N(A,B)$ so that
  any $(A,B)$-quasi-geodesic $c'$ with endpoints in $c$ is contained
  in the $N$-neighborhood $B_N(c)$ of $c$.
\end{definition}

The following lemma is well known. The first part states that contracting
geo\-desics are Morse, and the second is a variant for geodesic lines.

\begin{lemma} \label{morse}
\begin{enumerate}[(i)]
\item For every $D,A,B$ there is $N=N(D,A,B)$ such that every
  $(A,B)$-quasi-geodesic with endpoints on a $D$-con\-tracting geodesic 
  $c$ is contained in the $N$-neighborhood of $c$.
\item 
Suppose $c$ is a contracting geodesic line and $c'$ is a
geodesic line contained in some neighborhood $B_M(c)$ of $c$. Then
$c'$ is contained in the $R$-neighborhood of $c$, where $R$ is a
function of the contracting constant $D$.
\end{enumerate}
\end{lemma}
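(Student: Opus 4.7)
For part (i), I will use the standard Morse-type bound for contracting geodesics. After a routine taming step, assume $c'$ is continuous, parametrized on $[0, T]$ with $c'(0), c'(T) \in c$. Set $N := \sup_t d(c'(t), c)$; the goal is to bound $N$ in terms of $D, A, B$. Pick $t_0$ near where $d(c'(t), c)$ attains $N$, and let $[t_-, t_+] \ni t_0$ be the maximal interval on which $d(c'(t), c) \geq N/2$. Cover $c'([t_-, t_+])$ by $X$-balls of radius $N/4$ centered on points of $c'([t_-, t_+])$; each such ball is disjoint from $c$, so by the $D$-contracting property it projects to a subset of $c$ of diameter $\leq D$. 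The $(A, B)$-quasi-geodesic inequalities control the number of such balls needed, producing an upper bound for $\diam \rho_c(c'([t_-, t_+]))$ of the form $D \cdot f(A, B)$. Against this, a lower bound comes from the fact that $c'(t_\pm)$ are within $N/2$ of $c$ while separated on $c'$ by a parameter amount forced by the quasi-geodesic relation to make $d(c'(t_-), c'(t_+))$ at least linear in $N$; projecting gives $\diam \rho_c(\cdots) \gtrsim N$. Balancing the two estimates yields $N \leq N(D, A, B)$.

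For part (ii), I plan to use part (i) to obtain an initial neighborhood bound and then refine. Given $s < t$, let $y^* := \rho_c(c'(s))$ and $z^* := \rho_c(c'(t))$, so $d(c'(s), y^*), d(c'(t), z^*) \leq M$. The concatenation of the geodesic arc $[y^*, c'(s)]$, the geodesic segment $c'|_{[s, t]}$, and the geodesic arc $[c'(t), z^*]$, reparametrized by arc length, is a $(1, CM)$-quasi-geodesic from $y^*$ to $z^*$ with endpoints on $c$, for some absolute constant $C$. Part (i) places this path inside the $N_1$-neighborhood of $c$, with $N_1 := N(D, 1, CM)$; letting $s, t$ range over $\IR$ gives $c' \subseteq B_{N_1}(c)$.

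To sharpen $N_1$ to a bound $R(D)$ depending only on $D$, I will exploit that $c'$ is a bi-infinite geodesic: the identity $d(c'(s), c'(t)) = |s - t|$ combined with $d(c'(\cdot), c) \leq M$ shows that $\rho_c \colon c' \to c$ is a coarse isometric embedding with additive error $2M$, hence a coarse bijection onto $c$. Consequently $c$ and $c'$ share their two ends at infinity. The $D$-contracting property can then be applied to two geodesic lines with common ends at infinity to force them to fellow-travel at distance bounded by a function of $D$ alone: as $s, t \to \pm \infty$ along $c'$ the projections $y^*, z^*$ exit every bounded portion of $c$, and in the limit the auxiliary constants $CM$ drop out, leaving only the $D$-contribution from the contracting estimate.

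The main obstacle will be executing this refinement rigorously: the naive application of part (i) yields only a bound $N_1 = N_1(D, M)$ dependent on $M$, and removing the $M$-dependence requires careful use both of the bi-infiniteness of $c'$ and of the scale-invariance of the contracting estimate. Part (i), by contrast, is essentially combinatorial once the covering of the ``bad'' sub-arc of $c'$ by balls disjoint from $c$ is set up correctly.
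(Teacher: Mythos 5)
Your part (i) captures the paper's key idea (chain a sequence of $c$-disjoint balls along the bad sub-arc of $c'$, project, and compare the projection diameter to the quasi-geodesic travel), but the cutoff $N/2$ makes the estimates fail to close. With $[t_-,t_+]$ the maximal interval on which $d(c'(\cdot),c)\geq N/2$, you lose $N/2+N/2=N$ when passing from $d(c'(t_-),c'(t_+))$ to $d(\rho_c(c'(t_-)),\rho_c(c'(t_+)))$. Combined with $d(c'(t_-),c'(t_+))\geq L/A-B$ and $L := t_+ - t_- \geq (N-2B)/A$, the lower bound on $\diam\rho_c(c'([t_-,t_+]))$ you obtain is $\geq N(A^{-2}-1) - B(1+2A^{-2})$, which is nonpositive for every $A\geq 1$ (indeed already for $A\geq 1/\sqrt 2$). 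Similarly, the claimed $D\cdot f(A,B)$ upper bound on the projection diameter presupposes $L\lesssim N$, which is only available after the argument has closed. The fix — and what the paper's ``big balls that miss $c$'' and ``contradicts the assumption that $c'$ is a quasi-geodesic'' are pointing at — is to take a \emph{fixed} cutoff $\Lambda=\Lambda(D,A,B)$ independent of $N$: then a chain of balls of radius commensurate with $\Lambda$ gives $\diam\rho_c \leq D\cdot O(L/\Lambda)$, while the quasi-geodesic gives $\diam\rho_c \geq L/A - B - 2\Lambda$; for $\Lambda\gg A^2D$ the first slope ($\approx D/\Lambda$) is strictly smaller than the second ($1/A$), forcing $L$ to be bounded in terms of $D,A,B$, and then $d(c'(t_0),c)\leq \Lambda + AL + B$ is bounded.

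Your part (ii) has the more serious gap, and you have diagnosed it correctly but not repaired it. The first half is fine: for any $s<t$, the concatenation $[\rho_c(c'(s)),c'(s)] \ast c'|_{[s,t]}\ast[c'(t),\rho_c(c'(t))]$ is a $(1,4M)$-quasi-geodesic with endpoints on $c$, and (i) gives $c'\subset B_{N_1}(c)$ with $N_1=N(D,1,4M)$. But the ``limit'' step is not an argument: the additive error $4M$ is present for \emph{every} choice of $s,t$, so the bound $N_1$ does not improve as $s,t\to\pm\infty$, and nothing makes the ``auxiliary constants drop out.'' Iterating (i) (replace $M$ by $N_1$, etc.) also diverges, since $N(D,1,B)$ grows at least linearly in $B$. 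The paper's ``can be proved similarly'' means: run the fixed-cutoff ball-covering argument directly on the geodesic line $c'$ with respect to $c$, with $A=1$, $B=0$. The one new point, which is where $c'\subset B_M(c)$ is actually used, is that since $c'$ has no endpoints on $c$ you must rule out an unbounded component of $\{t: d(c'(t),c)>\Lambda\}$: on a sub-interval of such a component of length $T$, the ball-chain gives $\diam\rho_c \lesssim D T/\Lambda$, while the endpoint projections are separated by $\geq T - 2M$; for $\Lambda>2D$ these are incompatible once $T\gg M$, contradiction. Hence every bad component is bounded, its length is controlled by $D$ alone exactly as in (i), and $d(c'(t_0),c)\leq \Lambda + L$ is bounded by a function of $D$ only. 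This direct route avoids the $M$-dependence cleanly, whereas the approach through part (i) plus a limit does not obviously recover the $D$-only bound.
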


For the first statement, see e.g. \cite{arzhantseva}. The idea is that
if $c'$ contains a long segment outside a big neighborhood
of $c$, then covering this segment with slightly overlapping big balls
that miss $c$ and projecting we see that the projection of the segment
to $c$ has much smaller length, and this contradicts the assumption
that $c'$ is a quasi-geodesic. The second statement can be proved
similarly.

We will need the following facts about Teichm\"uller geodesics. Recall
that $\pi:\calt\to\mathcal C(\Sigma)$ is the coarse projection to the curve
complex. 

\begin{prop}\label{basic facts}
\begin{enumerate}[(1)]
\item (Arzela-Ascoli) Any sequence of Teichm\"uller geodesics that
  intersect a fixed compact set has a subsequence that (after
  re\-para\-metrization via translation) converges to a Teich\-m\"uller
  geodesic.  
\item If $c$ is a Teichm\"uller geodesic (segment, ray or line) in the
  thick part $\calt_{\geq\epsilon}$ then
\begin{itemize}
\item $\pi c$ is a $(K,L)$-quasi-geodesic in
  $\mathcal C(\Sigma)$, where $K,L$ depend only on $\epsilon$. For a much more
  general statement see \cite{Rafi-Combinatorical-model}.
\item (Minsky~\cite{minsky-contracting}) $c$
  is $D$-contracting for
  $D=D(\epsilon)$. 
\item (the Masur Criterion \cite{masur_criterion}) $c(t)$ converges as $t\to\infty$ to
  $c(\infty)\in\PMF$ which is filling and uniquely ergodic and equals
  the vertical foliation of $c$.
\end{itemize}
\end{enumerate}
\end{prop}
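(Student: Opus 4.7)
The plan is to handle the two items separately; each is essentially known, and my job is to explain which ingredient does the work and why the constants come out right.

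For item (1) I would run the standard Arzela--Ascoli argument using the fact that the Teichm\"uller metric is proper. Given a sequence $c_n$ of Teichm\"uller geodesics all meeting a fixed compact set $K$, first translate the parameters so that $c_n(0)\in K$. Each $c_n$ is $1$-Lipschitz, so for every fixed $t$ the point $c_n(t)$ lies in the closed ball of radius $|t|+\diam(K)$ around a point of $K$, which is compact by properness. A diagonal extraction on a dense countable parameter set together with equicontinuity produces a subsequence converging uniformly on compact subsets of the common parameter interval, and the limit is a $1$-Lipschitz isometric embedding, i.e., a geodesic; Teichm\"uller's contractibility Theorem~\ref{teich cont} identifies it as a Teichm\"uller geodesic. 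The only point to verify is that properness is available, which follows from Mumford's Theorem~\ref{mumford}: the thick parts exhaust $\calt$ by $\MCG$-cocompact subsets, and together with the properness of the $\MCG$-action on $\calt$ this yields properness of $(\calt,d_\calt)$.

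For item (2), each of the three bullets is a citation of a result in the literature, and my plan is simply to quote them while checking the uniformity of constants. The first bullet, that $\pi c$ is a $(K,L)$-quasi-geodesic in $\mathcal{C}(\Sigma)$ with $K,L$ depending only on $\e$, follows from the Masur--Minsky principle that the short curves on points of $c$ track the coarse projection; the uniformity is built into the cited refinement by Rafi~\cite{Rafi-Combinatorical-model}. The second bullet is Minsky's contraction theorem~\cite{minsky-contracting}; its statement in the thick part gives a contraction constant $D=D(\e)$ directly. The third bullet is the Masur Criterion~\cite{masur_criterion}: a Teichm\"uller ray that remains in $\calt_{\geq \e}$ has uniquely ergodic vertical foliation. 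Unique ergodicity together with the description of the Thurston boundary via length functions then forces the ray to converge in $\overline{\calt}$ to the projective class of its vertical foliation, which is filling (since every Teichm\"uller ray that converges to a non-filling class must leave every thick part).

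The main point requiring care, such as it is, is the uniformity of the constants in item (2): each of $K,L,D$ must depend only on $\e$ and not on the particular geodesic. In all three cases this uniformity is present in the cited theorems, and its ultimate source is the same Mumford cocompactness used for item (1): properties of geodesics in $\calt_{\geq\e}$ are reduced via the $\MCG$-action to properties over a fixed compact slice of moduli space, where uniform constants exist by compactness. No new argument is needed beyond careful bookkeeping.
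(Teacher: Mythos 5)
The paper states this proposition without proof, presenting it as a collection of known facts with the relevant citations built into the statement itself; your proposal does the same, fleshing out the Arzela–Ascoli argument for item (1) in the standard way (properness, $1$-Lipschitz equicontinuity, unique geodesics via Theorem~\ref{teich cont}) and pointing to \cite{Rafi-Combinatorical-model}, \cite{minsky-contracting}, \cite{masur_criterion} for item (2). This is consistent with the paper's treatment and the sketched arguments are sound.
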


In particular, note that as a consequence of Theorem \ref{klarreich}
and the Masur Criterion, if $c,d$ are two Teichm\"uller rays in a
thick part such that their images $\pi c$ and $\pi d$ fellow travel in
$\mathcal C(\Sigma)$
then the vertical foliations of $c$ and $d$ determine the same point
in $\PMF$, and this point is $c(\infty)=d(\infty)$.

\begin{lemma}\label{above}
Suppose $[Z_n,Y_n]$ are Teichm\"uller geodesic segments that converge
to a Teichm\"uller ray $c$, so that $Z_n\to c(0)$. If $c$ and all
$[Z_n,Y_n]$ are in a
thick part $\calt_{\geq\epsilon}$ then $Y_n\to c(\infty)$.
\end{lemma}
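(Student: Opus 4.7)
The plan is to show that every subsequential limit of $Y_n$ in the compact space $\overline{\calt}$ equals $c(\infty)$. First I would note that $Y_n$ must leave every compact subset of $\calt$: since $c$ is an infinite ray and $[X_n,Y_n] \to c$ uniformly on compact parameter intervals with $X_n \to c(0)\in\calt$, necessarily $d_\calt(X_n,Y_n)\to\infty$, so properness of the Teichm\"uller metric ejects $Y_n$ from every compact set. Passing to a subsequence, $Y_n \to \eta\in\PMF$, and the problem becomes to identify $\eta$ using the Klarreich coarse projection $\pi \colon \overline{\calt} \to \calc(\Sigma) \cup \dd\calc(\Sigma)$ from Theorem~\ref{klarreich}.

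The main work is to show $\pi(Y_n) \to \pi(c(\infty))$ in $\dd\calc(\Sigma)$. By Proposition~\ref{basic facts}, both $\pi(c)$ and every $\pi([X_n,Y_n])$ are $(K,L)$-quasi-geodesics in the $\delta$-hyperbolic complex $\calc(\Sigma)$ with constants depending only on $\epsilon$; the Masur criterion gives $\pi(c(t)) \to \pi(c(\infty))\in\dd\calc(\Sigma)$ and that $c(\infty)$ is filling and uniquely ergodic. The uniform convergence of $[X_n,Y_n]$ to $c$ on compact subintervals, combined with a coarse continuity of $\pi$ on $\calt_{\geq\epsilon}$ (lengths of short curves depend continuously on the Teichm\"uller point), yields via a diagonal argument a sequence $T_n \to \infty$ with $T_n \leq \tfrac{1}{2} d_\calt(X_n,Y_n)$ such that $r_n := \pi([X_n,Y_n](T_n))$ stays a bounded distance from $\pi(c(T_n))$ in $\calc(\Sigma)$. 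In particular $r_n \to \pi(c(\infty))$ in $\overline{\calc(\Sigma)}$. Since $r_n$ lies at parameter $T_n$ on the quasi-geodesic $\pi([X_n,Y_n])$ and both $d_{\calc}(\pi(X_n), r_n)$ and $d_{\calc}(r_n, \pi(Y_n))$ tend to infinity, a standard Gromov-product computation gives
\[
   \langle \pi(Y_n), r_n \rangle_{\pi(X_n)} \to \infty,
\]
which together with $r_n \to \pi(c(\infty))$ forces $\pi(Y_n) \to \pi(c(\infty))$ in $\overline{\calc(\Sigma)}$.

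To finish I would invoke Klarreich's Theorem~\ref{klarreich}. Since $\pi(Y_n)$ converges to a point of $\dd\calc(\Sigma)$, the coarsely-rotating alternative in part~(1) is excluded, so $\pi(\eta)\in\dd\calc(\Sigma)$ with $\pi(\eta) = \pi(c(\infty))$. The unique ergodicity of $c(\infty)$ together with Theorem~\ref{klarreich}(2) makes the preimage $\pi^{-1}(\pi(c(\infty)))$ consist of the single point $c(\infty)\in\PMF$, whence $\eta = c(\infty)$. The main technical obstacle is the diagonal argument producing $T_n\to\infty$ with $\pi([X_n,Y_n](T_n))$ close to $\pi(c(T_n))$: this requires a quantitative coarse continuity of $\pi$ that is uniform as $T_n$ diverges, after which the hyperbolic-geometry conclusion is routine.
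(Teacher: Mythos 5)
Your proof is correct, but it takes a genuinely different route from the paper's. The paper argues by contradiction on the Thurston boundary: it supposes $Y_n\to z\neq c(\infty)$ along a subsequence and branches on the two Klarreich alternatives for $\pi(z)$ --- if $\pi(z)\in\partial\calc(\Sigma)$ it runs a tripod argument, and if $\pi(z)\in\calc(\Sigma)$ it runs a coarse-rotation argument; in both cases it chooses $W_n\in[X_n,Y_n]$ at a \emph{fixed} distance $T$ from $X_n$ and uses $W_n\to c(T)$ to derive the contradiction. You instead prove $\pi(Y_n)\to\pi(c(\infty))$ directly, by a diagonal argument producing a slowly diverging cutoff $T_n\to\infty$ together with a Gromov-product computation that propagates convergence from the midpoints $r_n$ to the endpoints $\pi(Y_n)$, and only then pass to the subsequential Thurston limit and invoke Klarreich plus unique ergodicity. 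Both approaches rest on the same pillars (hyperbolicity of $\calc(\Sigma)$, the $(K,L)$-quasi-geodesic property of $\pi\circ c$ on the thick part, and the Klarreich/Masur identification of $c(\infty)$), but the paper avoids the diagonal argument at the cost of a case split, while you remove the case split at the cost of needing the coarse-continuity bound for $\pi$ on $\calt_{\geq\epsilon}$ to be uniform as $T_n\to\infty$ --- which does hold, since $\pi$ is uniformly coarsely Lipschitz on the thick part, but it is a point worth spelling out since it is the linchpin of the diagonalization. One minor caution: $\calc(\Sigma)$ is not locally compact, so ``$r_n\to\pi(c(\infty))$ in $\overline{\calc(\Sigma)}$'' should be read as divergence of Gromov products $\langle r_n,r_m\rangle\to\infty$ with the common boundary limit $\pi(c(\infty))$, which is exactly what your estimate produces.
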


\begin{proof}
   If this fails, then, after a subsequence, $Y_n\to y\neq c(\infty)$. 
   Theorem~\ref{klarreich} and the Masur criterion imply that $\pi(y) \neq \pi(c(\infty))$ as well. 
   There are two cases.

   Suppose first $\pi(y)\in\partial\mathcal C(\Sigma)$. 
   By Proposition~\ref{basic facts} $\pi c$ is a quasi-geodesic.
   Choose a quasi-geodesic from $\pi(c(0))$ to $\pi(y)$.
   These two quasi-geodesics start at the same point and go to distinct boundary points, so they coarsely form a tripod, with the center point $w\in\mathcal C(\Sigma)$, say.
   By Proposition~\ref{basic facts}, we can choose $T>>0$ such that any Teich\-m\"uller geodesic segment of length $T$ in $\calt_{\geq\epsilon}$ projects in $\mathcal C(\Sigma)$ with endpoints at distance $>>d(\pi c(0),w)$. 
   For large $n$ choose $W_n \in [Z_n,Y_n]$ at distance $T$ from $Z_n$. 
   Then $\pi(Y_n) \to \pi(y) \in \dd \calc(\Sigma)$ implies that $\pi(W_n)$ is in a bounded neighborhood of a quasi-geodesic from $w$ to $\pi(y)$. 
   But $W_n\to c(T)$, so for large $n$, $\pi(W_n)$ is in a bounded neighborhood of a quasi-geodesic from $w$ to $\pi(c(\infty))$. 
   This is impossible since $\pi(W_n)$ is also far from $w$.
   
   The other case is that $\pi(y)\in\mathcal C(\Sigma)$. 
   Then Theorem~\ref{klarreich} implies that $\pi(Y_n)$ coarsely rotate around $\pi(y)$. 
   Again choose $W_n \in [Z_n,Y_n]$ at a fixed distance $T$ from $Z_n$ so that $\pi(W_n)$ is much further from $\pi(Z_n)$ than $\pi(y)$. 
   Thus $\pi(W_n)$ also coarsely rotates around $\pi(y)$.
   But on the other hand $W_n \to c(T)$, so $\pi(W_n)$ will stay in an $R$-neighborhood of $\pi(c(T))$.
   Here $R$ is independent of $T$, so for large $T$, the $\pi(W_n)$ cannot coarsely rotate.
   Contradiction.  
\end{proof}

We can now prove that the collection of Teichm\"uller rays that stay in a fixed thick part $\calt_{\geq \e}$ satisfies our flow axioms (F1) to (F3).

\begin{proof}[Proof of (F1)]

Let $c_n$ be Teichm\"uller rays in $\calt_{\geq \e}$.
Let $Y_n := c_n(0)$ and by assumption there are $Z_n \in c_n$ that stay in a bounded subset. 
Let $X_n \in \calt$ such that $d(X_n,Y_n)$ is bounded.
We need to show $Y_n \to \xi \in \PMF$ if and only if $X_n \to \xi \in \PMF$.
In both cases $d(Z_n,Y_n), d(Z_n,X_n) \to \infty$.
We proceed by contradiction.
As $\PMF$ is compact, we can, after a subsequence, assume $Y_n \to \xi$ and $X_n \to \xi'$ with $\xi \neq \xi'$.
Passing to a further subsequence, we may assume that $[Z_n,Y_n]$ converges to a geodesic ray $c$ in $\calt_{\geq\epsilon}$. 
Likewise we may assume that $[Z_n,X_n]$ converges to a geodesic ray $c'$ with $c'(0)=c(0)$. 
By Proposition~\ref{basic facts} the $[Z_n,Y_n]$ are $D$-contracting.
Lemma~\ref{morse}(i) implies that $[Z_n,X_n]$ is contained in a fixed neighborhood of $[Z_n,Y_n]$.
Thus $c'$ is in a neighborhood of $c$ and the projections of $c$ and $c'$ fellow travel and therefore converge to the same point in $\partial \mathcal C(\Sigma)$.
By unique ergodicity (Theorem~\ref{klarreich} and Proposition~\ref{basic facts}) it follows that $c(\infty)=c'(\infty)$.
Lemma~\ref{above} now implies that both $Y_n$
and $X_n$ converge to this point. 
\end{proof}

\begin{proof}[Proof of (F2)]
Let $X \in \calt$, $\xi_+ \in \PMF$. 
Let $c_0$ be the Teichm\"uller ray starting in $X$ with vertical foliation $\xi_+$.
By Theorem~\ref{teich cont} the map that associates to $\xi \in \PMF$ the evaluation at $t$ of the geodesic ray starting in $X$ with vertical foliation $\xi$ is continuous. 
Thus for a given $t$ there is a neighborhood $U_+$ of $\xi_+$ such that $d(c'(t),c_0(t))<1$
whenever $c'(0) = X$ and the vertical foliation of $c'$ is in  $U_+$. 
Let $c$ be a Teichm\"uller ray in $\calt_{\geq \e}$ with $d(c(0),X) < \rho$ and $c(\infty) \in U_+$.
By the Masur criterion (Proposition~\ref{basic facts}), $c(n) \to c(\infty)$.
We will show that $d(c(t),c_0(t))$ is bounded, where the bound depends only on $\e$ and $\rho$.
By Proposition~\ref{basic facts} the ray $c$ is contracting.
By Lemma~\ref{morse}(i), the geodesic segments $[X,c(n)]$ stay in a bounded neighborhood of $c$.
Let $c'$ be the limiting ray of (a subsequence of) the $[X,c(n)]$.
The $[X,c(n)]$ and $c'$ stay in the thick part (as they are in a bounded neighborhood of $\calt_{\geq \e}$).  
Thus Lemma~\ref{above} applies and $c(n) \to c'(\infty)$.
Therefore $c'(\infty) = c(\infty)$.
By the Masur criterion (Proposition~\ref{basic facts}) the vertical foliation of $c'$ is $c'(\infty) = c(\infty)$ and therefore in $U_+$.
Thus $d(c'(t),c_0(t)) < 1$.
As $c'$ stays in a bounded neighborhood of $c$, $d(c(t), c'(t))$ is bounded as well.
Thus $d(c(t),c_0(t))$ is bounded.
\end{proof}

\begin{remark}
The question when Teichm\"uller rays are parallel, or asymptotic, is
well understood. See
\cite{Eskin-Mirzakhani-Rafi-Counting-in-Orbits,ivanov,lenzhen-masur,masur-thesis,rafi-hyperbolicity}.
\end{remark}

\begin{proof}[Proof of (F3)]
Let $X\in T_{K,\rho}(\xi_-,\xi_+)$. Choose a sequence $c_n$ as in the
definition. After a subsequence and reparametrization, $c_n\to
c$. Thus $c$ is a Teichm\"uller geodesic in $\calt_{\geq\epsilon}$
that passes within $\rho$ of $X$. It remains to prove that if $c,c'$
are two geodesics in the thick part and $c(\pm\infty)=c'(\pm\infty)$,
then $c,c'$ are in uniform neighborhoods of each other. By Lemma
\ref{morse}(ii), uniformity is automatic if we can show that $c,c'$
are parallel, i.e. contained in each other's metric neighborhoods. 
Consider the geodesics $[c(0),c'(t)]$, $t \geq 0$. 
Let $\tilde c$ be the limiting ray of a subsequence.
By Lemma~\ref{morse}(i) the $[c(0),c'(t)]$ and $\tilde c$ stay in a bounded neighborhood of $c'$ (where the bound depends on $d(c(0),c'(0))$) and in particular in some thick part.
Thus Lemma~\ref{above} yields $\tilde c(+\infty) = c'(+\infty) = c(+\infty)$.
By the Masur criterion (Proposition~\ref{basic facts}) the vertical foliations of $c$ and $\tilde c$ are $\tilde c(+\infty)$ and $c(\infty)$ and therefore coincide. 
Of course $\tilde c(0) = c(0)$.
Thus the restriction of $c$ to $[0,+\infty)$ coincides with $\tilde c$ and stays in a bounded neighborhood of $c'$.
Similarly, the restriction of $c$ to $(-\infty,0]$ stays in a bounded neighborhood of $c'$.
\end{proof}

\begin{remark} The Gardiner-Masur theorem \cite{gardiner-masur} states
  that if $\xi_\pm$ are two measured foliations such that
  $i(\xi_+,\mu)+i(\xi_-,\mu)>0$ for every $\mu\in\MF$, then there is a
  unique Teichm\"uller geodesic with vertical and horizontal
  foliations $\xi_\pm$. Conversely, any pair $\xi_\pm$ of a horizontal
  and vertical foliation satisfies this condition.
\end{remark}

\begin{remark}
We stated the axioms with an eye towards applying them to $Out(F_n)$. At
present we do not know how to make them all work, but we point out the
following. For $R>>0$ consider biinfinite folding paths that are
contained in the $\frac 1R$-thick part of Culler-Vogtmann's Outer
space and whose projections to the complex of free factors are
$(R,R)$-quasi-geodesics. The main ingredients needed for the flow
axioms are all known in the $Out(F_n)$ context: these lines are
contracting \cite{bf}, the Masur Criterion holds \cite{npr}, and the
analog of Klarreich's theorem is in \cite{br,h}. The projection axioms
are more subtle, but see \cite{bf2}.
\end{remark}

\section{Teichm\"uller geodesics intersecting the thin part}
    \label{sec:teich-geod-in-thin}

    \subsection{Thick or thin}
Suppose $X\in\calt_{\geq\epsilon}$ and $\xi\in\PMF$. Section
\ref{s:2} was concerned with Teichm\"uller rays completely contained
in the thick part. Here we want to establish that if there are short
curves along the ray, then there is a (proper) subsurface where the
projection distance between the vertical foliation and a suitable
point thought of as the projection of the initial point of the ray, is
large. The next theorem is a variant of a theorem of Rafi
\cite{rafi-short}.

\begin{thm} \label{large-proj}
For each collection $\bfY^i$ (see Section \ref{color}) fix a basepoint
$X^i\in\bfY^i$, and also fix $X_0\in\calt$. Then for every $\Theta>0$
there is a compact set $K\subset \calt$ such that for any $(g,\xi)\in
G\times\PMF$ one of the following holds:
\begin{enumerate}[(a)]
\item The Teichm\"uller ray $c$ starting in $gX_0$ with vertical foliation $\xi$ is contained in
  $G\cdot K$, or
\item there is some $\bfY^i$ and $Z\in\bfY^i$ so that
  $d^\pi_Z(gX^i,\xi)>\Theta$. 
\end{enumerate}
\end{thm}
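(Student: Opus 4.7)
The plan is to prove the contrapositive. By equivariance of both the Teichm\"uller ray and the projection distances (recall $G$ preserves each $\bfY^i$), we may reduce to the case $g = e$. So, assuming $d^\pi_Y(X^i, \xi) \leq \Theta$ for every $i$ and every $Y \in \bfY^i$, we shall produce $\epsilon = \epsilon(\Theta) > 0$ such that $c_{X_0, \xi} \subset \calt_{\geq \epsilon}$. Mumford's Theorem~\ref{mumford} then gives a compact $K_\epsilon$ with $G \cdot K_\epsilon = \calt_{\geq \epsilon}$ (since $G$ has finite index in $\MCG$), and we take $K := K_\epsilon$.

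The first step is a basepoint change from the subsurface basepoints $X^i$ to the fixed Teichm\"uller basepoint $X_0$. The triangle inequality (Proposition~\ref{triangle}) gives
\[
   d^\pi_Y(X_0, \xi) \leq \Theta + d^\pi_Y(X_0, X^i) \quad \text{for } Y \in \bfY^i.
\]
By the finiteness property (Proposition~\ref{finiteness}), for each fixed $i$ only finitely many $Y$ have $d^\pi_Y(X_0, X^i) > 3$, and on that finite exceptional set the quantity is still finite. Taking the maximum over the $k$ collections and the finite exceptions yields a constant $\Theta' = \Theta'(\Theta, X_0, \{X^i\})$ such that $d^\pi_Y(X_0, \xi) \leq \Theta'$ for every proper subsurface $Y$ (for which both projections are defined).

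The main input is Rafi's theorem on short curves along a Teichm\"uller geodesic~\cite{rafi-short}: there is a function $L \colon (0,\infty) \to (0,\infty)$ with $L(\epsilon) \to \infty$ as $\epsilon \to 0$, such that whenever a curve $\alpha$ has hyperbolic length less than $\epsilon$ at some point of a Teichm\"uller segment $[X, Y]$, some \emph{active} proper subsurface $W$ (either the annulus around $\alpha$, or a subsurface with $\alpha \subset \partial W$) satisfies $d^\pi_W(X, Y) \geq L(\epsilon)$. Apply this to a segment $[X_0, c_{X_0, \xi}(t)]$ on which an $\epsilon$-short curve first appears, and pass $t \to \infty$: the active subsurface stabilizes once $t$ is past the short-curve moment, and by semi-continuity of projection to $\PMF$ (Proposition~\ref{continuity}) one obtains $d^\pi_W(X_0, \xi) \geq L(\epsilon) - N$ for some uniform $N$. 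Choosing $\epsilon$ small enough that $L(\epsilon) - N > \Theta'$ contradicts the bound from the previous paragraph, so no $\epsilon$-short curve occurs along $c_{X_0, \xi}$, which is what we wanted.

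The main delicacy is the limit $t \to \infty$: the ray $c_{X_0,\xi}$ need not converge to $\xi$ in $\overline\calt$ when $\xi$ is non-filling or not uniquely ergodic, and $\pi_W(\xi)$ need not even be defined for every $W$. One must argue that the active subsurface $W$ produced by Rafi for large $t$ is one for which $\pi_W(\xi)$ makes sense and for which the semi-continuity of Proposition~\ref{continuity} can be invoked. This can be done either by a direct analysis using Klarreich's Theorem~\ref{klarreich} to control where $\pi(c(t))$ can escape, or by taking a suitable diagonal sequence of longer and longer segments and extracting an active $W$ whose projection distance to $c(t)$ grows without bound, forcing $W$ into the set where $\pi_W(\xi)$ is defined and large.
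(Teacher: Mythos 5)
Your plan is sound in outline but the central step — passing from subsurface projections of the ray endpoints $c(t)$ as $t\to\infty$ to a subsurface projection of $\xi$ — is where the real work lies, and you haven't closed it. The coarse semi-continuity of Proposition~\ref{continuity} moves in the wrong direction for what you need: it perturbs the second argument away from a fixed $\nu$ where $\pi_Y(\nu)$ is defined, whereas you want to conclude $d^\pi_W(X_0,\xi)$ is large from $d^\pi_W(X_0,c(t'))$ large for a sequence $t'\to\infty$. Worse, the natural route to make that work — $c(t')\to\xi$ in $\overline\calt$ and then a continuity statement — is unavailable: the ray $c_{X_0,\xi}$ converges to $\xi$ in $\PMF$ only when $\xi$ is filling and uniquely ergodic (the Masur criterion), and exactly the non--uniquely-ergodic filling case is the one that requires care. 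You flag this delicacy yourself, but the two suggested remedies remain unfleshed; the claim that ``the active subsurface stabilizes'' is a genuine statement about Rafi's active intervals and would itself need proof.

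Relatedly, the version of Rafi's theorem you quote — an $\epsilon$-short curve along $[X,Y]$ forces $d^\pi_W(X,Y)\geq L(\epsilon)$ for some active $W$ — is a paraphrase of what \cite{rafi-short} actually gives, which is a bound in terms of $d^\pi_W(q^H,q^V)$, the projection distance between the \emph{horizontal and vertical foliations} of the underlying quadratic differential. Translating to $d^\pi_W(X,Y)$ requires that $X$ and $Y$ be thick and that the active interval of $W$ lie between them; when $Y=c(t')$ is allowed to leave the thick part, this translation can fail.

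The paper's proof sidesteps both issues by staying inside the quadratic-differential picture and never taking a limit. It fixes a finite filling collection of curves $\beta_j$, applies Rafi's Theorems~5.5 and~5.6 directly to get that a short curve $s$ at time $t$ forces $i_Z(\beta,q^V)$ large on a component $Z$ of $\Sigma\smallsetminus s$ (the horizontal intersection being controlled because $s$ must be mostly vertical at the start to get short), and then feeds this into Lemma~\ref{subsurface} to produce $Y\subset Z$ with $d^\pi_Y(\beta,\xi)$ large — where $\xi=q^V$ is the vertical foliation \emph{itself}, so no convergence of $c(t)$ is needed. Your basepoint-change step (from $X^i$ to $X_0$, or in the paper from $X^i$ to $\beta$) and the non-filling reduction are fine; the gap is in replacing this direct use of $q^V$ with a limit of endpoints. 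If you rewrite your Rafi input as ``$\epsilon$-short curve at time $t$ forces some $W$ with $d^\pi_W(q^H,\xi)\geq L(\epsilon)$'' and then argue that $\pi_W(q^H)$ is coarsely $\pi_W(X_0)$ (using that $X_0$ is fixed and thick), the limit disappears and your argument converges to the paper's.
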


In order to prove this theorem we will need the following statement.

\begin{prop}\label{short implies big intersection}
  Fix a Riemann surface $X$ and a finite filling collection of curves
  $\beta_i$ in $X$. For every $k$ there exists $\epsilon>0$ so that
  the following holds.  Suppose $g$ is a Teichm\"uller geodesic with
  $g_0=X$ and with vertical measured foliation $\xi^+$ at $X$. Assume
  that $\xi^+$ is filling. Suppose there is a curve $\alpha$ of
  hyperbolic length $\epsilon$ at $g_T$ for some $T>0$, and let
  $\beta=\beta_i$ be one of the curves that intersects $\alpha$.  Then
  there is a component $Y$ of $X$ cut open along $\alpha$, so that after
  putting $Y$, $\beta$ and the leaves of $\xi^+$ in minimal position,
  there are arcs
  $\beta',\ell$ of intersections of $\beta$ and $\xi^+$ with $Y$ so
  that the intersection number $i_Y(\beta',\ell)>k$.
\end{prop}

\begin{proof}[Proof of Theorem \ref{large-proj} assuming Proposition
    \ref{short implies big intersection}]
If $\xi$ is not filling we can take $Y$ to be a component of
$Supp(\xi)$ and then (b) holds since $d^\pi_Y(gX^i,\xi)=\infty$. So
assume $\xi$ is filling. 
By equivariance, we can also assume
$g=1$. Fix a finite collection of curves $\beta_1,\cdots,\beta_m$ that
fill $\Sigma$. Note that there is a bound on the intersection numbers
between any $\beta_j$ and any $\partial X^i$. This means that when
$d^\pi_Y(\beta_j,X^i)$ is defined, it is uniformly bounded. 

Now suppose that there is a curve $s$ that has length $<\epsilon$
somewhere along $c$. 
Choose $\beta=\beta_j$ that intersects
$s$. By Proposition \ref{short implies big intersection} there is a
subsurface $Y$ of $\Sigma$ and arcs $\beta',\ell$ of intersection
between $\beta,\xi$ with $Y$ whose intersection number is as large as
we want, if $\epsilon$ is small enough.  

By
Lemma~\ref{subsurface} there is a further subsurface $Z$ of $Y$ such
that $d^\pi_Z(\beta,\xi)$ is large. Thus $Z$ satisfies the conclusion,
since replacing $\beta$ by $X^i$ (where $Z\in\bfY^i$) changes
projection distance by a bounded amount.

If no curve gets $\epsilon$-short along $c$, then (a) holds with a
suitable $K$ (see Theorem \ref{mumford}).
\end{proof}

\subsection{Outline for Proposition \ref{short implies big
    intersection}}
  We will follow Rafi's strategy in \cite{rafi-short} where he proved
similar statements. Here is the basic idea. Since $\alpha$ is short in
$g_T$ it will have, by the Collar Lemma \ref{lem:collar}, an
annulus neighborhood where the distance between the boundary
components is much larger than the length of $\alpha$. Thus the arcs
of intersection between $\beta$ and $X\smallsetminus\alpha$ must be
long, as they have to go across the annulus. On the other hand, if the
intersection numbers are bounded, we will argue that the horizontal
lengths of these arcs with respect to the quadratic differential $q_T$
are too small. Since the vertical lengths are small as well, given
that they are bounded at time 0, this will give a contradiction.

Technically, we will work with the metric $l_{q_T}$ induced by $q_T$
and need to review the work of Minsky \cite{minsky-harmonic} on the
Collar Lemma in this setting.

\subsection{Primitive annuli}

Let $X$ be a Riemann surface and $q$ a quadratic differential of area
1 on $X$. Thus $X$ is equipped with the associated flat metric $l_q$ (with
cone type singularities). The following concept was introduced by
Minsky \cite[Section 4.3]{minsky-harmonic}. Consider a closed annulus
$B\subset X$ with piecewise smooth boundary curves $\partial_0$ and
$\partial_1$.  We say that $B$ is {\it regular} if
\begin{itemize}
\item $\partial_0$ and $\partial_1$ are equidistant from each other,
  \item either $\partial_0$ is a geodesic, or else the curvature
    vector of $\partial_0$ points away from $B$ (and the internal
    angles are $\geq\pi$), and likewise either $\partial_1$ is a
    geodesic or the curvature vector of $\partial_1$
    points into $B$ (and the internal angles are $\leq\pi$).
\end{itemize}

Denote by $\kappa(\partial_i)$ the total signed curvature 
of $\partial_i$. Thus
$\kappa(\partial_0)\leq 0$ and $\kappa(\partial_1)\geq 0$. These
numbers can also be thought of as the total turning number 
of $\partial_i$ with respect to the horizontal (or
vertical) foliation.  We call the annulus $B$ {\it primitive} if it is
regular and contains no singular points 
in its interior. In that case
$\kappa(\partial_1)=-\kappa(\partial_0)$ is a nonnegative integral
multiple of $\pi$, and we denote this quantity by $\kappa(B)$. When
$\kappa(B)=0$ then $B$ is a flat cylinder.

Other than the flat cylinder (when $\kappa=0$) an example of a
primitive annulus (with $\kappa=2\pi$) is 
the region
in the plane between two concentric circles (here $\partial_0$ is the
smaller circle). The
following is a Theorem of Minsky \cite[Theorem 4.6]{minsky-harmonic}.

\begin{prop}
  For any topological surface $\Sigma$ there are constants   
  $m_0,c_3,c_4>0$ so that for any Riemann surface $X$ homeomorphic to
  $\Sigma$ and any homotopically nontrivial annulus $A\subset X$ with
  $Mod(A)\geq m_0$ there exists a primitive annulus $B\subset A$
  homotopic to $A$ and with $Mod(B)\geq c_3 Mod(A)-c_4$.
\end{prop}

\begin{remark}
  In the statement of \cite[Theorem 4.6]{minsky-harmonic} there is a
  typo; the inequality in $Mod(A)\geq m_0$ is reversed.

  Further, \cite[Theorem 4.6]{minsky-harmonic} finds only a regular
  annulus $B$. One can then pass to a primitive subannulus by taking a
  parametrization by curves equidistant from $\partial_0$
  and passing to a subannulus bounded by two such curves that avoids the
  singular points. Since the number of singular points is bounded by
  the topology of $X$, we can arrange that the modulus still satisfies
  the above estimate. 
  See \cite[Theorem 4.5]{minsky-harmonic} and the
  paragraph before it.
\end{remark}

We will need the following elementary facts about a primitive
annulus $B$. Denote by $d$ the distance between the two boundary
components. 

\begin{prop} \label{primitive}
  Let $B\subset X$ be an essential primitive annulus.
  \begin{enumerate}[(i)]
  \item If $\kappa(B)=0$ then $Mod(B)=\frac d{l_q(\partial_0)}$,  
    \item If $\kappa(B)=n\pi>0$ then $$\kappa(B)Mod(B)\leq
      c_5\log\bigg(\frac d{l_q(\partial_0)}\bigg)+c_6$$
      for constants $c_5,c_6>0$ that depend only on the topology of
      the surface
      $\Sigma$ underlying $X$.
    \item $Area(B)=d l_q(\partial_0)+\frac {\kappa(B)d^2}2$.
  \end{enumerate}
\end{prop}

\begin{proof}
  (i) is the definition of the modulus of a flat cyclinder. (ii) is
  \cite[Lemma 3.6]{rafi-short}. For (iii), we compute the area by
  integrating lengths of equidistant curves. The curve at distance $r$
  from $\partial_0$
  has length $l_q(\partial_0)+\kappa(B)r$
  so
  $$Area(B)=\int_0^d(l_q(\partial_0)+\kappa(B)r)dr=d~
  l_q(\partial_0)+\frac {\kappa(B)d^2}2$$
  \end{proof}

\begin{proof}[Proof of Proposition \ref{short implies big
      intersection}]
  For this proof the reader is invited to review Section \ref{s:mcg},
  and particularly subsections \ref{subsec:quadratic-diff} and \ref{collar}
  We will be working with singular Euclidean metrics $l_{q_t}$
  associated to quadratic differentials $q_t$ along $g_t$.  Let
  $h_t(\alpha)$ be the horizontal length of $\alpha$ at time $t$. 
  By the Collar Lemma, $X$ contains an annulus with core curve $\alpha$ 
  and with arbitrarily large modulus, if $\epsilon$ is sufficiently
  small. By the above Proposition, $(g_T,l_{q_T})$ contains a primitive
  annulus $B$ with arbitrarily large modulus and core curve homotopic
  to $\alpha$. Instead of cutting along $\alpha$ we will cut along
  $\partial_0$ (which is homotopic to $\alpha$). 
  If $\partial_0$ is separating, we let $Y$ be the component of
  $X\smallsetminus \partial_0$ that contains $B$. If $\partial_0$ is
  nonseparating, we let $Y=X\smallsetminus\partial_0$ and note that the
  core curve of $B$ is homotopic to one of the two copies of
  $\partial_0$ in $Y$.

  Let $d$ be the distance between $\partial_0$ and $\partial_1$.  Then
  $\frac d{l_{q_T}(\partial_0)}$ is arbitrarily large, by Proposition
  \ref{primitive} (i) and (ii). Since the area of $B$ is $\leq 1$
  (because the area of $X$ is 1) it follows from Proposition
  \ref{primitive} (iii) that $l_{q_T}(\partial_0)$ is arbitrarily
  small (can be made arbitrarily close to 0 if $\epsilon$ is
  sufficiently small). Thus $h_T(\alpha)\leq l_{q_T}(\alpha)\leq
  l_{q_T}(\partial_0)$ is also arbitrarily small. Since the horizontal
  length is growing, we have that $h_t(\alpha)$ is arbitrarily small
  for $t\in [0,T]$. Put $\alpha,\beta,\partial Y$ and leaves of
  $\xi^+$ in minimal position
  with respect to each other. 
  The vertical foliation restricted to $Y$ 
  induces a
  decomposition of $Y$ into strips, each foliated by pairwise isotopic
  arcs of intersection between $Y$ and the leaves of $\xi^+$. See
  Figure \ref{strips}.

\begin{figure}[h]
  \begin{tikzpicture}[scale=0.6]
\sf
\clip (0,0) ellipse (8.03cm and 5.02cm);
\fill [gray,opacity=.02](0,0) ellipse (8cm and 5cm);
\fill [gray,opacity=.3] (0,0) -- (2.5,2) -- (2.5,5) -- (-2.5,5) -- (-2.5,2) -- cycle; 
\draw[thick,red] (0,0) -- (2.5,2) -- (5,0) (0,0) -- (-2.5,2) -- (-5,0) (2.5,2) -- (2.5,4.76) (-2.5,2) -- (-2.5,4.76) (0,0) -- (2.5,-2) -- (5,0) (0,0) -- (-2.5,-2) -- (-5,0) (2.5,-2) -- (2.5,-4.76) (-2.5,-2) -- (-2.5,-4.76);  
\draw[blue] (-8,0) -- (8,0) (0,5) -- (0,-5);
\draw[blue] plot[smooth] 
        coordinates{(.3,0) (2.5,1) (4.7,0)};
\draw[blue] plot[smooth] 
        coordinates{(-.3,0) (-2.5,1) (-4.7,0)};
\draw[blue] plot[smooth] 
        coordinates{(.3,0) (2.5,-1) (4.7,0)};
\draw[blue] plot[smooth] 
        coordinates{(-.3,0) (-2.5,-1) (-4.7,0)};  
\draw[blue] plot[smooth] 
        coordinates{(0,.3) (1.5,2.2) (1.6,5)};  
\draw[blue] plot[smooth] 
        coordinates{(0,-.3) (1.5,-2.2) (1.6,-5)};  
\draw[blue] plot[smooth] 
        coordinates{(0,.3) (-1.5,2.2) (-1.6,5)};  
\draw[blue] plot[smooth] 
        coordinates{(0,-.3) (-1.5,-2.2) (-1.6,-5)}; 
\draw[blue] plot[smooth] 
        coordinates{(5,.5) (3.8,2.2) (3.5,5)};           
\draw[blue] plot[smooth] 
        coordinates{(-5,.5) (-3.8,2.2) (-3.5,5)};           
\draw[blue] plot[smooth] 
        coordinates{(5,-.5) (3.8,-2.2) (3.5,-5)};           
\draw[blue] plot[smooth] 
        coordinates{(-5,-.5) (-3.8,-2.2) (-3.5,-5)};  
\draw[blue] plot[smooth] 
        coordinates{(5,.2) (6,2) (8,5)};                  
\draw[blue] plot[smooth] 
        coordinates{(-5,.2) (-6,2) (-8,5)};                  
\draw[blue] plot[smooth] 
        coordinates{(5,-.2) (6,-2) (8,-5)};                  
\draw[blue] plot[smooth] 
        coordinates{(-5,-.2) (-6,-2) (-8,-5)};                                                                              
\fill[white] (0,0) circle (1cm) (5,0) circle (1cm) (-5,0) circle (1cm);
\draw[black] (0,0) circle (1cm) (5,0) circle (1cm) (-5,0) circle (1cm);
\draw[black] (0,0) ellipse (8cm and 5cm);
\end{tikzpicture}%
  \caption{Singular leaves, drawn in red, decompose the subsurface $Y$
    (disk with three holes)
  into (six) strips, each foliated by arcs of $\xi^+$, some of which are
  drawn in blue; one of the strips is shaded in gray.}\label{strips}
\end{figure}
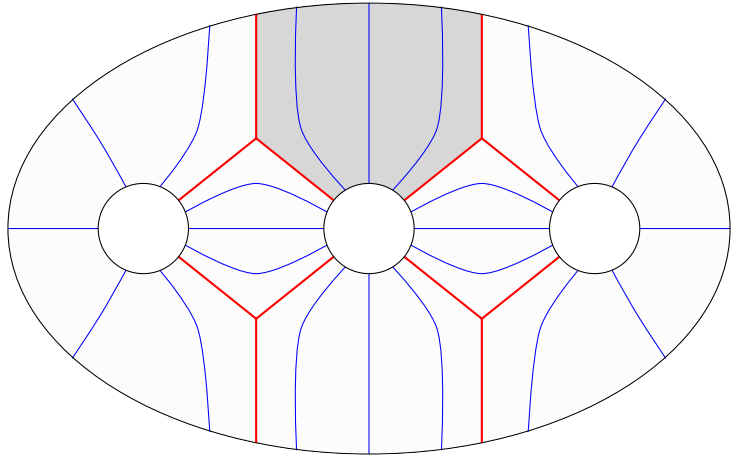

Each such strip has a width, i.e. the transverse measure
  assigned to a cross section by the measure on $\xi^+$. The sum of
  the widths over all the strips is $h_t(\alpha)$ (when $\alpha$ is
  nonseparating) or $\frac 12 h_t(\alpha)$ (when $\alpha$ is
  separating). Assuming the conclusion is false, each arc of $\beta$
  intersects each strip at most $k$ times, so its horizontal length is
  at most $k~ h_t(\alpha)$. The vertical length of $\beta$ at $g_0=X$
  is bounded above by some fixed number, say $L$.
Likewise, the vertical length of $\alpha$ at $g_0=X$
  is bounded {\it below} by some fixed constant $K$ depending only on
  $X$ (see Lemma \ref{bdd below}, this is because
  the horizontal length is very small and the $q_0$-length of $\alpha$
  is bounded below). Since the ratio between the vertical lengths of
  $\beta$ and $\alpha$ does not change over time, it follows that the
  ratio $l_{q_T}(\beta')/l_{q_T}(\alpha)$ is bounded by a function of
  $k$ for any arc $\beta'$. The ratio
  $l_{q_T}(\beta')/l_{q_T}(\partial_0)$ is even smaller, and is thus also
  bounded. It follows that no such arc is long enough to traverse the
  annulus $B$, as soon as $d/l_{q_T}(\partial_0)$ is large enough. Thus $\beta$ 
cannot intersect $\alpha$. Contradiction.
\end{proof}

\section{The Farrell-Jones Conjecture for mapping class groups}
     \label{sec:FJC-for-MCGs}

In this section we prove that mapping class groups satisfy the Farrell-Jones Conjecture.
We fix a closed oriented surface $\Sigma$ of genus $g$ with $p$ punctures and write $\MCG$ for its mapping class group.

We start with a proof of Theorem B, which we restate for convenience.
\vskip 0.3cm

\noindent
{\bf Theorem B.}
{\it 
    Let $\Sigma$ be a closed oriented surface of genus $g$ with $p$ punctures.
    Assume that $6g + 2p -6 > 0$.
    Then the action of its mapping class group $\MCG$ on the
    space $\PMF$ of projective measured foliations on $\Sigma$
    is finitely $\calf$-amenable where $\calf$ is the family of subgroups that are 
    either virtually cyclic or virtually fix a subsurface of $\Sigma$.}

\vskip 0.3cm

Earlier we described $\calf$ as the family of subgroups that virtually fix a curve or are virtually cyclic.
This is the same family, because a subgroup that fixes a subsurface fixes all boundary curves of the subsurface virtually, and if a subgroup fixes a curve, then it also fixes the corresponding annulus.

\begin{proof}[Proof of Theorem~\ref{thm:finitely-amenable-action-MCG}]
By Proposition \ref{prop:finite-index+fin-F-amenabilty} it suffices
to show that the action of the color preserving subgroup $G<\MCG$ (see
Section \ref{color}) is
finitely $\mathcal F$-amenable. 
Let $T=\calt$ be the Teichm\"uller space of $\Sigma$ and let
$\Delta=\PMF$. Thus $\overline T=T\cup\Delta$ is homeomorphic to a
ball and $T$ is its interior. To finish defining the flow data, for a
compact set $K\subset T$ let $\calg_K$ be the set of Teichm\"uller
rays which are contained in $G\cdot K$. By the Masur Criterion (see
Proposition \ref{basic facts})
every such ray converges to a point of
$\Delta$.

We now define the projection data.  For a subsurface $Y\subset\Sigma$
let $\Delta(Y)\subset\Delta$ be the open set of foliations whose
support is not disjoint from $Y$. The collection of subsurfaces which
are not pairs of pants is partitioned into subcollections
$\bfY^1,\cdots,\bfY^k$ each of which is a $G$-orbit and consists of
overlapping subsurfaces (i.e. $Y,Y'\in \bfY^i$, $Y\neq Y'$ implies
$\partial Y\cap\partial Y'\neq\emptyset$). We set $\mathcal
Y=\{\bfY^1,\cdots,\bfY^k\}$. Projections and projection distance
$d^\pi_Y(X,Z)$ was defined in Sections \ref{ss:subsurface-projections}
and \ref{ss:proj-dist}
and $d^\pi_Y(X,\xi)$ for $\xi\in \Delta(Y)$ in Sections
\ref{ss:projecting-laminations} and \ref{ss:proj-dist}. Axiom (P1) is
obvious, (P2) was recorded as Proposition \ref{triangle}, (P3) is
Proposition \ref{behrstock}, (P4) is Proposition \ref{finiteness} and
(P5) is Proposition \ref{continuity}. This finishes the projection
axioms.

Axiom \ref{axiom:large-projection} holds by Theorem \ref{large-proj} and 
the flow axioms were proved in Section \ref{s:2}.
\end{proof}

  \begin{corollary}
    \label{cor:FJC-MCG-ref-calf}
    Assume $6g + 2p -6 > 0$.
    Then the action of the  Mapping class group $\MCG$
    on the Thurston compactification $\overline{\calt} = \calt \cup \PMF$ 
    of Teichm\"uller space
    is finitely $\calf$-amenable where $\calf$ is the family of subgroups that are either
    virtually cyclic or stabilize a subsurface.  
  \end{corollary}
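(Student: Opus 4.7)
The plan is to combine the $\calf$-amenable cover of $\MCG \times \PMF$ provided by Theorem~\ref{thm:finitely-amenable-action-MCG} with an open $\Fin$-cover of $\MCG \times \calt$ coming from the properness of the $\MCG$-action on $\calt$. Since $\Fin \subseteq \calf$, and since $\overline{\calt} = \calt \sqcup \PMF$ with $\calt$ open in $\overline{\calt}$, the union of such covers will yield the sought $\calf$-cover of $\MCG \times \overline{\calt}$.

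Given a finite $S \subseteq \MCG$, I would first apply Theorem~\ref{thm:finitely-amenable-action-MCG} to obtain an $S$-long $\MCG$-invariant open $\calf$-cover $\calu_\PMF$ of $\MCG \times \PMF$ of bounded order $N_1$. Separately, I would use that $\MCG$ acts properly and by isometries (with respect to the Teichm\"uller metric) on the finite-dimensional manifold $\calt$ to produce an $S$-long $\MCG$-invariant open $\Fin$-cover $\calu_\calt$ of $\MCG \times \calt$ of bounded order $N_2$; this is the ``straightforward'' finite $\Fin$-amenability of the $\MCG$-action on $\calt$ noted in the introduction. Since $\calt$ is open in $\overline{\calt}$, the sets of $\calu_\calt$ remain open when viewed in $\MCG \times \overline{\calt}$, and they are $\calf$-subsets because $\Fin \subseteq \calf$.

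Next I would invoke Lemma~\ref{lem:extension-of-covers} with $X = \MCG \times \overline{\calt}$ and $Y = \MCG \times \PMF$ (both metrizable, and one readily constructs a $\MCG$-invariant metric on $X$) to extend $\calu_\PMF$ to a $\MCG$-invariant collection $\calu_\PMF^+$ of open $\calf$-subsets of $\MCG \times \overline{\calt}$ of the same order $N_1$, with each $U \in \calu_\PMF$ contained in its extension $U^+ \in \calu_\PMF^+$; this inclusion preserves the $S$-long property at points of $\MCG \times \PMF$. The union $\calu := \calu_\PMF^+ \cup \calu_\calt$ will then be a $\MCG$-invariant open $\calf$-cover of $\MCG \times \overline{\calt}$ that is $S$-long (on $\PMF$ via $\calu_\PMF^+$, on $\calt$ via $\calu_\calt$) and has order at most $N_1 + N_2 + 1$, establishing the desired finite $\calf$-amenability.

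The only conceptual obstacle I foresee is the temptation to look for a single $\MCG$-invariant open neighborhood of $\PMF$ in $\overline{\calt}$ whose complement in $\calt$ is cocompact; no such neighborhood can exist, since $\MCG$ acts on $\calt$ properly but with non-cocompact quotient, forcing any nonempty $\MCG$-invariant closed subset of $\calt$ to be non-compact. The extension-and-union strategy sidesteps this by treating the two parts of $\overline{\calt}$ simultaneously, at the modest cost of summing the two orders.
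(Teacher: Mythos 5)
Your proof is correct and follows essentially the same route as the paper: extend the $\calf$-cover of $\MCG\times\PMF$ from Theorem~\ref{thm:finitely-amenable-action-MCG} via Lemma~\ref{lem:extension-of-covers}, add an open $\Fin$-cover handling the interior $\calt$ (the paper uses the slightly slicker sets $\MCG\times V$, which are trivially $S$-long in the group coordinate), and take the union, with the order bounded by the sum of the two orders plus one.
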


  \begin{proof}
    By Theorem~\ref{thm:finitely-amenable-action-MCG} the action of $\MCG$ on
    $\PMF$ is $N$-$\calf$-amenable for some $N$.
    Let $N' := \dim \calt$.
    We will show that the action of $\MCG$ on $\overline{\calt}$
    is $N+N'+1$-$\calf$-amenable.
    
    Let $S \subseteq \MCG$ be finite.
    Then there exists an  open  $\calf$-cover of the product 
    $\MCG \x \PMF$ of order at most $N$ that is $S$-long in the
    group coordinate.
    It is not difficult to extend this cover to a $\MCG$-invariant collection  $\calu$ 
    of open $\calf$-subsets of $\MCG \x \overline{\calt}$ of order
    at most $N$ satisfying
    \begin{equation*}
      \forall (g,\xi) \in \MCG \x \PMF \; \exists \; U \in \calu \;
        \text{with} \; gS \x \{ \xi \} \subseteq U,
    \end{equation*}
    see Lemma~\ref{lem:extension-of-covers}.  
    Teichm\"uller space $\calt$ carries the structure of a proper $\MCG$-$CW$-complex.
    (Even simpler, we could restrict here to a finite index subgroup of $\MCG$ that acts freely.)   
    We therefore find an open $\Fin$-cover $\calv$ of $\calt$ of dimension $N$, where $\Fin$ denotes the family of finite subgroups.
    Now the cover of $\MCG \x \PMF$ consisting of all $U \in \calu$
    and all sets of the form $\MCG \x V$ with $V \in \calv$ is an 
    open $\calf$ cover, is  of order at most $N + N' + 1$, and 
    is $S$-long in the group coordinate. 
  \end{proof}

  We now write $c(\Sigma) := 6g + 2p -6$.
  For a subsurface $Y$ of $\Sigma$ we write $Y'$ for the closed surface obtained by collapsing
  the boundary curves of $Y$ to punctures.
  Note that $c(Y') < c(\Sigma)$. 

  \begin{lemma}
    \label{lem:stab-subsurface}
    Let $G$ be a subgroup of $\MCG$ that fixes a subsurface $Y$ of $\Sigma$. 
    Then there exists a finite index subgroup $G_0$ of $G$ 
    and a central extension
    \begin{equation*}
      \IZ^k \xrightarrow{i} G_0 \xrightarrow{p} Q 
    \end{equation*}
    where $Q$ is a subgroup of a product of mapping class groups of
    closed surfaces $\hat Y_i$ with punctures for which $c(\hat Y_i) <
    c(\Sigma)$.
  \end{lemma}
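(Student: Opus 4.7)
The plan is to decompose each element of $G$ via restriction to $Y$ and to the connected components of $\Sigma \setminus Y$, after first passing to a finite index subgroup to make these restrictions well defined; the kernel of the resulting restriction map will be the free abelian group $\IZ^k$ generated by Dehn twists around the boundary components of $Y$.

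First I would pass to the finite index subgroup $G_0 \subseteq G$ consisting of elements that fix, up to isotopy, each boundary component of $Y$, each connected component of $\Sigma \setminus Y$, and each of their boundary components individually. This is possible because $G$ acts by permutations on these finite sets. Let $Y_1 := Y$ and let $Y_2, \dots, Y_\ell$ denote the components of $\Sigma \setminus Y$, and let $\hat Y_j$ be the closed surface with punctures obtained from $Y_j$ by collapsing each boundary component to a puncture. A routine Euler characteristic computation, using that $Y$ is a proper non-sporadic subsurface (so in particular $|\partial Y|\geq 1$), gives $c(\hat Y_j) < c(\Sigma)$ for each non-sporadic $Y_j$; sporadic pieces (spheres with at most three punctures) have finite or trivial mapping class group and may be dropped from the product.

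Next I would consider the restriction homomorphism $\rho \colon G_0 \to \prod_{j} \MCGnoSigma(\hat Y_j)$ and set $Q := \rho(G_0)$. The kernel of $\rho$ consists of mapping classes which, after collapsing boundary curves to punctures, act trivially on each $\hat Y_j$; by the standard Alexander method applied to each piece, these are exactly the mapping classes of $\Sigma$ represented by homeomorphisms supported in a regular neighborhood of $\partial Y$, and therefore $\ker \rho \cong \IZ^k$ with $k := |\partial Y|$, generated by the Dehn twists around the boundary curves $c_1,\dots,c_k$ of $Y$. Since $G_0$ fixes each such boundary curve up to isotopy, every element of $G_0$ commutes with every such twist, so $\ker \rho$ is central in $G_0$. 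This produces the required central extension $\IZ^k \hookrightarrow G_0 \twoheadrightarrow Q$.

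The main step requiring care is the identification of $\ker \rho$ as exactly $\IZ^k$: the independence of the boundary Dehn twists uses essentiality of each $c_i$ in $\Sigma$, while ruling out other kernel elements requires a relative form of the Alexander method on each $\hat Y_j$ together with a careful assembly of the pieces across the cut curves. Once these points are established everything else is routine bookkeeping.
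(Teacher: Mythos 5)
Your proof follows the same route as the paper's: pass to a finite index subgroup $G_0$ fixing each piece of the decomposition and each boundary curve, restrict and cap off to land in a product of smaller mapping class groups, and identify the kernel with the central free abelian group of boundary Dehn twists. One small imprecision: you cannot simply "drop" the sporadic pieces $\hat Y_j$ with nontrivial finite mapping class group (e.g.\ a twice-punctured disk component capping to a thrice-punctured sphere whose two interior punctures $G_0$ might swap), since discarding such a factor can enlarge the kernel by a finite group, spoiling the identification $\ker\rho\cong\IZ^k$. The cleanest fix is what the paper does: keep every non-annular piece in the product --- the sporadic ones still satisfy $c(\hat Y_j)\leq 0 < c(\Sigma)$, which is all the lemma requires --- or, alternatively, pass to a further finite index subgroup of $G_0$ fixing each puncture of $\Sigma$, after which the image in each sporadic factor is trivial and dropping them is harmless.
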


  \begin{proof}
    Let $Y_0 := Y$ and let $Y_1,\dots,Y_n$ be the components of the
    complement. For each $Y_i$ that's not an annulus let $\hat Y_i$ be
    the surface obtained from $Y_i$ by collapsing the boundary
    components to punctures.  Then $c(\hat Y_i) < c(\Sigma)$.  Since
    $G$ permutes the $Y_i$'s and their boundary components, there is a
    finite index subgroup $G_0$ of $G$ that preserves each $Y_i$ and
    each boundary component of $Y_i$.  Restriction to each non-annular
    $Y_i$ followed by capping off the boundary components induces a
    group homomorphism~\cite[Proposition 3.19]{FM}
    \begin{equation*}
      G_0 \xrightarrow{p} \MCG_0 \x \dots \x \MCG_n.
    \end{equation*}
    whose kernel is the free abelian group 
    generated by Dehn twists around boundary curves of $Y$ and is
    central in $G_0$.
  \end{proof}

  We will now use some of the classes of groups introduced in
  Section~\ref{subsec:FJC}.

  \begin{lemma}
    \label{lem:MCG-in-AC}
    The mapping class group $\MCG$ of $\Sigma$ belongs to the class of groups $\EAC(\EVNil)$.
  \end{lemma}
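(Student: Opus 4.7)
The plan is to prove this by induction on the complexity $c(\Sigma) = 6g+2p-6$.

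For the base case ($c(\Sigma) \leq 0$), the sporadic classification of mapping class groups makes $\MCG$ either finite or commensurable with $SL_2(\IZ)$. Finite groups sit in $\EVNil \subseteq \EAC(\EVNil)$ directly. For $SL_2(\IZ)$, Example~\ref{ex:SL_2} together with Theorem~\ref{thm:axioms->-fin-amenable} yield a finitely $\VCyc$-amenable action on $S^1$, an $\ER$, so $SL_2(\IZ) \in \Eac(\EVCyc) \subseteq \Eac(\EVNil)$, using that $\EVCyc \subseteq \EVNil$ and the closure of $\EAC(\EVNil)$ under finite index overgroups from Lemma~\ref{lem:ac(F)}\ref{lem:ac(F):over}.

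For the inductive step ($c(\Sigma) > 0$), suppose the lemma is established for all oriented surfaces of strictly smaller complexity. Corollary~\ref{cor:FJC-MCG-ref-calf} gives a finitely $\calf$-amenable action of $\MCG$ on $\overline{\calt}$, which is homeomorphic to a closed ball and in particular a compact $\ER$. Here $\calf$ consists of the virtually cyclic subgroups together with subsurface stabilizers. So to conclude $\MCG \in \EAC(\EVNil)$, it suffices to show that there is a single integer $n$ with $\calf \subseteq \Eac^n(\EVNil)(\MCG)$; indeed, the action is then a fortiori finitely $\Eac^n(\EVNil)(\MCG)$-amenable and the definition gives $\MCG \in \Eac^{n+1}(\EVNil) \subseteq \EAC(\EVNil)$.

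The virtually cyclic members of $\calf$ lie in $\EVNil = \Eac^0(\EVNil)$. For a subsurface stabilizer $G$, Lemma~\ref{lem:stab-subsurface} produces a finite index subgroup $G_0$ sitting in a central extension $\IZ^k \to G_0 \to Q$, where $Q$ embeds into a product of mapping class groups of closed punctured surfaces of complexity strictly less than $c(\Sigma)$. By induction each factor of this product belongs to $\EAC(\EVNil)$, hence to some level $\Eac^{n_i}(\EVNil)$. Only finitely many topological types of subsurface arise (as $c$ is bounded), so we may take $n := \max_i n_i$; then every factor lies in $\Eac^n(\EVNil)$. Applying the closure properties from Lemma~\ref{lem:ac(F)}\ref{lem:ac(F):over} in turn — finite products, subgroups, central extensions with finitely generated kernel, and finite index overgroups — we conclude successively that the product of factors, then $Q$, then $G_0$, and finally $G$ all belong to $\Eac^n(\EVNil)$. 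Enlarging $n$ once if needed to also absorb the virtually cyclic members, we obtain the uniform bound required, which completes the induction. The only substantive point beyond bookkeeping is the observation that the subsurface types are finite in number, so that a single level $n$ suffices; everything else is an assembly of results already proved.
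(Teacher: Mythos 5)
Your argument is essentially the paper's: induction on $c(\Sigma)$, with Corollary~\ref{cor:FJC-MCG-ref-calf} supplying the finitely $\calf$-amenable action on the compact $\ER$ $\overline\calt$ in the inductive step, and Lemma~\ref{lem:stab-subsurface} together with the closure properties from Lemma~\ref{lem:ac(F)}~\ref{lem:ac(F):over} placing the subsurface stabilizers inside the class. Your explicit observation that a single level $n$ suffices --- because only finitely many topological types of subsurfaces (and hence of the surfaces $\hat Y_i$ and of the lists of factors) occur, so one can take the maximum of finitely many levels $n_i$ --- is a genuine tightening of a step the paper leaves implicit; it is exactly what is needed to invoke the defining property of $\Eac$ correctly, since $\calf$ must lie in $\Eac^n(\EVNil)(\MCG)$ for a single $n$, not merely in the union $\EAC(\EVNil)(\MCG)$.

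There is, however, one genuine gap, in the base case $g=1$, $p=0$. You assert that the finitely $\VCyc$-amenable action of $\SL_2(\IZ)$ on $S^1$ produced by Example~\ref{ex:SL_2} and Theorem~\ref{thm:axioms->-fin-amenable} already witnesses membership in $\Eac$, and you call $S^1$ an $\ER$. It is not: a Euclidean retract must be a finite-dimensional contractible ANR, and $S^1$ is not contractible. The action on the boundary circle alone does not satisfy the definition of $\Eac$; you must promote it to a finitely $\VCyc$-amenable action on a contractible compactum. One fix is to pass to the closed disk $\overline{\IH^2}=\IH^2\cup S^1$ and extend the $\calf$-cover of $G\times S^1$ over the interior exactly as Corollary~\ref{cor:FJC-MCG-ref-calf} does for $\overline\calt$, via Lemma~\ref{lem:extension-of-covers} together with a finite-dimensional $\Fin$-cover of the cocompact interior. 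The other is the route the paper takes: $\SL_2(\IZ)$ acts cocompactly on a locally finite tree $T$, the geodesic ray compactification $\overline{T}$ is a compact $\ER$, and the action on $\overline{T}$ is finitely $\VCyc$-amenable by~\cite{Bartels-Lueck-Reich-cover}.
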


  \begin{proof}
    We will proceed by induction on $c(\Sigma)$. 
    In the sporadic case $c(\Sigma) = 0$ we have either $g=0$, $p \leq 3$ 
    or $g = 1$, $p=0$.
    If $g=0$, $p \leq 3$, then $\MCG$ is finite and belongs to $\EAC(\EVNil)$.
    If $g=1$, $p=0$, then $\MCG = \SL_2(\IZ)$ acts cocompactly on a
    locally finite tree $T$.
    The geodesic ray compactification $\overline{T}$ of the tree 
    is a  compact $\ER$ and the action of $\SL_2(\IZ)$
    on $\overline{T}$ is finitely $\VCyc$-amenable by~\cite{Bartels-Lueck-Reich-cover}.
    Alternatively, we could use the action of $\SL_2(\IZ)$ on the hyperbolic plane $\IH^2$ 
    as discussed in Example~\ref{ex:SL_2}.   
    Thus $\MCG \in \EAC(\EVNil)$.
    
    Suppose now $c(\Sigma) > 0$.
    The Thurston compactification $\overline{\calt(\Sigma)}$ of Teichm\"uller space
    is homeomorphic to a closed Euclidean ball  
    and in particular a compact $\ER$.
    Virtually cyclic groups belong to $\EVNil$ and therefore also to $\EAC(\EVNil)$.
    Lemma~\ref{lem:ac(F)}~\ref{lem:ac(F):over} implies that $\EAC(\EVNil)$
    is closed under finite products, central extensions with finitely generated kernel,
    taking subgroups and with 
    taking overgroups of finite index.    
    Therefore the induction hypothesis and Lemma~\ref{lem:stab-subsurface} imply that all 
    stabilizers of subsurfaces of $\Sigma$  belong to $\EAC(\EVNil)$.
    Now Corollary~\ref{cor:FJC-MCG-ref-calf} allows us to use the defining property
    of the operation $\Eac$.
    Therefore $\MCG$ belongs to $\EAC(\EVNil)$. 
  \end{proof}

  \begin{proof}[Proof of Theorem~\ref{thm:FJC-for-MCG}]
    Corollary~\ref{cor:AC} and Proposition~\ref{prop:FJC-for-VNil}
    imply that all groups in $\EAC(\EVNil)$ satisfy the Farrell-Jones Conjecture.
    Since mapping class groups of surfaces belong to this class by
    Lemma~\ref{lem:MCG-in-AC} they satisfy the Farrell-Jones Conjecture.
  \end{proof}
 
  \begin{remark}
    The \emph{Farrell-Jones Conjecture with wreath products}
      ~\cite[Sec.~6]{Bartels-Lueck-Reich-Rueping-GLnZ} is a strengthening of the
    Farrell-Jones Conjecture that has the advantage that it is closed under 
    taking finite index overgroups.
    Since the class $\EAC(\EVNil)$ is closed under taking finite overgroups
    and under finite products all groups in $\EAC(\EVNil)$ satisfy the 
    Farrell-Jones Conjecture with wreath products.
    In particular, mapping class groups of surfaces satisfy the
    Farrell-Jones Conjecture with wreath products.
    This is also true for all groups in the class $\EAC(\EVSol)$. 
   \end{remark}

  \bibliographystyle{abbrv}
  \bibliography{mcg-fjc2.bib}

\end{document}